\newtheorem{thm}{Theorem}[section]
\newtheorem{prop}[thm]{Proposition}
\newtheorem{cor}[thm]{Corollary}
\newtheorem{lem}[thm]{Lemma}
\newtheorem{conj}[thm]{Conjecture}
\theoremstyle{definition}
\newtheorem{defn}[thm]{Definition}
\newtheorem{rmk}[thm]{Remark}
\begin{document}

\title{Generating Series of Key Polynomials and Bounded Ascending Sequences of Integers}

\author{Noah Cape and Shaul Zemel}

\maketitle

\section*{Introduction}

The Schubert polynomials are known to be the weighted counting functions for combinatorial objects called (reduced) RC-graphs, or (reduced) pipe dreams (see \cite{[BJS]}). One way of proving this is, as done in \cite{[FK]}, by defining their generating function in the appropriate Demazure algebra, and showing that the generating function of these weighted counting functions is determined by the same properties as the one of the Schubert polynomials (see also the appendix of \cite{[H]} for an extension of this proof to the $\beta$-Grothendieck polynomials, where the reduction assumption from the RC-graphs/pipe dreams is removed).

The key polynomials, also known as Demazure characters, were defined in \cite{[D1]} and \cite{[D2]} in order to describe characters of representations of the Borel subgroup inside the irreducible representations of general linear groups. They are also weighted counting functions of certain objects, like the skyline fillings from \cite{[Ma1]} and \cite{[Ma2]}, and are related to Gelfand--Tsetlin polytopes in \cite{[KST]}. They, like their building blocks called Demazure atoms (as defined in \cite{[LS]}), can be obtained as special cases of the non-symmetric Macdonald polynomials (see, e.g., \cite{[AS]}).

It is therefore interesting to see whether some of their properties are also visible using their generating function. Noting that these depend on two parameters, the coefficient of a fixing element of the Demazure algebra is still a generating function of key polynomials (rather than a single one). In this paper we show how these generating functions are closely related to bounded ascending sequences of integers. Similar investigations can be carried out for the Demazure atoms (though the polynomials in the numerators are much nicer for the key polynomials themselves, which is why we concentrate on them), as well as the generalization of the key polynomials called Lascoux polynomials, as defined in \cite{[L]} (they too decompose into Lascoux atoms---see \cite{[Mo]}). Note that \cite{[Y]} relates the latter to set-valued tableaux, and \cite{[BSW]} constructs colored lattice models associated to them. it will be interesting whether the results of these papers, when restricted to the key polynomials by setting $\beta=0$, can be related to ours.

We remark that all these families of polynomials are constructed, from some simple base cases, using modified divided difference operators that satisfy the braid relations---see \cite{[K]}, as well as the recent pre-print \cite{[Z]}, for more on this subject.

\smallskip

We now explain the results of our paper in more detail. A key polynomial, in some variables $\{x_{i}\}_{i=1}^{\infty}$, depends on a partition and a permutation (the latter takes the former to the weak composition, often used for indexing key polynomials in many references). For creating a generating function, the parameter indexing the partition $\lambda$ is the monomial $t^{\lambda}$ in another set of variables $\{t_{i}\}_{i=1}^{\infty}$, and if we restrict to permutations in $S_{n}$ and partitions of length at most $n$, then the multiplier associated with some word $w$ is the element $\varepsilon_{ww_{0}}$ in an appropriately modified Demazure algebra (with $w_{0} \in S_{n}$ the word of maximal length). Then this generating function satisfies a functional equation similar to that from \cite{[LS]}, of the Schubert polynomials, of the one of the $\beta$-Grothendieck polynomials as showing up in the appending of \cite{[H]} (see Proposition \ref{propgen} below).

Due to the double parametrization, by fixing $w$ we still get a generating function, as a sum over the partitions $\lambda$. Our object of investigation is these generating functions, denoted by $\mathcal{K}_{w}(x,t)$ and defined to be $\sum_{\lambda}K_{\lambda,w}(x)t^{\lambda}$, where for a partition $\lambda$ (and the permutation $w$), $K_{\lambda,w}(x)$ is the key polynomial associated with $w$ and $\lambda$. It is a rational function (or a generalized one, if we remove the restriction on the length of $\lambda$), which can be constructed, like the key polynomials themselves, from the basic case $w=\operatorname{Id}$ using the corresponding modified divided difference operators $\{\pi_{i}\}_{i\geq1}$. We prove that both the numerator and the denominator in the expression for $\mathcal{K}_{w}(x,t)$ carry interesting combinatorial meanings.

The numerator can be described explicitly quite easily as follows. For any integer $l$, there is a finite set $A_{l}(w)$ of increasing sequences of integers that are bounded by the first $l$ values of $w$ (ordered as well), as we prove in Theorem \ref{formofKw}. This follows quite easily by induction, after one determines the behavior of the sets $A_{l}(w)$ once one passes from $w$ to $s_{i}w$ with larger length. The more involved analysis is the one concerning the numerator, which is a polynomial $P_{w}(x,t)$. This polynomial is 1 up to quadratic terms (see Proposition \ref{expPwinT}), and the quadratic terms themselves are based on multi-sets having multiple presentations as sums from the $A_{l}(w)$'s (see Definition \ref{Bklwdef} and Theorem \ref{mposBkl}), the sets of which, denoted by $B_{k,l}(w)$, exhibit interesting combinatorial behavior.

In some cases we obtain an explicit formula for the multiplicity in which the term associated with $\eta \in B_{k,l}(w)$ shows up in the quadratic part of $P_{w}$, which is based on the poset type of the set of presentations of $\eta$ as such a sum (see Theorems \ref{diff1} and \ref{diff2} and Proposition \ref{lketa23} below). We conjecture that all the quadratic part can be expressed in such a way (see Conjecture \ref{poset} below). The cubic terms also seem to exhibit a combinatorial behavior of a similar flavor (as we suggest in Conjecture \ref{formPw3}). Comparing our formula for $\mathcal{K}_{w}(x,t)$ with the series obtained by replacing the denominator by 1 expresses the coefficient of a monomial in a key polynomial as a combination of numbers of integral points on polytopes, as explained in Proposition \ref{Fwcoeffs}. Generalizing to the Lascoux polynomials, we get a similar generating function, with the same denominator, and we prove in Proposition \ref{Pxiw1} a first result about the linear part of the resulting polynomial in this case.

\smallskip

The paper is divided into 6 sections. Section \ref{PropAlw} introduces the basic objects involving permutations and the sets $A_{l}(w)$ mentioned above, and establishes their required properties. Section \ref{PropBlw} considers sets of multi-sets that can be obtained as sums of two bounded ascending sequences, and proves results about them that are needed later. Section \ref{KeyPols} defines the key polynomials, and shows that their generating series has the desired form. Section \ref{Quadratic} focuses on the quadratic parts of the polynomials $P_{w}$, and proves that they are based on the sets $B_{k,l}(w)$, plus some auxiliary results. In Section \ref{Mults} we determine the multiplicities showing up in these quadratic terms in several cases. Finally, Section \ref{QuesRes} suggests several directions to pursue the investigation further, including a few conjectures and the results involving integral points on polytopes and the basic form arising from the Lascoux polynomials.

\section{Permutations, Lengths, and Bounded Ascending Sequences \label{PropAlw}}

Let $S_{n}$ be the symmetric group on the first $n$ positive integers. By viewing $S_{n}$ as the stabilizer in $S_{n+1}$ of the largest integer and taking the direct limit over $n$, we obtain the group $S_{\infty}$, and all the notions that involve a permutation $w$ that will be considered in this paper will be the same for $w$ as an element of the group $S_{n}$, of the next one $S_{n+1}$, or of the infinite limit group $S_{\infty}$. In our notation the set $\mathbb{N}$ of natural numbers does not contain 0.

Given a finite sequence $\alpha$ of distinct integers, we will denote by $\operatorname{ord}(\alpha)$ the tuple having the same elements as $\alpha$ but ordered increasingly. Moreover, if $\alpha$ is already ordered in such a way, and $\gamma$ is another increasing sequence of the same length as $\alpha$ (say $l$), we denote by $\alpha\leq\gamma$ the statement that $\alpha_{j}\leq\gamma_{j}$ for every $1 \leq j \leq l$. Using these notions we now make the following definition.
\begin{defn}
For $w$ in $S_{n}$ and an integer $l$, we denote by $w^{l}$ the ordering of the first $l$ values of $w$, namely $w^{l}:=\operatorname{ord}\big(w(1),\ldots,w(l)\big)$, and write $\{w^{l}_{j}\}_{j=1}^{l}$ for its elements. We also define \[A_{l}(w):=\{\alpha=(\alpha_{1},\ldots,\alpha_{l})\in\mathbb{N}^{l}|\alpha_{j}<\alpha_{j+1}\ \forall1 \leq j<l,\ \alpha \leq w^{l}\},\] namely $A_{l}(w)$ is the set of increasing sequences $(\alpha_{1},\ldots,\alpha_{l})$ of positive integers such that $\alpha_{j} \leq w^{l}_{j}$ for $1 \leq j \leq l$ (this is the set of ascending sequences that are bounded by $w^{l}$). We we also use the union $A(w):=\bigcup_{l=1}^{n}A_{l}(w)$. \label{Asetdef}
\end{defn}

We will usually write permutations in one-line notation. Hence the element $w \in S_{5}$ which takes 1 to 4, 2 to 2, 3 to 5, 4 to 3, and 5 to 1 will be written simply as 42531 (or 425316 when viewed as an element of $S_{6}$, etc.). We will also write ordered sets as concatenated sequence of digits, when no confusion may arise. Hence for our $w$ we have $w^{1}=4$, $w^{2}=24$, $w^{3}=245$, $w^{4}=2345$, and $w^{5}=12345$. In particular the set $A_{2}(w)$ is $\{12,13,14,23,24\}$, and we have
\begin{equation}
A_{3}(w)=\{123,124,125,134,135,145,234,235,245\}. \label{A3wex}
\end{equation}

\begin{rmk}
Note that $w^{l}$ and $A_{l}(w)$ from Definition \ref{Asetdef} depend only on the first $l$ values of $w$, so that the two sets in our example remain the same when $w$ is replaced by 42513, or 425361. It also follows from the definition that for $w \in S_{n}$ we have $w^{n}=(1,\ldots,n)$ and $A_{n}(w)$ consists of the single element $w^{n}$ (note that $w^{l} \in A_{l}(w)$ for every $w$ and $l$, by definition). In particular, for $w=\operatorname{Id}$ we have $A_{l}(\operatorname{Id})=\{(1,\ldots,l)\}$ for every $l$, and by considering any $w$ as an element of $S_{\infty}$, the union $A(w)$, which is now taken over all $l\geq1$, is now an infinite union, with $A_{l}(w)$ being the singleton $A_{l}(\operatorname{Id})$ for every large enough $l$ (and clearly finite for every $l\geq1$). \label{forId}
\end{rmk}

\smallskip

For $1 \leq i<n$, let $s_{i}$ be the simple transposition, interchanging $i$ and $i+1$ and leaving the remaining numbers invariant. Recall that the symmetric group $S_{n}$ is generated by $\{s_{i}\}_{i=1}^{n-1}$, and for a permutation $w \in S_{n}$ we define, as usual, its \emph{length} $\ell(w)$, to be the minimum number of simple transpositions in a product that yields $w$. We will often investigate our objects by induction, through constructing $w$ as a product of $\ell(w)$ simple transpositions. We now give some properties of $\ell(w)$ that follow directly from its definition, together with the fact that $s_{i}$ is an odd permutation and an involution for every $i$. The proofs (which are mostly simple and direct), as well as more details on $\ell(w)$ and some related notions, can be found in, e.g., Chapter 1 of \cite{[P]}.
\begin{lem}
The number $\ell(w)$ has the same parity as that $w$ as a permutation, and it equals the number of inversions $\{i<j|w(i)>w(j)\}$ of $w$. It also satisfies the triangle inequality $l(wu) \leq l(w)+l(u)$. \label{propoflw}
\end{lem}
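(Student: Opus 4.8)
The plan is to handle the three assertions separately, in increasing order of difficulty. The parity statement and the triangle inequality are essentially immediate from the definition of $\ell(w)$ as a minimal word length. For the parity, I would write $w=s_{i_{1}}\cdots s_{i_{\ell(w)}}$ as a product of $\ell(w)$ simple transpositions realizing the minimum. Since each $s_{i}$ is an odd permutation, $w$ is a product of $\ell(w)$ odd permutations, and because the sign is a homomorphism to $\{\pm1\}$ this forces $w$ to have the same parity as $\ell(w)$. For the triangle inequality, I would take reduced words for $w$ and for $u$ of lengths $\ell(w)$ and $\ell(u)$ respectively, concatenate them to obtain an expression of $wu$ as a product of $\ell(w)+\ell(u)$ simple transpositions, and invoke the minimality defining $\ell(wu)$ to conclude $\ell(wu)\leq\ell(w)+\ell(u)$.

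The identification of $\ell(w)$ with the number of inversions $\operatorname{inv}(w):=\#\{i<j\mid w(i)>w(j)\}$ is where the real content lies, and I would prove it by two inequalities. The key technical step is the claim that right multiplication by a simple transposition changes the inversion count by exactly $\pm1$: passing from $w$ to $ws_{i}$ only swaps the values in positions $i$ and $i+1$, so for any third position $j$ the two pairs $\{(j,i),(j,i+1)\}$ (resp.\ $\{(i,j),(i+1,j)\}$) contribute the same total number of inversions before and after, while the single pair $(i,i+1)$ flips its inversion status. Hence $\operatorname{inv}(ws_{i})=\operatorname{inv}(w)+1$ when $w(i)<w(i+1)$ and $\operatorname{inv}(ws_{i})=\operatorname{inv}(w)-1$ when $w(i)>w(i+1)$.

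Granting this step, the lower bound $\ell(w)\geq\operatorname{inv}(w)$ follows because the identity has no inversions and every multiplication by a simple transposition alters the inversion count by at most $1$, so reaching $w$ requires at least $\operatorname{inv}(w)$ factors. For the upper bound $\ell(w)\leq\operatorname{inv}(w)$ I would argue by induction on $\operatorname{inv}(w)$: if $w\neq\operatorname{Id}$ then $w$ has a descent, i.e.\ some $i$ with $w(i)>w(i+1)$, and then $ws_{i}$ has exactly $\operatorname{inv}(w)-1$ inversions; by induction $ws_{i}$ is a product of $\operatorname{inv}(w)-1$ simple transpositions, whence $w=(ws_{i})s_{i}$ (using that $s_{i}$ is an involution) is a product of $\operatorname{inv}(w)$ of them. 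Combining the two inequalities gives $\ell(w)=\operatorname{inv}(w)$.

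The main obstacle is the $\pm1$ lemma for inversions under right multiplication by $s_{i}$; everything else is either formal (parity, triangle inequality) or a short induction built on top of it. The care needed there is purely bookkeeping --- verifying that the inversions involving a third position $j$ cancel in pairs so that only the pair $(i,i+1)$ contributes a net change --- so I expect no genuine difficulty, only the need to organize the case analysis ($j<i$, the pair $\{i,i+1\}$ itself, and $j>i+1$) cleanly.
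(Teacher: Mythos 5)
Your proof is correct, and it is exactly the standard argument the paper has in mind: the authors do not write out a proof of this lemma, remarking only that the assertions ``follow directly from its definition, together with the fact that $s_{i}$ is an odd permutation and an involution,'' and deferring the details to Chapter 1 of \cite{[P]}. Your parity and triangle-inequality arguments are the immediate ones alluded to there, and your two-inequality proof of $\ell(w)=\#\{i<j\mid w(i)>w(j)\}$ via the $\pm1$ change of the inversion count under right multiplication by $s_{i}$ is the classical argument found in that reference.
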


Since concatenation of permutations corresponds to their compositions as functions acting on arguments showing up to the left of them, we obtain the following consequence.
\begin{lem}
For $w \in S_{n}$ and $1 \leq i<n$, the following are equivalent:
\begin{enumerate}[$(i)$]
\item We have the inequality $\ell(s_{i}w)>\ell(w)$.
\item The equality $\ell(s_{i}w)=\ell(w)+1$ holds.
\item The entry $i$ shows up before $i+1$ in the one-line notation for $w$.
\item The inequality $w^{-1}(i)<w^{-1}(i+1)$ holds.
\end{enumerate} \label{lsiwlw}
\end{lem}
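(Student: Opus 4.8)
The plan is to read off the effect of left-multiplication by $s_{i}$ in one-line notation and then to track how the inversion count from Lemma \ref{propoflw} changes. Since $(s_{i}w)(k)=s_{i}\big(w(k)\big)$, passing from $w$ to $s_{i}w$ simply interchanges the two values $i$ and $i+1$ wherever they occur in the one-line notation, leaving every other entry in place. With this description the equivalence of $(iii)$ and $(iv)$ is immediate, as both assert that the position $w^{-1}(i)$ of the value $i$ precedes the position $w^{-1}(i+1)$ of the value $i+1$.

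First I would dispose of $(i)\Leftrightarrow(ii)$ by a parity-and-distance argument using only Lemma \ref{propoflw}. Because $s_{i}$ is odd, $\ell(s_{i}w)$ and $\ell(w)$ have opposite parities, so in particular $\ell(s_{i}w)\neq\ell(w)$. Applying the triangle inequality to $s_{i}w$ and to $s_{i}\cdot(s_{i}w)=w$ gives $\ell(s_{i}w)\leq\ell(w)+1$ and $\ell(w)\leq\ell(s_{i}w)+1$, whence $\ell(s_{i}w)\in\{\ell(w)-1,\ell(w)+1\}$. Thus the strict inequality in $(i)$ can be witnessed only by the equality in $(ii)$, while the reverse implication is trivial.

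It then remains to decide which of the two possibilities $\ell(w)\pm1$ actually occurs and to match it with $(iii)$. Here I would invoke the inversion formula from Lemma \ref{propoflw} together with the fact that $i$ and $i+1$ are consecutive integers: any position carrying a value $v\neq i,i+1$ satisfies either $v<i<i+1$ or $i<i+1<v$, so interchanging the values $i$ and $i+1$ does not alter whether that position forms an inversion with either of the two distinguished positions $w^{-1}(i),w^{-1}(i+1)$. Consequently the only inversion that can be created or destroyed is the one between those two positions themselves. If $i$ precedes $i+1$ (condition $(iii)$) this pair is a non-inversion in $w$ and an inversion in $s_{i}w$, so the number of inversions, and hence by Lemma \ref{propoflw} the length, rises by one, giving $(ii)$; if instead $i+1$ precedes $i$, the same pair switches from an inversion to a non-inversion, the length drops by one, and $(i)$ fails. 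This yields $(iii)\Leftrightarrow(ii)$ and closes the cycle.

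The only point requiring genuine care is the locality claim in the last step, namely that no inversion other than the one between the two distinguished positions is affected. Its verification is short but rests essentially on the consecutiveness of $i$ and $i+1$; this is the step I would state most carefully, since it is precisely what forces the length to change by exactly $\pm1$ rather than by some larger amount.
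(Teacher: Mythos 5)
Your proof is correct and follows exactly the route the paper intends: the paper dispenses with Lemma \ref{lsiwlw} in one sentence, saying it "follows directly from Lemma \ref{propoflw}, with the triangle inequality required for showing that $(i)$ implies $(ii)$," which is precisely your parity-plus-triangle-inequality argument combined with the inversion count. Your careful justification of the locality claim (that only the inversion between the positions of $i$ and $i+1$ is affected, because the two values are consecutive) fills in the detail the paper leaves implicit.
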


Lemma \ref{lsiwlw} follows directly from Lemma \ref{propoflw}, with the triangle inequality required for showing that $(i)$ implies $(ii)$ there. The opposite situation for $w$ and $i$ is where $\ell(s_{i}w)<\ell(w)$, $\ell(s_{i}w)=\ell(w)-1$, $i$ appears after $i+1$ in the word $w$, and $w^{-1}(i)>w^{-1}(i+1)$.

Given any ascending sequence $\alpha$ of positive integers, we allow ourselves the abuse of notation of writing $s_{i}(\alpha)$ for $\operatorname{ord}\big(s_{i}(\alpha)\big)$, where the argument is the sequence obtained by applying $s_{i}$ to all the entries of $\alpha$. This will only matter when both $i$ and $i+1$ show up in $\alpha$, since in any other case the sequence $s_{i}(\alpha)$ (in the straightforward meaning) is already ordered increasingly. It is also clear from the definitions that $(s_{i}w)^{l}=s_{i}(w^{l})$ in these notations. In fact, one can define this for any permutation $u$ acting on $\alpha$, but we will only use it for $u=s_{i}$.

For seeing how the sets from Definition \ref{Asetdef} are affected when we move from $w$ to $s_{i}w$ in case the equivalent conditions from Lemma \ref{lsiwlw} are satisfied, we write $A_{l}(w)$ for every $l$ as the disjoint union $A_{l}^{(i)}(w) \cup A_{l,i}(w)$, where
\begin{equation}
A_{l}^{(i)}(w):=\{\alpha \in A_{l}(w)|s_{i}\alpha \in A_{l}(w)\}\mathrm{\ and\ }A_{l,i}(w):=\{\alpha \in A_{l}(w)|s_{i}\alpha \notin A_{l}(w)\}. \label{Alwdecom}
\end{equation}
For example, if we take $w=42531$ and $l=3$, then the set $A_{3}(w)$ is given in Equation \eqref{A3wex}, and with $i=2$ the subset $A_{3,2}(w)$ from Equation \eqref{Alwdecom} is just the singleton $\{245\}$ (as its $s_{2}$-image 345 is not in $A_{3}(w)$), while $A_{3}^{(2)}(w)$ is the complement $\{123,124,125,134,135,145,234,235\}$ (and indeed, $s_{2}$ leaves 123, 145, 234, and 235 invariant, while it interchanges 124 with 134 and 125 with 135, showing the $s_{2}$-invariance of this set).

The second set in Equation \eqref{Alwdecom}, which will soon be seen to be the more delicate one, is defined by the following properties.
\begin{lem}
The set $A_{l,i}(w)$ is non-empty if and only if $\ell(s_{i}w)>\ell(w)$ and $w^{-1}(i) \leq l<w^{-1}(i+1)$. For such a value of $l$ there exists some $1 \leq j \leq l$ with $w^{l}_{j}=i$, and then $A_{l,i}(w)$ is the set of $\alpha \in A_{l}(w)$ with $\alpha_{j}=i$ and in which $i+1$ does not show up. \label{nontrivAli}
\end{lem}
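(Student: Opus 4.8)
The plan is to reduce everything to a direct case analysis of how the values $i$ and $i+1$ can occur inside a sequence $\alpha \in A_l(w)$, since by Equation \eqref{Alwdecom} the membership $\alpha \in A_{l,i}(w)$ is exactly the failure of $s_i\alpha \in A_l(w)$. Because every $\alpha$ is strictly increasing, each of $i$ and $i+1$ occurs at most once, and the re-ordering hidden in the notation $s_i\alpha$ is triggered only when both occur. First I would dispose of the two symmetric cases. If neither $i$ nor $i+1$ appears in $\alpha$, then $s_i\alpha = \alpha$; if both appear, they sit in adjacent positions (being consecutive integers in an increasing sequence), so $s_i$ merely swaps them back after re-ordering and again $s_i\alpha = \alpha$. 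In both situations $s_i\alpha \in A_l(w)$ trivially, so such $\alpha$ lie in $A_l^{(i)}(w)$ and contribute nothing to $A_{l,i}(w)$.

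The content is therefore in the two asymmetric cases. If $i+1$ appears in $\alpha$ but $i$ does not, then passing to $s_i\alpha$ replaces that single entry $i+1$ by the smaller value $i$; the sequence stays strictly increasing (the slot below was at most $i-1$) and the bound $\alpha \leq w^l$ is only relaxed, so $s_i\alpha \in A_l(w)$ and again $\alpha \in A_l^{(i)}(w)$. The only remaining possibility is that $i$ appears in $\alpha$, say $\alpha_j = i$, while $i+1$ does not. Here $s_i\alpha$ replaces the $j$-th entry $i$ by $i+1$, the result is still increasing (the slot above was at least $i+2$), and the only coordinate that can now violate $\alpha \leq w^l$ is the $j$-th one. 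Thus $s_i\alpha \notin A_l(w)$ precisely when $i+1 > w^l_j$, which, combined with $i = \alpha_j \leq w^l_j$, is equivalent to $w^l_j = i$. This already yields the structural description: writing $j$ for the unique index with $w^l_j = i$ (unique because $w^l$ is strictly increasing), the set $A_{l,i}(w)$ consists exactly of those $\alpha \in A_l(w)$ with $\alpha_j = i$ in which $i+1$ does not occur.

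It remains to convert this into the non-emptiness criterion. The existence of an index $j$ with $w^l_j = i$ is exactly the statement that $i$ is among the first $l$ values of $w$, i.e.\ $w^{-1}(i) \leq l$; this also supplies the index $j$ asserted in the statement. For the upper bound I would argue that $i+1 \notin w^l$: if instead $i+1$ were a value of $w^l$ it would occupy position $j+1$, and then every $\alpha \in A_l(w)$ with $\alpha_j = i$ would be forced to satisfy $i < \alpha_{j+1} \leq w^l_{j+1} = i+1$, hence $\alpha_{j+1} = i+1$, contradicting the absence of $i+1$ from $\alpha$; so a non-empty $A_{l,i}(w)$ forces $l < w^{-1}(i+1)$. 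Conversely, when $w^{-1}(i) \leq l < w^{-1}(i+1)$ the sequence $w^l$ itself has $w^l_j = i$ and omits $i+1$, so it lies in $A_{l,i}(w)$ and witnesses non-emptiness. Finally, the chain $w^{-1}(i) \leq l < w^{-1}(i+1)$ gives $w^{-1}(i) < w^{-1}(i+1)$, which by Lemma \ref{lsiwlw} is the condition $\ell(s_iw) > \ell(w)$, completing the equivalence.

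The steps are all elementary; the point requiring the most care, and the one I would treat as the main obstacle, is the passage between the purely local constraint ``$i+1$ is absent from the witness $\alpha$'' and the global constraint ``$i+1$ is absent from $w^l$''. The clean way to close both directions at once is the observation that $w^l$ is itself the extremal element of $A_l(w)$, so it is the natural candidate witness, and verifying that it lies in $A_{l,i}(w)$ under the range condition on $l$ makes the sufficiency transparent.
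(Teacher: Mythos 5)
Your proposal is correct and follows essentially the same route as the paper's proof: the same case analysis on whether $i$ and $i+1$ occur in $\alpha$, the same identification of the critical condition $w^{l}_{j}=i$, the same argument ruling out $w^{-1}(i+1)\leq l$ via the forced entry $\alpha_{j+1}=i+1$, and the same use of $w^{l}$ itself as the witness for non-emptiness. No gaps.
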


\begin{proof}
If an element $\alpha$ contains both $i$ and $i+1$, or contains neither, then $s_{i}(\alpha)=\alpha$ in our definitions and hence if $\alpha \in A_{l}(w)$ then $\alpha \in A_{l}^{(i)}(w)$. Similarly, if $\alpha$ contains $i+1$ but not $i$, then in $s_{i}(\alpha)$ we replace $i+1$ by $i$, and if $\alpha$ satisfies the bounds required for it to be in $A_{l}(w)$ then clearly so does $s_{i}(\alpha)$ and $\alpha$ is in $A_{l}^{(i)}(w)$ once again. It thus remains to consider elements $\alpha \in A_{l}(w)$ such that $\alpha_{j}=i$ for some index $j$ and $i+1$ does not show up in $\alpha$, where $s_{i}(\alpha)_{j}=i+1$ and the other entries of $s_{i}(\alpha)$ are like those of $\alpha$.

Now, we have $w^{l}_{j} \geq i$ since $\alpha \in A_{l}(w)$. If $w^{l}_{j}>i$ then $s_{i}(\alpha)$ also lies in $A_{l}(w)$ (since all the bounds are satisfied), and again $\alpha \in A_{l}^{(i)}(w)$. We thus consider the case where $w^{l}_{j}=i$, so in particular $w^{-1}(i) \leq l$ since $i$ shows up in $w^{l}$ and is thus the value $w(h)$ for some $h \leq l$, which is $w^{-1}(i)$ by definition. We also proves that that $A_{l,i}(w)$ is contained in the asserted set of $\alpha$'s, and we have to show the reverse inclusion and that this set is non-empty precisely when $\ell(s_{i}w)>\ell(w)$ and $l<w^{-1}(i+1)$.

But if $w^{-1}(i+1) \leq l$ as well, then $j<l$ (since $w^{l}_{l}$ is the maximal element of $w^{l}$, and it cannot be $i$ if $i+1$ is also there), and the ordering property of $w^{l}$ implies that $w^{l}_{j+1}=i+1$. Thus if $\alpha \in A_{l}(w)$ and $\alpha_{j}=i$, then we have the inequalities $i<\alpha_{j+1} \leq w^{l}_{j+1}=i+1$, meaning that $i+1$ also shows up in $\alpha$, which contradicts our assumption. Therefore in this case the asserted set of $\alpha$'s is empty, and with it also $A_{l,i}(w)=\emptyset$. Hence the only situation where $A_{l,i}(w)$ can be non-empty is if $w^{-1}(i) \leq l$ (as already noticed) but also $w^{-1}(i+1)>l$, meaning, through Lemma \ref{lsiwlw}, that $\ell(s_{i}w)>\ell(w)$.

Finally, assume that $\ell(s_{i}w)>\ell(w)$ and $l$ is in the desired range, so that $w^{l}_{j}=i$ and $i+1$ does not appear in $w^{l}$ (hence either $j=l$ or $w^{l}_{j+1}>i+1$). We consider a sequence $\alpha \in A_{l}(w)$ with $\alpha_{j}=i$, and such that if $j<l$ then $\alpha_{j+1}>i+1$. For every such sequence, $s_{i}(\alpha)$ is obtained by replacing the $j$th entry by $i+1$, which no longer satisfies the $w^{l}$ bounding condition, and is therefore not in $A_{l}(w)$. Therefore $A_{l,i}(w)$ is precisely the asserted set, and to see that it is not empty in this setting, note that $w^{l}$ itself, which is always in $A_{l}(w)$ (see Remark \ref{forId}), satisfies the conditions that we found for it to be in $A_{l,i}(w)$. This completes the proof of the lemma.
\end{proof}

We can now compare $A(s_{i}w)$ with $A(w)$ when the former permutation has greater length.
\begin{prop}
If $\ell(s_{i}w)>\ell(w)$ then for every $l$ the set $A_{l}(s_{i}w)$ is the disjoint union of $A_{l}(w)$ and $\{s_{i}(\alpha)|\alpha \in A_{l,i}(w)\}$, with the latter being non-empty for at least one value of $l$. In particular we have $A_{l}(w) \subseteq A_{l}(s_{i}w)$ for every $l$, $A_{l}(w)$ is preserved under $s_{i}$ for every $l$, and $A(w) \subsetneq A(s_{i}w)$. \label{Asiw}
\end{prop}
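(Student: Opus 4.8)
The plan is to track how the bounding tuple $w^{l}$ changes when we pass from $w$ to $s_{i}w$, using the identity $(s_{i}w)^{l}=s_{i}(w^{l})$ noted just before \eqref{Alwdecom}, and then to read off $A_{l}(s_{i}w)$ directly from the resulting bounds. Since $\ell(s_{i}w)>\ell(w)$, Lemma \ref{lsiwlw} tells us that $i$ precedes $i+1$ in the one-line notation of $w$, i.e. $w^{-1}(i)<w^{-1}(i+1)$, and this is the fact that controls every case. First I would split according to whether $i$ and $i+1$ occur among the first $l$ values of $w$. If both occur or neither occurs, then applying $s_{i}$ to the entries of $w^{l}$ and reordering returns $w^{l}$, so $(s_{i}w)^{l}=w^{l}$ and hence $A_{l}(s_{i}w)=A_{l}(w)$; moreover $l$ then lies outside the range $w^{-1}(i)\leq l<w^{-1}(i+1)$ of Lemma \ref{nontrivAli}, so $A_{l,i}(w)=\emptyset$ and the asserted disjoint union holds trivially. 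The case where $i+1$ occurs in $w^{l}$ but $i$ does not cannot arise, since $w^{-1}(i+1)\leq l$ would force $w^{-1}(i)<w^{-1}(i+1)\leq l$, placing $i$ in $w^{l}$ as well.

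The substantive case is $w^{-1}(i)\leq l<w^{-1}(i+1)$, where $i$ appears in $w^{l}$ at some position $j$ (so $w^{l}_{j}=i$) but $i+1$ does not. Here replacing the entry $i$ of $w^{l}$ by $i+1$ and reordering leaves it in position $j$, so $(s_{i}w)^{l}$ agrees with $w^{l}$ except that its $j$th entry is $i+1$ rather than $i$. Consequently the defining inequalities for $A_{l}(s_{i}w)$ coincide with those for $A_{l}(w)$ off position $j$, where the bound is loosened from $i$ to $i+1$; thus $A_{l}(s_{i}w)\setminus A_{l}(w)$ is exactly the set of increasing $\beta$ obeying the old bounds away from position $j$ and having $\beta_{j}=i+1$. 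The key step is to identify this difference with $\{s_{i}(\alpha)\mid\alpha\in A_{l,i}(w)\}$. Given such a $\beta$, the increasing condition together with $\beta_{j-1}\leq w^{l}_{j-1}<w^{l}_{j}=i$ shows that $i$ does not occur in $\beta$, so $s_{i}(\beta)$ is the genuine replacement of $i+1$ by $i$ at position $j$, yielding an $\alpha$ with $\alpha_{j}=i$, no occurrence of $i+1$, and the correct bounds, i.e. $\alpha\in A_{l,i}(w)$ by the description in Lemma \ref{nontrivAli}. Conversely that same lemma shows each $\alpha\in A_{l,i}(w)$ has $s_{i}(\alpha)$ of precisely this form (with $j$th entry $i+1>i=w^{l}_{j}$, hence outside $A_{l}(w)$), and $s_{i}$ is an involution on these sequences. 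This establishes the disjoint union $A_{l}(s_{i}w)=A_{l}(w)\sqcup\{s_{i}(\alpha)\mid\alpha\in A_{l,i}(w)\}$, and Lemma \ref{nontrivAli} guarantees the second piece is non-empty for the $l$ in the above range, which is itself non-empty because $w^{-1}(i)<w^{-1}(i+1)$.

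Finally I would deduce the remaining assertions. The inclusion $A_{l}(w)\subseteq A_{l}(s_{i}w)$ is immediate from the disjoint union. For the invariance under $s_{i}$ of the enlarged set $A_{l}(s_{i}w)$, I would apply $s_{i}$ to the refined decomposition $A_{l}(s_{i}w)=A_{l}^{(i)}(w)\sqcup A_{l,i}(w)\sqcup\{s_{i}(\alpha)\mid\alpha\in A_{l,i}(w)\}$ coming from \eqref{Alwdecom} and the disjoint union just proved: the subset $A_{l}^{(i)}(w)$ is preserved by its very definition, while $s_{i}$ interchanges $A_{l,i}(w)$ with $\{s_{i}(\alpha)\mid\alpha\in A_{l,i}(w)\}$, so the whole set returns to itself. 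The strict inclusion $A(w)\subsetneq A(s_{i}w)$ then follows by taking the union over all $l$, with strictness provided by any $l$ for which the new piece is non-empty. The point I expect to require the most care is the verification that a new sequence $\beta$ with $\beta_{j}=i+1$ genuinely cannot contain $i$: this is exactly what makes $s_{i}$ act as a clean single-entry replacement and lands its preimage inside $A_{l,i}(w)$ rather than failing the bounds elsewhere, and it is where the ordering hypothesis on $w^{l}$ is used in an essential way.
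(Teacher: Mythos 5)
Your proposal is correct and follows essentially the same route as the paper's own proof: the same case split on whether $l$ lies in the range $w^{-1}(i)\leq l<w^{-1}(i+1)$, the same use of Lemma \ref{nontrivAli} to identify $A_{l,i}(w)$ and its non-emptiness, and the same key observation that a sequence in $A_{l}(s_{i}w)\setminus A_{l}(w)$ has $j$th entry $i+1$ and cannot contain $i$ because the earlier entries are bounded by $w^{l}_{j-1}<i$. The only cosmetic difference is that you characterize the difference set directly from the loosened bound at position $j$ and then exhibit the $s_{i}$-bijection with $A_{l,i}(w)$, whereas the paper proves the two containments separately; the substance is identical.
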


\begin{proof}
Lemma \ref{lsiwlw} implies that $w^{-1}(i)<w^{-1}(i+1)$. Now, when $l<w^{-1}(i)$ or when $l \geq w^{-1}(i+1)$ the sequence $w^{l}$ is invariant under applying $s_{i}$ in our terminology (indeed, in the former case neither $i$ nor $i+1$ show up in $w^{l}$, and in the latter case it contains both of them), and as applying $s_{i}$ yields $(s_{i}w)^{l}$, we deduce that in this case the latter coincides with $w^{l}$ and hence $A_{l}(s_{i}w)=A_{l}(w)$  for these values of $l$. Moreover, Lemma \ref{nontrivAli} shows that for such values of $l$ the set $A_{l,i}(w)$ is empty, so our presentation as a union trivially holds.

So assume now that $w^{-1}(i) \leq l<w^{-1}(i+1)$, so that $w^{l}_{j}=i$ for some $1 \leq j \leq l$, and if $j<l$ then $w^{l}_{j+1}>i+1$. Then $(s_{i}w)^{l}$ is obtained by replacing the $j$th entry by $i+1$, so in particular $w^{l}_{h}\leq(s_{i}w)^{l}_{h}$ for every $1 \leq h \leq l$. It follows that every element of $A_{l}(w)$ satisfies the (weaker) bounds of $(s_{i}w)^{l}$ as well, so that $A_{l}(w) \subseteq A_{l}(s_{i}w)$. Moreover, Lemma \ref{nontrivAli} shows that if $\alpha \in A_{l,i}(w)$ then $\alpha_{j}=i$ and if $j<l$ then $i+1<\alpha_{j+1} \leq w^{l}_{j+1}$, and as applying $s_{i}$ to $\alpha$ replaces $\alpha_{j}=i$ by $i+1$ and leaves the remaining entries of $\alpha$ invariant, we deduce that $s_{i}(\alpha)$ satisfies the bounds arising from $(s_{i}w)^{l}$. Hence $\{s_{i}(\alpha)|\alpha \in A_{l,i}(w)\}$, which is disjoint from $A_{l}(w)$ by the definition in Equation \eqref{Alwdecom}, is also contained in $A_{l}(s_{i}w)$. Note that Lemma \ref{nontrivAli} shows that this set is not empty for such value of $l$, and as there always is such a value (e.g., $l=w^{-1}(i)<w^{-1}(i+1)$), the final strict containment is also established.

It only remains to show that if $w^{-1}(i) \leq l<w^{-1}(i+1)$ and $\alpha \in A_{l}(s_{i}w)$ then it lies in the asserted union. Let again $1 \leq j \leq l$ be such that $w^{l}_{j}=i$ and hence $(s_{i}w)^{l}_{j}=i+1$, and then if $\alpha_{j} \leq i$ then we deduce, as in the previous paragraph, that $\alpha \in A_{l}(w)$. Otherwise $\alpha_{j}=i+1$, which means that $\alpha \not\in A_{l}(w)$, and since $\alpha_{h}>i+1$ for $h>j$ and in case $h<j$ (so that $j>1$) we have \[\alpha_{h}\leq\alpha_{j-1}\leq(s_{i}w)^{l}_{j-1}=w^{l}_{j-1}<w^{l}_{j}=i,\] we deduce that $i$ does not appear in $\alpha$. It follows that $s_{i}(\alpha)$ is obtained by replacing $\alpha_{j}=i+1$ by $i$, and our argument shows that $s_{i}(\alpha) \in A_{l}(w)$. Since $s_{i}\big(s_{i}(\alpha)\big)=\alpha \not\in A_{l}(w)$, we deduce that $s_{i}(\alpha) \in A_{l,i}(w)$, so that $\alpha$ is indeed the $s_{i}$-image of an element of $A_{l,i}(w)$, as desired. The fact that elements of $A_{l}^{(i)}$ come with their $s_{i}$-images by definition in Equation \eqref{Alwdecom}, and $A_{l,i}(w)$ and the complement $A_{l}(s_{i}w) \setminus A_{l}(w)$ are taken to one another by $s_{i}$, yields the asserted invariance as well. This proves the proposition.
\end{proof}
Proposition \ref{Asiw} can also be used to prove by induction the finiteness of $A_{l}(w)$ for all $l$ and $w$, as noted in Remark \ref{forId}.

\smallskip

Recall that two increasing sequences $\alpha$ and $\gamma$ of the same length $l$ satisfy $\alpha\leq\gamma$ in case $\alpha_{j}\leq\gamma_{j}$ for every $1 \leq j \leq l$, and we will write $\alpha<\gamma$ to denote $\alpha\leq\gamma$ and $\alpha\neq\gamma$ as usual (note that this does not imply $\alpha_{j}<\gamma_{j}$ for all $j$). This produces a partial order on the set $A_{l}(w)$ for any given $w$, in which there is always a unique maximal element, which is $w^{l}$ by definition (there is also the minimal element coming up from $A_{l}(\operatorname{Id})$, but we will not need it). For this we shall make the following definition.
\begin{defn}
Let $\alpha$ be an ascending sequence of $l$ positive integers, and fix an index $1 \leq j \leq l$ and some number $r$ that is not in $\alpha$. Then we define $R_{j,r}(\alpha)$ to be $\operatorname{ord}(\alpha_{j \leftarrow r})$, where $\alpha_{j \leftarrow r}$ is the sequence obtained by replacing $\alpha_{j}$ by $r$. \label{replace}
\end{defn}
The properties of the operation from Definition \ref{replace} are as follows.
\begin{lem}
We have $R_{j,r}(\alpha)<\alpha$ in case $r<\alpha_{j}$ and $R_{j,r}(\alpha)>\alpha$ when $r>\alpha_{j}$. Moreover, if $\alpha \in A_{l}(w)$ and $r \leq w^{l}_{j}$ for some $w$, then $R_{j,r}(\alpha)$ is also in $A_{l}(w)$. \label{Rjrprop}
\end{lem}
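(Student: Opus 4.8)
The plan is to first pin down the explicit shape of the reordered sequence $R_{j,r}(\alpha)$ by locating the position into which the new value $r$ slots. Since $\alpha$ is strictly increasing and $r\notin\alpha$, there is a unique index $m$ placing $r$ between two consecutive entries of $\alpha$, and the sign condition $r<\alpha_{j}$ versus $r>\alpha_{j}$ forces $m\leq j$ or $m\geq j$ respectively. Writing out $R_{j,r}(\alpha)$ entry by entry then reveals that it coincides with $\alpha$ outside the index range between $m$ and $j$, and that inside that range each entry is obtained by a one-step shift of the entries of $\alpha$. Throughout one should keep in mind that $r$ is a positive integer distinct from the entries of $\alpha$, so that $R_{j,r}(\alpha)$ is again a strictly increasing sequence in $\mathbb{N}^{l}$.

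For the order comparison I would treat the two cases separately. When $r<\alpha_{j}$ we have $m\leq j$: the entry at position $m$ becomes $r<\alpha_{m}$, each entry at a position $m<h\leq j$ becomes the smaller value $\alpha_{h-1}<\alpha_{h}$, and all remaining entries are unchanged. Hence $R_{j,r}(\alpha)_{h}\leq\alpha_{h}$ for every $h$ with strict inequality at $h=m$, giving $R_{j,r}(\alpha)<\alpha$. The case $r>\alpha_{j}$ is entirely symmetric, with $m\geq j$ and the entries at positions $j\leq h<m$ shifting upward to $\alpha_{h+1}>\alpha_{h}$ (and $r$ landing at position $m$), which yields $R_{j,r}(\alpha)>\alpha$.

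For the membership claim, the case $r<\alpha_{j}$ is immediate: since $R_{j,r}(\alpha)<\alpha$ and $\alpha\in A_{l}(w)$, every entry of $R_{j,r}(\alpha)$ is bounded by the corresponding entry of $\alpha$, hence by $w^{l}_{h}$, so no bounding condition can be violated; note that the hypothesis $r\leq w^{l}_{j}$ is not even needed here.

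The main obstacle, and the only place where $r\leq w^{l}_{j}$ genuinely enters, is the case $r>\alpha_{j}$, where the entries of $R_{j,r}(\alpha)$ increase relative to $\alpha$ so that comparison with $\alpha$ no longer suffices. Here I would verify the bound $R_{j,r}(\alpha)_{h}\leq w^{l}_{h}$ position by position. Outside the range $j\leq h\leq m$ nothing changes and $\alpha\in A_{l}(w)$ gives the bound directly; at position $m$ we have $R_{j,r}(\alpha)_{m}=r\leq w^{l}_{j}\leq w^{l}_{m}$ by monotonicity of $w^{l}$ and $m\geq j$. For the shifted positions $j\leq h<m$ the new entry is $\alpha_{h+1}$, and the key chain of inequalities is $\alpha_{h+1}\leq\alpha_{m}<r\leq w^{l}_{j}\leq w^{l}_{h}$, where the first step uses $h+1\leq m$ and monotonicity of $\alpha$, the second is the defining property of $m$, the third is the hypothesis, and the last uses $h\geq j$ and monotonicity of $w^{l}$. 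This establishes $R_{j,r}(\alpha)\in A_{l}(w)$ and completes the proof.
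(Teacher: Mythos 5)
Your proof is correct and follows essentially the same route as the paper: both arguments locate the position where $r$ lands after reordering and verify the bounds entrywise via the chain $\alpha_{h+1}\leq\alpha_{m}<r\leq w^{l}_{j}\leq w^{l}_{h}$ (and its mirror image). The only difference is organizational — the paper reaches the reordered sequence by iterating adjacent swaps and checking the bounds at each step, whereas you write the final sequence in closed form and check the bounds in one pass, which is arguably cleaner.
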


\begin{proof}
Since $r$ is not in $\alpha$, all the inequalities involving $r$ and elements of $\alpha$ are strict. We also note that if $\alpha_{j-1}<r<\alpha_{j+1}$ then $\alpha_{j \leftarrow r}$ is already ordered, the comparison with $\alpha$ is clear, and so is the fact that the assumptions for the second assertion imply that $\alpha_{j \leftarrow r} \in A_{l}(w)$. Assume next that $r>\alpha_{j+1}$, so that putting $\alpha_{j+1}$ in the $j$th position and $r$ in the entry number $j+1$ yields a sequence that is larger than or equal to $\alpha$ at every spot. Moreover, under the $w^{l}$-assumptions we have $\alpha_{j+1}<r \leq w^{l}_{j}$ and $r \leq w^{l}_{j}<w^{l}_{j+1}$, and this sequence is bounded by $w^{l}$ at every entry as well. We are now in the same situation as before, with the index $j$ replaced by $j+1$. Repeating the process until we reach the situation where either $r$ is in the $i$th position and $r<\alpha_{i+1}$ or $r$ lands in the $l$th position (and one of these will eventually occur, as the index $j$ only grows and $l$ is finite), we reach a situation where the new sequence is ordered and thus equals $R_{j,r}(\alpha)$, and the same argument proves both assertions in this case.

The remaining option with the initial index $j$ is that $r<\alpha_{j-1}$. We now put $\alpha_{j-1}$ in the $j$th position and $r$ in the one number $j-1$, yielding a sequence that is bounded by $\alpha$, and the $w^{l}$-conditions yield the inequalities $\alpha_{j-1}<\alpha_{j} \leq w^{l}_{j}$ and $r<\alpha_{j-1} \leq w^{l}_{j-1}$. This puts us in the same situation but with $j-1$ instead of $j$. Iterating the argument until either we have $r>\alpha_{i-1}$ when $r$ is in the $i$th position or $r$ reaches the first entry (one of which will again happen at some stage, by finiteness) produces the ordered sequence $R_{j,r}(\alpha)$ which indeed satisfies the bound of $\alpha$, as well as that of $w^{l}$ under the additional assumptions. This proves the lemma.
\end{proof}

\section{Sums of Two Sequences, and Multi-Sets \label{PropBlw}}

A set $\alpha$ in some $A_{l}(w)$ will correspond, in the polynomial ring in variables $\{x_{i}\}_{i=1}^{n}$ for large enough $n$ (or in infinitely many variables), to the monomial $x^{\alpha}:=\prod_{j=1}^{l}x_{\alpha_{j}}$. The next step involves considering products of two such monomials, which would produce $x^{\eta}$, where $\eta$ is now a multi-set of positive integers, in which some elements are allowed to show up with multiplicity, but the multiplicity is at most 2. We will write the union of multi-sets (increasing multiplicities) as addition (motivated by the fact that $x^{\alpha}x^{\beta}=x^{\alpha+\beta}$), meaning that we can write every multi-set with the above property as $\eta=\eta_{1}+2\eta_{2}$ for disjoint sets $\eta_{1}$ and $\eta_{2}$ of positive integers, representing the elements showing up in $\eta$ once and twice respectively. For ordinary sets of integers we will allow ourselves the abuse of notation of writing $\alpha$ both for a set of integers, as well as for the increasing sequence that was denoted by $\operatorname{ord}(\alpha)$ above.

So let now $l$ and $k$ be two integers, and we suppose, unless otherwise specified, that $k \leq l$. We then define the following sets.
\begin{defn}
Given $w \in S_{n}$ and such $k$ and $l$, set \[\tilde{B}_{k,l}(w):=\{\eta|\exists\alpha \in A_{l}(w)\mathrm{\ and\ }\beta \in A_{k}(w)\mathrm{\ such\ that\ }\eta=\alpha+\beta\},\] namely $\tilde{B}_{k,l}(w)$ is the set of multi-sets which can be presented, at least in one way, as the sum of an element of $A_{l}(w)$ and one from $A_{k}(w)$. We will be more interested, however, in the subset $B_{k,l}(w)$ consisting of those multi-sets which are thus obtained in more than one way in a non-trivial manner, namely $\eta$ is in $B_{k,l}$ if there are $\alpha$ and $\tilde{\alpha}$ in $A_{l}(w)$ as well as $\beta$ and $\tilde{\beta}$ in $A_{k}(w)$ such that \[\eta=\alpha+\beta=\tilde{\alpha}+\tilde{\beta}\quad\alpha\neq\tilde{\alpha},\alpha\neq\tilde{\beta},\beta\neq\tilde{\alpha},\beta\neq\tilde{\beta}.\] \label{Bklwdef}
\end{defn}
Note that if $k \neq l$ (i.e., $k<l$) then the conditions $\alpha\neq\tilde{\beta}$ and $\beta\neq\tilde{\alpha}$ in Definition \ref{Bklwdef} are immediate, but they do become meaningful when $l=k$. We will be more interested in $B_{k,l}(w)$, though it will be easier to work with $\tilde{B}_{k,l}(w)$. It is obvious from the fact that elements of $A_{l}(w)$ and $A_{k}(w)$ are ordinary sets (rather than multi-sets) of size $l$ and $k$ respectively that every element $\eta \in B_{k,l}(w)$ for some $w$ satisfies $|\eta_{2}| \leq k \leq l$ and $2|\eta_{2}|+|\eta_{1}|=k+l$, and every $\eta$ with these properties is in $B_{k,l}(w)$ for some $w \in S_{\infty}$.

Using the values of $A_{2}(w)$ and $A_{3}(w)$ for $w=42531$ in Equation \eqref{A3wex} above, we take $l=3$ and $k=2$, and evaluate $B_{2,3}(w)$ and $\tilde{B}_{2,3}(w)$. For example, the equalities $12+134=14+123=11234$ and $235+14=135+24=12345$ show that 11234 and 12345 are in $B_{2,3}(w)$, but 11334 lies in $\tilde{B}_{2,3}(w)$ as the sum 13+134, but is not in $B_{2,3}(w)$ since this is the only possible way to construct it as a sum of an element of $A_{2}(w)$ and one from $A_{3}(w)$. In total we obtain that
\begin{align*}
B_{2,3}(w)=&\{11234,11235,11245,11345,12234,12235,12245, \\ &~12334,12335,12344,12345,12445,22345\}
\end{align*}
and $\tilde{B}_{2,3}(w)$ is the union of $B_{2,3}(w)$ and
\begin{align}
&\{11223,11224,11225,11233,11244,11334,11335,11344, \nonumber \\ &~ 11445,12233,12244,22334,22335,22344,22445\}. \label{B23wex}
\end{align}
It is clearly much easier to determine when some multi-set lies in $\tilde{B}_{k,l}(w)$ (by finding a sum yielding it) than to verify whether it is in $B_{k,l}(w)$. Proposition \ref{Bkldiff} below gives a an alternative criterion for checking which elements of $\tilde{B}_{k,l}(w)$ are in $B_{k,l}(w)$ that is much easier to check.

\smallskip

Using Lemma \ref{Rjrprop} we will obtain the following argument, which we will use several times below. Recall that if $k \leq l$ then the set whose ordering produces $w^{k}$ is contained in that which yields $w^{l}$, so that for every $1 \leq j \leq k$ there exists an index $h$ such that $w^{k}_{j}=w^{l}_{h}$. This can be written in terms of an order-preserving injective map $\varphi_{k,l}$ such that $w^{k}_{j}=w^{l}_{\varphi_{k,l}(j)}$ for any $1 \leq j \leq k$ (this map is clearly the identity map in case $k=l$).
\begin{lem}
Given $k \leq l$, consider elements $\alpha \in A_{l}(w)$ and $\beta \in A_{k}(w)$, take some index $1 \leq j \leq k$, and assume that $\beta_{j}<\alpha_{\varphi_{k,l}(j)}$. Assume further that either $\beta_{j}$ is not in $\alpha$, or that $k=l$. Then there exist elements $\tilde{\alpha} \in A_{l}(w)$ and $\tilde{\beta} \in A_{k}(w)$ with $\tilde{\beta}>\beta$ and $\tilde{\alpha}+\tilde{\beta}=\alpha+\beta$. \label{interup}
\end{lem}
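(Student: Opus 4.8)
The plan is to build $\tilde\alpha,\tilde\beta$ out of $\alpha,\beta$ by transferring entries between the two sequences; since any such transfer automatically preserves the multiset sum $\alpha+\beta$, everything reduces to arranging that the modified $\beta$ grows and that both modified sequences still lie in their $A$-sets. I would split along the two alternatives in the hypothesis. When $k=l$ (so $\varphi_{k,l}=\operatorname{Id}$ and both sequences have length $l$), I would just take the entrywise extremes: $\tilde\beta_{i}:=\max(\beta_{i},\alpha_{i})$ and $\tilde\alpha_{i}:=\min(\beta_{i},\alpha_{i})$. For each $i$ the pair $\{\tilde\alpha_{i},\tilde\beta_{i}\}$ equals $\{\alpha_{i},\beta_{i}\}$, so $\tilde\alpha+\tilde\beta=\alpha+\beta$; the entrywise max and min of two strictly increasing sequences are again strictly increasing, so these are genuine sets, and they are bounded by $w^{l}$ because $\alpha$ and $\beta$ are. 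Finally $\tilde\beta\geq\beta$ entrywise, strictly at the index $j$ (where $\beta_{j}<\alpha_{j}$), so $\tilde\beta>\beta$. This works even when $\beta_{j}$ lies in $\alpha$, which is precisely why that alternative of the hypothesis is allowed here.

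When $k<l$ the hypothesis forces $\beta_{j}\notin\alpha$, and I would perform a single swap. Exchanging $\beta_{j}$ with $\alpha_{\varphi_{k,l}(j)}$ fails exactly when $\alpha_{\varphi_{k,l}(j)}$ already appears in $\beta$, so instead I let $r$ be the smallest entry of $\alpha$ with $r>\beta_{j}$ and $r\notin\beta$, say $r=\alpha_{m}$. Such an $r$ exists: if every entry of $\alpha$ exceeding $\beta_{j}$ lay in $\beta$, then the $l-\varphi_{k,l}(j)+1$ entries $\alpha_{\varphi_{k,l}(j)},\ldots,\alpha_{l}$ would all sit among the $k-j$ entries $\beta_{j+1},\ldots,\beta_{k}$, forcing $\varphi_{k,l}(j)-j\geq l-k+1$ and contradicting the bound $\varphi_{k,l}(j)-j\leq l-k$ valid for the order-preserving injection $\varphi_{k,l}$. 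I then set $\tilde\alpha:=R_{m,\beta_{j}}(\alpha)$ and $\tilde\beta:=R_{j,r}(\beta)$. That $\tilde\alpha\in A_{l}(w)$ follows from Lemma \ref{Rjrprop}, since $\beta_{j}<r=\alpha_{m}\leq w^{l}_{m}$ and $\beta_{j}\notin\alpha$; that $\tilde\beta$ has distinct entries and $\tilde\beta>\beta$ follows from $r>\beta_{j}$ and $r\notin\beta$, again via Lemma \ref{Rjrprop}.

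The main obstacle is proving $\tilde\beta\in A_{k}(w)$, i.e. $\tilde\beta\leq w^{k}$. This does \emph{not} come for free from Lemma \ref{Rjrprop}, because $r$ may exceed $w^{k}_{j}$ (it lands in a later slot of $\tilde\beta$ rather than in position $j$). Instead I would verify the equivalent dominance statement that $|\{i:\tilde\beta_{i}\leq v\}|\geq|\{i:w^{k}_{i}\leq v\}|$ for every threshold $v$. Setting $f(v):=|\{i:\beta_{i}\leq v\}|-|\{i:w^{k}_{i}\leq v\}|\geq0$ (nonnegative since $\beta\leq w^{k}$), the replacement of $\beta_{j}$ by $r$ only threatens the inequality on the range $\beta_{j}\leq v<r$, where it becomes $f(v)\geq1$. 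At $v=\beta_{j}$ this holds because $\beta_{j}<\alpha_{\varphi_{k,l}(j)}\leq w^{k}_{j}$ gives $w^{k}_{j}>\beta_{j}$, hence $|\{i:w^{k}_{i}\leq\beta_{j}\}|\leq j-1<j$.

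The crux is therefore to rule out a drop $f(v_{0})=0$ at some $v_{0}\in(\beta_{j},r)$. Such a $v_{0}$ would have to be an entry of $w^{k}$, say $v_{0}=w^{k}_{t}=w^{l}_{\varphi_{k,l}(t)}$, lying in neither $\beta$ nor (by minimality of $r$) $\alpha$. Counting $\alpha$-entries below $v_{0}$ two ways then gives a contradiction: on one hand $|\{i:\alpha_{i}<v_{0}\}|\geq\varphi_{k,l}(t)$ because $\alpha\leq w^{l}$ and $v_{0}=w^{l}_{\varphi_{k,l}(t)}\notin\alpha$; on the other hand every $\alpha$-entry in $(\beta_{j},v_{0})$ lies in $\beta$ by minimality of $r$, so $|\{i:\alpha_{i}<v_{0}\}|\leq|\{i:\alpha_{i}<\beta_{j}\}|+(t-j)$. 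These combine to $|\{i:\alpha_{i}<\beta_{j}\}|\geq\varphi_{k,l}(t)-t+j$, while every $\alpha$-entry below $\beta_{j}$ precedes $\alpha_{\varphi_{k,l}(j)}>\beta_{j}$, giving the reverse bound $|\{i:\alpha_{i}<\beta_{j}\}|\leq\varphi_{k,l}(j)-1$. Hence $\varphi_{k,l}(t)-t<\varphi_{k,l}(j)-j$, which contradicts the monotonicity of $i\mapsto\varphi_{k,l}(i)-i$ together with $t\geq j$ (forced by $v_{0}>\beta_{j}$). Thus $f\geq1$ on $[\beta_{j},r)$, so $\tilde\beta\leq w^{k}$, completing the proof; I expect this dominance-and-minimality argument to be the one genuinely delicate step, the rest being direct applications of Lemma \ref{Rjrprop}.
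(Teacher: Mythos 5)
Your proof is correct, but it takes a genuinely different route from the paper's in both cases. For $k=l$ the paper runs two index-chasing chains (one upward through $\beta$, one downward through $\alpha$) to locate a single transferable pair of entries; your entrywise $\max/\min$ construction replaces all of that with one line, since the pair $\{\min(\alpha_{i},\beta_{i}),\max(\alpha_{i},\beta_{i})\}$ reproduces $\{\alpha_{i},\beta_{i}\}$ at every slot and the extremes of two increasing sequences bounded by $w^{l}$ are again increasing and bounded by $w^{l}$ --- this is cleaner and worth noting. For $k<l$ the two arguments diverge more substantially: the paper also transfers a single element $r_{h}$ from $\alpha$ to $\beta$, but it finds it by chasing the chain $j_{1}<j_{2}<\cdots$ with $r_{h-1}=\beta_{j_{h}}$, precisely so that the one replacement $R_{j,r_{h}}$ factors as a composition of operators each of which satisfies the hypothesis $r\leq w^{k}_{j'}$ of Lemma \ref{Rjrprop}, making membership in $A_{k}(w)$ automatic step by step. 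You instead take $r$ to be the \emph{smallest} eligible entry of $\alpha$ (in general a different element from the paper's $r_{h}$), justify its existence by the pigeonhole bound $\varphi_{k,l}(j)-j\leq l-k$, and then prove $\tilde{\beta}\leq w^{k}$ directly by the dominance criterion, ruling out a drop of $f$ on $(\beta_{j},r)$ via the double count of $\alpha$-entries below $v_{0}$ and the monotonicity of $i\mapsto\varphi_{k,l}(i)-i$; I checked this counting argument and it is sound. The trade-off is that the paper's chain gives the conclusion as a formal consequence of Lemma \ref{Rjrprop} at the cost of a somewhat opaque iteration, while your version needs the self-contained dominance lemma but identifies a canonical (minimal) choice of $r$ and makes the reason $\tilde{\beta}$ stays bounded by $w^{k}$ more transparent.
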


\begin{proof}
First we construct an element $\hat{\beta} \in A_{k}(w)$ with $\hat{\beta}>\beta$. If $\alpha_{\varphi_{k,l}(j)}$ is not in $\beta$, then $r=\alpha_{\varphi_{k,l}(j)}$ satisfies $\beta_{j}<r \leq w^{l}_{\varphi_{k,l}(j)}=w^{k}_{j}$, and Lemma \ref{Rjrprop} shows that $\hat{\beta}=R_{j,r}(\beta)$ is in $A_{k}(w)$ and satisfies $\hat{\beta}>\beta$. Otherwise set $j_{1}:=j$, and then $\alpha_{\varphi_{k,l}(j_{1})}$ equals some $\beta_{j_{2}}$, with $j_{2}>j_{1}$ since $r_{1}:=\alpha_{\varphi_{k,l}(j_{1})}>\beta_{j_{1}}$. But as $k \leq l$ we can apply $\varphi_{k,l}$, and as $\varphi_{k,l}(j_{2})>\varphi_{k,l}(j_{1})$ by order-preservation, we get $r_{2}:=\alpha_{\varphi_{k,l}(j_{2})}>\alpha_{\varphi_{k,l}(j_{1})}=\beta_{j_{2}}$, with $r_{1}=\beta_{j_{2}}$. But this is now the same situation, with the index $j_{1}$ replaced by the larger index $j_{2}$. We continue and construct an increasing sequence $j_{1}<j_{2}<\ldots$, where each time that the resulting element, say $r_{h-1}:=\alpha_{\varphi_{k,l}(j_{h-1})}$ still lies in $\beta$ we get a new index $j_{h}$ such that $r_{h-1}=\beta_{j_{h}}$, with the corresponding $r_{h}$.

Now, since $k$ is finite, this increasing sequence of indices must be finite, and let $h$ be the maximal one. We can now apply $R_{j_{h},r_{h}}$ to $\beta$, and note that the resulting element of $A_{k}(w)$, which is larger than $\beta$, no longer contains $\beta_{j_{h}}=r_{h-1}$. We can thus apply $R_{j_{h-1},r_{h-1}}$ to it, and go down with the index until the last application of $R_{j_{1},r_{1}}$, which will produce our $\hat{\beta}$. In fact, this $\hat{\beta}$ is just $R_{j,r_{h}}(\beta)$, but we needed this decomposition in order to determine $r_{h}$ as some element of $\alpha$, and obtain that $\hat{\beta}$ is in $A_{k}(w)$.

In the first case, where $\beta_{j}$ is not in $\alpha$, we set $\tilde{\beta}:=\hat{\beta}$ (which was seen to have the desired properties), and as $\beta_{j}<\alpha_{\varphi_{k,l}(j)}<\alpha_{\varphi_{k,l}(j_{h})}=r_{h}$, we can define $\tilde{\alpha}$ to be $R_{\varphi_{k,l}(j_{h}),\beta_{j}}(\alpha)$, which is in $A_{l}(w)$ by Lemma \ref{Rjrprop}. Since as sets we have $\tilde{\beta}=\beta\cup\{r_{h}\}\setminus\{\beta_{j}\}$ and $\tilde{\alpha}=\alpha\cup\{\beta_{j}\}\setminus\{r_{h}\}$, the assertion follows in this case.

We thus assume $k=l$, so that $\varphi_{k,l}$ is the identity map, and we know that $\beta_{j}<\alpha_{j}$, where the case that $\beta_{j}$ is not in $\alpha$ was already covered. By a similar argument we set $j^{(0)}:=j$ and find $j^{(1)}<j^{(0)}$ such that $r^{(1)}:=\beta_{j^{(0)}}=\alpha_{j^{(1)}}$ and this value is larger than $\beta_{j^{(1)}}$. We proceed in this manner until we get an index $p$ with $r^{(p)}:=\beta_{j^{(p-1)}}$ not lying in $\alpha$, with a decreasing sequence of indices $j=j^{(0)}>j^{(1)}>\ldots>j^{(p)}$ for some $p\geq1$. We let $R_{j^{(1)},r^{(1)}}$ act on $\hat{\beta}$, and then $R_{j^{(2)},r^{(2)}}$ operates on the result, with the application of $R_{j^{(p)},r^{(p)}}$ yielding our $\tilde{\beta}$, which can now be written as $R_{j^{(p)},r_{h}}(\beta)$ (but again with the decomposition yielding its required properties) hence as a set it equals $\beta\cup\{r_{h}\}\setminus\{r^{(p)}\}$. By setting $\tilde{\alpha}:=R_{j_{h},r^{(p)}}(\alpha) \in A_{l}(w)$, which as a set is $\beta\cup\{r^{(p)}\}\setminus\{r_{h}\}$, we obtain the result in this case as well. This completes the proof of the lemma.
\end{proof}
The element $\tilde{\beta}$ from Lemma \ref{interup} was seen to be of the form $R_{i,r}(\beta)>\beta$ for a single operator as given in Definition \ref{replace}, but $i$ and $r$ there do not satisfy the conditions of Lemma \ref{Rjrprop}. This is why we wrote this $R_{i,r}$ as a composition of several such operators, since now Lemma \ref{Rjrprop} applies to each of them and we could make sure that $\tilde{\beta} \in A_{k}(w)$. The other element $\tilde{\alpha}$ was obtained in a similar manner, but as $\tilde{\alpha}<\alpha$, the fact that it lies in $A_{l}(w)$ is immediate. By inverting all the inequalities in the proof, and using the same decomposition for $\alpha$ rather than $\beta$, we obtain the following complement, in the opposite direction.
\begin{lem}
Assume that $k<l$, $\alpha \in A_{l}(w)$, $\beta \in A_{k}(w)$, $\beta_{j}>\alpha_{\varphi_{k,l}(j)}$, and $\beta_{j}$ is not in $\alpha$. Then there are elements $\alpha<\tilde{\alpha} \in A_{l}(w)$ and $\tilde{\beta} \in A_{k}(w)$ for which $\tilde{\alpha}+\tilde{\beta}=\alpha+\beta$. \label{interdown}
\end{lem}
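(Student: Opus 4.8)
The plan is to mirror the proof of Lemma \ref{interup}, reversing every inequality and carrying out the decomposition on $\alpha$ instead of on $\beta$, as the concluding remark suggests. The swap I aim to produce inserts the large value $\beta_j$ into $\alpha$ and extracts a suitable value into $\beta$. Since $\beta_j\notin\alpha$ by hypothesis, placing $\beta_j$ into $\alpha$ creates no repetition; the only possible obstruction is that the value $\alpha_{\varphi_{k,l}(j)}$ one would naively move into $\beta$ may already lie in $\beta$. To locate a value that is genuinely available, I would run the chain of Lemma \ref{interup} in the opposite direction: set $j^{(0)}:=j$ and $r^{(0)}:=\alpha_{\varphi_{k,l}(j)}$, and whenever $r^{(m)}=\beta_{j^{(m+1)}}$ for some index $j^{(m+1)}$, pass to $r^{(m+1)}:=\alpha_{\varphi_{k,l}(j^{(m+1)})}<r^{(m)}$. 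As in the original proof one checks $r^{(m)}<\beta_{j^{(m)}}$, forcing $j^{(m+1)}<j^{(m)}$, so this yields a strictly decreasing sequence of indices; positivity makes it terminate, at a value $v:=r^{(p)}=\alpha_{\varphi_{k,l}(j^{(p)})}$ not lying in $\beta$ (if $\alpha_{\varphi_{k,l}(j)}\notin\beta$ already, the chain is trivial and $v=\alpha_{\varphi_{k,l}(j)}$).

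Having isolated $v$, I would set $\tilde{\alpha}:=\alpha\setminus\{v\}\cup\{\beta_j\}$ and $\tilde{\beta}:=\beta\setminus\{\beta_j\}\cup\{v\}$. These satisfy $\tilde{\alpha}+\tilde{\beta}=\alpha+\beta$ as multi-sets, because the intermediate chain values $r^{(0)},\ldots,r^{(p-1)}$, which lie in both $\alpha$ and $\beta$, are left untouched. Since $v\leq\alpha_{\varphi_{k,l}(j)}<\beta_j$ and $\beta_j\notin\alpha$, replacing $v$ by $\beta_j$ strictly raises the sequence, so $\tilde{\alpha}>\alpha$ as required. The element $\tilde{\beta}$ is the single replacement $R_{j,v}(\beta)$ of Definition \ref{replace}: as $v\notin\beta$ and $v<\beta_j\leq w^{k}_{j}$, Lemma \ref{Rjrprop} gives $\tilde{\beta}\in A_{k}(w)$ (and $\tilde{\beta}<\beta$), exactly as the element $\tilde{\alpha}$ in Lemma \ref{interup} needed no decomposition.

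The element $\tilde{\alpha}$ is where the decomposition enters. Writing it as the lone replacement that inserts $\beta_j$ at position $\varphi_{k,l}(j^{(p)})$ would require $\beta_j\leq w^{l}_{\varphi_{k,l}(j^{(p)})}=w^{k}_{j^{(p)}}$, which generally fails since $j^{(p)}<j$. Instead I would realize the net replacement as the composition of the operators $R_{\varphi_{k,l}(j^{(m)}),\,r^{(m-1)}}$ for $m=0,1,\ldots,p$, with the convention $r^{(-1)}:=\beta_j$, applied in order of increasing $m$. Thus $\beta_j$ is placed at the high position $\varphi_{k,l}(j)$, and each value $r^{(m-1)}$ freed by the previous step is lowered to position $\varphi_{k,l}(j^{(m)})$, the final step removing $r^{(p)}=v$.

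The main obstacle, and the only point that must be verified, is that every individual step respects the bounds of Lemma \ref{Rjrprop}. This is guaranteed by the mirrored key inequality: for $m\geq1$ the value inserted is $r^{(m-1)}=\beta_{j^{(m)}}$, which is bounded by $w^{k}_{j^{(m)}}=w^{l}_{\varphi_{k,l}(j^{(m)})}$ precisely because $\beta\in A_{k}(w)$; and the initial step inserts $\beta_j$ at position $\varphi_{k,l}(j)$, where $\beta_j\leq w^{k}_{j}=w^{l}_{\varphi_{k,l}(j)}$ again holds since $\beta\in A_{k}(w)$. Each inserted value is absent from the current sequence, the preceding step having removed it, so Lemma \ref{Rjrprop} applies at every stage and yields $\tilde{\alpha}\in A_{l}(w)$. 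This completes the argument, which is in effect the proof of Lemma \ref{interup} with all inequalities inverted and the cascade transferred from $\beta$ to $\alpha$.
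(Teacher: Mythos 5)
Your proposal is correct and is exactly the argument the paper intends: the paper gives no separate proof of this lemma, stating only that one inverts all the inequalities in the proof of Lemma \ref{interup} and transfers the cascade of replacement operators from $\beta$ to $\alpha$, which is precisely what you have carried out. Your verification of the key bound $r^{(m-1)}=\beta_{j^{(m)}}\leq w^{k}_{j^{(m)}}=w^{l}_{\varphi_{k,l}(j^{(m)})}$ at each stage of the decomposition, and your observation that the single downward replacement $R_{j,v}(\beta)$ needs no decomposition, correctly mirror the corresponding steps of the original proof.
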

The opposite direction of the case $k=l$ in Lemma \ref{interup} is covered by the assertion itself, and need not be restated also in Lemma \ref{interdown}

\smallskip

Now consider $\eta=\eta_{1}+2\eta_{2}$ as above, and we will say that $\eta$ contains a set $\beta$ if $\beta\subseteq\eta_{1}\cup\eta_{2}$ as sets. We will repeatedly use the following simple observations.
\begin{lem}
If $\alpha$ and $\beta$ are increasing sequences that satisfy $\alpha+\beta=\eta$ then both $\alpha$ and $\beta$ contain $\eta_{2}$ and are contained in $\eta$ in the above sense. Moreover, let $\alpha$ and $\beta$ contain $\eta_{2}$ and be contained in $\eta$, and take another pair of sequences $\tilde{\alpha}$ and $\tilde{\beta}$ such that $\tilde{\alpha}+\tilde{\beta}=\alpha+\beta$. Then $\tilde{\alpha}$ and $\tilde{\beta}$ also contain $\eta_{2}$ and are contained in $\eta$. \label{conteta}
\end{lem}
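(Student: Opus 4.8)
The plan is to reduce everything to the bookkeeping of multiplicities in a multi-set and to exploit the fact that the ``support'' and the ``doubled part'' of a multi-set are intrinsic invariants, independent of any way of writing it as a sum of two ordinary sets. Throughout I write $\mathrm{supp}(\mu)$ for the set of integers occurring in a multi-set $\mu$ and, when all multiplicities in $\mu$ are at most $2$, I write $\mu_{2}$ for the set of integers occurring in $\mu$ with multiplicity exactly $2$; these are precisely the data $\eta_{1}\cup\eta_{2}$ and $\eta_{2}$ when $\mu=\eta$.

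First I would record the basic translation: if $\alpha$ and $\beta$ are ordinary (hence multiplicity-one) increasing sequences, then the multiplicity of an integer $m$ in $\alpha+\beta$ is $[m\in\alpha]+[m\in\beta]\in\{0,1,2\}$. Consequently $\mathrm{supp}(\alpha+\beta)=\alpha\cup\beta$ and the doubled part $(\alpha+\beta)_{2}=\alpha\cap\beta$. This single observation handles the first assertion at once: from $\alpha+\beta=\eta$ we get $\alpha\cap\beta=\eta_{2}$, so both $\alpha$ and $\beta$ contain $\eta_{2}$, and $\alpha\cup\beta=\mathrm{supp}(\eta)=\eta_{1}\cup\eta_{2}$, so each of $\alpha$ and $\beta$ is contained in $\eta$ in the required sense.

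For the second assertion the key point is that $\mathrm{supp}$ and the doubled part depend only on the multi-set $\alpha+\beta$, not on the presentation. So I would set $\mu:=\alpha+\beta=\tilde{\alpha}+\tilde{\beta}$ and compute its invariants in two ways. Using $\alpha$ and $\beta$ together with the hypotheses $\eta_{2}\subseteq\alpha,\beta$ and $\alpha,\beta\subseteq\eta_{1}\cup\eta_{2}$ gives $\eta_{2}\subseteq\alpha\cap\beta=\mu_{2}$ and $\mathrm{supp}(\mu)=\alpha\cup\beta\subseteq\eta_{1}\cup\eta_{2}$. Applying the translation above to the other presentation gives $\mu_{2}=\tilde{\alpha}\cap\tilde{\beta}$ and $\mathrm{supp}(\mu)=\tilde{\alpha}\cup\tilde{\beta}$. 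Combining, $\eta_{2}\subseteq\mu_{2}=\tilde{\alpha}\cap\tilde{\beta}$, so both $\tilde{\alpha}$ and $\tilde{\beta}$ contain $\eta_{2}$; and $\tilde{\alpha},\tilde{\beta}\subseteq\tilde{\alpha}\cup\tilde{\beta}=\mathrm{supp}(\mu)\subseteq\eta_{1}\cup\eta_{2}$, so both are contained in $\eta$.

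There is no real obstacle here; the lemma is a direct unwinding of the definitions, and the only thing worth stating explicitly is the invariance of $\mathrm{supp}$ and of the doubled part $\mu_{2}$ under re-decomposing the multi-set as a sum of two sets. That invariance is exactly what lets information pass from the pair $(\alpha,\beta)$ to the pair $(\tilde{\alpha},\tilde{\beta})$ without assuming $\alpha+\beta=\eta$ (indeed $\alpha+\beta$ need not equal $\eta$ in the second part), which is the subtlety to watch for in the write-up.
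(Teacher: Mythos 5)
Your proof is correct and follows essentially the same route as the paper: you introduce the intermediate multi-set $\mu=\alpha+\beta$ (the paper calls it $\tilde{\eta}$), show from the hypotheses that $\eta_{2}\subseteq\mu_{2}$ and $\operatorname{supp}(\mu)\subseteq\eta_{1}\cup\eta_{2}$, and then apply the first assertion to the presentation $\mu=\tilde{\alpha}+\tilde{\beta}$. The explicit identification $\mu_{2}=\alpha\cap\beta$ and $\operatorname{supp}(\mu)=\alpha\cup\beta$ is a clean way of packaging the same multiplicity count the paper uses.
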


\begin{proof}
The elements of $\eta_{2}$ show up with multiplicity 2 in the sum, meaning that they must be contained in both $\beta$ and $\alpha$. As every element showing up in $\alpha$ or $\beta$ must show up in their sum $\eta$, we deduce the other containment in the first assertion as well.

Take now $\alpha$ and $\beta$ satisfying the containment properties, set $\tilde{\eta}:=\alpha+\beta$, and express it as $\tilde{\eta}_{1}+2\tilde{\eta}_{2}$ as for $\eta$. Since $\eta_{2}$ is contained in both $\beta$ and $\alpha$, we deduce that $\eta_{2}\subseteq\tilde{\eta}_{2}$, while the fact that $\beta$ and $\alpha$ are contained in $\eta$ implies $\tilde{\eta}_{1}\subseteq\eta$ as well (and in fact $\tilde{\eta}_{1}\subseteq\eta_{1}$ by the previous containment and the fact that $\eta_{1}\cap\eta_{2}=\tilde{\eta}_{1}\cap\tilde{\eta}_{2}=\emptyset$). We thus apply the first assertion, with the sum $\tilde{\eta}$, and deduce that $\eta_{2}\subseteq\tilde{\eta}_{2}\subseteq\tilde{\beta}\subseteq\tilde{\eta}\subseteq\eta$ (and the same for $\tilde{\alpha}$). This completes the proof of the lemma.
\end{proof}

Based on Lemma \ref{conteta}, we make the following definition/notation.
\begin{defn}
For a multi-set $\eta=\eta_{1}+2\eta_{2}$, a permutation $w$, and an index $l$, we denote by $A_{l}^{\eta}(w)$ the set $\{\alpha \in A_{l}(w)|\eta_{2}\subseteq\alpha\subseteq\eta\}$. \label{setswitheta}
\end{defn}

The first property of the set from Definition \ref{setswitheta} is given in the following Proposition, in which we replaced in the index $l$ by $k$ in order to match the notation of Lemma \ref{interup} when we apply it.
\begin{prop}
Assume that $A_{k}^{\eta}(w)\neq\emptyset$. Then it contains a unique maximal element, that is larger than every other element in that set. \label{maxelt}
\end{prop}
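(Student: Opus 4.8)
The plan is to reduce everything to a single claim: \emph{any maximal element of $A_{k}^{\eta}(w)$ already dominates every other element of that set}. Once this is established the proposition follows at once, since a greatest element is automatically unique (two elements each dominating the other must coincide under $\le$). To get started, I would note that $A_{k}^{\eta}(w)$ is a subset of $A_{k}(w)$, which is finite by Remark \ref{forId}; being finite and, by hypothesis, non-empty, it certainly possesses at least one maximal element with respect to the partial order $\le$. Thus the entire content of the proposition is to promote ``maximal'' to ``maximum''.

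For the key step I would argue by contradiction, exploiting Lemma \ref{interup} in its $k=l$ incarnation (which is exactly why the index there is allowed to equal $k$, and why $\varphi_{k,l}$ then becomes the identity). Let $\beta$ be a maximal element of $A_{k}^{\eta}(w)$, and suppose there were some $\alpha \in A_{k}^{\eta}(w)$ with $\beta \not\ge \alpha$. By the definition of the order this means there is an index $1 \le j \le k$ with $\beta_{j}<\alpha_{j}$. Now both $\alpha$ and $\beta$ lie in $A_{k}(w)$, the index condition $\beta_{j}<\alpha_{\varphi_{k,k}(j)}=\alpha_{j}$ holds, and the clause of Lemma \ref{interup} permitting the case $k=l$ is satisfied automatically. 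Applying that lemma produces $\tilde{\alpha},\tilde{\beta} \in A_{k}(w)$ with $\tilde{\beta}>\beta$ and $\tilde{\alpha}+\tilde{\beta}=\alpha+\beta$.

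It then remains to check that $\tilde{\beta}$ is still a legitimate member of $A_{k}^{\eta}(w)$, i.e.\ that it contains $\eta_{2}$ and is contained in $\eta$. This is precisely the purpose of the second assertion of Lemma \ref{conteta}: since $\alpha$ and $\beta$ both contain $\eta_{2}$ and are contained in $\eta$, and $\tilde{\alpha}+\tilde{\beta}=\alpha+\beta$, the sequences $\tilde{\alpha}$ and $\tilde{\beta}$ inherit the same containment properties. Hence $\tilde{\beta} \in A_{k}^{\eta}(w)$ while $\tilde{\beta}>\beta$, contradicting the maximality of $\beta$. Therefore a maximal $\beta$ must satisfy $\beta \ge \alpha$ for every $\alpha \in A_{k}^{\eta}(w)$, which is the asserted maximum, and uniqueness follows as above.

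I do not expect a serious obstacle, since the heavy lifting is done by Lemmas \ref{interup} and \ref{conteta}; the only points requiring care are invoking Lemma \ref{interup} correctly in the equal-index case and observing that it guarantees merely a \emph{strictly larger} $\tilde{\beta}$ rather than a least upper bound---but strictness is all that is needed to break maximality. As an independent sanity check one can also prove the statement directly: the coordinatewise maximum $\gamma_{j}:=\max(\alpha_{j},\alpha'_{j})$ of two elements $\alpha,\alpha' \in A_{k}^{\eta}(w)$ is again strictly increasing, is bounded by $w^{k}$, lies inside $\eta$, and still contains $\eta_{2}$, so it furnishes an explicit upper bound inside the set, and the maximum is then the coordinatewise maximum of all elements. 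This confirms the result and clarifies that $A_{k}^{\eta}(w)$ is in fact a finite join-semilattice.
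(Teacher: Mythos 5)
Your main argument is exactly the paper's proof: finiteness of $A_{k}(w)$ gives a maximal element of $A_{k}^{\eta}(w)$, and the $k=l$ case of Lemma \ref{interup} combined with the second assertion of Lemma \ref{conteta} shows that a maximal element must dominate every other element, whence it is the unique maximum. Your closing ``sanity check'' is also correct (if $m\in\eta_{2}$ equals $\alpha_{p}=\alpha'_{q}$ with $p \leq q$, then $\alpha'_{p}\leq\alpha'_{q}=m$ forces $\gamma_{p}=m$, so the coordinatewise join does contain $\eta_{2}$) and in fact gives a shorter, self-contained proof that bypasses Lemma \ref{interup} entirely, though the paper does not take that route.
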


\begin{proof}
Consider $\beta \in A_{k}^{\eta}(w)$, and assume that there is another element in this set, say $\alpha$, with $\beta\not\geq\alpha$. Hence there is an index $1 \leq j \leq k$ with $\beta_{j}<\alpha_{j}$. We are in the $k=l$ setting of Lemma \ref{interup}, which produces an element $\tilde{\beta} \in A_{k}(w)$, with $\tilde{\beta}>\beta$, and another element $\tilde{\alpha}$ with $\tilde{\alpha}+\tilde{\beta}=\alpha+\beta$. Then Lemma \ref{conteta} implies that $\tilde{\beta}$ is also in $A_{k}^{\eta}(w)$, so that if an element of our non-empty set does not contain all the other elements in that set, then it is not maximal there. But since $A_{k}(w)$ is finite (Remark \ref{forId}), so is $A_{k}^{\eta}(w)$, and as its is non-empty, it must contain a maximal element. By what we proved, this maximal element is the unique one, which is larger than all the others there. This proves the proposition.
\end{proof}

We can now make the following definition.
\begin{defn}
Denote, under the non-emptiness assumption, the unique maximal element from Proposition \ref{maxelt} by $\alpha_{\max}^{l,w}(\eta)$. If $k+l=2|\eta_{2}|+|\eta_{1}|$ then let $\beta_{\min}^{k,w}(\eta)$ be the unique set such that $\alpha_{\max}^{l,w}(\eta)+\beta_{\min}^{k,w}(\eta)$. By interchanging $k$ and $l$ we obtain the elements $\beta_{\max}^{k,w}(\eta)$ and $\alpha_{\min}^{l,w}(\eta)$. In case $k$, $l$, and $w$ are clear from the context we may write just $\alpha_{\max}(\eta)$, $\beta_{\max}(\eta)$, $\alpha_{\min}(\eta)$ and $\beta_{\min}(\eta)$, and we may also remove $\eta$ from the notation when no ambiguity may arise. \label{minmaxelts}
\end{defn}
The conditions in Proposition \ref{maxelt} implies that the complements from Definition \ref{minmaxelts} are sets (rather than multisets), which are then clearly contained in $\eta$ as well. In fact, $\beta_{\max}^{k,w}(\eta)$ means exactly the same as $\alpha_{\max}^{k,w}(\eta)$ there, and similarly $\alpha_{\min}^{l,w}(\eta)$ is just $\beta_{\min}^{l,w}(\eta)$. But based on the fact that when $k<l$ we use $\alpha$ (with decorations) for elements of $A_{l}(w)$ and $\beta$ (with indices) when considering elements of $A_{k}(w)$, we defined the multiple notation, which can now keep its meaning when some parameters are omitted.

Note that given $k$, $l$, $w$, and $\eta$, the elements $\beta_{\min}$ and $\alpha_{\min}$ need not be in $A_{k}(w)$ and $A_{l}(w)$ respectively. More precisely, we have the following result.
\begin{lem}
For such $k$, $l$, $w$, and $\eta$, we have $\beta_{\min} \in A_{k}(w)$ if and only if $\alpha_{\min} \in A_{l}(w)$ if and only if $\eta\in\tilde{B}_{k,l}(w)$. \label{relsminBkl}
\end{lem}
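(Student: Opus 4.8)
The plan is to reduce the three-way equivalence to two nontrivial implications and then prove them from a single order-reversal principle for complementation. First I would record the two easy directions. Since $\alpha_{\max}\in A_l(w)$ with $\alpha_{\max}+\beta_{\min}=\eta$, the membership $\beta_{\min}\in A_k(w)$ immediately exhibits $\eta$ as a valid sum, so $\eta\in\tilde{B}_{k,l}(w)$; symmetrically $\alpha_{\min}+\beta_{\max}=\eta$ with $\beta_{\max}\in A_k(w)$ shows that $\alpha_{\min}\in A_l(w)$ forces $\eta\in\tilde{B}_{k,l}(w)$. Thus everything comes down to proving that $\eta\in\tilde{B}_{k,l}(w)$ implies both $\beta_{\min}\in A_k(w)$ and $\alpha_{\min}\in A_l(w)$ (throughout we work under the standing non-emptiness assumption of Definition \ref{minmaxelts}, so that $\alpha_{\max}$, $\beta_{\min}$, $\beta_{\max}$, and $\alpha_{\min}$ are all defined).

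The key tool I would isolate is that complementation inside $\eta$ reverses the partial order. Writing $c(\gamma):=\eta\setminus\gamma$ for a set $\gamma$ containing $\eta_{2}$ and contained in $\eta$ (so $c(\gamma)$ is again such a set, by the cardinality condition $k+l=2|\eta_{2}|+|\eta_{1}|$), the claim is that for two length-$l$ such sets $\alpha\leq\alpha'$ one has $c(\alpha)\geq c(\alpha')$. To prove it I would use the counting functions $f_{\gamma}(t):=\#\{\,\text{entries of }\gamma\text{ that are}\leq t\,\}$ and the standard equivalence, valid for two increasing sequences of equal length, that $\alpha\leq\alpha'$ holds if and only if $f_{\alpha}(t)\geq f_{\alpha'}(t)$ for every $t$. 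Since $\alpha$ and $c(\alpha)$ partition $\eta$ as multi-sets, $f_{\alpha}(t)+f_{c(\alpha)}(t)=f_{\eta}(t)$, so $\alpha\leq\alpha'$ gives $f_{c(\alpha)}(t)=f_{\eta}(t)-f_{\alpha}(t)\leq f_{\eta}(t)-f_{\alpha'}(t)=f_{c(\alpha')}(t)$ for all $t$, which is exactly $c(\alpha)\geq c(\alpha')$.

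With order-reversal in hand the two implications are short. Suppose $\eta\in\tilde{B}_{k,l}(w)$, witnessed by $\alpha\in A_{l}(w)$ and $\beta\in A_{k}(w)$ with $\alpha+\beta=\eta$. By Lemma \ref{conteta}, $\alpha\in A_{l}^{\eta}(w)$ and $\beta\in A_{k}^{\eta}(w)$, and $\beta=c(\alpha)$, $\alpha=c(\beta)$. By Proposition \ref{maxelt}, $\alpha\leq\alpha_{\max}$, so order-reversal gives $\beta_{\min}=c(\alpha_{\max})\leq c(\alpha)=\beta\leq w^{k}$; as $\beta_{\min}$ is an increasing sequence of $k$ positive integers, this yields $\beta_{\min}\in A_{k}(w)$. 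Symmetrically $\beta\leq\beta_{\max}$ gives $\alpha_{\min}=c(\beta_{\max})\leq c(\beta)=\alpha\leq w^{l}$, whence $\alpha_{\min}\in A_{l}(w)$. Combining with the easy directions closes the cycle of equivalences.

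The main obstacle is really the order-reversal step, and I expect the counting-function formulation to be the cleanest route to it. One could instead try to argue combinatorially by starting from a witnessing pair $(\alpha,\beta)$ and repeatedly invoking Lemma \ref{interdown} (together with Lemma \ref{interup} for the symmetric statement) to push $\alpha$ up to $\alpha_{\max}$ while keeping its partner in $A_{k}(w)$; but at each step one must exhibit an index $j$ satisfying the $\varphi_{k,l}$-indexed hypotheses of those lemmas, and verifying that such an index persists precisely until $\alpha=\alpha_{\max}$ is exactly the fiddly bookkeeping that the monotone counting functions let me avoid.
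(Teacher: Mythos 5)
Your proposal is correct, and it takes a genuinely different route from the paper. The paper's proof of the hard direction starts from a witnessing pair $\alpha+\beta=\eta$ and walks it to the extremal pair by iterating the replacement operators of Definition \ref{replace}: at each step the proofs of Proposition \ref{maxelt} and Lemma \ref{interup} supply an $R_{j,r}$ that pushes $\beta$ strictly up inside $A_{k}^{\eta}(w)$ while the compensating $R_{i,\beta_{j}}$ pulls $\alpha$ down inside $A_{l}(w)$, and finiteness terminates the process at $(\alpha_{\min},\beta_{\max})$; the symmetric iteration handles $\beta_{\min}$. You instead prove once and for all that complementation inside $\eta$ reverses the partial order, via the counting functions $f_{\gamma}(t)$ and the identity $f_{\gamma}+f_{c(\gamma)}=f_{\eta}$, and then conclude by the sandwich $\beta_{\min}=c(\alpha_{\max})\leq c(\alpha)=\beta\leq w^{k}$ together with the downward closedness of $A_{k}(w)$ built into Definition \ref{Asetdef} (the same closedness the paper itself invokes in Corollary \ref{setofpairs}). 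Your equivalence $\alpha\leq\alpha'\Leftrightarrow f_{\alpha}\geq f_{\alpha'}$ pointwise is standard and checks out, as does the fact that $c(\gamma)=\eta_{2}\cup(\eta_{1}\setminus\gamma)$ is an honest set of the complementary cardinality. What your approach buys is the elimination of the iterative bookkeeping and of any appeal to Lemmas \ref{interup} and \ref{interdown}; what the paper's approach buys is that the intermediate pairs produced along the iteration are reused in later arguments (e.g., in Proposition \ref{Bkldiff} and Theorem \ref{diff1}, where one needs to enumerate all presentations between the two extremes), so the machinery is not wasted there. Your order-reversal observation would in fact also streamline the first assertion of Corollary \ref{setofpairs}.
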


\begin{proof}
Since $\alpha_{\max} \in A_{l}(w)$, $\beta_{\max} \in A_{k}(w)$, and $\eta=\alpha_{\max}+\beta_{\min}=\alpha_{\min}+\beta_{\max}$ by Definition \ref{minmaxelts}, if either of the elements with index $\min$ is in $A(w)$ then $\eta\in\tilde{B}_{k,l}(w)$ directly from Definition \ref{Bklwdef}.

Conversely, assume that $\eta\in\tilde{B}_{k,l}(w)$, so that there are $\alpha \in A_{l}(w)$ and $\beta \in A_{k}(w)$ with $\eta=\alpha+\beta$ (Definition \ref{Bklwdef} again). Lemma \ref{conteta} implies that $\alpha$ is in $A_{l}^{\eta}(w)$ from Definition \ref{setswitheta} and $\beta$ is in $A_{k}^{\eta}(w)$. Now, if $\beta\neq\beta_{\max}$, then the proofs of Proposition \ref{maxelt} and Lemma \ref{interup} imply that there is another element $\tilde{\beta} \in A_{k}^{\eta}(w)$, which is of the form $R_{j,r}(\beta)$ for some $r>\beta_{j}$, which is larger than $\beta$ in that set. But this $r$ an element of $\eta$ that is not in $\beta$, meaning that it is in $\alpha$ (since $\alpha+\beta=\eta$), so that $r=\alpha_{i}$ for some $i$. We thus define $\tilde{\alpha}=R_{i,\beta_{j}}(\alpha)\leq\alpha$ (which is thus in $A_{l}(w)$), and we get $\tilde{\alpha}+\tilde{\beta}=\eta$ (so $\tilde{\alpha} \in A_{l}^{\eta}(w)$ by Lemma \ref{conteta}).

We can repeat this process, and remain with $\eta$ as the sum of an element from $A_{l}^{\eta}(w)$ and one from $A_{k}^{\eta}(w)$, until we replace $\beta$ by $\beta_{\max}$. But then $\alpha$ becomes $\alpha_{\min}$ (by Definition \ref{minmaxelts}), and it is in $A_{l}(w)$ as desired. The same argument but with increasing $\alpha$ to $\alpha_{\max}$ shows that $\beta_{\min}$ must be in $A_{k}(w)$ as well. This proves the lemma.
\end{proof}

We now deduce the following useful consequence.
\begin{cor}
Assume that $\eta\in\tilde{B}_{k,l}(w)$. Then we have $\alpha_{\max}\geq\alpha_{\min}$ and $\beta_{\max}\geq\beta_{\min}$. Moreover, given an increasing sequence $\beta$ of $k$ integers that contains $\eta_{2}$ and is contained in $\eta$, we have $\beta_{\min}\leq\beta\leq\beta_{\max}$ if and only if $\beta \in A_{k}(w)$ and there is $\alpha \in A_{l}(w)$ such that $\alpha+\beta=\eta$. The same assertion holds for increasing sequences $\alpha$ of $l$ integers with $\eta_{2}\subseteq\alpha\subseteq\eta$. \label{setofpairs}
\end{cor}

\begin{proof}
The first assertion follows directly from Lemma \ref{relsminBkl} and Proposition \ref{maxelt}. For the second one, if $\alpha+\beta=\eta$ for such $\alpha$ and $\beta$ then $\alpha \in A_{l}^{\eta}(w)$ and $\beta \in A_{k}^{\eta}(w)$ (by Lemma \ref{conteta}) and thus satisfy $\alpha\leq\alpha_{\max}$ and $\beta\leq\beta_{\max}$ (via Proposition \ref{maxelt}), hence also $\beta_{\min}\leq\beta$ and $\alpha_{\min}\leq\alpha$ by taking the complements from Definition \ref{minmaxelts}. Conversely, if $\eta_{2}\subseteq\beta\subseteq\eta$ then there is a sequence (rather than a multi-set) $\alpha$ with $\alpha+\beta=\eta$, and from $\beta\leq\beta_{\max}$ we deduce that $\beta \in A_{k}(w)$ (since $\beta_{\max} \in A_{k}(w)$ by definition), while $\beta_{\min}\leq\beta$ implies $\alpha\leq\alpha_{\max}$ and hence $\alpha \in A_{l}(w)$ by the same argument. The proof for $\eta_{2}\subseteq\alpha\subseteq\eta$ as analogous. This proves the corollary.
\end{proof}

The next result relates, for fixed $k$, $l$, $w$, and $\eta$ (with $k \leq l$ as always), the elements $\alpha_{\max} \in A_{l}(w)$ and $\beta_{\max}$, when they exist.
\begin{prop}
Take any $\eta=\eta_{1}+2\eta_{2}$ for which $A_{l}^{\eta}(w)$ and $A_{k}^{\eta}(w)$ are non-empty, so that $\alpha_{\max}$ and $\beta_{\max}$ from Definition \ref{minmaxelts} both exist. Then we have $\beta_{\max}\subseteq\alpha_{\max}$ and also $\beta_{\min}\cap\beta_{\max}=\eta_{2}$. \label{contmax}
\end{prop}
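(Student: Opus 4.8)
The plan is to prove the containment $\beta_{\max} \subseteq \alpha_{\max}$ first, and then to read off the identity $\beta_{\min} \cap \beta_{\max} = \eta_{2}$ from it by a short set computation. Write $\alpha_{\max} = \eta_{2} \sqcup S$ and $\beta_{\max} = \eta_{2} \sqcup T$ with $S,T \subseteq \eta_{1}$; this is legitimate because both sequences contain $\eta_{2}$ and are contained in $\eta$ by Definition~\ref{setswitheta}. Since $\beta_{\min}$ is defined (Definition~\ref{minmaxelts}), the relation $\alpha_{\max} + \beta_{\min} = \eta$ shows that, as a set, $\beta_{\min} = \eta_{2} \sqcup (\eta_{1} \setminus S)$. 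Hence $\beta_{\min} \cap \beta_{\max} = \eta_{2} \sqcup \big((\eta_{1} \setminus S) \cap T\big)$, which equals $\eta_{2}$ exactly when $T \subseteq S$, i.e.\ exactly when $\beta_{\max} \subseteq \alpha_{\max}$. So the second assertion is equivalent to the first, and it suffices to prove $\beta_{\max} \subseteq \alpha_{\max}$. When $k=l$ the sets $A_{k}^{\eta}(w)$ and $A_{l}^{\eta}(w)$ coincide, so their unique maximal elements from Proposition~\ref{maxelt} are equal, $\alpha_{\max} = \beta_{\max}$, and both assertions are immediate. I therefore assume $k<l$ from now on.

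For $k<l$ I would argue by contradiction, using the two exchange lemmas together with the maximality characterizations. Suppose $\beta_{\max} \not\subseteq \alpha_{\max}$; since both contain $\eta_{2}$ and lie in $\eta$, there is an element $x \in \beta_{\max} \setminus \alpha_{\max}$, necessarily with $x \in \eta_{1}$. Let $j$ be the position of $x$ in $\beta_{\max}$, so $x = (\beta_{\max})_{j}$, and compare $x$ with the entry $(\alpha_{\max})_{\varphi_{k,l}(j)}$. As $x$ occurs in no coordinate of $\alpha_{\max}$, these two numbers are distinct, and I split into two cases. If $x < (\alpha_{\max})_{\varphi_{k,l}(j)}$, then Lemma~\ref{interup} applies at the index $j$ (its hypotheses $(\beta_{\max})_{j} < (\alpha_{\max})_{\varphi_{k,l}(j)}$ and $(\beta_{\max})_{j} \notin \alpha_{\max}$ both hold) and yields $\tilde{\beta} > \beta_{\max}$ in $A_{k}(w)$ together with $\tilde{\alpha} \in A_{l}(w)$ satisfying $\tilde{\alpha} + \tilde{\beta} = \alpha_{\max} + \beta_{\max}$. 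If instead $x > (\alpha_{\max})_{\varphi_{k,l}(j)}$, then Lemma~\ref{interdown} (which requires $k<l$, now available) produces $\tilde{\alpha} > \alpha_{\max}$ in $A_{l}(w)$ and some $\tilde{\beta}$ with the same preserved sum.

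In either case the decisive point is that the new sequences stay in the relevant $A^{\eta}$-set. Since $\alpha_{\max}$ and $\beta_{\max}$ contain $\eta_{2}$ and are contained in $\eta$, and the sum $\tilde{\alpha} + \tilde{\beta} = \alpha_{\max} + \beta_{\max}$ is unchanged, Lemma~\ref{conteta} guarantees that $\tilde{\alpha}$ and $\tilde{\beta}$ likewise contain $\eta_{2}$ and are contained in $\eta$. Thus in the first case $\tilde{\beta} \in A_{k}^{\eta}(w)$ with $\tilde{\beta} > \beta_{\max}$, contradicting the fact (Proposition~\ref{maxelt}) that $\beta_{\max}$ dominates every element of $A_{k}^{\eta}(w)$; in the second case $\tilde{\alpha} \in A_{l}^{\eta}(w)$ with $\tilde{\alpha} > \alpha_{\max}$, contradicting the maximality of $\alpha_{\max}$. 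This forces $\beta_{\max} \subseteq \alpha_{\max}$, which by the first paragraph also gives $\beta_{\min} \cap \beta_{\max} = \eta_{2}$.

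I expect the main subtlety to be exactly this bookkeeping: choosing which exchange lemma to invoke according to the sign of $x - (\alpha_{\max})_{\varphi_{k,l}(j)}$, and then verifying through Lemma~\ref{conteta} that the preserved sum keeps the boosted sequence inside the correct $A^{\eta}$-set, so that a genuine contradiction with a maximality statement arises. Isolating the degenerate case $k=l$ is also essential, since there Lemma~\ref{interdown} is unavailable, but in that case the claim collapses to the uniqueness of the maximal element and is trivial.
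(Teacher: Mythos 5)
Your proposal is correct and follows essentially the same route as the paper: the containment $\beta_{\max}\subseteq\alpha_{\max}$ is proved by contradiction using Lemma \ref{interup} or Lemma \ref{interdown} according to the sign of $(\beta_{\max})_{j}-(\alpha_{\max})_{\varphi_{k,l}(j)}$, with Lemma \ref{conteta} keeping the exchanged pair inside the $A^{\eta}$-sets so that Proposition \ref{maxelt} yields the contradiction, and the intersection identity then follows from $\alpha_{\max}+\beta_{\min}=\eta$. The only cosmetic difference is that you establish the equivalence of the two assertions up front rather than deducing the second from the first at the end.
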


\begin{proof}
The first assertion is trivial when $k=l$, so assume $k<l$. Suppose, to the contrary, that this is not the case, denote the elements of $\beta_{\max}$ as $\{\beta_{j}\}_{j=1}^{k}$ in increasing order as usual, and by assumption there exists some $1 \leq j \leq k$ such that $\beta_{j}$ is not in $\alpha_{\max}$. In particular it does not equal $\alpha_{\varphi_{k,l}(j)}$, and if $\beta_{j}$ is smaller than this element, the we are in the setting of Lemma \ref{interup}, while when $\beta_{j}$ is the larger element we can apply Lemma \ref{interdown}.

Let $\tilde{\alpha} \in A_{l}(w)$ and $\tilde{\beta} \in A_{k}(w)$ be the resulting elements. Since $\alpha_{\max} \in A_{l}^{\eta}(w)$ and $\beta_{\max} \in A_{k}^{\eta}(w)$, the equality $\tilde{\alpha}+\tilde{\beta}=\alpha_{\max}+\beta_{\max}$ implies, via Lemma \ref{conteta}, that $\tilde{\alpha} \in A_{l}^{\eta}(w)$ and $\tilde{\beta} \in A_{k}^{\eta}(w)$ as well. But the construction from Lemma \ref{interup} implies that $\tilde{\beta}>\beta_{\max}$, and when we applied Lemma \ref{interdown} we obtain $\tilde{\alpha}>\alpha$. This means that in both situations we got to a contradiction of the definition of one of these elements via Proposition \ref{maxelt} and Definition \ref{minmaxelts}, so that the situation in which $\beta_{\max}\nsubseteq\alpha_{\max}$ cannot occur. This proves the asserted containment.

Now, as $\beta_{\min}+\alpha_{\max}=\eta$ (also when $k=l$), we deduce that $\beta_{\min}$ is the union of $\eta_{2}$, which is contained in $\beta_{\max}$, with the complement $(\eta_{1}\cup\eta_{2})\setminus\alpha_{\max}$, which is disjoint from $\beta_{\max}$ by the containment just established. The intersection of such a with $\beta_{\max}$ is thus $\eta_{2}$ as desired. This proves the proposition.
\end{proof}
Note that the condition that $\eta \in B_{k,l}(w)$, which is stronger than what we assumed in Proposition \ref{contmax}, is not necessary, and the result holds whenever all the objects are defined.

We deduce, as a consequence, the distinction between the two sets from Definition \ref{Bklwdef}.
\begin{prop}
Assume that $\eta$ is in $\tilde{B}_{k,l}(w)$, and write $\eta$ as $\alpha+\beta$ with elements $\alpha \in A_{l}(w)$ and $\beta \in A_{k}(w)$. Then for $k<l$ we get $\eta \not\in B_{k,l}(w)$ if and only if $\beta\subseteq\alpha$ if and only if $k=|\eta_{2}|$. When $l=k$ the condition $\eta \not\in B_{k,l}(w)$ occurs either in the same situation, or when $l=k\leq|\eta_{2}|+1$, and in no other setting. \label{Bkldiff}
\end{prop}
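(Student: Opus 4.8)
Throughout I write $\eta=\eta_{1}+2\eta_{2}$ as in the text, and I use the extremal elements $\beta_{\min}=\beta_{\min}^{k,w}(\eta)$, $\beta_{\max}=\beta_{\max}^{k,w}(\eta)$ of Definition \ref{minmaxelts}. The plan is to turn the whole statement into a counting problem about presentations of $\eta$. By Corollary \ref{setofpairs}, a sequence $\beta$ with $\eta_{2}\subseteq\beta\subseteq\eta$ and $|\beta|=k$ occurs in some presentation $\eta=\alpha+\beta$ with $\alpha\in A_{l}(w)$ if and only if $\beta_{\min}\leq\beta\leq\beta_{\max}$; thus the presentations are parametrized by this interval. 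I would first record the two endpoint facts of the trichotomy: Proposition \ref{contmax} gives $\beta_{\min}\cap\beta_{\max}=\eta_{2}$, so $\beta_{\min}=\beta_{\max}$ forces both to equal $\eta_{2}$ and hence $|\eta_{2}|=k$, while conversely $|\eta_{2}|=k$ forces every admissible $\beta$ to equal $\eta_{2}$. Moreover, for a presentation $\eta=\alpha+\beta$, the condition $\beta\subseteq\alpha$ means every element of $\beta$ has multiplicity $2$ in $\eta$, i.e. $\beta\subseteq\eta_{2}$, which together with $\eta_{2}\subseteq\beta$ (Lemma \ref{conteta}) says exactly $\beta=\eta_{2}$, that is $k=|\eta_{2}|$.

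For $k<l$ the argument is then immediate. Distinct presentations correspond bijectively to distinct admissible $\beta$ (since $\alpha=\eta-\beta$ is determined), and because $|\alpha|=l\neq k=|\beta|$ the cross-conditions $\alpha\neq\tilde{\beta}$ and $\beta\neq\tilde{\alpha}$ of Definition \ref{Bklwdef} are automatic. Hence $\eta\in B_{k,l}(w)$ iff there are at least two admissible $\beta$, iff $\beta_{\min}\neq\beta_{\max}$, iff $k\neq|\eta_{2}|$ by the previous paragraph. Combined with the equivalence $\beta\subseteq\alpha\Leftrightarrow k=|\eta_{2}|$ established above, this gives all three stated equivalences in the case $k<l$.

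For $k=l$ the presentations come in pairs under the involution $\sigma\colon\beta\mapsto\eta-\beta$, which preserves the interval $[\beta_{\min},\beta_{\max}]$ and swaps its endpoints. I would count $\sigma$-orbits (unordered presentations). The key observation is that two distinct unordered presentations $\{\mu,\eta-\mu\}$ and $\{\nu,\eta-\nu\}$ are necessarily disjoint as sets of sequences, so setting $(\alpha,\beta)=(\mu,\eta-\mu)$ and $(\tilde{\alpha},\tilde{\beta})=(\nu,\eta-\nu)$ satisfies all four inequalities of Definition \ref{Bklwdef}, including the two that only matter when $k=l$; conversely a single unordered presentation can never satisfy them. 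Thus $\eta\notin B_{k,k}(w)$ is equivalent to there being exactly one $\sigma$-orbit. Since $\sigma$ has a fixed point only when $\eta=2\eta_{2}$ (i.e. $|\eta_{2}|=k$), I would check directly that one orbit occurs exactly when the interval is a single point ($|\eta_{2}|=k$) or a single pair swapped by $\sigma$; the latter is forced when $m:=k-|\eta_{2}|=1$, because then the only admissible $\beta$ are the two sets $\eta_{2}\cup\{a\}$ and $\eta_{2}\cup\{b\}$ with $\{a,b\}=\eta_{1}$, both present since $\eta\in\tilde{B}_{k,k}(w)$. This yields $\eta\notin B_{k,k}(w)$ whenever $|\eta_{2}|\in\{k-1,k\}$, i.e. $k\leq|\eta_{2}|+1$.

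The main obstacle is the converse for $k=l$: when $m=k-|\eta_{2}|\geq2$ one must produce a second $\sigma$-orbit, equivalently an admissible $\beta$ lying strictly between $\beta_{\min}$ and $\beta_{\max}$ (it is then automatically valid by Corollary \ref{setofpairs}, and its orbit differs from $\{\beta_{\min},\beta_{\max}\}$). Writing $\beta_{\min}=\eta_{2}\cup S_{\min}$ and $\beta_{\max}=\eta_{2}\cup S_{\max}$, Proposition \ref{contmax} gives $S_{\min}\cap S_{\max}=\emptyset$, so these partition $\eta_{1}$, each of size $m\geq2$. I would first reduce the comparison to the $\eta_{1}$-part: for sets $S,S'$ disjoint from $\eta_{2}$ one has $\operatorname{ord}(\eta_{2}\cup S)\leq\operatorname{ord}(\eta_{2}\cup S')$ iff $\operatorname{ord}(S)\leq\operatorname{ord}(S')$, which follows from characterizing the order by the counts $\#\{u\geq x\}$ (the common contribution of $\eta_{2}$ cancels). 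Finding an intermediate then amounts to finding an $m$-subset of $\eta_{1}$ strictly between the complementary sets $S_{\min}<S_{\max}$ in this order. Since $S_{\min}$ and $S_{\max}$ are disjoint they differ in all $2m\geq4$ of their elements, whereas a covering pair in this order has symmetric difference exactly $2$; hence $S_{\max}$ does not cover $S_{\min}$ and an intermediate $m$-subset exists. Mapping back by $\eta_{2}\cup(-)$ and applying Corollary \ref{setofpairs} produces the required second presentation, finishing the case. The one delicate point is the cover fact (symmetric difference exactly $2$), which I would either cite or prove by realizing a single intermediate via one application of the replacement operator $R_{j,r}$ of Definition \ref{replace} together with Lemma \ref{Rjrprop}.
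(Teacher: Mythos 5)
Your proof is correct, and its skeleton coincides with the paper's: both arguments pivot on the extremal elements $\beta_{\min},\beta_{\max}$, use Lemma \ref{relsminBkl} to know they give genuine presentations, Proposition \ref{contmax} to identify $\beta_{\min}\cap\beta_{\max}=\eta_{2}$, and Corollary \ref{setofpairs} to parametrize all presentations by the interval between them; the $k<l$ case and the two ``forward'' counting facts are essentially identical in the two treatments. The one place you genuinely diverge is the converse for $k=l$ with $k-|\eta_{2}|\geq 2$. The paper argues by contradiction: it invokes Lemma \ref{interup} to manufacture a third summand $\tilde{\beta}=R_{j,r}(\beta_{\min})>\beta_{\min}$, and deduces that if $\eta\notin B_{k,k}(w)$ then $\beta_{\max}$ must equal this single-replacement image of $\beta_{\min}$, forcing $|\eta_{2}|=k-1$. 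You instead count $\sigma$-orbits and directly exhibit an intermediate admissible $\beta$ by a poset argument: covers in the Gale order on $m$-subsets have symmetric difference $2$, while the disjoint sets $S_{\min},S_{\max}$ have symmetric difference $2m\geq 4$. Your route is conceptually tidier (the orbit-counting reformulation of Definition \ref{Bklwdef} for $k=l$ is a nice clarification), at the cost of the cover fact you flag as delicate; that fact does hold here and is provable exactly as you suggest, by one application of $R_{j,r}$ (replace the $j$th entry of $S_{\min}$ by the $j$th entry of $S_{\max}$ and check, via the threshold-count characterization of the order, that the result still lies below $S_{\max}$), after which Corollary \ref{setofpairs} supplies membership in $A_{k}(w)$. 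So the two proofs produce the ``third presentation'' by closely related replacement moves, packaged differently.
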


\begin{proof}
Recall from Lemma \ref{conteta} that in any presentation of $\eta$ we have $\alpha \in A_{l}^{\eta}(w)$ and $\beta \in A_{k}^{\eta}(w)$. Hence if $k=|\eta_{2}|$ then in any such presentation with one of the sets being of size $k$, that set must be precisely $\eta_{2}$, so in particular we have $\beta=\eta_{2}$, it is contained in $\alpha=\eta_{1}\cup\eta_{2}$, and this is the only presentation for $\eta$ and hence $\eta \not\in B_{k,l}(w)$. Conversely if $\beta\subseteq\alpha$ and $\alpha+\beta=\eta$ then the elements of $\beta$ are in $\eta_{2}$ and we have $k=|\eta_{2}|$. Next we observe that when $l=k=|\eta_{2}|+1$ we get $|\eta_{1}|=2$, so that when $\alpha+\beta=\eta$ both sets consist of the elements of $\eta_{2}$ plus a single element from $\eta_{1}$ (Lemma \ref{conteta} again). This means that up to interchanging the sets there is only one presentation for $\eta$ as such a sum, and $\eta \not\in B_{k,l}(w)$ once again.

It remains to prove that these are the only situations for $\eta$ to be in $\tilde{B}_{k,l}(w)$ but not in $B_{k,l}(w)$. Note that Lemma \ref{relsminBkl} implies that $\alpha_{\min} \in A_{l}(w)$ and $\beta_{\min} \in A_{k}(w)$, so that the presentations $\eta=\alpha_{\max}+\beta_{\min}=\alpha_{\min}+\beta_{\max}$ from Definition \ref{minmaxelts} are both with summands from the required sets. If $k<l$ then for $\eta \not\in B_{k,l}(w)$ both presentations must the same, with the letter ($\alpha$ or $\beta$) indicating the index ($l$ or $k$), so that both become $\eta=\alpha_{\max}+\beta_{\max}$. The containment from Proposition \ref{contmax} yields the assertion in this case.

When $k=l$ we write just $\eta=\beta_{\max}+\beta_{\min}$, where we recall that $\beta_{\min} \in A_{k}(w)$ by Lemma \ref{relsminBkl} and hence $\beta_{\min}\leq\beta_{\max}$ by Proposition \ref{maxelt}. If the sets are the same then we are again in the situation where $l=k=|\eta_{2}|$, so assume otherwise. Then is an entry $j$ whose value in $\beta_{\min}$ is smaller than the corresponding entry of $\beta_{\max}$, and we apply Lemma \ref{interup} to obtain an element $\tilde{\beta}>\beta_{\min}$ in $A_{k}(w)$, which is of the form $R_{j,r}(\beta_{\min})$ for some $j$ and $r$, and another element $\tilde{\alpha} \in A_{k}(w)$ such that $\tilde{\alpha}+\tilde{\beta}=\beta_{\max}+\beta_{\min}=\eta$. But if we assume that $\eta \not\in B_{k,l}(w)$ then this must be the same presentation or its interchange, and the fact that $\tilde{\beta}\neq\beta_{\min}$ implies that this is indeed the interchange. Hence $\beta_{\max}=\tilde{\beta}=R_{j,r}(\beta_{\min})$, so that $\beta_{\min}\cap\beta_{\max}$, which equals $\eta_{2}$ by Proposition \ref{contmax}, contains all the elements of $\beta_{\min}$ except for the $j$th one, and is indeed of size $k-1$ as desired. This completes the proof of the proposition.
\end{proof}
It follows from Proposition \ref{Bkldiff} that for determining whether a particular multi-set $\eta=\eta_{1}+2\eta_{2}$ lies in $B_{k,l}(w)$ for a given $w$, it suffices to check the existence of a single presentation of $\eta$ as a sum as in Definition \ref{Bklwdef} (i.e., to see if $\eta\in\tilde{B}_{k,l}(w)$), as well as verify that $|\eta_{2}|$ is smaller than $k$ if $k<l$, and smaller than $k-1$ in case $k=l$. Indeed, this is not the case for the elements of $\tilde{B}_{2,3}(w)$ given in Equation \eqref{B23wex} for $w=42531$, while the condition from Proposition \ref{Bkldiff} is satisfied for the elements of $B_{2,3}(w)$ showing up right before it. The latter condition can be written in a uniform manner as $|\eta|_{2}<k-\delta_{k,l}$, where the latter symbol is the Kronecker $\delta$-symbol, yielding 1 when $k=l$ and 0 otherwise.

\smallskip

We will also need to see how the sets $B_{k,l}(w)$ changes when we replace $w$ by $s_{i}w$ for some index $i$ with $\ell(s_{i}w)>\ell(w)$.
\begin{prop}
If $\ell(s_{i}w)>\ell(w)$ then the set $\tilde{B}_{k,l}(s_{i}w)$ consists of elements of $\tilde{B}_{k,l}(w)$, their $s_{i}$-images, and the sums of elements from $A_{l,i}(w)$ with $s_{i}$-images of those from $A_{k,i}(w)$. For $B_{k,l}(s_{i}w)$ we take $B_{k,l}(w)$, its $s_{i}$-images, and the sums, except for the sums $\alpha+s_{i}\alpha$ for $\alpha \in A_{l,i}$ in case $k=l$. Both sets are preserved under the action of $s_{i}$. \label{Bsiw}
\end{prop}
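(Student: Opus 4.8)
The plan is to reduce everything to the decomposition $A_m(s_iw)=A_m(w)\sqcup s_i\big(A_{m,i}(w)\big)$ provided by Proposition \ref{Asiw} (used with $m=k$ and $m=l$), together with the stability of $A_m(s_iw)$ under $s_i$: the proof of Proposition \ref{Asiw} shows $A_m^{(i)}(w)$ is $s_i$-invariant and $s_i$ interchanges $A_{m,i}(w)$ with $A_m(s_iw)\setminus A_m(w)$, so $A_m(s_iw)=A_m^{(i)}(w)\sqcup A_{m,i}(w)\sqcup s_i\big(A_{m,i}(w)\big)$ is $s_i$-stable. Throughout I regard $s_i$ as acting on a multi-set by applying $s_i$ to each of its entries, which amounts to interchanging the multiplicities of $i$ and $i+1$; this action is compatible with addition, in the sense that $s_i(\alpha+\beta)=s_i(\alpha)+s_i(\beta)$, and it preserves the cardinalities $|\eta_1|$ and $|\eta_2|$ (interchanging the multiplicities of two values does not change how many values occur twice). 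The single non-obvious computation I would isolate at the outset is the multi-set identity
\[
\alpha+s_i(\beta)=s_i(\alpha)+\beta\qquad\text{whenever }\alpha\in A_{l,i}(w),\ \beta\in A_{k,i}(w),
\]
which holds because, by Lemma \ref{nontrivAli}, each of $\alpha,\beta$ contains $i$ but not $i+1$, so both sides consist of one copy of $i$, one copy of $i+1$, and the common remainder. This identity makes the ``cross'' sums symmetric, and hence $s_i$-invariant.

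For $\tilde{B}_{k,l}(s_iw)$ I would classify a presentation $\eta=\alpha+\beta$ with $\alpha\in A_l(s_iw)$ and $\beta\in A_k(s_iw)$ according to which part of the decomposition each summand lies in. If both lie in the old parts $A_l(w),A_k(w)$ we land in $\tilde{B}_{k,l}(w)$; if both lie in the new parts, then $\eta=s_i(\alpha'+\beta')$ with $\alpha'+\beta'\in\tilde{B}_{k,l}(w)$, so $\eta$ is an $s_i$-image. In the two mixed cases, say $\alpha\in A_l(w)$ and $\beta=s_i(\beta')$ with $\beta'\in A_{k,i}(w)$, I split on the decomposition $A_l(w)=A_l^{(i)}(w)\sqcup A_{l,i}(w)$ from \eqref{Alwdecom}: if $\alpha\in A_l^{(i)}(w)$ then $s_i(\alpha)\in A_l(w)$ and $s_i(\eta)=s_i(\alpha)+\beta'\in\tilde{B}_{k,l}(w)$, so $\eta$ is again an $s_i$-image; while if $\alpha\in A_{l,i}(w)$ the sum is exactly of the third (cross) type. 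The other mixed case is handled identically, using the displayed identity to rewrite $s_i(\alpha')+\beta$ as $\alpha'+s_i(\beta)$ when $\beta\in A_{k,i}(w)$. This shows $\tilde{B}_{k,l}(s_iw)$ is the union of the three asserted families; each is visibly contained in $\tilde{B}_{k,l}(s_iw)$, giving the reverse inclusion, and $s_i$-stability follows because $s_i$ exchanges the first two families and fixes each cross sum (again by the displayed identity).

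For $B_{k,l}$ the key is that Proposition \ref{Bkldiff} characterizes membership by the \emph{$w$-independent} numerical condition $|\eta_2|<k-\delta_{k,l}$ together with $\eta\in\tilde{B}_{k,l}$; hence $B_{k,l}(s_iw)=\{\eta\in\tilde{B}_{k,l}(s_iw):|\eta_2|<k-\delta_{k,l}\}$, and I only need to intersect this cutoff with the three families above. Since $|\eta_2|$ is $s_i$-invariant, the cutoff selects $B_{k,l}(w)$ from the first family and its $s_i$-image from the second. For a cross sum $\eta=\alpha+s_i(\beta)$ with $\alpha\in A_{l,i}(w)$ and $\beta\in A_{k,i}(w)$ I would compute $\eta_2=(\alpha\cap\beta)\setminus\{i\}$, so that $|\eta_2|=|\alpha\cap\beta|-1$; when $k<l$ this is at most $k-1<k=k-\delta_{k,l}$, so every cross sum survives, whereas when $k=l$ the bound $|\alpha\cap\beta|\le k$ is met with equality exactly when $\alpha=\beta$, i.e.\ precisely for the sums $\alpha+s_i(\alpha)$, which are the only cross sums failing $|\eta_2|<k-1$. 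This yields the stated description, and $s_i$-stability of $B_{k,l}(s_iw)$ is immediate since both the cutoff and $\tilde{B}_{k,l}(s_iw)$ are $s_i$-stable.

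I expect the main obstacle to be the two mixed cases: the content lies in recognizing that applying $s_i$ turns an old--new presentation either into an honest presentation over $w$ (when the old summand lies in $A_l^{(i)}(w)$) or into a symmetric cross sum (when it lies in $A_{l,i}(w)$), and that the displayed multi-set identity makes the cross family genuinely $s_i$-invariant. Once this is in place, the passage from $\tilde{B}_{k,l}$ to $B_{k,l}$ is pure bookkeeping with the $w$-independent cutoff of Proposition \ref{Bkldiff}.
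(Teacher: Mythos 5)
Your proof is correct and follows essentially the same route as the paper's: classify presentations $\eta=\alpha+\beta$ over $s_iw$ via the decomposition $A_m(s_iw)=A_m(w)\sqcup s_i\big(A_{m,i}(w)\big)$ from Proposition \ref{Asiw}, use the identity $\alpha+s_i\beta=s_i\alpha+\beta$ for the cross sums, and pass from $\tilde{B}_{k,l}$ to $B_{k,l}$ by the $w$-independent cutoff $|\eta_2|<k-\delta_{k,l}$ of Proposition \ref{Bkldiff}. Your explicit computation $|\eta_2|=|\alpha\cap\beta|-1$ for cross sums is a slightly cleaner way of identifying the excluded sums $\alpha+s_i\alpha$ in the $k=l$ case than the paper's description via $|\eta_1|=2$, but the substance is identical.
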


\begin{proof}
If $\eta\in\tilde{B}_{k,l}(w)$ then $\eta=\alpha+\beta$ for $\alpha \in A_{l}(w)$ and $\beta \in A_{k}(w)$ via Definition \ref{Bklwdef}. Since Proposition \ref{Asiw} shows that $\alpha \in A_{l}(s_{i}w)$ and $\beta \in A_{k}(s_{i}w)$, we get that $\eta\in\tilde{B}_{k,l}(s_{i}w)$, as desired. Moreover, the invariance in Proposition \ref{Asiw} implies that $\alpha \in A_{l}(s_{i}w)$ and $\beta \in A_{k}(s_{i}w)$ as well, meaning that $s_{i}\eta=s_{i}\alpha+s_{i}\beta$ also lies in $\tilde{B}_{k,l}(s_{i}w)$. Proposition \ref{Bkldiff} yields that for being in $B_{k,l}(s_{i}w)$ we need to omit those elements $\eta$ in which $|\eta_{2}|=k$, as well as those with $|\eta_{2}|=k-1$ if $l=k$. Moreover, the action of $s_{i}$ clearly preserves the union of these sets by definition, yielding the invariance of this part.

The only other way in which $\eta$ can be in $\tilde{B}_{k,l}(s_{i}w)$ is if we can write $\eta=\alpha+\beta$ with at least one of $\alpha$ and $\beta$ is not in $A(w)$, and at least one of their $s_{i}$-images is not in $A(w)$. Since $s_{i}$ takes $A(s_{i}w) \setminus A(w)$ into $A(w)$, we either have $s_{i}\alpha \not\in A_{l}(w)$ (hence $\alpha \in A_{l,i}(w)$) and $\beta \not\in A_{k}(w)$ (and thus $\beta$ is the $s_{i}$-image of an element of $A_{k,i}(w)$), or the other way around. But in the latter case $\alpha$ contains $i+1$ but not $i$, $\beta$ contains $i$ but not $i+1$, and both $i$ and $i+1$ are in $\eta_{1}$, meaning that $\eta=s_{i}\eta$ and we can apply $s_{i}$ to both $\alpha$ and $\beta$ and return to the previous case. This establishes the result for $\tilde{B}_{k,l}(s_{i}w)$, and all these sums are thus invariant under $s_{i}$.

For $B_{k,l}(s_{i}w)$, when $k<l$ the case where $\eta=\alpha+\beta$ satisfies $|\eta_{2}|=k$ arises only when $\beta\subseteq\alpha$, which cannot occur when either one of $i$ or $i+1$ is in $\beta$ but not in $\alpha$. If $l=k$ then we also exclude the case with $|\eta_{2}|=k-1$ and $|\eta_{1}|=2$, which implies that $\eta_{1}=\{i,i+1\}$, $\alpha=\eta_{2}\cup\{i\} \in A_{l,i}(w)$, with complement $\eta_{2}\cup\{i+1\}$ to $\eta$. This completes the proof of the proposition.
\end{proof}
Note that the proof of Proposition \ref{Bsiw} shows that in sums of $s_{i}$-images of elements from $A_{l,i}(w)$ with elements of $A_{k,i}(w)$ we can take the $s_{i}$ to the second summand without affecting the sum, and we can use both presentations in what follows.

\section{Key Polynomials and Generating Functions \label{KeyPols}}

Consider the ring of polynomials in $n$ variables $\{x_{i}\}_{i=1}^{n}$ (or infinitely many variables), say over $\mathbb{Q}$ or over $\mathbb{C}$, elements of which we will write as just $f(x)$. Then for every $1 \leq i<n$ we can let the simple transposition $s_{i}$ act on each polynomial by interchanging $x_{i}$ and $x_{i+1}$, and then recall the \emph{divided difference operator} $\partial_{i}$ and its extension $\pi_{i}$, which are defined, on a polynomial $f$, by \[\partial_{i}(f):=\frac{f-s_{i}f}{x_{i}-x_{i+1}}\qquad\mathrm{and}\qquad\pi_{i}(f):=\partial_{i}(x_{i}f).\] The operator $\pi_{i}$ is an idempotent, namely satisfy $\pi_{i}^{2}=\pi_{i}$, and the family of operators satisfy the \emph{braid relations}, namely $\pi_{i}$ and $\pi_{j}$ commute when $|i-j|>1$ and the cubic relation $\pi_{i}\circ\pi_{i+1}\circ\pi_{i}=\pi_{i+1}\circ\pi_{i}\circ\pi_{i+1}$ holds for any $1 \leq i<n-1$.

The braid relations are the defining relations for $S_{n}$ as generated by $\{s_{i}\}_{i=1}^{n-1}$. The fact that the $\pi_{i}$'s satisfy them implies that given a permutation $w$, we can take any presentation of it as a composition $s_{i_{1}}\circ\ldots \circ s_{i_{\ell}}$ with $\ell=\ell(\sigma)$, and then the composition $\pi_{w}:=\pi_{i_{1}}\circ\ldots\circ\pi_{i_{\ell}}$ is independent of the decomposition chosen.

Recall that a \emph{partition} is a non-increasing sequence $\lambda=\{\lambda_{i}\}_{i=1}^{\infty}$ of non-negative integers, which vanish for large enough $i$. We say that $\lambda$ has \emph{length} $n$, written $\ell(\lambda)=n$, in case $\lambda_{n}\neq0$ and $\lambda_{n+1}=0$, and make the following definition.
\begin{defn}
Let $\lambda$ be a partition with $\ell(\lambda) \leq n$, and take a permutation $w \in S_{n}$. Then we define $x^{\lambda}$ to be the monomial $\prod_{i=1}^{n}x_{i}^{\lambda_{i}}$ in the ring of (at least) $n$ variables, and we define the \emph{key polynomial} $K_{\lambda,w}(x)$ to be $\pi_{w}(x^{\lambda})$. \label{keypoldef}
\end{defn}

\begin{rmk}
The key polynomials from Definition \ref{keypoldef} are also known as \emph{Demazure characters}, as they are obtained as the values at diagonal matrices of characters of representations of the Borel subgroup of $\operatorname{GL}_{n}$. It is clear that by considering $\lambda$ as a partition of length at most $n+1$, and by identifying $w$ with its image in $S_{n+1}$, the key polynomial $K_{\lambda,w}(x)$ remains the same, and can therefore be defined for an arbitrary partition $\lambda$ (with no restriction on the length) and with $w \in S_{\infty}$. \label{Kindepn}
\end{rmk}

We remark that in many references, the key polynomials are indexed by \emph{weak compositions}, namely (finite, or essentially finite) sequences $\nu=\{\nu_{i}\}_{i=1}^{\infty}$ of non-negative integers. The \emph{length} $\ell(\nu)$ of a weak composition $\nu$ is $n$ if $\nu_{n}\neq0$ and $\nu_{i}=0$ for all $i>n$. To any weak composition one can attach the partition $\lambda$ obtained by ordering the $\nu_{i}$'s in a decreasing order, and then some permutation $w$ on the indices takes this partition to $\nu$. It is clear that $\ell(\nu) \leq n$ then $\ell(\lambda) \leq n$, and we can take $w$ to be in $S_{n}$. Conversely, applying any permutation $w$ to any partition $\lambda$ produces a weak composition $\nu$, and if $w \in S_{n}$ and $\ell(\lambda) \leq n$ then $\ell(\nu) \leq n$ as well. Then the polynomial $K_{\lambda,w}$ is also denoted by $K_{\nu}$. Note that when some of the numbers can appear in $\nu$ and $\lambda$ with multiplicities, so $w$ is not unique, but in this situation the idempotent property of the $\pi_{i}$'s gives that $K_{\lambda,w}$ gives the same polynomial for every $w$ yielding the same $\nu$, making these notions well-defined.

\smallskip

The total generating function $\mathcal{K}$ of the key polynomials, say in $n$ variables, will have to take both partitions and permutations into account. We thus consider a \emph{twisted Demazure algebra}, or the \emph{Hecke algebra} with indices $-1$ and 0, which is generated by $\varepsilon_{i}=\varepsilon_{s_{i}}$ with $1 \leq i<n$, which satisfy the equality $\varepsilon_{i}^{2}=-\varepsilon_{i}$ for every $i$ as well as the braid relations in products. It is free with a basis $\{\varepsilon_{v}\}_{v \in S_{n}}$ (defined in the obvious manner), and the product $\varepsilon_{i}\varepsilon_{v}$ thus equals $\varepsilon_{s_{i}v}$ in case $\ell(s_{i}v)>\ell(v)$, and just $-\varepsilon_{v}$ when $\ell(s_{i}v)<\ell(v)$. We denote by $w_{0}$ the permutation of maximum length in $S_{n}$, whose one-line notation is the numbers from 1 to $n$ in decreasing order, and for partitions we simply add another set of variables $\{t_{i}\}_{i=1}^{n}$, hence obtaining functions of both $x$ and $t$ representing both variables, with $t^{\lambda}$ defined similarly to $x^{\lambda}$ from Definition \ref{keypoldef} for any partition $\lambda$ with $\ell(\lambda) \leq n$. We then define
\begin{equation}
\mathcal{K}^{(n)}(x,t):=\sum_{\ell(\lambda) \leq n}\sum_{w \in S_{n}}K_{\lambda,w}(x)t^{\lambda}\varepsilon_{ww_{0}}=\sum_{w \in S_{n}}\mathcal{K}_{w}^{(n)}(x,t)\varepsilon_{ww_{0}}, \label{genser}
\end{equation}
introducing the components $\mathcal{K}_{w}^{(n)}(x,t):=\sum_{\ell(\lambda) \leq n}K_{\lambda,w}(x)t^{\lambda}$ associated with every $w \in S_{n}$, which are related by the following result.
\begin{lem}
If $\pi_{i}$ acts on the $x$-variables, then $\pi_{i}\big(\mathcal{K}_{w}^{(n)}(x,t)\big)$ equals $\mathcal{K}_{s_{i}w}^{(n)}(x,t)$ in case $\ell(s_{i}w)>\ell(w)$, and just $\mathcal{K}_{w}^{(n)}(x,t)$ back again when $\ell(s_{i}w)>\ell(w)$. \label{piiKw}
\end{lem}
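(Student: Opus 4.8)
The plan is to reduce the whole statement to identifying the composite operator $\pi_i \circ \pi_w$ and then reassembling the generating series coefficient by coefficient. First I would use that $\pi_i$ is a linear operator acting only on the $x$-variables, so it passes through the sum and commutes with the coefficients $t^\lambda$ (which are scalars as far as the $x$-action is concerned). Combined with $K_{\lambda,w}=\pi_w(x^\lambda)$ from Definition \ref{keypoldef}, this gives
\[
\pi_i\big(\mathcal{K}_w^{(n)}(x,t)\big) = \sum_{\ell(\lambda) \leq n} \pi_i\big(K_{\lambda,w}(x)\big)\, t^\lambda = \sum_{\ell(\lambda)\leq n} (\pi_i \circ \pi_w)(x^\lambda)\, t^\lambda .
\]
Thus it suffices to determine $\pi_i \circ \pi_w$ as an operator, uniformly in $\lambda$.

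The next step is the length dichotomy. When $\ell(s_i w) > \ell(w)$, Lemma \ref{lsiwlw} upgrades this to $\ell(s_i w) = \ell(w)+1$, so prepending $s_i$ to any reduced word for $w$ produces a reduced word for $s_i w$; by the independence of $\pi_{(\cdot)}$ from the chosen reduced decomposition (which holds because the $\pi_i$ satisfy the braid relations), we get $\pi_{s_i w} = \pi_i \circ \pi_w$. Hence $(\pi_i \circ \pi_w)(x^\lambda)=K_{\lambda,s_i w}(x)$ for every $\lambda$, and summing against $t^\lambda$ yields exactly $\mathcal{K}_{s_i w}^{(n)}(x,t)$. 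In the opposite case $\ell(s_i w) < \ell(w)$ (which is what the second clause of the statement must intend, since it should exhaust the dichotomy of Lemma \ref{lsiwlw}), I would set $u := s_i w$, so that $w = s_i u$ with $\ell(w)=\ell(u)+1$ and therefore $\pi_w = \pi_i \circ \pi_u$. The idempotence $\pi_i^2 = \pi_i$ then collapses $\pi_i \circ \pi_w = \pi_i^2 \circ \pi_u = \pi_i \circ \pi_u = \pi_w$, so $(\pi_i \circ \pi_w)(x^\lambda)=K_{\lambda,w}(x)$ and the sum returns $\mathcal{K}_w^{(n)}(x,t)$ unchanged.

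There is no serious obstacle here: once the two structural facts about the $\pi$-operators are invoked cleanly, the rest is bookkeeping. The only points I would be careful to cite correctly are that $\pi_w$ is well-defined independently of the reduced word (so that $\pi_i \circ \pi_w = \pi_{s_i w}$ genuinely holds in the length-increasing case) and the idempotence $\pi_i^2=\pi_i$ (which handles the length-decreasing case); both are recorded in the text preceding Definition \ref{keypoldef}. I would also explicitly note the evident typo, namely that the condition governing the second conclusion should read $\ell(s_i w) < \ell(w)$ rather than $\ell(s_i w) > \ell(w)$.
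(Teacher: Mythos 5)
Your proposal is correct and follows essentially the same route as the paper's proof: reduce to the operator identity $\pi_{i}\circ\pi_{w}=\pi_{s_{i}w}$ in the length-increasing case, and use idempotence of $\pi_{i}$ together with $\pi_{w}=\pi_{i}\circ\pi_{s_{i}w}$ in the length-decreasing case, then sum against $t^{\lambda}$. Your observation that the second condition in the statement should read $\ell(s_{i}w)<\ell(w)$ is also right; the paper's own proof treats exactly that case.
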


\begin{proof}
By definition, when $\ell(s_{i}w)>\ell(w)$ we can write $\pi_{s_{i}w}=\pi_{i}\circ\pi_{w}$, meaning that $K_{\lambda,s_{i}w}(x)=\pi_{i}\big(K_{\lambda,w}(x)\big)$ for every partition $\lambda$, by Definition \ref{keypoldef}. When $\ell(s_{i}w)<\ell(w)$ we have $w=s_{i} \cdot s_{i}w$ and hence $\pi_{w}=\pi_{i}\circ\pi_{s_{i}w}$ as above, and the same argument gives $K_{\lambda,w}(x)=\pi_{i}\big(K_{\lambda,s_{i}w}(x)\big)$. But then \[\pi_{i}\big(K_{\lambda,w}(x)\big)=\pi_{i}^{2}\big(K_{\lambda,s_{i}w}(x)\big)=\pi_{i}\big(K_{\lambda,s_{i}w}(x)\big)=K_{\lambda,w}(x)\] by the idempotent property of $\pi_{i}$. Multiplying by $t^{\lambda}$ and taking the sum over partitions $\lambda$ with $\ell(\lambda) \leq n$ yields the desired result. This proves the lemma.
\end{proof}

The inductive construction of each $K_{\lambda,w}$ by the length of $w$ through applications of the $\pi_{i}$'s transfers to a functional equation for the large generating series from Equation \eqref{genser}, as is given by the following result, which imitates the one for the generating series of Schubert and Grothendieck polynomials in, e.g., \cite{[FK]} or the appendix of \cite{[H]}.
\begin{prop}
The series $\mathcal{K}^{(n)}(x,t)$ from Equation \eqref{genser} satisfies the functional equation $\pi_{i}\big(\mathcal{K}^{(n)}(x,t))=(\varepsilon_{i}+1)\mathcal{K}^{(n)}(x,t)$ for every $1 \leq i<n$, where $\pi_{i}$ acts on the $x$-variables and the multiplication by $\varepsilon_{i}$ is done in the Demazure algebra part. \label{propgen}
\end{prop}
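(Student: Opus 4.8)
The plan is to compute both sides of the asserted equation as explicit linear combinations of the basis elements $\varepsilon_{ww_{0}}$, $w \in S_{n}$, and to check that the coefficients agree. Since $\pi_{i}$ acts only on the $x$-variables while the $\varepsilon$'s are untouched, the left-hand side is $\sum_{w \in S_{n}} \pi_{i}\big(\mathcal{K}_{w}^{(n)}(x,t)\big)\varepsilon_{ww_{0}}$, and Lemma \ref{piiKw} evaluates each coefficient: $\pi_{i}(\mathcal{K}_{w}^{(n)}) = \mathcal{K}_{s_{i}w}^{(n)}$ when $\ell(s_{i}w) > \ell(w)$, and $\pi_{i}(\mathcal{K}_{w}^{(n)}) = \mathcal{K}_{w}^{(n)}$ when $\ell(s_{i}w) < \ell(w)$. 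This splits the left-hand side into two sums indexed by these two classes of permutations.

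For the right-hand side, I would expand $(\varepsilon_{i}+1)\mathcal{K}^{(n)} = \mathcal{K}^{(n)} + \sum_{w} \mathcal{K}_{w}^{(n)}\,\varepsilon_{i}\varepsilon_{ww_{0}}$ and apply the stated multiplication rule in the twisted Demazure algebra, with $v=ww_{0}$: one has $\varepsilon_{i}\varepsilon_{v} = \varepsilon_{s_{i}v}$ if $\ell(s_{i}v) > \ell(v)$, and $\varepsilon_{i}\varepsilon_{v} = -\varepsilon_{v}$ if $\ell(s_{i}v) < \ell(v)$. The key step that makes everything fit is the length-reversal identity $\ell(ww_{0}) = \ell(w_{0}) - \ell(w) = \binom{n}{2} - \ell(w)$. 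This follows from the inversion count in Lemma \ref{propoflw}: since $w_{0}$ is the order-reversing permutation $i \mapsto n+1-i$, the one-line notation of $ww_{0}$ is that of $w$ read backwards, so the inversions of $ww_{0}$ are exactly the non-inversions of $w$. Consequently $s_{i}v = (s_{i}w)w_{0}$ satisfies $\ell(s_{i}v) > \ell(v)$ precisely when $\ell(s_{i}w) < \ell(w)$; the length comparison flips on passing from $w$ to $ww_{0}$. Thus $\varepsilon_{i}\varepsilon_{ww_{0}} = \varepsilon_{(s_{i}w)w_{0}}$ for $w$ with $\ell(s_{i}w) < \ell(w)$, and $\varepsilon_{i}\varepsilon_{ww_{0}} = -\varepsilon_{ww_{0}}$ for $w$ with $\ell(s_{i}w) > \ell(w)$.

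Finally I would reassemble. In $\varepsilon_{i}\mathcal{K}^{(n)}$, the contributions from $w$ with $\ell(s_{i}w) > \ell(w)$ produce $-\mathcal{K}_{w}^{(n)}\varepsilon_{ww_{0}}$, which cancels exactly the matching part of $\mathcal{K}^{(n)}$ once the two are added; what survives from $\mathcal{K}^{(n)}$ is the sum over $w$ with $\ell(s_{i}w) < \ell(w)$, matching the second sum on the left. For the contributions from $w$ with $\ell(s_{i}w) < \ell(w)$, I would substitute $u = s_{i}w$, a bijection onto the set of $u$ with $\ell(s_{i}u) > \ell(u)$ whose inverse is $w = s_{i}u$, which turns $\sum \mathcal{K}_{w}^{(n)}\varepsilon_{(s_{i}w)w_{0}}$ into $\sum_{\ell(s_{i}u) > \ell(u)}\mathcal{K}_{s_{i}u}^{(n)}\varepsilon_{uw_{0}}$, matching the first sum on the left. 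I expect the only genuine subtlety to be bookkeeping the index substitution together with the length flip from $w_{0}$; the algebra is a short verification once the length-reversal identity is in hand.
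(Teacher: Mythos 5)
Your proof is correct and follows essentially the same route as the paper's: both rest on Lemma \ref{piiKw}, the length-reversal identity $\ell(ww_{0})=\ell(w_{0})-\ell(w)$, and the stated multiplication rules for $\varepsilon_{i}$ in the twisted Demazure algebra. The only differences are organizational --- the paper pairs $w$ with $s_{i}w$ into a single sum over $\{w\,:\,\ell(s_{i}w)>\ell(w)\}$ instead of reindexing at the end, and quotes the length-reversal identity rather than rederiving it from the inversion count as you do --- so nothing further is needed.
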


\begin{proof}
We will suppress the variables $x$ and $t$ as well as the superscript $n$, and recall that every element of $S_{n}$ can be written either as $w$ with $\ell(s_{i}w)>\ell(w)$, or as $s_{i}w$ with the same inequality $\ell(s_{i}w)>\ell(w)$. The definition in Equation \eqref{genser} and Lemma \ref{piiKw} yield \[\mathcal{K}=\!\!\!\sum_{\ell(s_{i}w)>\ell(w)}\!\!\!(\mathcal{K}_{w}\varepsilon_{ww_{0}}+\mathcal{K}_{s_{i}w}\varepsilon_{s_{i}ww_{0}})\mathrm{\ and\ }\pi_{i}(\mathcal{K})=\!\!\!\sum_{\ell(s_{i}w)>\ell(w)}\!\!\!\mathcal{K}_{s_{i}w}(\varepsilon_{ww_{0}}+\varepsilon_{s_{i}ww_{0}}).\]

We now recall (see, e.g., \cite{[P]}) that since the product of $w^{-1}$ and $ww_{0}$ is the longest word $w_{0}$, we have $\ell(ww_{0})=\ell(w_{0})-\ell(w^{-1})=\ell(w_{0})-\ell(w)$ (as $w$ and $w^{-1}$ have the same length). Applying the same argument for $s_{i}w$, we deduce that $\ell(s_{i}w)>\ell(w)$ if and only if $\ell(s_{i}ww_{0})<\ell(ww_{0})$. It follows, by the definition of our Demazure algebra, that $\varepsilon_{i}\varepsilon_{s_{i}ww_{0}}=\varepsilon_{ww_{0}}$ but $\varepsilon_{i}\varepsilon_{ww_{0}}=-\varepsilon_{ww_{0}}$, so that right multiplication by $\varepsilon_{i}+1$ annihilates $\varepsilon_{ww_{0}}$ and takes $\varepsilon_{s_{i}ww_{0}}$ to $\varepsilon_{s_{i}ww_{0}}+\varepsilon_{ww_{0}}$. Thus applying it to the sum $\mathcal{K}$ yields the same expression as $\pi_{i}(\mathcal{K})$, as asserted. This proves the proposition.
\end{proof}

\begin{rmk}
The generating series of the Schubert or Grothendieck polynomials from \cite{[FK]} or \cite{[H]} does not depend on $n$, in the sense that identifying each $w \in S_{n}$ with its image in $S_{n+1}$ produces the same Schubert polynomial and the same multiplier from the relevant Demazure algebra. In Equation \eqref{genser} the appearance of $w_{0}$, which depends on $n$ makes that no longer the case. It is required in Proposition \ref{propgen}, since in Definition \ref{keypoldef} the operation of $\pi_{i}$ increases the length of the word $w$ in the index, with $\operatorname{Id}$ being the basic case, while for the Schubert and Grothendieck polynomials $w_{0}$ is the basic case and the action of $\partial_{i}$ (or the modification required for the Grothendieck polynomials) reduces the length of the word. As for the components $\mathcal{K}_{w}^{(n)}$, it follows from Remark \ref{Kindepn} that if $w \in S_{n}$ is viewed as an element of $S_{n+1}$ then the summands in $\mathcal{K}_{w}^{(n+1)}$ for partitions $\lambda$ with $\ell(\lambda) \leq n$ yield the original $\mathcal{K}_{w}^{(n)}$. We can therefore look at $\mathcal{K}_{w}^{(n)}$ as the expression obtained by setting $t_{n+1}=0$, or equivalently $x_{n+1}=t_{n+1}=0$, in $\mathcal{K}_{w}^{(n+1)}$. This allows us to consider a generating series $\mathcal{K}_{w}$ in infinitely many variables $x_{i}$ and infinitely many $t_{i}$'s, with no restriction on the length of the summation index $\lambda$ either, and substituting $x_{i}=t_{i}=0$, or just $t_{i}=0$, for every $i>n$ in this $\mathcal{K}_{w}$ will produce the generating functions $\mathcal{K}_{w}^{(n)}$ in finitely many variables. Lemma \ref{piiKw} holds for $\mathcal{K}_{w}$ as well by the same argument. \label{deponn}
\end{rmk}

\smallskip

We now begin to obtain the properties of $\mathcal{K}_{w}$ and the $\mathcal{K}_{w}^{(n)}$'s. We will first introduce a shortened notation, that will show up in all our formulae. For any $l\geq1$ we define $T_{l}:=\prod_{i=1}^{l}t_{i}$, and similarly $X_{l}:=\prod_{i=1}^{l}x_{i}$, and obtain the following expression.
\begin{prop}
The sum $\mathcal{K}_{\operatorname{Id}}^{(n)}(x,t)$ is given by $1\big/\prod_{l=1}^{n}(1-X_{l}T_{l})$. In infinitely many variables we have $\mathcal{K}_{\operatorname{Id}}(x,t)=1\big/\prod_{l=1}^{\infty}(1-X_{l}T_{l})$. \label{KId}
\end{prop}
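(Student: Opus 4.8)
The plan is to reduce the statement to a purely combinatorial summation, since $\operatorname{Id}$ is the base case of the inductive construction and no divided-difference operators are involved. Because $\operatorname{Id}$ has the empty reduced word, $\pi_{\operatorname{Id}}$ is the identity operator, so Definition \ref{keypoldef} gives $K_{\lambda,\operatorname{Id}}(x)=\pi_{\operatorname{Id}}(x^{\lambda})=x^{\lambda}$ for every partition $\lambda$. Hence $\mathcal{K}_{\operatorname{Id}}^{(n)}(x,t)=\sum_{\ell(\lambda)\le n}x^{\lambda}t^{\lambda}$, and writing $y_{i}:=x_{i}t_{i}$ this equals $\sum_{\lambda_{1}\ge\cdots\ge\lambda_{n}\ge0}\prod_{i=1}^{n}y_{i}^{\lambda_{i}}$, the sum running over all partitions supported on the first $n$ indices.

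The main step is to evaluate this lattice sum. I would reparametrize a partition of length at most $n$ by its consecutive differences $m_{j}:=\lambda_{j}-\lambda_{j+1}\ge0$ for $1\le j\le n$ (with the convention $\lambda_{n+1}=0$); this is a bijection onto $\mathbb{Z}_{\ge0}^{n}$, with inverse $\lambda_{i}=\sum_{j=i}^{n}m_{j}$. Substituting and interchanging the two products, the telescoping $\prod_{i=1}^{n}y_{i}^{\sum_{j\ge i}m_{j}}=\prod_{j=1}^{n}\bigl(\prod_{i=1}^{j}y_{i}\bigr)^{m_{j}}$ together with $\prod_{i=1}^{j}y_{i}=\prod_{i=1}^{j}x_{i}t_{i}=X_{j}T_{j}$ turns the sum into $\sum_{(m_{j})\in\mathbb{Z}_{\ge0}^{n}}\prod_{j=1}^{n}(X_{j}T_{j})^{m_{j}}$. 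This factors as a product of $n$ independent geometric series, giving $\prod_{j=1}^{n}1/(1-X_{j}T_{j})$, as claimed. The crux of the whole argument is this telescoping identity: it is the only place where the specific shape $X_{l}T_{l}$ of the denominators is produced, so I would make sure the regrouping of exponents is carried out carefully.

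For the infinite-variable statement, the same reparametrization applies to arbitrary partitions, which correspond bijectively to finitely supported sequences $(m_{j})_{j\ge1}$ in $\mathbb{Z}_{\ge0}$, and the identical computation yields the infinite product $\prod_{l=1}^{\infty}1/(1-X_{l}T_{l})$. I would justify the passage to infinitely many variables using Remark \ref{deponn}: setting $t_{i}=0$ for all $i>n$ makes $T_{l}=0$, hence $X_{l}T_{l}=0$ and the factor $1/(1-X_{l}T_{l})=1$, for every $l>n$, so the infinite product specializes exactly to the finite product for $\mathcal{K}_{\operatorname{Id}}^{(n)}$. The only point requiring any real care is the formal meaning of the infinite product, which is harmless here since $X_{l}T_{l}$ is divisible by $t_{1}\cdots t_{l}$ and hence contributes only in $t$-degree $\ge l$; thus the product is a well-defined element of the power series ring and no genuine convergence issue arises. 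Everything beyond the telescoping is routine bookkeeping of exponents.
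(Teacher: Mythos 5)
Your proposal is correct and follows essentially the same route as the paper's proof: it uses $K_{\lambda,\operatorname{Id}}=x^{\lambda}$, reparametrizes partitions by the consecutive differences $h_{l}=\lambda_{l}-\lambda_{l+1}$, performs the same telescoping regrouping to produce the monomials $X_{l}T_{l}$, and factors the sum into independent geometric series. The treatment of the infinite-variable case also matches the paper's.
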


\begin{proof}
Since we have $K_{\lambda,\operatorname{Id}}=x^{\lambda}$ for every partition $\lambda$, the expression that we need is $\sum_{\ell(\lambda) \leq n}x^{\lambda}t^{\lambda}$. Now, a partition $\lambda$ with $\ell(\lambda) \leq n$ is determined by the set of non-negative integers $h_{l}:=\lambda_{l}-\lambda_{l+1}$ (with $\lambda_{n+1}=0$), and any choice of such $n$ numbers is associated with a unique partition $\lambda$, which is given explicitly by $\lambda_{i}=\sum_{l=i}^{n}h_{l}$. It follows that if $\lambda$ is associated with these $h_{l}$'s then \[x^{\lambda}t^{\lambda}=\prod_{i=1}^{n}x_{i}^{\lambda_{i}}t_{i}^{\lambda_{i}}=\prod_{i=1}^{n}\prod_{l=i}^{n}x_{i}^{h_{l}}t_{i}^{h_{l}}= \prod_{l=1}^{n}\prod_{i=1}^{l}x_{i}^{h_{l}}t_{i}^{h_{l}}=\prod_{l=1}^{n}X_{l}^{h_{l}}T_{l}^{h_{l}},\] and as taking the sum over $\lambda$ amounts to taking the sum over all possible choices of the $h_{l}$'s independently, we deduce from the formula for geometric series that \[\mathcal{K}_{\operatorname{Id}}^{(n)}(x,t)=\sum_{h_{1}=0}^{\infty}\ldots\sum_{h_{n}=0}^{\infty}\prod_{l=1}^{n}X_{l}^{h_{l}}T_{l}^{h_{l}}= \prod_{l=1}^{n}\sum_{h_{l}=0}^{\infty}X_{l}^{h_{l}}T_{l}^{h_{l}}=\prod_{l=1}^{n}\frac{1}{1-X_{l}T_{l}},\] as desired. In infinitely many variables we either take the inverse limit over $n$, or apply the same considerations but now with $h_{l}$ for all $l\geq1$, and obtain the asserted formula. This proves the proposition.
\end{proof}

\begin{rmk}
The generating function $\mathcal{K}_{w}$ for $w \in S_{n}$ is more natural than $\mathcal{K}_{w}^{(n)}$ since the dependence on $n$, as presented in Remark \ref{deponn}, plays no role in all the calculations to follow. However, note that $\mathcal{K}_{\operatorname{Id}}^{(n)}$ from Proposition \ref{KId}, as a generating series, is the series of a rational function, while $\mathcal{K}_{\operatorname{Id}}$ itself is not because its denominator is the product of infinitely many expressions. This will be the case for any permutation $w$, as we see in Theorem \ref{formofKw} below. \label{ratfunc}
\end{rmk}

For stating and prove the basic form of $\mathcal{K}_{w}^{(n)}$ and $\mathcal{K}_{w}$ for any permutation $w$, we recall the sets $A_{l}(w)$ from Definition \ref{Asetdef}. Take some index $i$ such that $\ell(s_{i}w)>\ell(w)$, and so that Proposition \ref{Asiw} gives that $A(w) \subsetneq A(s_{i}w)$ and $A(s_{i}w) \setminus A(w)=\bigcup_{l}\{s_{i}(\alpha)|\alpha \in A_{l,i}(w)\}$. We now make the following definition.
\begin{defn}
For every $w \in S_{\infty}$ let $P_{w}(x,t)$ be the polynomial defined inductively on $\ell(w)$ as follows. We set $P_{\operatorname{Id}}:=1$, and if $P_{w}$ is given and $\ell(s_{i}w)>\ell(w)$, then let $N_{w,i}(x,t):=\prod_{l=1}^{\infty}\prod_{\alpha \in A_{l,i}(w)}(1-x^{s_{i}\alpha}T_{l})$, and set $P_{s_{i}w}:=\pi_{i}(P_{w}N_{w,i})$, with $\pi_{i}$ acting on the $x$-variables as always. \label{Pwdef}
\end{defn}
The finiteness from Lemma \ref{nontrivAli} shows that the union yielding $A(s_{i}w) \setminus A(w)$ is essentially over finitely many values of $l$, so that $N_{w,i}$ from Definition \ref{Pwdef} is indeed a polynomial, hence by induction so is $P_{w}$ since the operators $\pi_{i}$ take polynomials to polynomials. Note that if $w \in S_{n}$ and $i<n$ then this difference is given by the $s_{i}$-images of $\bigcup_{l=1}^{n}A_{l,i}(w)$ (including some empty sets), making the finiteness more visible.

Our first main result is as follows.
\begin{thm}
Assume that $w \in S_{n}$. Then we have the expressions \[\mathcal{K}_{w}^{(n)}(x,t)=\frac{P_{w}(x,t)}{\prod_{l=1}^{n}\prod_{\alpha \in A_{l}(w)}(1-x^{\alpha}T_{l})},\ \mathcal{K}_{w}(x,t)=\frac{P_{w}(x,t)}{\prod_{l=1}^{\infty}\prod_{\alpha \in A_{l}(w)}(1-x^{\alpha}T_{l})}.\] \label{formofKw}
\end{thm}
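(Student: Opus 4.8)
The plan is to induct on $\ell(w)$, feeding the recursion $\mathcal{K}_{s_iw}^{(n)} = \pi_i(\mathcal{K}_w^{(n)})$ of Lemma \ref{piiKw} into the recursive definition of $P_w$ from Definition \ref{Pwdef}. I would prove both displayed formulas at once: for $w \in S_n$ the sets $A_l(w)$ stabilize to the singletons $A_l(\operatorname{Id})$ once $l \geq n$ (Remark \ref{forId}), so the finite and infinite denominators differ only by the tail $\prod_{l>n}(1-X_lT_l)$, and the inductive argument applies verbatim to both (alternatively one passes between them by setting $t_i=0$ for $i>n$, as in Remark \ref{deponn}). For the base case $w=\operatorname{Id}$, Definition \ref{Pwdef} gives $P_{\operatorname{Id}}=1$, while Remark \ref{forId} identifies $A_l(\operatorname{Id})=\{(1,\ldots,l)\}$, so that the factor indexed by $l$ is $1-x^{(1,\ldots,l)}T_l = 1-X_lT_l$; the claimed identity then reduces exactly to Proposition \ref{KId}.

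For the inductive step, assume the formula for $w$ and pick $i$ with $\ell(s_iw)>\ell(w)$, writing $D_w := \prod_l\prod_{\alpha\in A_l(w)}(1-x^\alpha T_l)$, so that the hypothesis reads $\mathcal{K}_w = P_w/D_w$. The crucial observation is the choice of which denominator to clear \emph{before} applying $\pi_i$. Proposition \ref{Asiw} presents $A_l(s_iw)$ as the disjoint union of $A_l(w)$ and $\{s_i\alpha \mid \alpha\in A_{l,i}(w)\}$, so the associated denominator factors as $D_{s_iw} = D_w\cdot N_{w,i}$ with $N_{w,i}$ precisely the polynomial of Definition \ref{Pwdef} (finite by Lemma \ref{nontrivAli}). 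The same proposition shows that $s_i$ permutes $A_l(s_iw)$ (fixing $A_l^{(i)}(w)$ and interchanging $A_{l,i}(w)$ with $A_l(s_iw)\setminus A_l(w)$), so that $D_{s_iw}$ is invariant under $s_i$ — whereas $D_w$ itself is \emph{not}.

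With this in place the computation is immediate. Multiplying numerator and denominator of $\mathcal{K}_w$ by $N_{w,i}$, and using the elementary identity $\pi_i(g\cdot f)=g\,\pi_i(f)$ valid for any $s_i$-invariant $g$ (which is immediate from $\pi_i(f)=(x_if-x_{i+1}s_if)/(x_i-x_{i+1})$, applied coefficientwise in the $t$-variables, where $1/D_{s_iw}$ is a genuine power series because each $T_l$ has positive $t$-degree), one obtains
\[
\mathcal{K}_{s_iw} = \pi_i\!\left(\frac{P_w\,N_{w,i}}{D_{s_iw}}\right) = \frac{\pi_i(P_w\,N_{w,i})}{D_{s_iw}} = \frac{P_{s_iw}}{D_{s_iw}},
\]
the last step being exactly the definition of $P_{s_iw}$ in Definition \ref{Pwdef}. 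This is the asserted expression for $s_iw$, closing the induction.

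The only place that requires thought is the second paragraph: recognizing that the $s_i$-symmetric denominator is $D_{s_iw}$ rather than $D_w$, which is precisely the reason Definition \ref{Pwdef} inserts the factor $N_{w,i}$ before applying $\pi_i$. Once the denominator is cleared up to $D_{s_iw}$, the symmetric factor pulls out of $\pi_i$ for free and the recursion for $P_w$ has been arranged to match; everything else is bookkeeping, namely that $N_{w,i}$ is a finite product so that $P_w$ remains a polynomial, and that all manipulations take place legitimately in the ring of formal power series in the $t_i$ with polynomial coefficients in the $x_i$, on which $\pi_i$ acts through the $x$-variables.
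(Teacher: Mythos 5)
Your proposal is correct and follows essentially the same route as the paper's own proof: induction on $\ell(w)$ with base case Proposition \ref{KId}, the factorization $D_{s_{i}w}=D_{w}N_{w,i}$ and its $s_{i}$-invariance via Proposition \ref{Asiw}, and pulling the invariant denominator out of $\pi_{i}$ so that the numerator becomes $\pi_{i}(P_{w}N_{w,i})=P_{s_{i}w}$ by Definition \ref{Pwdef}. The only cosmetic difference is that you state the identity $\pi_{i}(gf)=g\pi_{i}(f)$ for $s_{i}$-invariant $g$ explicitly, whereas the paper invokes it implicitly.
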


\begin{proof}
We argue by induction on $\ell(w)$. When $\ell(w)=0$ and $w=\operatorname{Id}$, Remark \ref{forId} implies that $A_{l}(w)$ is the single element $\alpha$ for which $x^{\alpha}=X_{l}$ for every $l$, so that with $P_{\operatorname{Id}}=1$ by Definition \ref{Pwdef}, Proposition \ref{KId} yields the asserted result.

So assume that the formula is valid for some $w \in S_{n}$, and take $1 \leq i<n$ with $\ell(s_{i}w)>\ell(w)$. The denominator of $P_{w}(x,t)$ is then the product of the part $\prod_{l}\prod_{\alpha \in A_{l}^{(i)}(w)}(1-x^{\alpha}T_{l})$ from Equation \eqref{Alwdecom} (which is invariant under $s_{i}$ because each multiplier there is either invariant or comes multiplied by its $s_{i}$-image), and $\prod_{l}\prod_{\alpha \in A_{l,i}(w)}(1-x^{\alpha}T_{l})$. Therefore the denominator of $s_{i}P_{w}(x,t)$ is the product of the first expression and of $\prod_{l}\prod_{\alpha \in A_{l,i}(w)}(1-x^{s_{i}\alpha}T_{l})$, which is $N_{w,i}(x,t)$ from Definition \ref{Pwdef}.

Proposition \ref{Asiw} then implies that the common denominator of $\mathcal{K}_{w}^{(n)}$ (or $\mathcal{K}_{w}$) and its $s_{i}$-image is $\prod_{l}\prod_{\alpha \in A_{l}(s_{i}w)}(1-x^{\alpha}T_{l})$ (with $l$ going up to $n$ or up to $\infty$), which is invariant under $s_{i}$. More precisely, these considerations express $\mathcal{K}_{w}^{(n)}$ (or $\mathcal{K}_{w}$) as \[\frac{P_{w}(x,t)N_{w,i}(x,t)}{\prod_{l}\prod_{\alpha \in A_{l}(s_{i}w)}(1-x^{\alpha}T_{l})},\] with the $s_{i}$-invariant denominator, and Lemma \ref{piiKw} expresses the series associated with $s_{i}w$ as its $\pi_{i}$-image, where the denominator can be taken out of the action by $s_{i}$-invariance. But this denominator is the desired one for $\mathcal{K}_{s_{i}w}^{(n)}$ or $\mathcal{K}_{s_{i}w}$, and the resulting numerator is precisely $P_{s_{i}w}(x,t)$ by Definition \ref{Pwdef}. This proves the theorem.
\end{proof}
Note that the construction of $P_{w}$ in Definition \ref{Pwdef} may a priori depend on the choice of the construction of $w$ as a product of simple transpositions, but the fact that Theorem \ref{formofKw} expresses it as the well-defined generating series $\mathcal{K}_{w}^{(n)}$ (for large enough $n$) or $\mathcal{K}_{w}$ times an explicit denominator implies that $P_{w}$ is indeed well-defined for every $w$. As in Remark \ref{ratfunc}, $\mathcal{K}_{w}^{(n)}$ has the advantage of being a rational function while $\mathcal{K}_{w}$ is canonical but is a more general type of formal power series. However, the polynomial $P_{w}$ is independent of $n$ by definition.

\smallskip

As essentially follows from the construction in Definition \ref{Pwdef} and Theorem \ref{formofKw}, the dependence of $P_{w}(x,t)$ on the variables $\{t_{i}\}_{i}$ is through the products $\{T_{l}\}_{l}$ (note that as $t_{1}$ divides every $T_{l}$ precisely once, we can consider this expansion according to the degree in that single variable). We will therefore expand $P_{w}$ according to the degrees in the latter variables. For investigating this, we apply the Leibniz Rule for $\pi_{i}$ (which arises from that for $\partial_{i}$), and deduce that if $\ell(s_{i}w)>\ell(w)$ then
\begin{equation}
P_{s_{i}w}=\pi_{i}(P_{w}N_{w,i})=P_{w}\pi_{i}(N_{w,i})+x_{i+1}s_{i}N_{w,i} \cdot \partial_{i}(P_{w}). \label{Leibniz}
\end{equation}
We can thus obtain the description for the terms having low degrees in the $T_{l}$'s, as follows.
\begin{prop}
For every $w \in S_{n}$ we have $P_{w}=1-P_{w,2}+O(T^{3})$, where $O(T^{3})$ involves a (possibly empty) sum of terms which in the product of at least three $T$ terms. The term $P_{w,2}(x,t)$ is a (possible empty) sum of expressions of the form $m^{k,l}_{\eta}(w)x^{\eta}T_{k}T_{l}$, with $k \leq l$, where $\eta=\eta_{1}+2\eta_{2}$ is a multiset with $|\eta_{2}| \leq k$ and $|\eta_{1}|+2|\eta_{2}|=k+l$, and $m_{\eta}$ is an integer. \label{expPwinT}
\end{prop}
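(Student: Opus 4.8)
The plan is to induct on $\ell(w)$ using the recursion \eqref{Leibniz}, organizing everything by total degree in the $T$-variables — which, since $t_1$ divides each $T_l$ exactly once, coincides with the degree in $t_1$, so this grading is well-defined on every polynomial appearing (all of which depend on $t$ only through the $T_l$). For such a polynomial $F$ write $F^{(d)}$ for its homogeneous component of $T$-degree $d$. Since $P_w$ is well-defined independently of the reduced word (as noted after Theorem \ref{formofKw}), I may build $w$ up from $\operatorname{Id}$; the base case $w=\operatorname{Id}$ is immediate from $P_{\operatorname{Id}}=1$, giving $P_{\operatorname{Id},2}=0$. For the step, fix $i$ with $\ell(s_iw)>\ell(w)$. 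From Definition \ref{Pwdef} each factor $1-x^{s_i\alpha}T_l$ of $N_{w,i}$ contributes $1$ in degree $0$ and $-x^{s_i\alpha}T_l$ in degree $1$, so $N_{w,i}^{(0)}=1$, $N_{w,i}^{(1)}=-\sum_l\sum_{\alpha\in A_{l,i}(w)}x^{s_i\alpha}T_l$, and $N_{w,i}^{(2)}=\sum x^{s_i\alpha+s_i\alpha'}T_lT_{l'}$, summed over unordered pairs of distinct (index,set)-pairs. Because $\pi_i$ and $\partial_i$ act only on the $x$-variables they commute with the $T$-grading, so I can read off each degree of \eqref{Leibniz} separately.

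The first point is the vanishing of the constant and linear behavior. Using $\pi_i(1)=1$ and $\partial_i(1)=0$, the degree-$0$ part of $P_{s_iw}$ is $1$. By induction $P_w^{(1)}=0$, and $\partial_i(P_w)$ starts in degree $2$ (as $\partial_i(P_w)^{(0)}=\partial_i(1)=0$ and $\partial_i(P_w)^{(1)}=\partial_i(P_w^{(1)})=0$); hence the only possible degree-$1$ contribution to $P_{s_iw}$ is $P_w^{(0)}\pi_i(N_{w,i})^{(1)}=\pi_i(N_{w,i}^{(1)})$. The crucial observation is that this is $0$: for $\alpha\in A_{l,i}(w)$, Lemma \ref{nontrivAli} gives that $\alpha$ contains $i$ but not $i+1$, so $s_i\alpha$ contains $i+1$ but not $i$, whence $x^{s_i\alpha}=x_{i+1}\cdot m'$ with $m'$ free of $x_i,x_{i+1}$; such $m'$ is $s_i$-invariant and $\pi_i(x_{i+1})=0$, so $\pi_i(x^{s_i\alpha})=m'\pi_i(x_{i+1})=0$. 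Thus $P_{s_iw}^{(1)}=0$ and $P_{s_iw}$ again has no linear term.

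For the quadratic part I would extract the degree-$2$ component of \eqref{Leibniz}. Only three products survive: $P_w^{(0)}\pi_i(N_{w,i})^{(2)}=\pi_i(N_{w,i}^{(2)})$, then $P_w^{(2)}\pi_i(N_{w,i})^{(0)}=-P_{w,2}$, and $x_{i+1}(s_iN_{w,i})^{(0)}\partial_i(P_w)^{(2)}=-x_{i+1}\partial_i(P_{w,2})$ (the cross term of two degree-$1$ factors drops out since $P_w^{(1)}=0$ and $\partial_i(P_w)^{(1)}=0$). Hence $-P_{s_iw,2}=\pi_i(N_{w,i}^{(2)})-P_{w,2}-x_{i+1}\partial_i(P_{w,2})$, and applying the identity $\pi_i(f)=f+x_{i+1}\partial_i(f)$ (the twisted Leibniz rule for $\partial_i$ applied to $x_if$, already used for \eqref{Leibniz}) this collapses to the clean recursion
\[P_{s_iw,2}=\pi_i\big(P_{w,2}-N_{w,i}^{(2)}\big).\]
Now $N_{w,i}^{(2)}$ already has the asserted shape: a summand $x^{s_i\alpha+s_i\alpha'}T_lT_{l'}$ comes from the sets $s_i\alpha,s_i\alpha'$ of sizes $l,l'$, so writing $\eta=s_i\alpha+s_i\alpha'=\eta_1+2\eta_2$ we have $|\eta_2|\le\min(l,l')$ and $|\eta_1|+2|\eta_2|=l+l'$, and ordering $\{l,l'\}$ as $k\le l$ matches the claim. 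Since $P_{w,2}$ has the same shape by induction, the whole matter reduces to showing that $\pi_i$ preserves it.

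I expect this last invariance to be the technical heart of the argument. Since $\pi_i$ fixes $T_kT_l$ and acts on $x^\eta$ only through the factor $x_i^ax_{i+1}^b$, where $a,b\in\{0,1,2\}$ are the multiplicities of $i,i+1$ in $\eta$, and since $\pi_i$ preserves total $x$-degree (so $|\eta_1|+2|\eta_2|=k+l$ is automatic), the only thing to check is that each $\pi_i(x_i^ax_{i+1}^b)$ is an integer combination of monomials $x_i^{a'}x_{i+1}^{b'}$ with $a',b'\in\{0,1,2\}$ in which the number of doubled coordinates among $\{i,i+1\}$ does not exceed that of $(a,b)$. I would prove this from the explicit formulas $\pi_i(x_i^ax_{i+1}^b)=\sum_{p=0}^{a-b}x_i^{a-p}x_{i+1}^{b+p}$ for $a\ge b$ and $\pi_i(x_i^ax_{i+1}^b)=-\sum_{p=0}^{b-a-2}x_i^{b-1-p}x_{i+1}^{a+1+p}$ for $a<b$ (both read off $\pi_i(g)=\partial_i(x_ig)$), followed by a finite case check over $a,b\in\{0,1,2\}$; this confirms that all exponents stay in $\{0,1,2\}$ and that the doubled-coordinate count never grows, so $|\eta_2|\le k$ is preserved. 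As $\pi_i$ fixes all coordinates other than $i,i+1$, it follows that $\pi_i\big(P_{w,2}-N_{w,i}^{(2)}\big)$ is again a $\mathbb{Z}$-combination of terms $x^\eta T_kT_l$ of the required type, completing the induction.
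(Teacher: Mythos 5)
Your proof is correct and follows essentially the same route as the paper's: induction on $\ell(w)$ via the Leibniz expansion \eqref{Leibniz}, the vanishing of the linear contribution because each term of $N_{w,i}^{(1)}$ is $x_{i+1}$ times an $s_{i}$-invariant monomial (so $\pi_{i}$ kills it), and the resulting recursion $P_{s_{i}w,2}=\pi_{i}\big(P_{w,2}-N_{w,i}^{(2)}\big)$, which is exactly the paper's Equation \eqref{Pw2ind}. Your explicit case check of $\pi_{i}(x_{i}^{a}x_{i+1}^{b})$ for $a,b\in\{0,1,2\}$ fills in a point the paper treats tersely at this stage (it only spells out the action of $\pi_{i}$ on these monomials later, around Equation \eqref{decomPw2} and Lemma \ref{typesind}), but the argument is the same.
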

Proposition \ref{expPwinT} can be understood as the statement that if $P_{w,d}$ is the part of $P_{w}$ that has degree $d$ in the $T_{l}$'s, then $P_{w,0}$ is always 1, $P_{w,1}=0$, and $P_{w,2}$ is of the asserted form.

\begin{proof}
We again work by induction, with the case $w=\operatorname{Id}$ being clear, so that we assume that $P_{w}$ has the asserted form, and consider $P_{s_{i}w}$ for some $i$ with $\ell(s_{i}w)>\ell(w)$. We expand the latter as in Equation \eqref{Leibniz}, and note that since all the operations are in the $x$-variables, we can replace $P_{w}$ by $1-P_{w,2}$ and $N_{w,i}$ by its constant, linear, and quadratic terms in the $T_{l}$'s only, and the rest goes into the unspecified $O(T^{3})$ term.

We begin with the second term, where $\partial_{i}$ annihilates 1, so that only the part arising from the constant term 1 of $N_{w,i}$ contributes terms which are not $O(T^{3})$. This means that up to $O(T^{3})$, this term produces $-x_{i+1}\partial_{i}(P_{w,2})$, which is the same as $-\pi_{i}(P_{w,2})+P_{w,2}$.

For the first term, $N_{w,i}$ is a product over $A_{l,i}(w)$, where we recall from Lemma \ref{nontrivAli} that every element of $A_{l,i}(w)$ contains $i$ but does not contain $i+1$, so that the opposite holds for its $s_{i}$-image. Therefore every multiplier $x^{s_{i}\alpha}T_{l}$ in its the product defining $N_{w,i}$ is divisible by $x_{i+1}$ (once), but not by $x_{i}$. Hence when we apply $\pi_{i}$, the constant term produces 1, which when multiplied by $1-P_{w,2}$ combines with the previous expression to $1-\pi_{i}(P_{w,2})$. Moreover, as $\pi_{i}$ preserves the degree in $x$, we get that $\pi_{i}(P_{w,2})$ has the asserted property in case $P_{w,2}$ does.

Now, up to $O(T^{3})$, the remaining terms of $\pi_{i}(N_{w,i})$ are simply multiplied by 1, and we saw that each term that is linear in $T$ is $x_{i+1}$ times a monomial $x^{\tilde{\alpha}}T_{l}$ with $\tilde{\alpha}$ not involving $i$ or $i+1$. But then its $\pi_{i}$-image vanishes, as $x_{i}x_{i+1}x^{\tilde{\alpha}}T_{l}$ is invariant under $s_{i}$ and is thus annihilated by $\partial_{i}$. Finally, any quadratic term from $N_{w,i}$ is, by the same argument, of the form $x_{i+1}^{2}x^{\tilde{\alpha}+\tilde{\beta}}T_{k}T_{l}$ (with a positive sign), so that applying $\pi_{i}$ takes it to $-x_{i}x_{i+1}x^{\tilde{\alpha}+\tilde{\beta}}T_{k}T_{l}$, which is of the desired form. This proves the proposition.
\end{proof}

As examples, we consider some permutations of small length, except for the base case with $w=\operatorname{Id}$ and $\ell(w)=0$. If $\ell(w)=1$, so that $w=s_{i}$ for some $i$, then $N_{\operatorname{Id},i}$ is the single multiplier $1-X_{i-1}x_{i+1}T_{i}$, and $P_{s_{i}}=1$. When $w$ is the product of two simple transpositions $s_{i}$ and $s_{j}$ with $|i-j|\geq2$, a similar argument also produces $P_{w}=1$. In some situations $P_{w}$ is $s_{i}$-invariant and $N_{w,i}$ contains a single multiplier, yielding that $P_{s_{i}w}=P_{w}$. The first non-trivial values show up when $\ell(w)=2$ and $w$ is the product of two simple transpositions having adjacent indices. These considerations give $P_{w}=1$ for the permutations \[w\in\{12345,21345,13245,12435,12354,21435,21354,13254\} \subseteq S_{5}.\] We also have \[P_{w}=1-x_{1}x_{2}x_{3}T_{1}T_{2}\mathrm{\ for\ }w\in\{23145,23154,31245,31254,32145,32154\},\] with the permutations 13425, 14235, and 14325 yielding $P_{w}=1-x_{1}^{2}x_{2}x_{3}x_{4}T_{2}T_{3}$, and the value \[P_{w}=1-x_{1}^{2}x_{2}^{2}x_{3}x_{4}x_{5}T_{3}T_{4}\mathrm{\ if\ }w\in\{23145,23154,31245,31254,32145,32154\}.\] Among the remaining permutations $w \in S_{5}$ with $\ell(w)=3$, the ones with the simplest polynomials are $P_{31425}$, which equals
\begin{equation}
1-x_{1}x_{2}x_{3}T_{1}T_{2}-x_{1}x_{2}x_{3}x_{4}T_{1}T_{3}-x_{1}^{2}x_{2}x_{3}x_{4}T_{2}T_{3}+(x_{1}^{2}x_{2}^{2}x_{3}x_{4}+x_{1}^{2}x_{2}x_{3}^{2}x_{4})T_{1}T_{2}T_{3}, \label{P31425}
\end{equation}
and $P_{14253}$, whose value is \[1-x_{1}^{2}x_{2}x_{3}x_{4}(T_{2}T_{3}+x_{5}T_{2}T_{4}+x_{2}x_{5}T_{3}T_{4})+x_{1}^{3}x_{2}^{2}x_{5}(x_{2}^{2}x_{3}+x_{2}x_{3}^{2})T_{2}T_{3}T_{4}.\]

\section{The Quadratic Terms \label{Quadratic}}

Proposition \ref{expPwinT} implies that the first non-trivial terms in the polynomial $P_{w}$ from Definition \ref{Pwdef} is $P_{w,2}$, which is a sum of expressions of the form $m_{\eta}x^{\eta}T_{k}T_{l}$, with a negative sign. In this section we investigate, for each $w$, $k$, and $l$ (with $k \leq l$), which multi-sets $\eta$ show up with a non-zero coefficient $m_{\eta}$, and prove that they are positive and which values can they take.

As usual, we begin with the known value 0 of $P_{w,2}$ for $w=\operatorname{Id}$, assume that $P_{w,2}$ is known, and consider $P_{s_{i}w,2}$ for some $i$ with $\ell(s_{i}w)>\ell(w)$. So fix such $i$ and $w$, and if we denote by $N_{w,i}^{(2)}$ the parts of $N_{w,i}$ that are quadratic in the $T_{l}$'s (or in $t_{1}$), then it follows directly from the proof of Proposition \ref{expPwinT} that
\begin{equation}
P_{s_{i}w,2}(x,t)=\pi_{i}\big(P_{w,2}(x,t)\big)-\pi_{i}\big(N_{w,i}^{(2)}(x,t)\big). \label{Pw2ind}
\end{equation}
In order to do so, recall that the multi-sets $\eta$ from that lemma can only contain indices with multiplicity at most 2. By going over the possible degrees of each monomial in both $i$ and $i+1$, we obtain the following decomposition, in which we again suppress the variables $x$ and $t$ from all arguments.
\[P_{w,2}=P_{w,2}^{00}+x_{i}P_{w,2}^{10}+(x_{i}+x_{i+1})P_{w,2}^{01}+x_{i}^{2}x_{i+1}P_{w,2}^{21}+(x_{i}^{2}x_{i+1}+x_{i}x_{i+1}^{2})P_{w,2}^{12}\]
\begin{equation}
+x_{i}^{2}x_{i+1}^{2}P_{w,2}^{22}+x_{i}^{2}P_{w,2}^{20}+x_{i}x_{i+1}P_{w,2}^{11}+(x_{i}^{2}+x_{i}x_{i+1}+x_{i+1}^{2})P_{w,2}^{02}, \label{decomPw2}
\end{equation}
where all the polynomials $P_{w,2}^{ab}$ depend on neither $x_{i}$ not $x_{i+1}$. Note that $P_{w,2}^{00}$, $P_{w,2}^{01}$, $P_{w,2}^{02}$, $P_{w,2}^{12}$, and $P_{w,2}^{22}$ are indeed based on the terms of $P_{w,2}$ in which $x_{i}$ and $x_{i+1}$ show up with the corresponding exponents, but in $P_{w,2}^{10}$, $P_{w,2}^{21}$, $P_{w,2}^{11}$, and $P_{w,2}^{20}$ they represent the difference between the terms having $x_{i}$ and $x_{i+1}$ with these exponents and some other terms, namely those showing up in $P_{w,2}^{01}$, $P_{w,2}^{12}$, $P_{w,2}^{02}$, and $P_{w,2}^{02}$ respectively.

The main tool for applying the induction step in all of the arguments below, which also forms an explanation for the decomposition in Equation \eqref{decomPw2}, is the following calculation, for which we recall from the proof of Proposition \ref{expPwinT} that $N_{w,i}^{(2)}$ can be written as $x_{i+1}^{2}\tilde{N}_{w,i}^{(2)}$, with the latter multiplier not depending on $x_{i}$ and on $x_{i+1}$.
\begin{lem}
When we decompose $P_{s_{i}w,2}$ as in Equation \eqref{decomPw2}, we have \[P_{s_{i}w,2}^{00}=P_{w,2}^{00},\ P_{s_{i}w,2}^{22}=P_{w,2}^{22},\ P_{s_{i}w,2}^{10}=P_{s_{i}w,2}^{20}=P_{s_{i}w,2}^{21}=0,\] \[P_{s_{i}w,2}^{01}=P_{w,2}^{01}+P_{w,2}^{10},\ P_{s_{i}w,2}^{02}=P_{w,2}^{02}+P_{w,2}^{20},\ P_{s_{i}w,2}^{12}=P_{w,2}^{12}+P_{w,2}^{21},\] and $P_{s_{i}w,2}^{11}=P_{w,2}^{11}+\tilde{N}_{w,i}^{(2)}$. \label{typesind}
\end{lem}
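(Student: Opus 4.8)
The plan is to apply $\pi_{i}$ directly to the decomposition \eqref{decomPw2} of $P_{w,2}$ and to $N_{w,i}^{(2)}$, and then read off the nine blocks of the result using Equation \eqref{Pw2ind}. The essential reduction is that the coefficient polynomials $P_{w,2}^{ab}$, as well as the factor $\tilde{N}_{w,i}^{(2)}$, involve neither $x_{i}$ nor $x_{i+1}$, and are therefore $s_{i}$-invariant. Since $\pi_{i}$ satisfies $\pi_{i}(gf)=g\,\pi_{i}(f)$ whenever $g$ is symmetric in $x_{i}$ and $x_{i+1}$ (this follows immediately from the corresponding property $\partial_{i}(gf)=g\,\partial_{i}(f)$ of the divided difference operator, together with $\pi_{i}(f)=\partial_{i}(x_{i}f)$), each such coefficient may be pulled outside $\pi_{i}$. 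Thus the whole computation collapses to evaluating $\pi_{i}$ on the handful of monomials in $x_{i},x_{i+1}$ appearing in \eqref{decomPw2}.

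First I would record those values. Using $\pi_{i}(f)=\partial_{i}(x_{i}f)$ one computes $\pi_{i}(x_{i})=x_{i}+x_{i+1}$, $\pi_{i}(x_{i}^{2})=x_{i}^{2}+x_{i}x_{i+1}+x_{i+1}^{2}$, and $\pi_{i}(x_{i}^{2}x_{i+1})=x_{i}^{2}x_{i+1}+x_{i}x_{i+1}^{2}$, whereas each of the combinations $1$, $x_{i}+x_{i+1}$, $x_{i}x_{i+1}$, $x_{i}^{2}x_{i+1}+x_{i}x_{i+1}^{2}$, $x_{i}^{2}x_{i+1}^{2}$, and $x_{i}^{2}+x_{i}x_{i+1}+x_{i+1}^{2}$ is symmetric and hence fixed by $\pi_{i}$ (as $\pi_{i}$ restricts to the identity on $s_{i}$-invariants, since $\partial_{i}(x_{i}f)=f$ for symmetric $f$). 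This is precisely why \eqref{decomPw2} was grouped into these blocks: applying $\pi_{i}$ pushes the correction monomials $x_{i}$, $x_{i}^{2}$, $x_{i}^{2}x_{i+1}$ onto the symmetric combinations sitting beside them in the $01$-, $02$-, and $12$-blocks respectively. Carrying the computation out termwise produces $\pi_{i}(P_{w,2})$ as an explicit symmetric polynomial, whose blocks I would then extract in the sense of \eqref{decomPw2}: symmetry forces the $10$-, $20$-, and $21$-blocks to vanish, the $00$- and $22$-blocks are unchanged, the $01$-, $02$-, and $12$-blocks pick up the corrections $P_{w,2}^{10}$, $P_{w,2}^{20}$, and $P_{w,2}^{21}$, and the $11$-block equals $P_{w,2}^{11}$.

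Next I would treat the subtracted term. Because $N_{w,i}^{(2)}=x_{i+1}^{2}\tilde{N}_{w,i}^{(2)}$ with $\tilde{N}_{w,i}^{(2)}$ free of $x_{i}$ and $x_{i+1}$, that symmetric factor comes out and one only needs $\pi_{i}(x_{i+1}^{2})=-x_{i}x_{i+1}$, so that $-\pi_{i}(N_{w,i}^{(2)})=\tilde{N}_{w,i}^{(2)}\,x_{i}x_{i+1}$. This is a pure bidegree-$(1,1)$ contribution, hence it feeds solely into the $11$-block, adding $\tilde{N}_{w,i}^{(2)}$. Substituting both pieces into \eqref{Pw2ind} then yields all the asserted identities simultaneously.

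The step demanding the most care—and the only genuine obstacle—is the bookkeeping of how a symmetric polynomial decomposes in the (non-orthogonal) basis used in \eqref{decomPw2}, since both $x_{i}^{2}+x_{i}x_{i+1}+x_{i+1}^{2}$ and $x_{i}x_{i+1}$ contribute to the single monomial $x_{i}x_{i+1}$. Disentangling them is exactly what produces the clean value $P_{s_{i}w,2}^{11}=P_{w,2}^{11}+\tilde{N}_{w,i}^{(2)}$ rather than a spurious mixing of the $02$- and $11$-data: concretely, the $11$-block of a symmetric $Q$ with $x_{i}x_{i+1}$-coefficient $q_{11}$ and $x_{i+1}^{2}$-coefficient $q_{02}$ is $q_{11}-q_{02}$, and one must check that the $q_{02}$ arising from $\pi_{i}(P_{w,2})$ cancels correctly so that only $P_{w,2}^{11}$ survives from that factor. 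Once this single accounting is verified, every remaining identity in the lemma is immediate.
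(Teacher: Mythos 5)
Your proposal is correct and follows essentially the same route as the paper's proof: apply $\pi_{i}$ to the decomposition \eqref{decomPw2}, pull the $s_{i}$-invariant coefficients $P_{w,2}^{ab}$ and $\tilde{N}_{w,i}^{(2)}$ through $\pi_{i}$, evaluate $\pi_{i}$ on the monomials $x_{i}$, $x_{i}^{2}$, $x_{i}^{2}x_{i+1}$, and $x_{i+1}^{2}$, and regather. Your explicit bookkeeping of the non-orthogonal basis for the $11$-block (the $q_{11}-q_{02}$ extraction) is a worthwhile elaboration of what the paper compresses into "gathering the expressions," and your check that the $P_{w,2}^{02}+P_{w,2}^{20}$ contributions cancel there is exactly right.
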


\begin{proof}
When we express $P_{w,2}$ as in Equation \eqref{decomPw2} and apply $\pi_{i}$, the terms involving $P_{w,2}^{00}$, $P_{w,2}^{01}$, $P_{w,2}^{02}$, $P_{w,2}^{12}$, $P_{w,2}^{22}$, and $P_{w,2}^{11}$, which are invariant under $s_{i}$, remain the same after $\pi_{i}$, and for the other three terms we use the fact that $\pi_{i}$ takes $x_{i}$ to $x_{i}+x_{i+1}$, $x_{i}^{2}x_{i+1}$ to $x_{i}^{2}x_{i+1}+x_{i}x_{i+1}^{2}$, and $x_{i}^{2}$ to $x_{i}^{2}+x_{i}x_{i+1}+x_{i+1}^{2}$. Since $\pi_{i}(x_{i+1}^{2})=-x_{i}x_{i+1}$, gathering the expressions as in Equation \eqref{decomPw2} yields the desired result. This proves the lemma.
\end{proof}

Given $w$, $k$, $l$ (with $k \leq l$), and a multi-set $\eta$, we recall from Proposition \ref{expPwinT} that when the term $x^{\eta}T_{k}T_{l}$ shows up in $P_{w,2}$, then the associated multiplicity is denoted by $m^{k,l}_{\eta}(w)$. When $k$ and $l$ are clear from the context, we will omit them from the notation, as we sometimes will also with $w$. Of course, $m^{k,l}_{\eta}(w)=0$ in case the term in question does not appear in $P_{w,2}$.

In order to investigate these multiplicities, we will need some notation. The setting is again that $w$ is a permutation and $i$ satisfies $\ell(s_{i}w)>\ell(w)$. We will write $\eta=\eta^{ab}$ in case $x_{i}$ shows up with multiplicity $a$ in $\eta$, and $x_{i+1}$ is there with multiplicity $b$. In this situation, if $c+d=a+b$ we will write $\eta^{cd}$ for the multi-set resulting from replacing the multiplicities of $x_{i}$ and $x_{i+1}$ by $c$ and $d$ respectively. In particular we have $s_{i}\eta^{ab}=\eta^{ba}$.

We will thus need the result of Lemma \ref{typesind} but for these multiplicities.
\begin{lem}
When passing from $w$ to $s_{i}w$, the multiplicities of the various $\eta^{ab}$'s are given by \[m_{\eta^{00}}(s_{i}w)\!=\!m_{\eta^{00}}(w),\ m_{\eta^{22}}(s_{i}w)\!=\!m_{\eta^{22}}(w),\ m_{\eta^{10}}(s_{i}w)\!=\!m_{\eta^{01}}(s_{i}w)\!=\!m_{\eta^{10}}(w),\] \[m_{\eta^{20}}(s_{i}w)=m_{\eta^{02}}(s_{i}w)=m_{\eta^{20}}(w),\ m_{\eta^{21}}(s_{i}w)=m_{\eta^{12}}(s_{i}w)=m_{\eta^{21}}(w),\] and $m_{\eta^{11}}(s_{i}w)=m_{\eta^{11}}(w)+\big(m_{\eta^{20}}(w)-m_{\eta^{02}}(w)\big)+m_{\eta^{02}}(N_{w,i}^{(2)})$, where the last term is the multiplicity in which $\eta^{02}$ shows up in $N_{w,i}^{(2)}$. \label{multsiw}
\end{lem}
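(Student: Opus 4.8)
The plan is to deduce the asserted multiplicity identities directly from the polynomial identities of Lemma \ref{typesind}, by comparing the coefficient of a single monomial on both sides. Fix a multi-set $\mu$ supported away from $x_{i}$ and $x_{i+1}$, together with indices $k\le l$, so that every $\eta^{ab}$ under consideration is $x_{i}^{a}x_{i+1}^{b}x^{\mu}$ and the multiplicity $m_{\eta^{ab}}(w)$ is the coefficient of $x_{i}^{a}x_{i+1}^{b}x^{\mu}T_{k}T_{l}$ in $P_{w,2}$. For each type $cd$ let $q^{cd}(w)$ denote the coefficient of $x^{\mu}T_{k}T_{l}$ in the $x_{i},x_{i+1}$-free polynomial $P_{w,2}^{cd}$, and similarly $q^{cd}(s_{i}w)$ for $s_{i}w$. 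Everything then reduces to scalar identities among these $q$'s.

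First I would record the dictionary between the $q^{cd}$'s and the multiplicities $m_{\eta^{ab}}$ forced by the prefactors in the decomposition \eqref{decomPw2}. Extracting the coefficient of each monomial $x_{i}^{a}x_{i+1}^{b}$, the five honest types give $m_{\eta^{00}}=q^{00}$, $m_{\eta^{01}}=q^{01}$, $m_{\eta^{02}}=q^{02}$, $m_{\eta^{12}}=q^{12}$, $m_{\eta^{22}}=q^{22}$, while the prefactors $x_{i}+x_{i+1}$, $x_{i}^{2}+x_{i}x_{i+1}+x_{i+1}^{2}$, and $x_{i}^{2}x_{i+1}+x_{i}x_{i+1}^{2}$ of the four difference types force $m_{\eta^{10}}=q^{10}+q^{01}$, $m_{\eta^{20}}=q^{20}+q^{02}$, $m_{\eta^{21}}=q^{21}+q^{12}$, and $m_{\eta^{11}}=q^{11}+q^{02}$ (all at the same $w$). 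Inverting the last four yields $q^{10}=m_{\eta^{10}}-m_{\eta^{01}}$, $q^{20}=m_{\eta^{20}}-m_{\eta^{02}}$, $q^{21}=m_{\eta^{21}}-m_{\eta^{12}}$, and $q^{11}=m_{\eta^{11}}-m_{\eta^{02}}$.

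Next I would take the coefficient of $x^{\mu}T_{k}T_{l}$ in every identity of Lemma \ref{typesind}, turning them into the scalar relations $q^{00}(s_{i}w)=q^{00}(w)$, $q^{22}(s_{i}w)=q^{22}(w)$, $q^{10}(s_{i}w)=q^{20}(s_{i}w)=q^{21}(s_{i}w)=0$, $q^{01}(s_{i}w)=q^{01}(w)+q^{10}(w)$, $q^{02}(s_{i}w)=q^{02}(w)+q^{20}(w)$, $q^{12}(s_{i}w)=q^{12}(w)+q^{21}(w)$, and $q^{11}(s_{i}w)=q^{11}(w)+m_{\eta^{02}}(N_{w,i}^{(2)})$; here I use the factorization $N_{w,i}^{(2)}=x_{i+1}^{2}\tilde{N}_{w,i}^{(2)}$ to identify the coefficient of $x^{\mu}T_{k}T_{l}$ in $\tilde{N}_{w,i}^{(2)}$ with the multiplicity of $\eta^{02}=x_{i+1}^{2}x^{\mu}$ in $N_{w,i}^{(2)}$. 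Substituting the $s_{i}w$-dictionary then produces each asserted formula; for example $m_{\eta^{01}}(s_{i}w)=q^{01}(s_{i}w)=q^{01}(w)+q^{10}(w)=m_{\eta^{01}}(w)+\bigl(m_{\eta^{10}}(w)-m_{\eta^{01}}(w)\bigr)=m_{\eta^{10}}(w)$, and $m_{\eta^{10}}(s_{i}w)=q^{10}(s_{i}w)+q^{01}(s_{i}w)=q^{01}(s_{i}w)$ gives the same value; the pairs $20/02$ and $21/12$ are handled identically, and the $\eta^{11}$ case combines $q^{11}(s_{i}w)+q^{02}(s_{i}w)$ and simplifies to $m_{\eta^{11}}(w)+\bigl(m_{\eta^{20}}(w)-m_{\eta^{02}}(w)\bigr)+m_{\eta^{02}}(N_{w,i}^{(2)})$.

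The bookkeeping is routine, and the one step that needs genuine care is inverting the dictionary for the four difference types: there $P_{s_{i}w,2}^{cd}$ (or its vanishing) does \emph{not} record $m_{\eta^{cd}}(s_{i}w)$ itself but a symmetrized combination. Concretely, $q^{10}(s_{i}w)=q^{20}(s_{i}w)=q^{21}(s_{i}w)=0$ must be read not as vanishing of multiplicities but as the symmetry relations $m_{\eta^{10}}(s_{i}w)=m_{\eta^{01}}(s_{i}w)$, $m_{\eta^{20}}(s_{i}w)=m_{\eta^{02}}(s_{i}w)$, and $m_{\eta^{21}}(s_{i}w)=m_{\eta^{12}}(s_{i}w)$ (as one expects from the $s_{i}$-invariance of $P_{s_{i}w,2}$, itself a $\pi_{i}$-image and hence invariant), and it is exactly this reinterpretation that produces the ``swapping'' shape of the stated identities. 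Keeping the honest/difference distinction straight throughout is the main obstacle; once the dictionary is pinned down, the conclusion is immediate.
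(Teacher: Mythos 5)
Your proposal is correct and follows essentially the same route as the paper: both extract the multiplicities from the decomposition in Equation \eqref{decomPw2}, apply Lemma \ref{typesind}, and invert the resulting dictionary between the coefficients of the $P_{w,2}^{ab}$'s and the $m_{\eta^{ab}}$'s (your explicit $q^{cd}$ notation just makes the paper's bookkeeping more systematic). The one step you flag as needing care --- reading the vanishing of $P_{s_{i}w,2}^{10}$, $P_{s_{i}w,2}^{20}$, $P_{s_{i}w,2}^{21}$ as symmetry relations among multiplicities rather than as vanishing of multiplicities --- is exactly the point the paper's proof also turns on.
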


\begin{proof}
In the decomposition from Equation \eqref{decomPw2}, the multiplicities $m_{\eta^{00}}$, $m_{\eta^{01}}$, $m_{\eta^{02}}$, $m_{\eta^{12}}$, and $m_{\eta^{22}}$ (all for $w$) only get contributions from the terms involving $P_{w,2}^{00}$, $P_{w,2}^{01}$, $P_{w,2}^{02}$, $P_{w,2}^{12}$, and $P_{w,2}^{22}$ respectively. Applying Lemma \ref{typesind} and using the same argument for $s_{i}w$ gives the result for $\eta^{00}$ and $\eta^{22}$, and shows that the multiplicities of the others are determined by the expressions $P_{w,2}^{01}+P_{w,2}^{10}$, $P_{w,2}^{02}+P_{w,2}^{20}$, and $P_{w,2}^{12}+P_{w,2}^{21}$ respectively.

However, the respective appearance of the terms $x_{i}$, $x_{i}^{2}$, and $x_{i}^{2}x_{i+1}$ in front of $P_{w,2}^{01}$, $P_{w,2}^{02}$ and $P_{w,2}^{12}$ implies that the corresponding multiplicities also contribute to $m_{\eta^{01}}$, $m_{\eta^{02}}$ and $m_{\eta^{12}}$ respectively, so that the terms involving $P_{w,2}^{10}$, $P_{w,2}^{20}$ and $P_{w,2}^{21}$ contain the expressions associated with $\eta^{10}$, $\eta^{20}$, and $\eta^{21}$ with the respective multiplicities $m_{\eta^{10}}-m_{\eta^{01}}$, $m_{\eta^{20}}-m_{\eta^{02}}$, and $m_{\eta^{12}}-m_{\eta^{21}}$. When these considerations are applied for $s_{i}w$, the fact that the corresponding parts of $P_{s_{i}w,2}$ vanish (by Lemma \ref{typesind}) implies that these differences vanish for $s_{i}w$. Moreover, the multiplicities in the sums from the previous paragraph are thus $m_{\eta^{10}}$, $m_{\eta^{20}}$, and $m_{\eta^{21}}$, yielding the desired expressions for these six multiplicities.

Finally, the multiplier $x_{i}x_{i+1}$ appears in Equation \eqref{decomPw2} in front of $P_{w,2}^{11}$ and $P_{w,2}^{02}$, meaning, by the first paragraph, that the former contains the term associated with $\eta^{11}$ with the multiplicity $m_{\eta^{11}}-m_{\eta^{02}}$. Since Lemma \ref{typesind} shows that for $s_{i}w$ we have to add $\tilde{N}_{w,i}^{(2)}$ to $P_{w,2}^{11}$, with the contribution of $-\pi_{i}(x_{i+1}^{2}\tilde{N}_{w,i}^{(2)})$ to the term in question being $m_{\eta^{02}}(N_{w,i}^{(2)})$ by the proof of that lemma, applying the same considerations with $s_{i}w$ yield \[m_{\eta^{11}}(w)-m_{\eta^{02}}(w)+m_{\eta^{20}}(N_{w,i}^{(2)})\!=\!m_{\eta^{11}}(s_{i}w)-m_{\eta^{02}}(s_{i}w)\!=\!m_{\eta^{11}}(s_{i}w)-m_{\eta^{20}}(w),\]
where we used the already established value of $m_{\eta^{02}}(s_{i}w)$, and the asserted value of $m_{\eta^{11}}(s_{i}w)$ follows as well. This proves the lemma.
\end{proof}

\smallskip

In order to understand the last relation from Lemma \ref{multsiw} better, we prove a lemma, which involves three multi-sets $\eta^{20}$, $\eta^{11}$, and $\eta^{02}$ related as above.
\begin{rmk}
If $\eta^{20}=\eta^{20}_{1}+2\eta^{20}_{2}$, $\eta^{11}=\eta^{11}_{1}+2\eta^{11}_{2}$, and $\eta^{02}=\eta^{02}_{1}+2\eta^{02}_{2}$, then one easily verifies the relations $\eta^{20}_{1}=\eta^{02}_{1}=\eta^{11}_{1}\setminus\{i,i+1\}$, $\eta^{20}_{2}=\eta^{11}_{2}\cup\{i\}$, and $\eta^{02}_{2}=\eta^{11}_{2}\cup\{i+1\}$. \label{inddecom}
\end{rmk}

\begin{lem}
For $w$, $i$, and the multi-sets $\eta^{20}$, $\eta^{11}$, and $\eta^{02}$ related as above, the following are equivalent:
\begin{enumerate}[$(i)$]
\item The multiplicity $m^{k,l}_{\eta^{02}}(N_{w,i}^{(2)})$ is positive, or equivalently non-zero.
\item We have $\eta^{20}=\alpha+\beta$ for $\alpha \in A_{l,i}(w)$ and $\alpha\neq\beta \in A_{k,i}(w)$.
\item $\eta^{20}$ is in $\tilde{B}_{k,l}(w)$ and with $\eta^{20}_{1}$ non-empty, and every presentation of $\eta^{20}$ as a sum as in Definition \ref{Bklwdef} is with elements from $A_{l,i}(w)$ and $A_{k,i}(w)$.
\item We have $\eta^{20}\in\tilde{B}_{k,l}(w)$ with a non-empty $\eta^{20}_{1}$, but $\eta^{11}$, and with it $\eta^{02}$, are not in that set.
\item $\eta^{11} \in B_{k,l}(s_{i}w) \setminus B_{k,l}(w)$.
\end{enumerate} \label{Nwipos}
\end{lem}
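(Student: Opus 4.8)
The plan is to prove Lemma \ref{Nwipos} as a cycle of implications establishing the equivalence of the five conditions, exploiting the structural results about the sets $A_{l,i}(w)$ (Lemma \ref{nontrivAli}), about how $\tilde B_{k,l}$ and $B_{k,l}$ transform under $s_i$ (Proposition \ref{Bsiw}), and about the distinction between $\tilde B_{k,l}$ and $B_{k,l}$ (Proposition \ref{Bkldiff}). First I would unwind the definitions. By Definition \ref{Pwdef}, $N_{w,i}=\prod_{l}\prod_{\alpha\in A_{l,i}(w)}(1-x^{s_i\alpha}T_l)$, so its quadratic part $N_{w,i}^{(2)}$ is a sum over unordered pairs $\{\alpha,\beta\}$ with $\alpha\in A_{l,i}(w)$, $\beta\in A_{k,i}(w)$ of the term $x^{s_i\alpha+s_i\beta}T_kT_l$. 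Since every element of $A_{l,i}(w)$ contains $i$ but not $i+1$ (Lemma \ref{nontrivAli}), each $s_i\alpha$ contains $i+1$ but not $i$, so $s_i\alpha+s_i\beta$ has $i+1$ with multiplicity exactly $2$ and $i$ with multiplicity $0$: this is precisely a multi-set of type $\eta^{02}$, and applying $s_i$ twice recovers $\eta^{20}=\alpha+\beta$. This immediately ties (i) to (ii): the coefficient $m^{k,l}_{\eta^{02}}(N_{w,i}^{(2)})$ counts (with sign) the number of pairs $\alpha\in A_{l,i}(w)$, $\beta\in A_{k,i}(w)$ with $\alpha\neq\beta$ and $\alpha+\beta=\eta^{20}$, so it is positive iff at least one such pair exists, giving (i)$\Leftrightarrow$(ii). (The $\alpha\neq\beta$ clause is only a genuine restriction when $k=l$, matching the distinction in the $\tilde B$-versus-$B$ behaviour.)

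Next I would prove (ii)$\Leftrightarrow$(iii). The forward direction is nearly immediate: a presentation $\eta^{20}=\alpha+\beta$ with $\alpha\in A_{l,i}(w)\subseteq A_l(w)$ and $\beta\in A_{k,i}(w)\subseteq A_k(w)$ shows $\eta^{20}\in\tilde B_{k,l}(w)$, and since $\alpha$ contains $i$ (so $i\in\eta^{20}$ with multiplicity $2$, i.e. $i\in\eta^{20}_2$) while neither $\alpha$ nor $\beta$ contains $i+1$, the element $i+1$ does not appear in $\eta^{20}$ at all; the non-emptiness of $\eta^{20}_1$ is forced because $k\geq 1$ means $\alpha$ and $\beta$ each carry at least one entry beyond the shared copies of $i$. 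For the ``every presentation'' clause I would argue that \emph{any} way of writing $\eta^{20}=\alpha'+\beta'$ with $\alpha'\in A_l(w)$, $\beta'\in A_k(w)$ must have both $\alpha'$ and $\beta'$ containing $i$ (since $i\in\eta^{20}_2$, by Lemma \ref{conteta} $\eta^{20}_2$ is contained in both summands) and neither containing $i+1$ (since $i+1\notin\eta^{20}$), which is exactly the membership condition for $A_{l,i}(w)$ and $A_{k,i}(w)$ from Lemma \ref{nontrivAli}. The reverse direction (iii)$\Rightarrow$(ii) is then the statement that an element of $\tilde B_{k,l}(w)$ has at least one presentation, which its membership guarantees.

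For (iii)$\Leftrightarrow$(iv) I would translate the ``every presentation uses $A_{l,i}$ and $A_{k,i}$'' condition into a statement about $\eta^{11}$ and $\eta^{02}$. Because $\eta^{20}$ contains $i$ twice and $i+1$ zero times, replacing one copy of $i$ by $i+1$ gives $\eta^{11}$, and $\eta^{02}$ replaces both; the point is that $\eta^{11}\in\tilde B_{k,l}(w)$ would require a presentation $\eta^{11}=\alpha'+\beta'$ in which $i$ appears once and $i+1$ once across the two summands, which (after possibly applying $s_i$, using that both $i,i+1$ then lie in $\eta^{11}_1$) would produce a presentation of $\eta^{20}$ in which some summand contains $i+1$ but not $i$, contradicting (iii) via Lemma \ref{nontrivAli}. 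Conversely if $\eta^{11}\notin\tilde B_{k,l}(w)$ then no presentation of $\eta^{20}$ can ``leak'' across to $\eta^{11}$, forcing every summand to contain $i$ and avoid $i+1$. The statement about $\eta^{02}$ then follows since $\eta^{02}=s_i\eta^{20}$ and $s_i$ preserves $\tilde B_{k,l}(w)$ (Proposition \ref{Bsiw}); I would note $\eta^{02}\in\tilde B_{k,l}(w)$ would force $\eta^{20}\in\tilde B_{k,l}(w)$ with a summand containing $i+1$, again contradicting (iii).

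Finally, (iv)$\Leftrightarrow$(v) is where I expect the main work, and it is essentially an application of the transformation law in Proposition \ref{Bsiw} together with the $\tilde B$-versus-$B$ criterion of Proposition \ref{Bkldiff}. By Proposition \ref{Bsiw}, $\tilde B_{k,l}(s_iw)$ is obtained from $\tilde B_{k,l}(w)$ by adjoining the $s_i$-images and the ``new'' sums of elements of $A_{l,i}(w)$ with $s_i$-images of elements of $A_{k,i}(w)$; such a new sum is exactly $\alpha+s_i\beta$ with $\alpha\in A_{l,i}(w)$, $\beta\in A_{k,i}(w)$, and since $\alpha$ contributes $i$ and $s_i\beta$ contributes $i+1$, this sum is precisely of type $\eta^{11}$. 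Thus $\eta^{11}$ is a genuinely new element of $\tilde B_{k,l}(s_iw)$ not already in $\tilde B_{k,l}(w)$ (nor an $s_i$-image of an old one) exactly when (iv) holds, and I would need to verify that $\eta^{11}$ is not an $s_i$-image of an element of $\tilde B_{k,l}(w)$ — which follows because $s_i\eta^{11}=\eta^{11}$, as $\eta^{11}$ contains both $i$ and $i+1$ once, so membership of $\eta^{11}$ or its image in $\tilde B_{k,l}(w)$ are the same statement, already excluded by (iv). The remaining and most delicate point is upgrading from $\tilde B$ to $B$: I must check that this new $\eta^{11}$ genuinely lands in $B_{k,l}(s_iw)$ rather than merely $\tilde B_{k,l}(s_iw)$, and that it is \emph{not} in $B_{k,l}(w)$. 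For the latter, $\eta^{11}\notin\tilde B_{k,l}(w)\supseteq B_{k,l}(w)$ by (iv). For the former I would invoke the exclusion clause in Proposition \ref{Bsiw}: the only new sums that fail to enter $B_{k,l}(s_iw)$ are the $\alpha+s_i\alpha$ with $\alpha\in A_{l,i}$ in the case $k=l$ — precisely the $\alpha=\beta$ situation excluded in (ii) — so under (iv), which by the chain above carries the $\alpha\neq\beta$ information, $\eta^{11}\in B_{k,l}(s_iw)$. This is the step I expect to require the most care, since it is where the Kronecker-$\delta$ correction term in Proposition \ref{Bkldiff} and the diagonal exclusion in Proposition \ref{Bsiw} must be reconciled with the $\alpha\neq\beta$ hypothesis threaded through from (ii); the cleanest route is probably to run the equivalence directly between (ii) and (v) and treat (iii), (iv) as intermediate bookkeeping, handling the $k=l$ diagonal as a separate short case.
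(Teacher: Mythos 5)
Your overall architecture (the cycle $(i)\Leftrightarrow(ii)$, $(ii)\Leftrightarrow(iii)$, $(iii)\Leftrightarrow(iv)$, $(iv)\Leftrightarrow(v)$, with Proposition \ref{Bsiw} and Proposition \ref{Bkldiff} doing the work in the last step) matches the paper's, and your treatment of $(i)\Leftrightarrow(ii)$ and of $(iv)\Leftrightarrow(v)$ is essentially sound. However, there is a genuine gap in your proof of $(ii)\Rightarrow(iii)$. You assert that ``contains $i$ and does not contain $i+1$'' is \emph{exactly} the membership condition for $A_{l,i}(w)$ from Lemma \ref{nontrivAli}. It is not: that lemma requires $i$ to occupy the specific position $j$ for which $w^{l}_{j}=i$. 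An element of $A_{l}(w)$ may contain $i$ at a later position $p>j$ (where $w^{l}_{p}>i$, hence $w^{l}_{p}\geq i+1$), in which case its $s_{i}$-image still satisfies all the bounds and the element lies in $A_{l}^{(i)}(w)$, not in $A_{l,i}(w)$. For instance, with $w=42531$, $l=3$, $i=2$, the sequence $124$ contains $2$ but not $3$, yet $s_{2}(124)=134 \in A_{3}(w)$, so $124 \notin A_{3,2}(w)$. This is precisely why the paper runs the counting argument around Equation \eqref{bigentries}: it shows that in \emph{every} presentation $\eta^{20}=\tilde{\alpha}+\tilde{\beta}$ the entries exceeding $i+1$ exhaust all the available bounds, which forces $\tilde{\alpha}_{j}=i$ and $\tilde{\beta}_{h}=i$ at the correct positions before Lemma \ref{nontrivAli} can be invoked. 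Your argument skips this entirely, and the implication as you have written it does not follow.

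A second, related gap occurs in your $(iii)\Rightarrow(iv)$ step. To show $\eta^{11}\notin\tilde{B}_{k,l}(w)$ you only treat presentations of $\eta^{11}$ in which $i$ and $i+1$ land in different summands (where replacing $i+1$ by $i$ does produce a presentation of $\eta^{20}$ contradicting $(iii)$). But a presentation could place both $i$ and $i+1$ in the \emph{same} summand, and your parenthetical ``after possibly applying $s_{i}$'' does nothing there, since such a summand is $s_{i}$-invariant and you cannot convert it into a presentation of $\eta^{20}$ without creating a repeated entry. The paper excludes this case by a second counting argument: the entries of the two summands with indices beyond $j$ and $h$ number $k+l-j-h$, which by Equation \eqref{bigentries} are already fully consumed by the elements of $\eta$ larger than $i+1$, leaving no room for the occurrence of $i+1$. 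Both gaps are repairable by the positional bookkeeping the paper carries out, but as written your proof of the chain $(ii)\Rightarrow(iii)\Rightarrow(iv)$ rests on a mischaracterization of $A_{l,i}(w)$ and an unexamined case.
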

Note that the condition $\alpha\neq\beta$ in part $(ii)$ is immediate when $k<l$, but it does make a difference when $k=l$. Of course, the expression $\eta^{20}_{1}$ is defined by the usual formula $\eta^{20}=\eta^{20}_{1}+2\eta^{20}_{2}$, and similarly for $\eta^{02}$ (in which $i\in\eta^{20}_{2}$ is replaced by $i+1$ and the $\eta_{1}$ set remains the same) and $\eta^{11}$ (for which we take $i$ away from $\eta_{2}$ and add it and $i+1$ into $\eta_{1}$).

\begin{proof}
Any contribution to $N_{w,i}^{(2)}$ arises as the product of two distinct terms $x^{s_{i}\alpha}T_{l}$ and $x^{s_{i}\beta}T_{k}$ with $\alpha \in A_{l,i}(w)$ and $\beta \in A_{k,i}(w)$ by Definition \ref{Pwdef} (with a change of notation) and the quadratic assumption. Hence if $(i)$ holds then one such product yields $\eta^{02}$, so that $\eta^{02}=s_{i}\alpha+s_{i}\beta$ with such $\alpha$ and $\beta$, and applying $s_{i}$ to get $\eta^{20}$ establishes $(ii)$. The converse implication is deduced by the same argument. Moreover, the proof of Lemma \ref{typesind} implies that every such contribution to the multiplicity is positive.

Next, if $(ii)$ holds then $\eta^{20}\in\tilde{B}_{k,l}(w)$ by Definition \ref{Bklwdef}, and $\eta^{20}_{1}\neq\emptyset$ since $\beta\neq\alpha$. Now, Lemma \ref{nontrivAli} implies that $w^{-1}(i) \leq k \leq l<w^{-1}(i+1)$, and we have $\alpha_{j}=\beta_{h}=i$ for the indices $j$ and $h$ for which $w^{l}_{j}=w^{k}_{h}=i$. Recalling that $\alpha$ and $\beta$ are ordered and sum to $\eta^{20}$, and comparing the number of integers that are smaller than $i$, we obtain \[|\{g\in\eta_{1}|g<i\}|+2|\{g\in\eta_{2}|g<i\}|=j+h-2\] (note that Remark \ref{inddecom} implies that these sets are the same for $\eta^{20}$, $\eta^{11}$, and $\eta^{02}$, so there is no ambiguity with the notation there). We also have $i\in\alpha\cap\beta$, and since the sum is $\eta^{20}$, we deduce, with the same lack of ambiguity, that
\begin{equation}
|\{g\in\eta_{1}|g>i+1\}|+2|\{g\in\eta_{2}|g>i+1\}|=k+l-j-h. \label{bigentries}
\end{equation}

Consider thus any pair of elements $\tilde{\alpha} \in A_{l}(w)$ and $\tilde{\beta} \in A_{k}(w)$ for which $\tilde{\alpha}+\tilde{\beta}=\eta^{20}$. There are only $l-j$ bounds in $w^{l}$ and only $k-h$ bounds in $w^{k}$ that allow for entries to be larger than $i+1$, and the corresponding entries of $\tilde{\alpha}$ and $\tilde{\beta}$ are the only ones that can give the elements of $\eta$ that are larger than $i+1$. But Equation \eqref{bigentries} implies that we need all of these parts of $\tilde{\alpha}$ and $\tilde{\beta}$ to be larger than $i+1$ in order to give the corresponding part of their sum $\eta^{20}$. This leaves only $j$ entries of $\tilde{\alpha}$ and $h$ entries of $\tilde{\beta}$ to contain the elements of $\eta$ that equal $i$ or less, and since both contain $i$, we must have, due to the ordering, that $\tilde{\alpha}_{j}=\tilde{\beta}_{h}=i$. As we already know that $i+1$ does not show up in a sum that yields $\eta^{20}$, Lemma \ref{nontrivAli} implies that $\tilde{\alpha} \in A_{l,i}(w)$ and $\tilde{\beta} \in A_{k,i}(w)$, and $(iii)$ follows. The fact that $(iii)$ implies $(ii)$ is immediate.

Now, if $(iii)$ holds then the first condition in $(iv)$ also does (by definition). Write $\eta^{02}$ as a sum of two elements, one of size $l$ and one of size $k$. The $\eta^{02}$ condition then allows us to write $\eta^{02}=s_{i}\alpha+s_{i}\beta$ for $\alpha$ and $\beta$ containing $i$ but not $i+1$. Since $s_{i}\alpha \in A_{l}(w)$ implies $\alpha \in A_{l}(w)$ and $s_{i}\beta \in A_{k}(w)$ yields $\beta \in A_{k}(w)$, and we then have $\eta^{20}=\alpha+\beta$, this implies $\alpha \in A_{l,i}(w)$ and $\beta \in A_{k,i}(w)$ by $(iii)$. But then $s_{i}\alpha \not\in A_{l}(w)$ and $s_{i}\beta \not\in A_{k}(w)$ (by the definition in Equation \eqref{Alwdecom}), which proves that $\eta^{02}\not\in\tilde{B}_{k,l}$. The same proof shows that if we write $\eta^{11}$ as a sum of an element of length $l$ and one of length $k$ such that $i$ lies in one of them and $i+1$ is in the other, then the second summand is not in $A(w)$.

Write now $\eta^{11}=\alpha+\beta$ such that one of them contains both $i$ and $i+1$. If $\beta \in A_{k}(w)$ then the only entries of $\beta$ that can be larger than $i$ have indices larger than $h$, and when $\alpha \in A_{l}(w)$ those of $\alpha$ that may be larger than $i$ must be larger than $j$, and there are in total $k+l-h-j$ such entries. But these entries must then give, to the sum $\eta^{11}$, all the elements from Equation \eqref{bigentries}, and the occurrence of $i+1$ in $\eta^{11}$ together. As this is impossible, we deduce that $\eta^{11}\not\in\tilde{B}_{k,l}$ s well, which proves $(iv)$.

Conversely, if $(iv)$ holds then the first condition of $(iii)$ follows directly, and we can write $\eta^{20}=\alpha+\beta$ with $\alpha \in A_{l}(w)$ and $\beta \in A_{k}(w)$. In case $\alpha \not\in A_{l,i}(w)$ (resp. $\beta \not\in A_{k,i}(w)$), we deduce from Equation \eqref{Alwdecom} that $s_{i}\alpha \in A_{l}(w)$ (resp. $s_{i}\beta \in A_{k}(w)$), and the sum $s_{i}\alpha+\beta$ (resp. $\alpha+s_{i}\beta$) is a presentation of $\eta^{11}$ as an element of $\tilde{B}_{k,l}(w)$, contradicting $(iv)$. This contradiction yields the desired converse implication.

Assuming $(iv)$ again, write $\eta^{20}=\alpha+\beta$ with $\alpha \in A_{l}(w)$ and $\beta \in A_{k}(w)$ once more, and by applying $s_{i}$ to either one of $\alpha$ and $\beta$ we get a presentation of $\eta^{11}$ as the sum of two elements from $A(s_{i}w)$ (Proposition \ref{Asiw}) and get $\eta^{11}\in\tilde{B}_{k,l}(s_{i}w)$. But from the $\tilde{B}_{k,l}$ condition on $\eta^{20}$ we get $|\eta^{20}_{2}| \leq k$ (by Lemma \ref{conteta}), and the condition on $\eta^{20}_{1}$ implies that if $k=l$ then the latter inequality is strict. Remark \ref{inddecom} translates these inequalities to $|\eta^{11}_{2}|<k-\delta_{k,l}$, so that $\eta^{11} \in B_{k,l}(s_{i}w)$ by Proposition \ref{Bkldiff}. Since the condition on $\eta^{11}$ in $(iv)$ excludes it from being in $B_{k,l}(w)$, this proves that $(iv)$ implies $(v)$.

Finally, $(v)$ implies, by Proposition \ref{Bkldiff}, that $\eta^{11}\in\tilde{B}_{k,l}(s_{i}w)$ and also $|\eta^{11}_{2}|<k-\delta_{k,l}$, and since this inequality holds and we assume that $\eta^{11} \not\in B_{k,l}(w)$, we get $\eta^{11}\not\in\tilde{B}_{k,l}(w)$ by the same proposition. This expresses $\eta^{11}$ as a sum $\alpha+\beta$ of elements from $A(s_{i}w)$, with at least one of them not being in $A(w)$. But then Proposition \ref{Asiw} implies that this summand is the $s_{i}$-image of an element of $A_{l,i}(w)$ or $A_{k,i}(w)$, so by Lemma \ref{nontrivAli} it contains $i+1$ but not $i$. Hence the other summand contains $i$ (by the $\eta^{11}$ sum condition) and thus lies in $A(w)$ (Lemma \ref{nontrivAli} again), and replacing $i+1$ by $i$ gives a sum of $\eta^{20}$ that puts it in $\tilde{B}_{k,l}(w)$. As in the previous paragraph, Remark \ref{inddecom} takes the inequality $|\eta^{11}_{2}|<k-\delta_{k,l}$ to $\eta_{20}^{1}$ being non-empty, which establishes the parts of $(iv)$ that imply $(iii)$ and hence the remaining statement about $\eta^{02}$. This completes the proof of the lemma.
\end{proof}

\smallskip

Our first result states when non-zero multiplicities may show up.
\begin{thm}
For $k$, $l$, $w$, and $\eta$ with the usual properties, we can have $m^{k,l}_{\eta}(w)\neq0$ only when $\eta \in B_{k,l}(w)$. \label{mposBkl}
\end{thm}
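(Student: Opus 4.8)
The plan is to argue by induction on $\ell(w)$, in parallel with the preceding results. The base case $w=\operatorname{Id}$ is immediate, since $P_{\operatorname{Id}}=1$ forces $P_{\operatorname{Id},2}=0$, so every multiplicity $m^{k,l}_{\eta}(\operatorname{Id})$ vanishes and the assertion holds vacuously. For the inductive step I fix $i$ with $\ell(s_{i}w)>\ell(w)$, assume the statement for $w$, and deduce it for $s_{i}w$ using the explicit recursion of Lemma \ref{multsiw} together with the description of $B_{k,l}(s_{i}w)$ in Proposition \ref{Bsiw}. Writing each multiset in the form $\eta^{ab}$ (with $a,b$ the multiplicities of $i$ and $i+1$), I check type by type that $m^{k,l}_{\eta^{ab}}(s_{i}w)\neq0$ forces $\eta^{ab}\in B_{k,l}(s_{i}w)$.

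For the eight ``transport'' types $\eta^{00},\eta^{01},\eta^{02},\eta^{10},\eta^{12},\eta^{20},\eta^{21},\eta^{22}$, Lemma \ref{multsiw} expresses $m^{k,l}_{\eta^{ab}}(s_{i}w)$ as a single multiplicity $m^{k,l}_{\zeta}(w)$, where $\zeta$ is either $\eta^{ab}$ itself or its $s_{i}$-image $s_{i}\eta^{ab}=\eta^{ba}$. In each case non-vanishing of $m^{k,l}_{\zeta}(w)$ gives $\zeta\in B_{k,l}(w)$ by the inductive hypothesis, and then Proposition \ref{Bsiw}, which places both $B_{k,l}(w)$ and its $s_{i}$-images inside $B_{k,l}(s_{i}w)$, yields $\eta^{ab}\in B_{k,l}(s_{i}w)$.

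The only substantial case is $\eta^{11}$, for which Lemma \ref{multsiw} gives the four-term expression $m_{\eta^{11}}(s_{i}w)=m_{\eta^{11}}(w)+m_{\eta^{20}}(w)-m_{\eta^{02}}(w)+m_{\eta^{02}}(N_{w,i}^{(2)})$. If the left-hand side is non-zero then at least one summand is, and I will show that each summand, taken individually, already forces $\eta^{11}\in B_{k,l}(s_{i}w)$. The term $m_{\eta^{11}}(w)$ is handled exactly as above, via induction and Proposition \ref{Bsiw}. The term $m_{\eta^{02}}(N_{w,i}^{(2)})$ is precisely condition $(i)$ of Lemma \ref{Nwipos}, whose equivalent condition $(v)$ reads $\eta^{11}\in B_{k,l}(s_{i}w)\setminus B_{k,l}(w)$, so it is dealt with directly.

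The main obstacle is the pair $m_{\eta^{20}}(w)$ and $m_{\eta^{02}}(w)$, which enter with opposite signs and could conceivably cancel; I avoid this by treating them separately. If $m_{\eta^{20}}(w)\neq0$, then $\eta^{20}\in B_{k,l}(w)$ by induction, so it admits two genuinely distinct presentations $\eta^{20}=\alpha+\beta=\tilde{\alpha}+\tilde{\beta}$ with $\alpha,\tilde{\alpha}\in A_{l}(w)$ and $\beta,\tilde{\beta}\in A_{k}(w)$. Since $i$ has multiplicity $2$ and $i+1$ is absent in $\eta^{20}$, Lemma \ref{conteta} forces all four summands to contain $i$ and to omit $i+1$. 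Applying $s_{i}$ to the first summand of each presentation (noting $s_{i}\alpha,s_{i}\tilde{\alpha}\in A_{l}(s_{i}w)$ and $\beta,\tilde{\beta}\in A_{k}(s_{i}w)$ by Proposition \ref{Asiw}) produces $\eta^{11}=s_{i}\alpha+\beta=s_{i}\tilde{\alpha}+\tilde{\beta}$. These two presentations are distinct and non-trivial: $s_{i}\alpha\neq s_{i}\tilde{\alpha}$ and $\beta\neq\tilde{\beta}$ come from the corresponding inequalities for $\eta^{20}$, while the cross-inequalities needed when $k=l$ are automatic, since $s_{i}\alpha$ (and $s_{i}\tilde{\alpha}$) contains $i+1$ but not $i$, whereas $\beta$ and $\tilde{\beta}$ contain $i$ but not $i+1$. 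Hence $\eta^{11}\in B_{k,l}(s_{i}w)$, and the symmetric argument, now applying $s_{i}$ to a summand containing $i+1$, handles the case $m_{\eta^{02}}(w)\neq0$. Collecting the four cases completes the inductive step and hence the proof.
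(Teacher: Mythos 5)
Your proof is correct and follows essentially the same route as the paper's: induction on $\ell(w)$, the recursion of Lemma \ref{multsiw}, Proposition \ref{Bsiw} for the transport types, and Lemma \ref{Nwipos} for the $N_{w,i}^{(2)}$ term. The only (harmless) divergence is in the sub-case where $m_{\eta^{20}}(w)$ or $m_{\eta^{02}}(w)$ is non-zero: you exhibit two genuinely distinct presentations of $\eta^{11}$ directly and check the inequalities of Definition \ref{Bklwdef} by hand, whereas the paper produces a single presentation (membership in $\tilde{B}_{k,l}(s_{i}w)$) and upgrades it to $B_{k,l}(s_{i}w)$ via the cardinality criterion of Proposition \ref{Bkldiff} together with Remark \ref{inddecom}.
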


\begin{proof}
We fix $k$ and $l$, and work by the usual induction on $w$. When $w=\operatorname{Id}$ we have $P_{\operatorname{Id},2}=0$, so that the assertion holds in an empty manner. We thus take some permutation $w$ for which we know that the result holds, choose some $i$ for which $\ell(s_{i}w)>\ell(w)$, and consider a multi-set $\eta$ for which $m_{\eta}(s_{i}w)\neq0$.

When $\eta$ is $\eta^{00}$, $\eta^{10}$, $\eta^{20}$, $\eta^{21}$, or $\eta^{22}$, Lemma \ref{multsiw} yields $m_{\eta}(s_{i}w)=m_{\eta}(w)$, so that the induction hypothesis implies that $\eta$ is in $B_{k,l}(w)$ and therefore also in $B_{k,l}(s_{i}w)$ by Proposition \ref{Bsiw}. Next, take $\eta$ which is either $\eta^{01}$, $\eta^{02}$, or $\eta^{12}$, so that Lemma \ref{multsiw} gives $m_{\eta}(s_{i}w)=m_{s_{i}\eta}(w)$. Then the induction hypothesis implies that $s_{i}\eta \in B_{k,l}(w)$, and Proposition \ref{Bsiw} once again gives $\eta \in B_{k,l}(s_{i}w)$.

It remains to consider the case where $\eta=\eta^{11}$, where the non-zero expression $m_{\eta}(s_{i}w)$ is given by Lemma \ref{multsiw}, and prove that $\eta^{11} \in B_{k,l}(s_{i}w)$. If $m_{\eta^{11}}(w)\neq0$ then we are done as in the previous paragraph, and Lemma \ref{Nwipos} gives the result in case the last term is non-zero. When $m_{\eta^{02}}(w)$ or $m_{\eta^{20}}(w)$ are non-zero, the induction hypothesis puts $\eta^{02}$ or $\eta^{20}$ in $B_{k,l}$, so that in particular $|\eta^{02}_{2}|$ and $|\eta^{20}_{2}|$ are smaller than $k-\delta_{k,l}$ by Proposition \ref{Bkldiff}. Since Remark \ref{inddecom} implies that the same holds for $|\eta^{11}_{2}|$, it suffices to show that $\eta^{11}\in\tilde{B}_{k,l}(s_{i}w)$ if either $\eta^{02}$ or $\eta^{20}$ are in $\tilde{B}_{k,l}(w)$.

Now, if $\eta^{02}\in\tilde{B}_{k,l}(w)$ then the proof of $(iii)$ implying $(iv)$ in Lemma \ref{Nwipos} implies, \emph{mutatis mutandis}, that $\eta^{11}\in\tilde{B}_{k,l}(w)$ as desired. Finally, in the case where $\eta^{20}\in\tilde{B}_{k,l}(w)$, we write it as a sum $\alpha+\beta$ from $A_{l}(w)$ and $A_{k}(w)$, apply $s_{i}$ to either $\alpha$ or $\beta$, and obtain, via Propositions \ref{Asiw} and \ref{Bsiw}, the asserted claim. This proves the theorem.
\end{proof}
Note that when $w=s_{i}$, the set $A(w)$ is the union of $A(\operatorname{Id})$ and a single element, so that the set $B_{k,l}(s_{i})$ is empty for every $k$ and $l$. This is in correspondence, via Theorem \ref{mposBkl}, with the fact that $P_{s_{i}}=1$, hence $P_{s_{i},2}=0$. We get the same polynomials for $w=s_{i}s_{j}$ when $|i-j|\geq2$, which matches with the fact that also for this $w$ all the $B_{k,l}$ sets are empty. One can also verify that all the $P_{w,2}$ terms in all the expressions around Equation \eqref{P31425} are indeed of the form $x^{\eta}T_{k}T_{l}$ for $\eta \in B_{k,l}(w)$.

We draw an immediate consequence, simplifying Lemma \ref{multsiw}.
\begin{cor}
Given $k$, $l$, $w$, and $i$ as above, for $\eta=\eta^{11}$ the multiplicity $m_{\eta^{11}}(s_{i}w)$ equals $m_{\eta^{20}}(w)+m_{\eta^{02}}(N_{w,i}^{(2)})$ in case $\eta^{11} \in B_{k,l}(s_{i}w) \setminus B_{k,l}(w)$, and $m_{\eta^{11}}(w)+\big(m_{\eta^{20}}(w)-m_{\eta^{02}}(w)\big)$ otherwise. \label{meta11siw}
\end{cor}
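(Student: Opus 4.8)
The plan is to read the statement straight off the master formula established in Lemma \ref{multsiw}, namely
\[
m_{\eta^{11}}(s_{i}w)=m_{\eta^{11}}(w)+\big(m_{\eta^{20}}(w)-m_{\eta^{02}}(w)\big)+m_{\eta^{02}}(N_{w,i}^{(2)}),
\]
and to show that in each of the two cases the appropriate summands vanish. The engine driving the dichotomy is the chain of equivalences in Lemma \ref{Nwipos}, whose condition $(v)$ is \emph{exactly} the membership statement $\eta^{11}\in B_{k,l}(s_{i}w)\setminus B_{k,l}(w)$ that separates the two cases here. This lets me translate the case hypothesis into statements about $\tilde{B}_{k,l}(w)$-membership and then, through Theorem \ref{mposBkl}, into the vanishing of the relevant multiplicities attached to $w$.

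First I would treat the case $\eta^{11}\in B_{k,l}(s_{i}w)\setminus B_{k,l}(w)$. This is precisely condition $(v)$ of Lemma \ref{Nwipos}, so all of $(i)$--$(iv)$ hold as well. In particular condition $(iv)$ gives that neither $\eta^{11}$ nor $\eta^{02}$ lies in $\tilde{B}_{k,l}(w)$, and since $B_{k,l}(w)\subseteq\tilde{B}_{k,l}(w)$ they are not in $B_{k,l}(w)$ either. Theorem \ref{mposBkl} then forces $m_{\eta^{11}}(w)=m_{\eta^{02}}(w)=0$, and substituting these two vanishings into the master formula collapses it to $m_{\eta^{20}}(w)+m_{\eta^{02}}(N_{w,i}^{(2)})$, which is the first asserted value.

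For the complementary (``otherwise'') branch I would instead use the equivalence $(i)\Leftrightarrow(v)$ of Lemma \ref{Nwipos}. Being in this branch means condition $(v)$ fails, hence so does condition $(i)$; but $(i)$ is exactly the statement that $m_{\eta^{02}}(N_{w,i}^{(2)})$ is non-zero, so its failure reads $m_{\eta^{02}}(N_{w,i}^{(2)})=0$. Dropping this last summand from the master formula leaves $m_{\eta^{11}}(w)+\big(m_{\eta^{20}}(w)-m_{\eta^{02}}(w)\big)$, the second asserted value.

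There is no genuine obstacle once Lemma \ref{Nwipos} is available: the whole content is the observation that the correction summand $m_{\eta^{02}}(N_{w,i}^{(2)})$ and the pair $m_{\eta^{11}}(w),m_{\eta^{02}}(w)$ are governed by mutually exclusive regimes, pinned down respectively by conditions $(i)$ and $(iv)$ of that lemma. The one point deserving a word is that the two cases genuinely exhaust all possibilities; this is immediate, since ``$\eta^{11}\in B_{k,l}(s_{i}w)\setminus B_{k,l}(w)$'' is a single membership condition, so its truth and its negation partition every situation and no boundary case is left unaddressed.
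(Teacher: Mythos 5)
Your proposal is correct and follows essentially the same route as the paper's own proof: both read the two cases off the master formula of Lemma \ref{multsiw}, use the equivalence $(i)\Leftrightarrow(v)$ of Lemma \ref{Nwipos} to kill the $m_{\eta^{02}}(N_{w,i}^{(2)})$ term in the ``otherwise'' branch, and use condition $(iv)$ together with Theorem \ref{mposBkl} to get $m_{\eta^{11}}(w)=m_{\eta^{02}}(w)=0$ in the other branch. No substantive difference.
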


\begin{proof}
Lemma \ref{Nwipos} implies that unless the condition $\eta^{11} \in B_{k,l}(s_{i}w) \setminus B_{k,l}(w)$ is satisfied, we have $m_{\eta^{02}}(N_{w,i}^{(2)})=0$ and this summand may be removed from the expression in Lemma \ref{multsiw}. When this condition is satisfied, the we also get from Lemma \ref{multsiw} that $\eta^{02} \not\in B_{k,l}(w)$ as well, and since Theorem \ref{mposBkl} gives $m_{\eta^{11}}(w)=m_{\eta^{02}}(w)=0$ the formula from Lemma \ref{multsiw} reduces to the desired expression. This proves the corollary.
\end{proof}

\smallskip

For a sequence $\beta$, write $\beta_{<i}$, $\beta_{\leq i}$, $\beta_{>i+1}$, and $\beta_{\geq i+1}$ for the set of elements of $\beta$ that satisfy the inequality in the subscript. Using this notation, we prove two additional lemmas, which will be helpful for establishing properties of the multiplicities in the next section.
\begin{lem}
Take $w$, $i$, and $\eta=\eta^{11}$ as above, with the related element $\eta^{20}$, and assume that $\eta\in\tilde{B}_{k,l}(w)$. Then the following are equivalent:
\begin{enumerate}[$(i)$]
\item The sequence $\eta^{20}$ is not in $\tilde{B}_{k,l}(w)$.
\item In every presentation $\eta=\alpha+\beta$ with $\alpha \in A_{l}(w)$ and $\beta \in A_{k}(w)$, both $i$ and $i+1$ are in $\alpha$.
\item Neither $\beta_{\min}$ nor $\beta_{\max}$, for our $\eta$, contain $i$ or $i+1$, and we have the equalities $|\beta_{\min,<i}|=|\beta_{\max,<i}|$ and $|\beta_{\min,>i+1}|=|\beta_{\max,>i+1}|$ as well as $|\alpha_{\min,<i}|=|\alpha_{\max,<i}|$ and $|\alpha_{\min,\geq i+1}|=|\alpha_{\max,\geq i+1}|$.
\end{enumerate} \label{eta20Bkl}
\end{lem}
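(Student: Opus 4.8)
The plan is to prove the two equivalences $(i)\Leftrightarrow(ii)$ and $(ii)\Leftrightarrow(iii)$ separately. Throughout I use that, since $\eta=\eta^{11}\in\tilde{B}_{k,l}(w)$, the extremal elements $\alpha_{\max},\alpha_{\min},\beta_{\max},\beta_{\min}$ of Definition~\ref{minmaxelts} all exist and lie in the relevant $A(w)$'s (Lemma~\ref{relsminBkl} and Proposition~\ref{maxelt}), and that by Corollary~\ref{setofpairs} the presentations $\eta=\alpha+\beta$ with $\alpha\in A_l(w)$ and $\beta\in A_k(w)$ are exactly the sequences $\beta$ with $\eta_2\subseteq\beta\subseteq\eta$ and $\beta_{\min}\leq\beta\leq\beta_{\max}$. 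Since $i,i+1\in\eta_1$ each with multiplicity one, each presentation falls into one of four types according to whether $i$ and $i+1$ land in $\alpha$ or in $\beta$; I call those with $i,i+1\in\alpha$ type $(a)$, those with $i,i+1\in\beta$ type $(b)$, and the two possibilities with one of $i,i+1$ in each summand the \emph{split} presentations. Condition $(ii)$ says precisely that every presentation is of type $(a)$.

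For $(i)\Leftrightarrow(ii)$ I prove the contrapositive $\neg(ii)\Rightarrow\neg(i)$ together with $(ii)\Rightarrow(i)$. If some presentation is split, then replacing the single $i+1$ by $i$ in the summand that does not already contain $i$ is an application of $R_{j,i}$ as in Definition~\ref{replace}, which stays in $A_k(w)$ (resp.\ $A_l(w)$) by Lemma~\ref{Rjrprop} and produces $\eta^{20}$; hence $\eta^{20}\in\tilde{B}_{k,l}(w)$. If instead the presentation is of type $(b)$, I first use Lemma~\ref{interup} or Lemma~\ref{interdown} at the entry carrying $i+1$ to exchange it into $\alpha$, reducing to the split case. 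Conversely, assuming $(ii)$, suppose $\eta^{20}\in\tilde{B}_{k,l}(w)$ and write $\eta^{20}=\alpha'+\beta'$; both summands contain $i$ and neither contains $i+1$. If either summand lies in the $A^{(i)}$ part of the decomposition~\eqref{Alwdecom}, applying $s_i$ to it turns its $i$ into $i+1$ and yields a split presentation of $\eta^{11}$, contradicting $(ii)$. Otherwise both summands lie in $A_{l,i}(w)$ and $A_{k,i}(w)$, and Lemma~\ref{Nwipos} forces $\eta^{11}\notin\tilde{B}_{k,l}(w)$, contradicting the standing hypothesis; the degenerate case $\alpha'=\beta'$ with $k=l$ gives $\eta^{11}_1=\{i,i+1\}$ (Remark~\ref{inddecom}), which admits no type-$(a)$ presentation, so $(ii)$ already fails.

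For $(ii)\Leftrightarrow(iii)$ I exploit the monotonicity of the counts along $\leq$: if $\gamma\leq\gamma'$ are increasing of length $k$ then $|\gamma'_{<i}|\leq|\gamma_{<i}|$ and $|\gamma_{>i+1}|\leq|\gamma'_{>i+1}|$. For $(iii)\Rightarrow(ii)$, the count equalities squeeze $|\beta_{<i}|$ and $|\beta_{>i+1}|$ to be constant over all valid $\beta\in[\beta_{\min},\beta_{\max}]$, so $|\beta\cap\{i,i+1\}|$ is constant and equals its value at $\beta_{\min}$, namely $0$; thus every presentation is of type $(a)$. For $(ii)\Rightarrow(iii)$ the first assertion is immediate from $(ii)$, and for the count equalities I build, as in the proof of Lemma~\ref{relsminBkl}, a chain $\beta_{\min}=\gamma_0<\gamma_1<\dots<\gamma_M=\beta_{\max}$ of valid presentations in which each step is a single move $\gamma_{t+1}=R_{j,r}(\gamma_t)$ with $r>\gamma_{t,j}$. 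Under $(ii)$ no $\gamma_t$ contains $i$ or $i+1$, so any step lowering $|\,\cdot_{<i}|$ must replace an entry $v<i$ by some $r>i+1$; replacing that same entry by $i$ instead gives $\gamma':=R_{j,i}(\gamma_t)$ with $\gamma_t\leq\gamma'\leq\gamma_{t+1}$ and $\eta_2\subseteq\gamma'\subseteq\eta$, which is a valid presentation by Corollary~\ref{setofpairs} and contains $i$, contradicting $(ii)$. Hence $|\beta_{\min,<i}|=|\beta_{\max,<i}|$, and the remaining three equalities follow by complementation (passing to $\alpha=\eta\setminus\beta$).

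The main obstacle is the type-$(b)$ case in $\neg(ii)\Rightarrow\neg(i)$, where $i$ and $i+1$ occupy the same (smaller) summand $\beta$: one cannot directly convert to $\eta^{20}$ and must first separate $i$ from $i+1$ by an exchange, which requires comparing $\alpha_{\varphi_{k,l}(j)}$ against $i+1$ and a separate treatment of $k=l$ through the corresponding clause of Lemma~\ref{interup}. The second delicate point is certifying that the interpolated sequence $\gamma'$ in the $(ii)\Rightarrow(iii)$ step is a genuine presentation; I avoid rechecking the $w^l$-bounds directly by instead verifying $\beta_{\min}\leq\gamma'\leq\beta_{\max}$ and $\eta_2\subseteq\gamma'\subseteq\eta$ and invoking Corollary~\ref{setofpairs}, the containment $\eta_2\subseteq\gamma'$ following because the entry removed at that step is not in $\eta_2$ (as $\gamma_{t+1}$ still contains $\eta_2$).
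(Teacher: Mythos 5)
Your proof is correct, and its overall architecture coincides with the paper's: both directions of $(i)\Leftrightarrow(ii)$ go through converting any presentation that is not of type $(a)$ into a split one and then into a presentation of $\eta^{20}$ (and back via $s_{i}$ and Lemma \ref{Nwipos}), and $(ii)\Leftrightarrow(iii)$ rests on the same squeezing of the counts $|\beta_{<i}|$ and $|\beta_{>i+1}|$ between their values at $\beta_{\min}$ and $\beta_{\max}$. Two sub-steps are implemented differently. For the type-$(b)$ case the paper never touches $\alpha$: it uses Proposition \ref{contmax} to see that each of $i,i+1$ misses at least one of $\beta_{\min},\beta_{\max}$, and then performs a single $R_{j,r}$ move on $\beta$ staying inside $[\beta_{\min},\beta_{\max}]$ that expels one of $i,i+1$, with Corollary \ref{setofpairs} certifying the result is again a presentation. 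You instead apply Lemma \ref{interup} or \ref{interdown} at the entry carrying $i+1$; this does work, but only because the forward chain in those proofs starts at $i+1\notin\alpha$ and therefore terminates immediately, so the migrating element really is $i+1$ --- a point you should make explicit, since the bare statements of those lemmas only promise that \emph{some} exchange preserving the sum occurs. For $(ii)\Rightarrow(iii)$ the paper argues in one step, replacing the largest entry of $\beta_{\min}\cap\eta_{1}$ below $i$ (or the smallest of $\beta_{\max}\cap\eta_{1}$ above $i+1$) by $i$ (or $i+1$) and checking directly that the result stays between $\beta_{\min}$ and $\beta_{\max}$; your chain from $\beta_{\min}$ to $\beta_{\max}$ localizing the drop to a single $R$-move is longer but makes the membership verification via Corollary \ref{setofpairs} essentially automatic. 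Both routes prove the same statement; the paper's is shorter, yours is more self-certifying.
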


\begin{proof}
First we prove that if the converse of $(ii)$ holds, then there is an expression $\eta=\alpha+\beta$ with $\alpha \in A_{l}(w)$ and $\beta \in A_{k}(w)$ such that $i$ and $i+1$ are not in the same summand. Indeed, there is, by assumption, such a sum in which $\beta$ contains either $i$ or $i+1$. We need to show that if it contains both then we can replace $\alpha$ and $\beta$ by $\tilde{\alpha} \in A_{l}(w)$ and $\tilde{\beta} \in A_{k}(w)$ with $\tilde{\alpha}+\tilde{\beta}=\eta$ such that the property involving $i$ and $i+1$ holds.

Indeed, recall from Proposition \ref{contmax} that neither $i$ nor $i+1$ lie in both $\beta_{\min}$ or $\beta_{\max}$ (since they are in $\eta_{1}$), and Corollary \ref{setofpairs} implies that $\beta_{\min}\leq\beta\leq\beta_{\max}$. Thus, there is an operation of the sort $R_{j,r}$ from Definition \ref{replace} that either replaces $i+1$ by a larger number from $\eta_{1}$ but still with $R_{j,r}(\beta)\leq\beta_{\max}$, or puts a smaller number from $\eta_{1}$ in the place of $i$ and still satisfies $R_{j,r}(\beta)\geq\beta_{\min}$. Since $\tilde{\beta}:=R_{j,r}(\beta)$ will also be in $A_{k}^{\eta}(w)$ and with $\beta_{\min}\leq\tilde{\beta}\leq\beta_{\max}$, Corollary \ref{setofpairs} shows that we have $\eta=\tilde{\alpha}+\tilde{\beta}$ for the appropriate $\tilde{\alpha} \in A_{l}(w)$. Since by construction $\tilde{\beta}$ contains precisely one of $i$ and $i+1$, so does $\tilde{\alpha}$, and our claim follows.

Now, if $(ii)$ does not hold then write $\eta=\alpha+\beta$ with $\alpha$ and $\beta$ as in the claim, and replacing $i+1$ by $i$ in the corresponding summand then implies that $\eta^{20}\in\tilde{B}_{k,l}(w)$, hence contradicting $(i)$. Conversely, if $(i)$ does not hold, then there are $\alpha \in A_{l}(w)$ and $\beta \in A_{k}(w)$ such that $\eta^{20}=\alpha+\beta$, with $i$ lying in both. If $\alpha \in A_{l,i}(w)$ and $\beta \in A_{l,i}(w)$, then Lemma \ref{Nwipos} implies that $\eta\not\in\tilde{B}_{k,l}(w)$, contradicting our assumption (the condition $\alpha\neq\beta$ there was only relevant for $\eta^{20}_{1}$ to be non-empty). Thus either $s_{i}\alpha$ or $s_{i}\beta$ are in $A(w)$, and we get a presentation of $\eta$ that contradicts $(ii)$. Hence $(i)$ and $(ii)$ are equivalent.

Next, assume $(iii)$, and write $\eta=\alpha+\beta$ with $\alpha \in A_{l}(w)$ and $\beta \in A_{k}(w)$. From $\beta\leq\beta_{\max}$ we deduce that $|\beta_{<i}|\geq|\beta_{\max,<i}|$ while the inequality $\beta\geq\beta_{\min}$ implies that $|\beta_{>i+1}|\geq|\beta_{\min,>i+1}|$. Since $\beta_{<i}$ and $\beta_{>i+1}$ are disjoint subsets of $\beta$, their sizes sum to at most $k$, with the sum being $k$ if and only if $i$ and $i+1$ are not in $\beta$. But if $\beta$ is $\beta_{\max}$ or $\beta_{\min}$ then our two inequalities are equalities (by our assumption), and they sum to $k$ by what we just saw. Hence $|\beta_{<i}|$ and $|\beta_{>i+1}|$ sum to $k$ for every such $\beta$, meaning that in every presentation as in $(ii)$ neither $i$ nor $i+1$ can be in $\beta$ and they are thus in $\alpha$. This proves that $(iii)$ implies $(ii)$.

Conversely, since $\beta_{\min} \in A_{k}(w)$ and $\alpha_{\min} \in A_{l}(w)$ by Lemma \ref{relsminBkl}, the definition of these elements in Definition \ref{minmaxelts} implies that if $\beta_{\min}$ or $\beta_{\max}$ contains either $i$ or $i+1$ but not both then $(ii)$ does not hold. Moreover, the proof of our claim shows in particular that if one of these sets contains both $i$ and $i+1$ then again $(i)$ and $(ii)$ do not hold. Moreover, since when $i$ and $i+1$ are neither in $\beta_{\min}$ nor in $\beta_{\max}$ the sums $|\beta_{\min,>i+1}|+|\alpha_{\max,\geq i+1}|$ and $|\beta_{\max,>i+1}|+|\alpha_{\min,\geq i+1}|$ both give the size from Equation \eqref{bigentries}, and $|\beta_{\min,<i}|+|\alpha_{\max,<i}|$ and $|\beta_{\max,<i}|+|\alpha_{\min,<i}|$ produce the size from the equation preceding it, it suffices to establish the equalities involving the $\beta$'s.

So assume that $\beta_{\min}$ nor $\beta_{\max}$ contain neither $i$ nor $i+1$, so that the sums $|\beta_{\min,<i}|+|\beta_{\min,>i+1}|$ and $|\beta_{\max,<i}|+|\beta_{\max,>i+1}|$ both equal $k$ as above, and we always have the inequalities $|\beta_{\min,<i}|\geq|\beta_{\max,<i}|$ and $|\beta_{\max,>i+1}|\geq|\beta_{\min,>i+1}|$, which are strict together. But when they are strict, we can replace the maximal entry of $\beta_{\min}\cap\eta_{1}$ that is smaller than $i$, or the minimal one of $\beta_{\max}\cap\eta_{1}$ that is larger than $i+1$, by either $i$ or $i+1$ using the operator $R_{j,i}$ or $R_{j,i+1}$ for the appropriate $j$, and obtain an element of $A_{k}^{\eta}(w)$ that lies between $\beta_{\min}$ and $\beta_{\max}$ and one of $i$ and $i+1$, meaning that $(ii)$ again does not hold. Therefore when $(ii)$ holds, $(iii)$ does too. This completes the proof of the lemma.
\end{proof}
Condition $(iii)$ of Lemma \ref{eta20Bkl} also gives the equalities
$|\alpha_{\min,\leq i}|=|\alpha_{\max,\leq i}|$ and $|\alpha_{\min,<i+1}|=|\alpha_{\max,<i+1}|$, because condition $(ii)$ there implies that $i$ and $i+1$ are in both $\alpha_{\min}$ and $\alpha_{\max}$. For the $\beta$'s changing to the non-strict inequalities does not affect the sets, since neither $i$ nor $i+1$ show up in these sets.

\begin{lem}
With the usual $w$, $i$, and $\eta=\eta^{11}$, which we assume to be in $\tilde{B}_{k,l}(w)$, consider now the related element and $\eta^{02}$, and the following are equivalent:
\begin{enumerate}[$(i)$]
\item The element $\eta^{02}$ does not lie in $\tilde{B}_{k,l}(w)$.
\item If $\alpha \in A_{l}(w)$ and $\beta \in A_{k}(w)$ satisfy $\eta=\alpha+\beta$ and $i$ and $i+1$ are in different summands, then the summand containing $i$ is in $A_{l,i}(w)$ or in $A_{k,i}(w)$.
\item We have $|\beta_{\min,\leq i}|=|\beta_{\max,\leq i}|$ and $|\beta_{\min,\leq i+1}|=|\beta_{\max,\leq i+1}|$, as well as the equalities $|\alpha_{\min,\leq i}|=|\alpha_{\max,\leq i}|$ and $|\alpha_{\min,\geq i+1}|=|\alpha_{\max,\geq i+1}|$.
\end{enumerate} \label{eta02Bkl}
\end{lem}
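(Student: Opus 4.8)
The plan is to establish $(i)\Leftrightarrow(ii)$ and $(ii)\Leftrightarrow(iii)$ in close parallel to Lemma~\ref{eta20Bkl}, the one genuine difference being that for $\eta^{02}$ the relevant move turns an $i$ into an $i+1$ (an \emph{increase}), which is admissible precisely when the summand carrying that $i$ is not stuck, i.e.\ lies in neither $A_{l,i}(w)$ nor $A_{k,i}(w)$. This asymmetry is the source of the ``$\le i$, $\le i+1$'' thresholds appearing in $(iii)$ in place of the ``$<i$, $>i+1$'' ones there.

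First I would settle $(i)\Leftrightarrow(ii)$ by a swap argument. Given a presentation $\eta^{02}=\alpha'+\beta'$ with $\alpha'\in A_l(w)$ and $\beta'\in A_k(w)$, both summands contain $i+1$ and neither contains $i$ (Lemma~\ref{conteta}); applying $s_i$ to one of them replaces an $i+1$ by $i$, and since lowering a value preserves the $w$-bounds the result stays in $A(w)$ and in fact in the $s_i$-invariant piece of Equation~\eqref{Alwdecom}, producing a presentation of $\eta^{11}$ in which $i$ and $i+1$ are separated and the summand carrying $i$ is not stuck. Conversely, from such a separated, non-stuck presentation of $\eta^{11}$ one pushes the $i$ up to $i+1$ (allowed exactly because that summand is not in $A_{l,i}(w)$ nor $A_{k,i}(w)$), landing again in $A(w)$ and yielding $\eta^{02}\in\tilde{B}_{k,l}(w)$. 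Taking contrapositives gives $(i)\Leftrightarrow(ii)$.

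Next, for $(iii)\Rightarrow(ii)$ I would invoke Corollary~\ref{setofpairs}, so that every admissible $\beta$ satisfies $\beta_{\min}\le\beta\le\beta_{\max}$. Monotonicity of $m\mapsto|\beta_{\le m}|$ turns the equalities $|\beta_{\min,\le i}|=|\beta_{\max,\le i}|$ and $|\beta_{\min,\le i+1}|=|\beta_{\max,\le i+1}|$ into the statement that $|\beta_{\le i}|$ and $|\beta_{\le i+1}|$ are constant on this range; hence $|\beta_{=i+1}|$ is constant, and since $i+1\notin\eta_2$ (Proposition~\ref{contmax}) this constant must be $0$, so no admissible $\beta$ contains $i+1$. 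Thus $i+1$ always sits in the $l$-summand, and a separated presentation has $i$ in the $k$-summand at the fixed position $h=|\beta_{\le i}|$. If that summand were not stuck, i.e.\ $w^k_h>i$, then applying $R_{h,i+1}$ to $\beta$ together with the $R$ lowering the $i+1$ in the $l$-summand to $i$ (both legitimate by Lemma~\ref{Rjrprop}) would produce an admissible $\beta$ containing $i+1$, a contradiction; hence every separated presentation is stuck, which is $(ii)$.

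Finally, $(ii)\Rightarrow(iii)$ I would prove in contrapositive form: the failure of any one size equality yields a separated presentation whose $i$-summand is not stuck, whence $\eta^{02}\in\tilde{B}_{k,l}(w)$ by $(i)\Leftrightarrow(ii)$. As in Lemma~\ref{eta20Bkl}, a strict gap $|\beta_{\min,\le m}|>|\beta_{\max,\le m}|$ at $m=i$ or $m=i+1$ is exploited by the replacement operators $R_{j,i}$ or $R_{j,i+1}$ of Lemma~\ref{Rjrprop}: applied to $\beta_{\max}$ or $\beta_{\min}$ they insert an $i$ or an $i+1$ into an admissible summand at a position where the relevant bound exceeds that value, exhibiting a non-pinned $i$; the identity $|\beta_{\min,\le m}|-|\beta_{\max,\le m}|=|\alpha_{\min,\le m}|-|\alpha_{\max,\le m}|$ coming from $\alpha+\beta=\eta$ lets me treat the $\alpha$-conditions simultaneously, and Lemma~\ref{nontrivAli} identifies ``not stuck'' with ``$w^k_\bullet>i$ (resp.\ $w^l_\bullet>i$) at the position of $i$''. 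I expect this last step to be the main obstacle: one must match the numeric gaps in $(iii)$ to explicit $R$-operations while tracking exactly where $i$ and $i+1$ fall relative to the bounds $w^k_\bullet$ and $w^l_\bullet$ — precisely the bookkeeping carried out, in the complementary ``decrease'' direction, in the proof of Lemma~\ref{eta20Bkl}.
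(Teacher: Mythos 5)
Your overall architecture coincides with the paper's: the swap argument for $(i)\Leftrightarrow(ii)$ is exactly the one used there, and your proof of $(iii)\Rightarrow(ii)$ is correct and complete --- deriving first that no admissible $\beta$ contains $i+1$ (via constancy of $|\beta_{\leq i+1}|-|\beta_{\leq i}|$ and Proposition \ref{contmax}), and then ruling out a non-stuck $i$ by the pair of replacements $R_{h,i+1}$ on $\beta$ and $R_{j,i}$ on $\alpha$, which stay admissible by Lemma \ref{Rjrprop} and Corollary \ref{setofpairs}. This is a mild and legitimate variant of the paper's version of that step, which instead observes that passing from $\beta$ to $s_{i}\beta$ transfers one element across the $\leq i$ / $\geq i+1$ divide and so contradicts the constancy directly.

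The genuine gap is in $(ii)\Rightarrow(iii)$, and you have correctly located it yourself. What is needed there is an explicit witness: from the strict gap one must \emph{construct} a presentation $\eta=\alpha+\beta$ that is simultaneously (a) separated, i.e.\ $i\in\beta$ and $i+1\notin\beta$, (b) admissible, i.e.\ $\beta_{\min}\leq\beta\leq\beta_{\max}$ so that a partner $\alpha\in A_{l}(w)$ exists via Corollary \ref{setofpairs}, and (c) non-stuck, i.e.\ the entry of $\beta_{\max}$ (hence of $w^{k}$) at the position of $i$ in $\beta$ strictly exceeds $i$. Your sketch ``insert an $i$ or an $i+1$ into an admissible summand'' does not address (a): if $i+1\in\beta_{\min}$ one cannot simply apply a single $R_{j,r}$ --- the paper first has to remove $i+1$ from $\beta_{\min}$ using the full machinery of Lemma \ref{interup} (which is what guarantees the replacement value can be chosen inside $\eta$ and the result stays admissible with unchanged counts), and only then inserts $i$ by $R_{j,i}$ if it is absent. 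Nor does your sketch carry out (c): the point where the strict inequality $|\beta_{\min,<i}|>|\beta_{\max,<i}|$ is actually consumed is in showing that the entry of $\beta_{\max}$ occupying the same index as the inserted $i$ is $>i$, whence $s_{i}\beta\in A_{k}(w)$ and $\beta\notin A_{k,i}(w)$ by Lemma \ref{nontrivAli}. There is also an unaddressed translation step between the failure of the $\leq i$, $\leq i+1$ equalities of $(iii)$ and the strictness of the $<i$, $>i+1$ inequalities that the replacement argument actually exploits. So the strategy is the right one, but the hardest third of the proof is asserted rather than proved.
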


\begin{proof}
When $(i)$ does not occur, we have $\eta^{02}$ as a sum of an element from $A_{l}(w)$ and one from $A_{k}(w)$, both of which containing $i+1$ and not containing $i$. Replacing $i+1$ by $i$ in any of them produces a presentation $\eta=\alpha+\beta$ from $A_{l}(w)$ and $A_{k}(w)$, with $i+1$ being in one summand and $i$ in the other, and the latter is in $A_{l}^{(i)}(w)$ or $A_{k}^{(i)}(w)$ by our assumption and the definition in Equation \eqref{Alwdecom}. But this contradicts $(ii)$, so that $(ii)$ implies $(i)$. Conversely, if $(ii)$ does not hold then we can write $\eta=\alpha+\beta$ with $i$ and $i+1$ in different summands, such that $\alpha \in A_{l}(w)$, $\beta \in A_{k}(w)$, and the $s_{i}$-image of the summand containing $i$ also lying in $A(w)$. But replacing this summand by its $s_{i}$-image yields an equation implying that $\eta^{20}\in\tilde{B}_{k,l}(w)$, contradicting $(i)$. conditions $(i)$ and $(ii)$ are thus equivalent.

Now, as in the proof of Lemma \ref{eta20Bkl}, the inequalities $\beta_{\min}\leq\beta\leq\beta_{\max}$ arising as usual wherever $\eta=\alpha+\beta$ with $\alpha \in A_{l}(w)$ and $\beta \in A_{k}(w)$ imply that $|\beta_{\min,\leq i}|\geq|\beta_{\leq i}|\geq|\beta_{\max,\leq i}|$ and $|\beta_{\min,\geq i+1}|\geq|\beta_{\geq i+1}|\geq|\beta_{\min,\geq i+1}|$, and for these sets we have $|\beta_{\leq i}|+|\beta_{\geq i+1}|=k$ for every such $\beta$. Assuming $(iii)$, we get that the values of $|\beta_{\leq i}|$ and $|\beta_{\geq i+1}|$ are the same for every such $\beta$, and consider a sum $\eta=\alpha+\beta$ for which $i$ and $i+1$ are in different summands. Since $s_{i}\eta=\eta$, we get $\eta=s_{i}\alpha+s_{i}\beta$, and we deduce that if $i\in\beta$ and $\beta \in A_{k}^{(i)}$, or if $i\in\alpha$ and $\alpha \in A_{l}^{(i)}$, then $s_{i}\alpha \in A_{l}(w)$ and $s_{i}\beta \in A_{k}(w)$. But in the former case going from $\beta$ to $s_{i}\beta$ transfers one element from $\beta_{\leq i}$ to $\beta_{\geq i+1}$ and in the other an element is taken in the other direction, so that the equalities cannot hold. Therefore $(iii)$ implies $(ii)$.

Finally, we recall from the proof of Lemma \ref{eta20Bkl} that $|\beta_{\min,<i}|\geq|\beta_{\max,<i}|$ and $|\beta_{\max,>i+1}|\geq|\beta_{\min,>i+1}|$ and that the inequalities are strict together, and assume, by contradiction to $(iii)$, that they are indeed strict. Note that if $i+1\in\beta_{\min}$, this strict inequality implies that the corresponding entry of $\beta_{\max}$ is larger than $i+1$ (since there is an entry of $\beta_{\max}$ that equals at least $i+1$ and shows up already before). Lemma \ref{interup} produces an element $\tilde{\beta}$ with the same sizes of the sets in the partition as $\beta_{\min}$ and which does not contain $i+1$, and if $i+1$ was not in $\beta_{\min}$ to begin with, we set $\tilde{\beta}$ to be just $\beta_{\min}$. Moreover, if $i\not\in\tilde{\beta}$ then once again the appropriate operator $R_{j,i}$ replaces the maximal entry of $\beta_{\min}\cap\eta_{1}$ that is smaller than $i+1$ by $i$ (again without changing the sizes of the sets in the partition), and we let $\beta$ be the resulting element, or just $\beta=\tilde{\beta}$ in case $i$ was already there.

Since $\beta \in A_{k}^{\eta}(w)$ and satisfies $\beta\geq\beta_{\min}$, Corollary \ref{setofpairs} implies that $\eta=\alpha+\beta$ for $\alpha \in A_{l}(w)$, with $\alpha$ containing $i+1$ and not $i$ by the fact that $\eta=\eta^{11}$ and how we constructed $\beta$. Moreover, the assumption on the sizes of the sets implies that entry of $\beta_{\max}$ that is in the same location as $i$ in $\beta$ is strictly larger than $i$, which means that $s_{i}\beta \in A_{k}(w)$ as well, contradicting $(ii)$. An argument involving complements to $\eta$, as in the proof of Lemma \ref{eta20Bkl}, shows that the equalities for the $\beta$'s in $(iii)$ are equivalent to those for the $\alpha$'s. This proves that $(ii)$ implies $(iii)$, and with it, the lemma.
\end{proof}
A simple application of $s_{i}$, which reduces each $i+1$ to $i$, shows that if $\eta^{20}\in\tilde{B}_{k,l}(w)$ then $\eta^{02}\in\tilde{B}_{k,l}(w)$ (as was also mentioned in the proof of Lemma \ref{Nwipos}). Hence condition $(i)$ of Lemma \ref{eta20Bkl} is stronger than condition $(i)$ of Lemma \ref{eta02Bkl}. To see this with the other conditions, note that if condition $(ii)$ of Lemma \ref{eta20Bkl} holds then so does that of Lemma \ref{eta02Bkl}, as an empty statement. Similarly, when $i$ and $i+1$ are neither in $\beta_{\min}$ nor in $\beta_{\max}$, the sets with the strict inequalities coincide with that having the non-strict ones, and indeed condition $(iii)$ of Lemma \ref{eta20Bkl} implies condition $(iii)$ of Lemma \ref{eta02Bkl}.

\section{Some Explicit Values of Multiplicities \label{Mults}}

Fix $k$ and $l$ with $k \leq l$, and consider a multi-set $\eta=\eta_{1}+2\eta_{2}$, for which we assume as always that $|\eta_{1}|+2|\eta_{2}|=k+l$. We are interested in the multiplicities $m^{k,l}_{\eta}(w)$, as defined via Proposition \ref{expPwinT}, for various $w$. Lemma \ref{conteta} implies that if $\eta$ is the sum of some $\alpha \in A_{l}(w)$ and some $\beta \in A_{k}(w)$ for any $w$ then $|\eta_{2}| \leq k$. Since Theorem \ref{mposBkl} implies that non-zero multiplicities can be found only in elements of the sets $B_{k,l}(w)$ from Definition \ref{Bklwdef}, the latter argument shows that when $|\eta_{2}|>k$ we have $m^{k,l}_{\eta}(w)=0$ for all $w$. Moreover, Proposition \ref{Bkldiff} allows us to restrict attention to the case where $|\eta_{2}|<k-\delta_{k,l}$.

It indeed turns out that the resulting parameter $r:=k-\delta_{k,l}-|\eta_{2}|$, which needs to be positive for giving non-zero multiplicities, plays a more important role in determining the behavior of these multiplicities. For investigating the behavior of these multiplicities using our inductive argument, we shall now check how the sets from Definition \ref{setswitheta} are affected during the induction step.
\begin{lem}
Consider a permutation $w$, and index $i$ with $\ell(s_{i}w)>\ell(w)$, and a multi-set $\eta$ as usual.
\begin{enumerate}[$(i)$]
\item If $\eta$ is $\eta^{00}$, $\eta^{10}$, $\eta^{20}$, $\eta^{21}$, or $\eta^{22}$, then $A_{l}^{\eta}(s_{i}w)=A_{l}^{\eta}(w)$ for every $l$. In particular we have $\alpha_{\max}^{l,s_{i}w}(\eta)=\alpha_{\max}^{l,w}(\eta)$ and $\beta_{\min}^{k,s_{i}w}(\eta)=\beta_{\min}^{k,w}(\eta)$ when this set is not empty and $|\eta_{1}|+2|\eta_{2}|=k+l$.
\item In case $\eta$ is $\eta^{01}$, $\eta^{02}$, or $\eta^{12}$, we have $A_{l}^{\eta}(s_{i}w)=\{s_{i}\alpha|\alpha \in A_{l}^{s_{i}\eta}(w)\}$ for any $l$. Therefore if these sets are not empty and $|\eta_{1}|+2|\eta_{2}|=k+l$ then $\alpha_{\max}^{l,s_{i}w}(\eta)=s_{i}\alpha_{\max}^{l,w}(s_{i}\eta)$ and $\beta_{\min}^{k,s_{i}w}(\eta)=s_{i}\beta_{\min}^{k,w}(s_{i}\eta)$.
\item Moving from $\eta$ to $s_{i}\eta$ does not affect $\eta_{1}$ when $\eta=\eta^{02}$, replaces $i+1$ by $i$ in the same position in case $\eta=\eta^{01}$, and changes $i$ to $i+1$ in the same location if $\eta=\eta^{12}$.
\item When $\eta=\eta^{11}$ and $\alpha_{\max}^{l,w}(\eta) \in A_{l}^{(i)}(w)$ we have $A_{l}^{\eta}(s_{i}w)=A_{l}^{\eta}(w)$, and in particular $\alpha_{\max}^{l,s_{i}w}(\eta)=\alpha_{\max}^{l,w}(\eta)$, as well as $\beta_{\min}^{k,s_{i}w}(\eta)=\beta_{\min}^{k,w}(\eta)$ when the usual equality $|\eta_{1}|+2|\eta_{2}|=k+l$ holds. In this case if $i\in\alpha_{\max}^{l,w}(\eta)$ then $i+1$ is also there.
\item If $\eta=\eta^{11}$ and $\alpha_{\max}^{l,w}(\eta) \in A_{l,i}(w)$, then $\alpha_{\max}^{l,s_{i}w}(\eta)=s_{i}\alpha_{\max}^{l,w}(\eta)>\alpha_{\max}^{l,w}(\eta)$.
\end{enumerate} \label{siwsets}
\end{lem}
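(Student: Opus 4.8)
The plan is to set $\alpha^{*}:=\alpha_{\max}^{l,w}(\eta)$ and prove directly that its $s_{i}$-image is the maximal element of $A_{l}^{\eta}(s_{i}w)$. First I would record the shape of $\alpha^{*}$: since $\alpha^{*}\in A_{l,i}(w)$, Lemma \ref{nontrivAli} gives $\alpha^{*}_{j}=i$ for the index $j$ with $w^{l}_{j}=i$, while $i+1$ does not appear in $\alpha^{*}$ (and $\alpha^{*}_{j+1}>i+1$ when $j<l$). Hence $s_{i}\alpha^{*}$ is obtained simply by putting $i+1$ in slot $j$, with no reordering, so that $s_{i}\alpha^{*}>\alpha^{*}$ (this also follows from Lemma \ref{Rjrprop}), and Proposition \ref{Asiw} places $s_{i}\alpha^{*}$ in $A_{l}(s_{i}w)\setminus A_{l}(w)$. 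Because $\eta=\eta^{11}$ has both $i$ and $i+1$ in $\eta_{1}$, while $\eta_{2}$ is disjoint from $\{i,i+1\}$, the containments $\eta_{2}\subseteq\alpha^{*}\subseteq\eta$ pass to $\eta_{2}\subseteq s_{i}\alpha^{*}\subseteq\eta$, so $s_{i}\alpha^{*}\in A_{l}^{\eta}(s_{i}w)$; in particular this set is non-empty and Proposition \ref{maxelt} guarantees it has a unique maximum.

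Next I would describe $A_{l}^{\eta}(s_{i}w)$ explicitly. By Proposition \ref{Asiw} its elements are either elements of $A_{l}(w)$, contributing exactly $A_{l}^{\eta}(w)$, or of the form $s_{i}\beta$ with $\beta\in A_{l,i}(w)$. For the second type, using $i,i+1\in\eta_{1}$ and $\eta_{2}\cap\{i,i+1\}=\emptyset$ one checks that $\eta_{2}\subseteq s_{i}\beta\subseteq\eta$ holds if and only if $\eta_{2}\subseteq\beta\subseteq\eta$, that is, if and only if $\beta\in A_{l}^{\eta}(w)$. Thus $A_{l}^{\eta}(s_{i}w)=A_{l}^{\eta}(w)\sqcup\{s_{i}\beta\mid\beta\in A_{l,i}(w)\cap A_{l}^{\eta}(w)\}$, and $s_{i}\alpha^{*}$ lies in the second piece.

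It then remains to show $s_{i}\alpha^{*}$ dominates this entire set. Every element of $A_{l}^{\eta}(w)$ is $\leq\alpha^{*}<s_{i}\alpha^{*}$ by maximality of $\alpha^{*}$, so those are handled. For a new element $s_{i}\beta$ with $\beta\in A_{l,i}(w)\cap A_{l}^{\eta}(w)$, Lemma \ref{nontrivAli} again forces $\beta_{j}=i$ at the \emph{same} index $j$ with $w^{l}_{j}=i$, with $i+1\notin\beta$; hence both $\beta$ and $\alpha^{*}$ carry $i$ in slot $j$, and applying $s_{i}$ changes precisely that entry to $i+1$ in each while fixing all others. Since $\beta\leq\alpha^{*}$ entrywise and the two sequences now agree at slot $j$, the entrywise inequality is preserved, giving $s_{i}\beta\leq s_{i}\alpha^{*}$. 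Therefore $s_{i}\alpha^{*}$ is $\geq$ every element of $A_{l}^{\eta}(s_{i}w)$, and by the uniqueness in Proposition \ref{maxelt} it is the maximum, i.e. $\alpha_{\max}^{l,s_{i}w}(\eta)=s_{i}\alpha^{*}$.

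The main obstacle is the order-preservation in the last paragraph: $s_{i}$ is not monotone on general increasing sequences, and the argument works only because \emph{all} members of $A_{l,i}(w)$ carry their forced entry $i$ in the common position $j$ determined by $w^{l}$, so that on this subset $s_{i}$ acts as the single coordinatewise substitution $i\mapsto i+1$ at slot $j$, which is visibly order-preserving. Pinning this positional fact down via Lemma \ref{nontrivAli} is the crux; the containment bookkeeping with $\eta_{1}$ and $\eta_{2}$ is then routine.
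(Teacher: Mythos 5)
Your proposal proves only part $(v)$ of the lemma; parts $(i)$--$(iv)$ are not addressed at all, and they are not corollaries of what you wrote. Part $(i)$ needs the observation that the new elements of $A_{l}(s_{i}w)$, namely the $s_{i}$-images of elements of $A_{l,i}(w)$, contain $i+1$ but not $i$ and therefore violate the condition $\eta_{2}\subseteq\alpha\subseteq\eta$ for the types $\eta^{00}$, $\eta^{10}$, $\eta^{20}$, $\eta^{21}$, and $\eta^{22}$, whence $A_{l}^{\eta}(s_{i}w)=A_{l}^{\eta}(w)$. Part $(ii)$ reduces to part $(i)$ via the identity $A_{l}^{\eta}(s_{i}w)=\{s_{i}\alpha\,|\,\alpha \in A_{l}^{s_{i}\eta}(s_{i}w)\}$, but the assertions about $\alpha_{\max}$ and $\beta_{\min}$ additionally require checking that $s_{i}$ preserves the partial order between $A_{l}^{s_{i}\eta}(w)$ and $A_{l}^{\eta}(s_{i}w)$ in both directions, which is genuinely delicate for the type $\eta^{12}$ (where some sequences contain only $i$, others only $i+1$, and others both). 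Part $(iv)$ requires showing that when $\alpha_{\max}^{l,w}(\eta)\in A_{l}^{(i)}(w)$ every $s_{i}$-image $s_{i}\tilde{\alpha}$ of an element $\tilde{\alpha}\in A_{l}^{\eta}(w)$ is still bounded by $\alpha_{\max}$, which forces a case split according to whether $\alpha_{\max}$ contains neither of $i$ and $i+1$, only $i+1$, or both (the possibility of containing only $i$ being excluded because $s_{i}\alpha_{\max}\leq\alpha_{\max}$ by maximality, which is also where the final claim of part $(iv)$ comes from). None of this appears in your write-up.

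For part $(v)$ itself your argument is correct and matches the paper's structure: decompose $A_{l}^{\eta}(s_{i}w)$ into $A_{l}^{\eta}(w)$ and the relevant $s_{i}$-images, note $s_{i}\alpha_{\max}>\alpha_{\max}$, and bound both pieces by $s_{i}\alpha_{\max}$. Your way of handling the order-preservation for the new elements --- observing via Lemma \ref{nontrivAli} that every member of $A_{l,i}(w)$ carries $i$ in the common slot $j$ with $w^{l}_{j}=i$, so that $s_{i}$ acts on this subset as a single coordinatewise substitution at slot $j$ --- is a clean positional variant of the paper's argument, which instead deduces $s_{i}\tilde{\alpha}\leq s_{i}\alpha_{\max}$ from $\tilde{\alpha}\leq s_{i}\alpha_{\max}$ using that the bounds coming from $s_{i}\alpha_{\max}$ involve $i+1$ and not $i$. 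As a proof of the lemma as stated, however, the proposal is incomplete.
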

Note that the statements about the minimal elements in Lemma \ref{siwsets} do not assume that $k \leq l$, and therefore hold also when we interchange the roles of $\alpha$ and $l$ with those of $\beta$ and $k$ to get this inequality when required.

\begin{proof}
For part $(i)$, note that $A_{l}^{\eta}(w) \subseteq A_{l}^{\eta}(s_{i}w)$, so that if the latter is empty then so is the former and the result holds trivially. Assuming now that it is not empty, recall from Proposition \ref{Asiw} that $A_{l}(s_{i}w)$ is the union of $A_{l}(w)$ and $\{s_{i}(\alpha)|\alpha \in A_{l,i}(w)\}$, and that elements of the latter set contain $i+1$ but not $i$ (by Lemma \ref{nontrivAli} and applying $s_{i}$). But if $\alpha \in A_{l}^{\eta}(s_{i}w)$ then the type of $\eta$ implies that if $i+1$ is in that set then so is $i$. Hence such $\alpha$ is not an $s_{i}$-image of an element from $A_{l,i}(w)$, and we thus have $\alpha \in A_{l}(w)$ and hence $\alpha \in A_{l}^{\eta}(w)$, yielding the reverse inclusion as well. Considering the maximal element in both sets yields the result about $\alpha_{\max}$, and the one for $\beta_{\min}$ now follows directly from Definition \ref{minmaxelts}. Part $(i)$ is thus established.

Turning to part $(ii)$, Proposition \ref{Asiw} implies that $s_{i}$ preserves $A(s_{i}w)$, and it is thus clear from Definition \ref{setswitheta} that $A_{l}^{\eta}(s_{i}w)=\{s_{i}\alpha|\alpha \in A_{l}^{s_{i}\eta}(s_{i}w)\}$. But if $\eta$ is of one of the types from part $(ii)$ then the type of $s_{i}\eta$ shows up in part $(i)$, and by applying this part we can replace $A_{l}^{s_{i}\eta}(s_{i}w)$ by $A_{l}^{s_{i}\eta}(w)$ and obtain the assertion about the sets. Now, when $\eta$ is $\eta^{01}$ and $\eta^{02}$, $i$ does not show up and any occurrence of $i+1$ satisfies the same inequalities as $i$ there, so that $s_{i}$ preserves the orders when it acts on $A_{l}^{s_{i}\eta}(w)$ and $A_{l}^{\eta}(s_{i}w)$, in both directions. If $\eta=\eta^{12}$ then the same assertion is clear when both $i$ and $i+1$ are present or when only $i+1$ shows up in the two compared sequences (or only $i$ appears, when considering $s_{i}\eta=\eta^{12}$). It is also clear in case $\alpha$ contains only $i+1$ (thus only $i$ is in $s_{i}\alpha$) while $\tilde{\alpha}$ contains both $i$ and $i+1$ if $\alpha\leq\tilde{\alpha}$ (resp. $s_{i}\alpha\geq\tilde{\alpha}$), since $s_{i}\alpha\leq\alpha$ and $\tilde{\alpha}$ is $s_{i}$-invariant. When $\alpha\geq\tilde{\alpha}$, the position of $i$ in $\tilde{\alpha}$ must be compared with an entry of $\alpha$ that is at least $i+1$ (and similarly in case $s_{i}\alpha\leq\tilde{\alpha}$, the position of $i+1$ in $\tilde{\alpha}$ is compared with one from $s_{i}\alpha$ that is at most $i$), so that $s_{i}\alpha\geq\tilde{\alpha}$ (resp. $\alpha\leq\tilde{\alpha}$) as well. Therefore considering the maximal elements yields the formula for $\alpha_{\max}$ also here, and with Definition \ref{minmaxelts} the one about $\beta_{\min}$ again follows. This proves part $(ii)$.

Next, when $\eta=\eta^{02}$ neither $i$ nor $i+1$ lie in $\eta_{1}$, while if $\eta=\eta^{01}$ then $i+1\in\eta_{1}$ with $i$ not being in that set, and in case $\eta=\eta^{12}$ we have $i\in\eta_{1}$ but $i+1$ is not there. Part $(iii)$ now easily follows.

Now, when $\eta=\eta^{11}$ the $\eta$-condition from Definition \ref{setswitheta} is $s_{i}$-invariant, and Proposition \ref{Asiw} presents $A_{l}(s_{i}w)$ as the union of $A_{l}(w)$ and its $s_{i}$-images. Therefore the set $A_{l}^{\eta}(s_{i}w)$ contains the elements of $A_{l}^{\eta}(w)$ and their $s_{i}$-images, and in particular $\alpha_{\max}=\alpha_{\max}^{l,w}(\eta)$ and $s_{i}\alpha_{\max}$ are there. If $\alpha_{\max} \in A_{l}^{(i)}(w)$ then its $s_{i}$-image is also in $A_{l}^{\eta}(w)$, hence we have $s_{i}\alpha_{\max}\leq\alpha_{\max}$, meaning that $\alpha_{\max}$ either contains both $i$ and $i+1$, or contains neither, or contains only $i+1$ and not $i$ (this proves the last assertion in part $(iv)$). When $\alpha_{\max} \in A_{l,i}(w)$, Lemma \ref{nontrivAli} implies that $s_{i}\alpha_{\max}$ contains $i+1$ and not $i$, and we have $s_{i}\alpha_{\max}>\alpha_{\max}$.

Given any element $\alpha \in A_{l}^{\eta}(s_{i}w)$, if $\alpha \in A_{l}^{\eta}(w)$ then $\alpha\leq\alpha_{\max}$ by Proposition \ref{maxelt}, and otherwise $\alpha=s_{i}\tilde{\alpha}$ with $\tilde{\alpha} \in A_{l}^{\eta}(w)$ and $\tilde{\alpha}\leq\alpha_{\max}$. When $\alpha_{\max}$ contains neither $i$ nor $i+1$, this bound remains invariant under $s_{i}$ and we again have $\alpha\leq\alpha_{\max}$, and if $\alpha_{\max}$ contains only $i+1$ then again $\tilde{\alpha}\leq\alpha_{\max}$ implies $\alpha=s_{i}\tilde{\alpha}\leq\alpha_{\max}$ because the bounds from $\alpha_{\max}$ involve $i+1$ and not $i$. If both $i$ and $i+1$ are in $\alpha_{\max}$ then the proof of part $(ii)$ shows that again $\tilde{\alpha}\leq\alpha_{\max}$ implies $\alpha\leq\alpha_{\max}$, and since $\alpha_{\max} \in A_{l}(w)$ we get that $\alpha \in A_{l}(w)$ as well. Hence when $\alpha_{\max} \in A_{l}^{(i)}(w)$ we get the inclusion $A_{l}^{\eta}(s_{i}w) \subseteq A_{l}^{\eta}(w)$ as well, yielding the equality of sets from part $(iv)$ and with it the equality of the maximal elements hence also of the minimal ones via Definition \ref{minmaxelts}.

When $\alpha_{\max}^{l,w}(\eta) \in A_{l,i}(w)$ we combine all the inequalities involving $\alpha$ or $\tilde{\alpha}$ from $A_{l}^{\eta}(w)$ and $\alpha$ with the inequality $s_{i}\alpha_{\max}>\alpha_{\max}$. Hence for $\alpha \in A_{l}^{\eta}(w)$ we get $\alpha<s_{i}\alpha_{\max}$, while if $\alpha=s_{i}\tilde{\alpha}$ for $\tilde{\alpha} \in A_{l}^{\eta}(w)$ then $\tilde{\alpha}<s_{i}\alpha_{\max}$, and the fact that $s_{i}\alpha_{\max}$ contains $i+1$ and not $i$ yields $\alpha \leq s_{i}\alpha_{\max}$ as well before. This proves part $(iv)$, and with it, the lemma.
\end{proof}

We can now prove the result about the multiplicities in the case $r=1$.
\begin{thm}
Take a permutation $w$, and assume that $l$, $k$, and $\eta=\eta_{1}+2\eta_{2}$ are such that either $l>k=|\eta_{2}|+1$ or $l=k=|\eta_{2}|+2$. Then the multiplicity $m^{k,l}_{\eta}(w)$ equals 1 when $\eta \in B_{k,l}(w)$, and 0 otherwise. \label{diff1}
\end{thm}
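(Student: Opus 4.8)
The plan is to argue by induction on $\ell(w)$, exactly as in the proof of Theorem \ref{mposBkl} and the surrounding lemmas. The ``$0$ otherwise'' half is immediate, since Theorem \ref{mposBkl} already forces $m^{k,l}_{\eta}(w)=0$ whenever $\eta\notin B_{k,l}(w)$; the entire content is therefore to show that the value is exactly $1$ for $\eta\in B_{k,l}(w)$. The hypotheses $l>k=|\eta_2|+1$ and $l=k=|\eta_2|+2$ are precisely the two ways of saying that the parameter $r:=k-\delta_{k,l}-|\eta_2|$ equals $1$, and since $r$ depends only on $k$, $l$, and $|\eta_2|$ (and is invariant under $s_i$), this condition persists throughout the induction. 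The base case $w=\operatorname{Id}$ holds vacuously because $P_{\operatorname{Id},2}=0$ and $B_{k,l}(\operatorname{Id})=\emptyset$. For the step I fix $w$ satisfying the claim and $i$ with $\ell(s_iw)>\ell(w)$, take $\eta\in B_{k,l}(s_iw)$, and compute $m_\eta(s_iw)$ using Lemma \ref{multsiw} together with its simplification in Corollary \ref{meta11siw}.

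For every type $\eta^{ab}$ with $(a,b)\neq(1,1)$ the transition is painless. Lemma \ref{multsiw} expresses $m_{\eta^{ab}}(s_iw)$ either as $m_{\eta^{ab}}(w)$ or as $m_{s_i\eta^{ab}}(w)$, and by Proposition \ref{Bsiw} the only elements of $B_{k,l}(s_iw)\setminus B_{k,l}(w)$ are of type $\eta^{11}$, being sums of an element of $A_{l,i}(w)$ with the $s_i$-image of one of $A_{k,i}(w)$. Since $B_{k,l}(w)$ is itself preserved by $s_i$ (a consequence of the $s_i$-invariance of the $A_l(w)$ in Proposition \ref{Asiw}), membership $\eta^{ab}\in B_{k,l}(s_iw)$ forces $\eta^{ab}\in B_{k,l}(w)$, and hence $s_i\eta^{ab}\in B_{k,l}(w)$ as well. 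The induction hypothesis then makes the relevant multiplicity for $w$ equal to $1$, so $m_{\eta^{ab}}(s_iw)=1$.

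The real work is the case $\eta=\eta^{11}$, and the crucial observation is that the two associated multi-sets $\eta^{20}$ and $\eta^{02}$ both satisfy $r=0$. Indeed, Remark \ref{inddecom} gives $\eta^{20}_2=\eta^{11}_2\cup\{i\}$ and $\eta^{02}_2=\eta^{11}_2\cup\{i+1\}$, so that $|\eta^{20}_2|=|\eta^{02}_2|=|\eta^{11}_2|+1=k-\delta_{k,l}$; by Proposition \ref{Bkldiff} neither multi-set lies in $B_{k,l}(w)$, whence $m_{\eta^{20}}(w)=m_{\eta^{02}}(w)=0$ by Theorem \ref{mposBkl}. Now I split according to Corollary \ref{meta11siw}. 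If $\eta^{11}\in B_{k,l}(w)$ then $m_{\eta^{11}}(s_iw)=m_{\eta^{11}}(w)+\bigl(m_{\eta^{20}}(w)-m_{\eta^{02}}(w)\bigr)=1+0=1$, using the induction hypothesis for $m_{\eta^{11}}(w)$. If instead $\eta^{11}\in B_{k,l}(s_iw)\setminus B_{k,l}(w)$ then $m_{\eta^{11}}(s_iw)=m_{\eta^{20}}(w)+m_{\eta^{02}}(N_{w,i}^{(2)})=m_{\eta^{02}}(N_{w,i}^{(2)})$, and it remains only to show this last number is $1$.

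This last identity is the one genuine obstacle, and I would settle it by a direct count. By Lemma \ref{Nwipos} the hypothesis $\eta^{11}\in B_{k,l}(s_iw)\setminus B_{k,l}(w)$ is equivalent to $\eta^{20}\in\tilde{B}_{k,l}(w)$ with $\eta^{20}_1\neq\emptyset$ and every presentation $\eta^{20}=\alpha+\beta$ using $\alpha\in A_{l,i}(w)$ and $\beta\in A_{k,i}(w)$; moreover $m_{\eta^{02}}(N_{w,i}^{(2)})$ literally counts the unordered pairs of distinct factors of $N_{w,i}$ whose product is $x^{\eta^{02}}T_kT_l$, that is, the presentations $\eta^{20}=\alpha+\beta$ of this form. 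Because $\eta^{20}$ has $r=0$, such a splitting is forced already at the level of abstract multi-sets: when $k<l$ we have $|\eta^{20}_2|=k$, so $\beta$ must be exactly $\eta^{20}_2$ and $\alpha$ its complement in $\eta^{20}$; when $k=l$ we have $|\eta^{20}_2|=k-1$ and $|\eta^{20}_1|=2$, so the two elements of $\eta^{20}_1$ are distributed one to each summand, yielding a single unordered pair $\{\alpha,\beta\}$ with $\alpha\neq\beta$. Since $\eta^{20}\in\tilde{B}_{k,l}(w)$ guarantees that this unique abstract splitting is realized inside $A(w)$, the count is exactly $1$, which completes the induction and the proof.
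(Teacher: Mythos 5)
Your overall strategy is the paper's own: induction on $\ell(w)$, the reduction of the transition step to Lemma \ref{multsiw} and Corollary \ref{meta11siw}, the observation that $\eta^{20}$ and $\eta^{02}$ have $r=0$ and hence vanishing multiplicity, and the final count of $m_{\eta^{02}}(N_{w,i}^{(2)})$ via the forced splitting of $\eta^{20}$ all match the published argument. The one place where you diverge is also where the argument breaks: the claim that ``$B_{k,l}(w)$ is itself preserved by $s_{i}$'' is false. What Proposition \ref{Asiw} actually establishes is the $s_{i}$-invariance of $A_{l}(s_{i}w)$ (equivalently, that $s_{i}$ interchanges $A_{l,i}(w)$ with $A_{l}(s_{i}w)\setminus A_{l}(w)$); the set $A_{l}(w)$ itself is not $s_{i}$-stable, since $A_{l,i}(w)$ is by definition the set of its elements whose $s_{i}$-images leave it. Concretely, for $w=42531$ and $i=2$ one has $22345\in B_{2,3}(w)$, while $s_{2}(22345)=23345$ is not even in $\tilde{B}_{2,3}(w)$: the only candidate splitting is $23+345$, and $345\not\leq w^{3}=245$. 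Consequently your deduction that $\eta^{ab}\in B_{k,l}(s_{i}w)$ forces both $\eta^{ab}\in B_{k,l}(w)$ and $s_{i}\eta^{ab}\in B_{k,l}(w)$ is unjustified, and its conclusion is false in general (for the same $w$ and $i$, $\eta^{20}=12245$ lies in $B_{2,3}(w)$ but $\eta^{02}=13345$ does not lie in $\tilde{B}_{2,3}(w)$, even though both lie in $B_{2,3}(s_{2}w)$ by Proposition \ref{Bsiw}).

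The gap is repairable because in each case only one of the two memberships is actually needed, namely that of the representative having at least as many $i$'s as $(i+1)$'s: for $\eta$ of type $\eta^{00}$, $\eta^{10}$, $\eta^{20}$, $\eta^{21}$, or $\eta^{22}$ Lemma \ref{multsiw} requires $\eta\in B_{k,l}(w)$, and for $\eta^{01}$, $\eta^{02}$, or $\eta^{12}$ it requires $s_{i}\eta\in B_{k,l}(w)$. Proposition \ref{Bsiw} places such an $\eta$ in $B_{k,l}(w)\cup s_{i}B_{k,l}(w)$, and one then observes that replacing every occurrence of $i+1$ by $i$ in each summand of a presentation keeps the summands in $A(w)$ (the bounds only become easier to satisfy), so whichever of $\eta$ and $s_{i}\eta$ lies in $B_{k,l}(w)$, the one with $a\geq b$ does too; alternatively, one can argue as the paper does via part $(i)$ of Lemma \ref{siwsets}, which shows directly that any presentation of such an $\eta$ over $s_{i}w$ already uses summands from $A(w)$. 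With that paragraph replaced, the remainder of your proof coincides with the paper's.
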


\begin{proof}
The fact that $m_{\eta}(w)=0$ when $\eta \not\in B_{k,l}(w)$ is covered in Theorem \ref{mposBkl}, so we apply the same induction as usual to prove that if $\eta \in B_{k,l}(w)$ then $m_{\eta}(w)=1$. For the base case $w=\operatorname{Id}$, Remark \ref{forId} shows that $A_{l}(w)$ and $A_{k}(w)$ are singletons, with the unique element of the latter contained in that of the former as sets. It follows that every element $\eta$ of the set $\tilde{B}_{k,l}(\operatorname{Id})$ satisfies $|\eta_{2}|=k$ and thus $B_{k,l}(\operatorname{Id})=\emptyset$ by Proposition \ref{Bkldiff}. Since $P_{\operatorname{Id},2}=0$, the result holds for this case.

So assume that the assertion holds for some permutation $w$, and as always we take an index $i$ with $\ell(s_{i}w)>\ell(w)$. Consider an appropriate multi-set $\eta$, and note that if $\eta$ is $\eta^{00}$, $\eta^{10}$, $\eta^{20}$, $\eta^{21}$, or $\eta^{22}$ then $m_{\eta}(s_{i}w)=m_{\eta}(w)$ by Lemma \ref{multsiw}. Now, if such $\eta$ is in $B_{k,l}(s_{i}w)$, or equivalently $\tilde{B}_{k,l}(s_{i}w)$ because the inequality from Proposition \ref{Bkldiff} holds by our initial assumption on $\eta$, then consider any presentation of it as the sum of $\alpha \in A_{l}(s_{i}w)$ and $\beta \in A_{k}(s_{i}w)$. We get $\alpha \in A_{l}^{\eta}(s_{i}w)$ and $\beta \in A_{k}^{\eta}(s_{i}w)$ by Lemma \ref{conteta} as usual, and thus part $(i)$ of Lemma \ref{siwsets} implies that $\alpha \in A_{l}^{\eta}(w) \subseteq A_{l}(w)$ and $\beta \in A_{k}^{\eta}(w) \subseteq A_{k}(w)$, and we get $\eta\in\tilde{B}_{k,l}(w)$ and thus $\eta \in B_{k,l}(w)$ via Proposition \ref{Bkldiff}. It follows from our induction hypothesis that $m_{\eta}(s_{i}w)=m_{\eta}(w)=1$, and the result is established for these types of $\eta$.

We now take $\eta$ that is either $\eta^{01}$, $\eta^{02}$, or $\eta^{12}$, so that Lemma \ref{multsiw} gives $m_{\eta}(s_{i}w)=m_{s_{i}\eta}(w)$. When $\eta$ is in $B_{k,l}(s_{i}w)$, Proposition \ref{Bsiw} shows that $s_{i}\eta$ is also in that set, and since it is of one of the types mentioned in the previous paragraph, the argument there implies that $s_{i}\eta \in B_{k,l}(w)$ and thus $m_{\eta}(s_{i}w)=m_{s_{i}\eta}(w)=1$ by the induction hypothesis once again.

Finally we consider the case where $\eta=\eta^{11}$, and then since the relations from Remark \ref{inddecom} imply that $\eta^{20}$ and $\eta^{02}$ no longer satisfy the inequality from Proposition \ref{Bkldiff}, they are not in $B_{k,l}(w)$ and hence Theorem \ref{mposBkl} reduces the expression for $m_{\eta^{11}}(s_{i}w)$ in Lemma \ref{multsiw} to $m_{\eta^{11}}(w)+m_{\eta^{02}}(N_{w,i}^{(2)})$. As the induction hypothesis implies that the first term is 1 when $\eta^{11} \in B_{k,l}(w)$ and 0 when $\eta^{11} \not\in B_{k,l}(w)$, and Lemma \ref{Nwipos} yields that the second term is positive when $\eta^{11} \in B_{k,l}(s_{i}w) \setminus B_{k,l}(w)$ and vanishes otherwise, it only remains to verify that when this term is positive, it equals 1.

But the proof of the equivalence of $(i)$ and $(ii)$ in Lemma \ref{Nwipos} implies that this multiplicity is the number of pairs, which for $k=l$ we take to be unordered, of $\alpha \in A_{l,i}(w)$ and $\beta \in A_{k,i}(w)$, or equivalently $\alpha \in A_{l}(w)$ and $\beta \in A_{k}(w)$ in general, such that $\alpha+\beta=\eta^{20}$. Moreover, as $\eta^{20}\in\tilde{B}_{k,l}(w)$ by Lemma \ref{Nwipos}, we know that at least one such pair exists. If $k<l$ then Remark \ref{inddecom} and our condition on $|\eta_{2}|=|\eta^{11}_{2}|$ implies that $|\eta^{20}_{2}|=k$, so that the proof of Proposition \ref{Bkldiff} shows that there is only one such possible pair, namely $\beta=\eta^{20}_{2}$ and $\alpha=\eta^{20}_{1}\cup\eta^{20}_{2}$, and the desired number is indeed 1. When $k=l$ the same considerations give $|\eta^{20}_{2}|=k-1$ and $|\eta^{20}_{1}|=2$, and the proof of Proposition \ref{Bkldiff} again gives only a single pair up to ordering, and the number is 1 also in this case. This completes the proof of the theorem.
\end{proof}
Note that in all the expressions around Equation \eqref{P31425}, every element $\eta$ in some set $B_{k,l}$ satisfies $|\eta_{2}|=k-1$ (and $k<l$), and they all indeed show up with multiplicity 1, as Theorem \ref{diff1} predicts.

\smallskip

Before we turn to higher cases of $r$, we prove a complement of Lemma \ref{siwsets}, involving the more delicate case where $\eta=\eta^{11}$.
\begin{lem}
Take $\eta=\eta^{11}$, with $k$, $l$, $w$, and $i$ as above, set $\tilde{\eta}=\eta^{20}$, and assume that $A_{l}^{\tilde{\eta}}(w)$ contains an element of $A_{l,i}(w)$. The the following assertions hold:
\begin{enumerate}[$(i)$]
\item The sequence $\alpha_{\max}^{l,w}(\tilde{\eta})$ is also in $A_{l,i}(w)$.
\item We have the equality $\alpha_{\max}^{l,w}(\eta)=\alpha_{\max}^{l,w}(\tilde{\eta})$.
\item The equality $\alpha_{\max}^{l,s_{i}w}(\eta)=s_{i}\alpha_{\max}^{l,w}(\eta)$ also holds.
\item When $|\eta_{1}|+2|\eta_{2}|=k+l$, we have the equalities $\beta_{\min}^{l,w}(\eta)=s_{i}\beta_{\min}^{l,w}(\tilde{\eta})$ and $\beta_{\min}^{k,s_{i}w}(\eta)=\beta_{\min}^{k,w}(\tilde{\eta})$, with the latter element containing $i$ and not $i+1$.
\end{enumerate} \label{eta11siw}
\end{lem}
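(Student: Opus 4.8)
The plan is to prove the four parts in sequence, exploiting the single structural fact underlying them all: by Remark \ref{inddecom} we have $i\in\tilde\eta_2$ and $i+1\notin\tilde\eta$, so every element of $A_l^{\tilde\eta}(w)$ contains $i$ and omits $i+1$, and in fact $A_l^{\tilde\eta}(w)$ is precisely the subset of $A_l^{\eta}(w)$ consisting of sequences that contain $i$ but not $i+1$ (the inclusions $\eta_2\subseteq\tilde\eta_2$ and $\operatorname{supp}(\tilde\eta)=\operatorname{supp}(\eta)\setminus\{i+1\}$ give both directions). To settle part $(i)$, note that since $A_l^{\tilde\eta}(w)$ meets $A_{l,i}(w)$ by hypothesis, the set $A_{l,i}(w)$ is non-empty, so Lemma \ref{nontrivAli} supplies a unique index $j$ with $w^l_j=i$, and the assumed common element $\alpha^{\ast}$ satisfies $\alpha^{\ast}_j=i$. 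Writing $\gamma:=\alpha_{\max}^{l,w}(\tilde\eta)$ for the maximal element provided by Proposition \ref{maxelt}, domination gives $\gamma_j\ge\alpha^{\ast}_j=i$, while the bound $\gamma\le w^l$ forces $\gamma_j\le w^l_j=i$; hence $\gamma_j=i$. As $\gamma\in A_l^{\tilde\eta}(w)$ it cannot contain $i+1$, so $\gamma$ has $i$ in position $j$ and omits $i+1$, which by Lemma \ref{nontrivAli} places it in $A_{l,i}(w)$, as claimed.

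For part $(ii)$, the inclusion $A_l^{\tilde\eta}(w)\subseteq A_l^{\eta}(w)$ noted above gives $\gamma\le\alpha_M$, where $\alpha_M:=\alpha_{\max}^{l,w}(\eta)$; it therefore suffices to prove $\alpha_M\in A_l^{\tilde\eta}(w)$, since then $\alpha_M\le\gamma$ and the two maxima coincide. From $\gamma_j=i\le(\alpha_M)_j\le w^l_j=i$ I get $(\alpha_M)_j=i$, so $i$ sits in position $j$ and any occurrence of $i+1$ in $\alpha_M$ must lie in some position $p>j$. But then $\gamma_p\ge i+2$ (since $\gamma$ is increasing with $\gamma_j=i$ and omits $i+1$), contradicting $\gamma_p\le(\alpha_M)_p=i+1$; hence $i+1\notin\alpha_M$ and part $(ii)$ follows. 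Part $(iii)$ is then immediate: parts $(i)$ and $(ii)$ give $\alpha_{\max}^{l,w}(\eta)=\gamma\in A_{l,i}(w)$, so part $(v)$ of Lemma \ref{siwsets} applies verbatim and yields $\alpha_{\max}^{l,s_iw}(\eta)=s_i\gamma=s_i\alpha_{\max}^{l,w}(\eta)$.

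Finally, part $(iv)$ is obtained by passing to complements inside $\eta$ and $\tilde\eta$ through Definition \ref{minmaxelts}, the only subtlety being the bookkeeping of $i$ and $i+1$. Because $\gamma$ contains a single $i$ and no $i+1$, while $\eta$ carries one $i$ and one $i+1$ and $\tilde\eta$ carries two $i$'s and no $i+1$, the complement $\tilde\eta-\gamma=\beta_{\min}^{k,w}(\tilde\eta)$ contains exactly one $i$ and no $i+1$, which gives the stated membership. Complementing the equality $\alpha_{\max}^{l,w}(\eta)=\alpha_{\max}^{l,w}(\tilde\eta)$ of part $(ii)$ then produces the first asserted equality, the lone swapped coordinate accounting for the factor $s_i$, while complementing part $(iii)$ gives $\eta-s_i\gamma=\tilde\eta-\gamma$, which is the second asserted equality with no $s_i$ (the passage from $\gamma$ to $s_i\gamma$ exactly cancels the $i\leftrightarrow i+1$ discrepancy between $\eta$ and $\tilde\eta$).

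The main obstacle is part $(i)$: the content is not that $A_l^{\tilde\eta}(w)$ meets $A_{l,i}(w)$ (that is assumed) but that the maximal element $\gamma$ \emph{itself} lands there, and this is exactly what the domination argument anchored at the distinguished index $j$ with $w^l_j=i$ delivers. The remaining work in $(ii)$–$(iv)$ is the careful tracking of the positions of $i$ and $i+1$; in $(iv)$ one must additionally keep straight, through Definition \ref{minmaxelts}, which maximal element each minimal element complements, so that the factors $s_i$ attach to the correct equalities.
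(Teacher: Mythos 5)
Your proof is correct and follows essentially the same route as the paper's: the domination argument anchored at the index $j$ with $w^{l}_{j}=i$ for part $(i)$, the identification of $A_{l}^{\tilde{\eta}}(w)$ as the elements of $A_{l}^{\eta}(w)$ containing $i$ but not $i+1$ together with a squeeze between the two maxima for part $(ii)$, part $(v)$ of Lemma \ref{siwsets} for part $(iii)$, and complementation inside $\eta$ and $\tilde{\eta}$ for part $(iv)$. The only cosmetic difference is that in part $(ii)$ you exclude $i+1$ from $\alpha_{\max}^{l,w}(\eta)$ directly by contradiction with $\alpha_{\max}^{l,w}(\tilde{\eta})\leq\alpha_{\max}^{l,w}(\eta)$, whereas the paper re-runs the part-$(i)$ argument with $\eta$ in place of $\tilde{\eta}$; both are equally valid.
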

Also in part $(iv)$ of Lemma \ref{eta11siw} we do not assume that $k \leq l$, so that we can do the same interchanges when applying that lemma as well.

\begin{proof}
Since $A_{l,i}(w)\neq\emptyset$, we have $w^{-1}(i) \leq l<w^{-1}(i+1)$ by Lemma \ref{nontrivAli}, so that there is $1 \leq j \leq l$ such that $w^{l}_{j}=i$ and $i+1$ is not in $w^{l}$. Our assumption yields, through that lemma, that some element $\alpha \in A_{l}^{\tilde{\eta}}(w)$ satisfies $\alpha_{j}=i$, and $\alpha$ does not contain $i+1$. Since Proposition \ref{maxelt} and the fact that $\alpha_{\max}^{l,w}(\tilde{\eta}) \in A_{l}^{\tilde{\eta}}(w) \subseteq A_{l}(w)$ give, via Definition \ref{Asetdef}, that $\alpha\leq\alpha_{\max}^{l,w}(\tilde{\eta}) \leq w^{l}$, the $j$th entry of $\alpha_{\max}^{l,w}(\tilde{\eta})$ must also be $i$ and the next one strictly larger than $i+1$. Part $(i)$ thus follows from another application of Lemma \ref{nontrivAli}.

Now, as Remark \ref{inddecom} yields $\eta_{2}\subseteq\tilde{\eta}_{2}$ but $\tilde{\eta}_{1}\cup\tilde{\eta}_{2}\subseteq\eta_{1}\cup\eta_{2}$, with the respective complements $\{i\}$ and $\{i+1\}$, we deduce from Definition \ref{setswitheta} that $A_{l}^{\tilde{\eta}}(w) \subseteq A_{l}^{\eta}(w)$. More precisely, an element of $A_{l}^{\eta}(w)$ lies in $A_{l}^{\tilde{\eta}}(w)$ if and only if it contains $i$ but does not contain $i+1$. The inclusion of sets yields, via Proposition \ref{maxelt}, the inequality $\alpha_{\max}^{l,w}(\tilde{\eta})\leq\alpha_{\max}^{l,w}(\eta)$, and the previous paragraph, which did not depend on the type of $\tilde{\eta}$ hence holds also for $\eta$, implies that $\alpha_{\max}^{l,w}(\eta)$ is also in $A_{l,i}(w)$. But then it contains $i$ and not $i+1$ (Lemma \ref{nontrivAli} once more), so that it lies $A_{l}^{\tilde{\eta}}(w)$ as we saw, and satisfies the reverse inequality. This proves the equality from part $(ii)$, and part $(iii)$ then follows directly from part $(v)$ of Lemma \ref{siwsets}.

Finally, the joint element from part $(ii)$ contains $i$ and not $i+1$, and $\beta_{\min}^{k,w}(\tilde{\eta})$ is its complement to $\tilde{\eta}=\eta^{20}$, which thus also contains $i$ and not $i+1$. Hence the sequence $\beta_{\min}^{k,w}(\eta)$ that we have to add in order to get $\eta=\eta^{11}$ is indeed its $s_{i}$-image, yielding the first equality in part $(iv)$. Part $(iii)$ though shows that for $s_{i}w$ and $\eta=\eta^{11}$ we moved $i$ to $i+1$ in $\alpha_{\max}$, and thus its sum with $\beta_{\min}^{k,w}(\tilde{\eta})$ is indeed $\eta^{11}$, and the second equality follows as well. This proves the lemma.
\end{proof}

We will also be using the following properties of these elements.
\begin{lem}
With the usual $w$, $i$, $l$, $k$, and $\eta=\eta^{11}\in\tilde{B}_{k,l}(w)$, let $j$ and $h$ be the indices for which $w^{l}_{j}=w^{k}_{h}=i$ when they exist, and then the following results hold:
\begin{enumerate}[$(i)$]
\item If $\eta^{20}\not\in\tilde{B}_{k,l}(w)$, then when $\eta=\alpha+\beta$ with $\alpha \in A_{l}(s_{i}w)$ and $\beta \in A_{k}(s_{i}w)$, we already have $\alpha \in A_{l}(w)$ and $\beta \in A_{k}(w)$. The elements $\alpha_{\max}$, $\alpha_{\min}$, $\beta_{\max}$, and $\beta_{\min}$ remain unchanged when going from $w$ to $s_{i}w$ in this case.
\item In case $\eta^{20}\in\tilde{B}_{k,l}(w)$ but $\eta^{02}\not\in\tilde{B}_{k,l}(w)$, we either have $\alpha_{\max} \in A_{l,i}(w)$ and the minimal entry of $\alpha_{\min}\cap\eta_{1}$ that lies after the $j$th entry is larger than $i$, or $\beta_{\max} \in A_{k,i}(w)$ and the all the entries of $\beta_{\min}\cap\eta_{1}$ with indices larger than $h$ being larger than $i$, or both.
\item When $\eta^{02}\in\tilde{B}_{k,l}(w)$, if $\beta_{\max} \in A_{k,i}(w)$, then $i+1\not\in\beta_{\max}$ and some entry of $\beta_{\min}\cap\eta_{1}$ is smaller than $i$ and has index larger than $h$. In case $\alpha_{\max} \in A_{l,i}(w)$ we have $i\not\in\beta_{\min}$ but $i+1\in\beta_{\min}$, lying in some entry $p$ say, and there is an entry of $\beta_{\max}\cap\eta_{1}$ that is larger than $i+1$ and has index smaller that $p$.
\end{enumerate} \label{siweta11}
\end{lem}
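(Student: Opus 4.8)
The plan is to carry out the same induction-step bookkeeping used throughout Section~\ref{Quadratic}, but now tracking precisely where the entries $i$ and $i+1$ sit in the extreme presentations of $\eta=\eta^{11}$. Throughout I write $\eta=\alpha+\beta$ for $\alpha\in A_l(w)$ and $\beta\in A_k(w)$, recall from Remark~\ref{inddecom} the relation between $\eta^{11}$, $\eta^{20}$, and $\eta^{02}$, and use Lemma~\ref{nontrivAli}, by which the elements of $A_{l,i}(w)$ carry $i$ at position $j$ (where $w^l_j=i$) and omit $i+1$, while those of $A_{k,i}(w)$ do the same at position $h$. The three parts will be handled by feeding the characterizations of Lemmas~\ref{eta20Bkl} and \ref{eta02Bkl} into the comparison of extreme elements provided by Lemma~\ref{siwsets}.

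For part $(i)$ I would first reduce presentations over $s_iw$ to presentations over $w$. By Proposition~\ref{Asiw} any element of $A_l(s_iw)\setminus A_l(w)$ is the $s_i$-image of an element of $A_{l,i}(w)$, hence contains $i+1$ but not $i$; since $\eta^{11}$ has exactly one $i$ and one $i+1$, at most one summand can be such a new element, and if (say) $\beta$ were new then $\alpha$ would contain $i$ but not $i+1$, forcing $\alpha\in A_{l,i}(w)$ and $s_i\beta\in A_{k,i}(w)$, whence $\alpha+s_i\beta=\eta^{20}\in\tilde B_{k,l}(w)$, contradicting $\eta^{20}\notin\tilde B_{k,l}(w)$. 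Thus $\alpha\in A_l(w)$ and $\beta\in A_k(w)$. For the invariance of the four extreme elements I invoke Lemma~\ref{eta20Bkl}: its condition $(i)$ holds here, so by $(ii)$ and the remark following that lemma both $i$ and $i+1$ lie in each of $\alpha_{\max}$ and $\alpha_{\min}$, while by $(iii)$ neither $i$ nor $i+1$ lies in $\beta_{\max}$ or $\beta_{\min}$. Consequently all four sequences are $s_i$-invariant, i.e.\ lie in the relevant $A^{(i)}$, and part $(iv)$ of Lemma~\ref{siwsets} (together with the interchange of roles noted after it, applied with $\beta_{\max}\in A_k^{(i)}(w)$) gives that $\alpha_{\max}$, $\alpha_{\min}$, $\beta_{\max}$, and $\beta_{\min}$ are unchanged on passing to $s_iw$.

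For parts $(ii)$ and $(iii)$ the strategy is to combine the two characterizations to locate $i$ and $i+1$ in the extreme presentations. In part $(ii)$, $\eta^{02}\notin\tilde B_{k,l}(w)$ supplies the cardinality identities of condition $(iii)$ of Lemma~\ref{eta02Bkl} (in the $\leq i$, $\leq i+1$ normalization), whereas $\eta^{20}\in\tilde B_{k,l}(w)$ makes condition $(iii)$ of Lemma~\ref{eta20Bkl} (in the $<i$, $>i+1$ normalization) fail. Comparing the two sets of identities, the discrepancy records exactly the membership of $i$ or $i+1$ in an extreme element, so one of the two tight alternatives must occur: either $\alpha_{\max}\in A_{l,i}(w)$, so that $i\in\alpha_{\max}$ and $i+1\in\beta_{\min}$, or $\beta_{\max}\in A_{k,i}(w)$, so that $i\in\beta_{\max}$ and $i+1\in\alpha_{\min}$, where Lemma~\ref{eta02Bkl}$(ii)$ is what forces the summand containing $i$ to be tight. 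The positional claims—that the smallest entry of $\alpha_{\min}\cap\eta_1$ after position $j$ exceeds $i$, respectively that all entries of $\beta_{\min}\cap\eta_1$ with index beyond $h$ exceed $i$—then follow from minimality: were such an entry at most $i$, the replacement operators $R_{j,r}$ of Lemma~\ref{Rjrprop} (as used in the proofs of Lemmas~\ref{eta20Bkl} and \ref{eta02Bkl}, keeping the modified sequence between $\beta_{\min}$ and $\beta_{\max}$ via Corollary~\ref{setofpairs}) would move it onto $i$ and produce an admissible $s_i$-image, contradicting $\eta^{02}\notin\tilde B_{k,l}(w)$. Part $(iii)$ is handled analogously, now with $\eta^{02}\in\tilde B_{k,l}(w)$ making the identities of Lemma~\ref{eta02Bkl}$(iii)$ fail; there $i+1\notin\beta_{\max}$ and the placement of $i,i+1$ in $\beta_{\min}$ are immediate from $\beta_{\max}\in A_{k,i}(w)$ or $\alpha_{\max}\in A_{l,i}(w)$ together with taking complements (Definition~\ref{minmaxelts}), and the prescribed small entry of $\beta_{\min}\cap\eta_1$ beyond $h$, respectively the large entry of $\beta_{\max}\cap\eta_1$ before position $p$, is read off from the strict form of the failed size inequalities by the same $R_{j,r}$ manipulation.

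I expect the main obstacle to be the positional bookkeeping in parts $(ii)$ and $(iii)$: converting the cardinality identities and inequalities of Lemmas~\ref{eta20Bkl} and \ref{eta02Bkl} into the precise ``smallest entry after position $j$'' and ``largest entry before position $p$'' statements, while keeping careful track of which of $i$, $i+1$ occupies which slot of $\alpha_{\max}$, $\alpha_{\min}$, $\beta_{\max}$, $\beta_{\min}$, and ensuring that every intermediate sequence produced by $R_{j,r}$ remains in $A_k^\eta(w)$ (via Lemma~\ref{Rjrprop} and Proposition~\ref{maxelt}) and between the min and max. The conceptual content is already packaged in the two equivalence lemmas; the real work is the case analysis and making the strict-versus-nonstrict inequality comparisons line up.
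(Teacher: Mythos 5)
Your proposal follows essentially the same route as the paper: part $(i)$ by combining conditions $(ii)$ and $(iii)$ of Lemma \ref{eta20Bkl} (placing $i$ and $i+1$ in both $\alpha_{\max}$ and $\alpha_{\min}$ and in neither $\beta_{\max}$ nor $\beta_{\min}$) with part $(iv)$ of Lemma \ref{siwsets}, and parts $(ii)$ and $(iii)$ by playing the cardinality conditions of Lemmas \ref{eta20Bkl} and \ref{eta02Bkl} against each other to locate $i$ and $i+1$ in the extreme elements and then reading off the positional claims. The only cosmetic differences are that the paper extracts the positional statements directly from the (failed) size equalities rather than via an explicit $R_{j,r}$ manipulation, and your intermediate claim in part $(i)$ that $\alpha\in A_{l,i}(w)$ is unnecessary (membership in $A_{l}(w)$ already suffices for the contradiction); neither affects correctness.
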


\begin{proof}
For part $(i)$, Lemma \ref{eta20Bkl} gives that $\beta_{\max}$ contains neither $i$ nor $i+1$, and that $\alpha_{\max}$ contains them both. Since in every sum $\eta=\alpha+\beta$ we have $\alpha \in A_{l}^{\eta}(s_{i}w)$ and $\beta \in A_{k}^{\eta}(s_{i}w)$ by Lemma \ref{conteta} as usual, part $(i)$ follows directly from part $(iv)$ of Lemma \ref{siwsets}.

Lemma \ref{eta02Bkl} implies that in part $(ii)$ the equalities on the sizes of the sets from condition $(iii)$ there must hold, while from Lemma \ref{eta20Bkl} we deduce that at least one of $\beta_{\max}$ and $\beta_{\min}$ must contain either $i$ or $i+1$. But if $i+1\in\beta_{\max}$ then it is not in $\beta_{\min}$, hence must be replaced by a smaller number, which contradicts condition $(iii)$ of Lemma \ref{eta02Bkl}. Similarly, when $i\in\beta_{\min}$ the corresponding entry of $\beta_{\max}$ is larger, again contradicting this equality. It follows that $i+1\in\alpha_{\min}$ and $i\in\alpha_{\max}$, and we have either $i\in\beta_{\max}$ or $i+1\in\beta_{\min}$ (or both). But in the former case we get, by the maximality of $\beta_{\max}$, that $s_{i}\beta_{\max} \not\in A_{k}(w)$ and thus $\beta_{\max} \in A_{k,i}(w)$ via Equation \eqref{Alwdecom}, and in the latter $\alpha_{\max} \in A_{l,i}(w)$ by the same manner. Part $(ii)$ is thus established, as the assertions about the entries of $\beta_{\min}$ and $\alpha_{\min}$ follow from condition $(iii)$ of Lemma \ref{eta02Bkl}.

For part $(iii)$ we invoke Lemma \ref{nontrivAli}, which implies that $i+1\not\in\beta_{\max}$ provides us the location of $i$ in $\beta_{\max}$ in case it is in $A_{k,i}(w)$, and also gives, via Definition \ref{minmaxelts}, that $i\not\in\beta_{\min}$ and $i+1\in\beta_{\min}$ when $\alpha_{\max} \in A_{l,i}(w)$. The contraposition of condition $(iii)$ of Lemma \ref{eta02Bkl} yields the remaining assertions of part $(iii)$. This proves the lemma.
\end{proof}

\smallskip

When $r\geq2$ the multiplicities $m_{\eta}(w)$ can be different for different elements $\eta \in B_{k,l}(w)$. To investigate them we shall need some notation. Recall from Proposition \ref{contmax} that $\beta_{\min}\cap\beta_{\max}=\eta_{2}$ with these $k$, $w$, and $\eta$, meaning that if $\eta\in\tilde{B}_{k,l}(w)$ then each of these elements of $A_{k}(w)$ is the union of $\eta_{2}$ with a set of $r+\delta_{k,l}$ elements from $\eta_{1}$, and these sets of $r+\delta_{k,l}$ elements are disjoint. Moreover, as $\beta_{\min}\leq\beta_{\max}$ (by Corollary \ref{setofpairs}), the corresponding subsets of $\eta_{1}$ satisfy the same inequality. It is also clear that if $|\eta_{1}|+2|\eta_{2}|=k+l$ and $|\eta_{2}|=k-\delta_{k,l}-r$ then $|\eta_{1}|=l-k+2r+2\delta_{k,l}$. In what follows we will need a notation for the indices of elements of $\eta_{1}$, so we write the entries of $\operatorname{ord}(\eta_{1})$ as $(\gamma_{1},\ldots,\gamma_{l-k+2r+2\delta_{k,l}})$.

We wish to obtain an exact formula for $m^{k,l}_{\eta}(w)$ in case $|\eta_{2}|=k-\delta_{k,l}-2$. Assuming that $k<l$, $\eta \in B_{k,l}$, and $|\eta_{2}|=k-2$, we deduce from the previous paragraph that $\beta_{\min}=\eta_{2}\cup\{\gamma_{a},\gamma_{b}\}$ and $\beta_{\max}=\eta_{2}\cup\{\gamma_{c},\gamma_{d}\}$, where we assume that $a<b$ and $c<d$. The fact that $\beta_{\min}\leq\beta_{\max}$ and $\beta_{\min}\cap\beta_{\max}=\eta_{2}$ implies that $a<c$ and $b<d$ (hence $a<d$) as well as $b \neq c$, but both $b>c$ and $b<c$ are possible. We will also set $\varepsilon$ to be 1 in case $b>c$ and 0 when $b<c$.
\begin{thm}
Assume that $\eta \in B_{k,l}(w)$ satisfies $l>k=|\eta_{2}|+2$, and write the entries of $\operatorname{ord}(\eta_{1})$ as $(\gamma_{1},\ldots,\gamma_{l-k+4})$. Then, in the notation above, we have $m^{k,l}_{\eta}(w)=d-a+1-\varepsilon(b-c)$. \label{diff2}
\end{thm}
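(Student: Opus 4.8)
The plan is to induct on $\ell(w)$ exactly as in Theorems \ref{mposBkl} and \ref{diff1}; the base case $w=\operatorname{Id}$ is vacuous since $B_{k,l}(\operatorname{Id})=\emptyset$. In the inductive step I fix $i$ with $\ell(s_iw)>\ell(w)$ and split according to the type $\eta^{ab}$ as in Lemma \ref{multsiw}. For every type other than $\eta^{11}$ that lemma gives $m_\eta(s_iw)=m_\eta(w)$ or $m_\eta(s_iw)=m_{s_i\eta}(w)$, while parts $(i)$--$(iii)$ of Lemma \ref{siwsets} carry the pair $\big(\beta_{\min}^{k,w},\beta_{\max}^{k,w}\big)$ and the ordered set $\operatorname{ord}(\eta_1)$ to those of $s_i\eta$ by a relabeling that fixes the four indices $a,b,c,d$ and the sign $\varepsilon$. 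Hence the formula transports verbatim in all these cases, and the entire content is the case $\eta=\eta^{11}$.

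For $\eta=\eta^{11}$ I invoke Corollary \ref{meta11siw} together with the observation that the related multi-sets $\eta^{20}$ and $\eta^{02}$ satisfy $|\eta^{20}_2|=|\eta^{02}_2|=k-1$, placing them in the $r=1$ regime of Theorem \ref{diff1}. Thus $m_{\eta^{20}}(w)$ and $m_{\eta^{02}}(w)$ are each the indicator of membership in $B_{k,l}(w)$, whereas $m_{\eta^{02}}(N_{w,i}^{(2)})$ --- which by the equivalence of $(i)$ and $(ii)$ in Lemma \ref{Nwipos} counts the presentations $\eta^{20}=\alpha+\beta$ with $\alpha\in A_{l,i}(w)$ and $\beta\in A_{k,i}(w)$ --- equals, via part $(iii)$ there and Corollary \ref{setofpairs}, the chain length $c'-a'+1$ attached to $\eta^{20}$. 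Here I write $\operatorname{ord}(\eta^{20}_1)=(\gamma'_1,\ldots)$ for $\eta^{20}_1=\eta_1\setminus\{i,i+1\}$ and $\beta_{\min}^{k,w}(\eta^{20})=\eta^{20}_2\cup\{\gamma'_{a'}\}$, $\beta_{\max}^{k,w}(\eta^{20})=\eta^{20}_2\cup\{\gamma'_{c'}\}$, and I set $i=\gamma_u$, $i+1=\gamma_{u+1}$.

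When $\eta^{11}\in B_{k,l}(s_iw)\setminus B_{k,l}(w)$ (case B), Corollary \ref{meta11siw} gives $m_{\eta^{11}}(s_iw)=1+(c'-a'+1)=c'-a'+2$, and Lemma \ref{eta11siw} (applied both as stated and with $k$ and $l$ interchanged) identifies $\beta_{\min}^{k,s_iw}(\eta^{11})=\eta_2\cup\{i,\gamma'_{a'}\}$ and $\beta_{\max}^{k,s_iw}(\eta^{11})=\eta_2\cup\{i+1,\gamma'_{c'}\}$. The crucial point is that, by Lemma \ref{Nwipos}$(iii)$, all presentations of $\eta^{20}$ lie in $A_{l,i}(w)\times A_{k,i}(w)$, so the defining position of $i$ inside $A_{k,i}(w)$ (Lemma \ref{nontrivAli}) forces the free entry $\gamma'_p$ to stay on one side of $i$ for every valid $p$; this rules out the mixed configuration and leaves only $c'<u$ or $a'\geq u$. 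Reading off $a,b,c,d$ from the displayed sequences in either sub-case gives $\varepsilon=1$ and $d-a+1-(b-c)=c'-a'+2$, matching the recursion.

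When $\eta^{11}\in B_{k,l}(w)$ (case A) the induction hypothesis gives $m_{\eta^{11}}(w)=d_w-a_w+1-\varepsilon_w(b_w-c_w)$, and Corollary \ref{meta11siw} reduces the task to showing that the formula value changes by exactly $[\eta^{20}\in B_{k,l}(w)]-[\eta^{02}\in B_{k,l}(w)]$ when $w$ is replaced by $s_iw$. I organize this by whether $\eta^{20}$ and $\eta^{02}$ lie in $\tilde B_{k,l}(w)$: if $\eta^{20}\notin\tilde B_{k,l}(w)$ both indicators vanish and Lemma \ref{siweta11}$(i)$ with Lemma \ref{siwsets}$(iv)$ leaves all four extremal sequences fixed; if $\eta^{02}\in\tilde B_{k,l}(w)$ both indicators equal $1$ and the required net change is $0$; and if only $\eta^{20}\in\tilde B_{k,l}(w)$ the difference is $+1$. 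In these last two sub-cases Lemma \ref{siweta11}$(ii)$--$(iii)$ dictates which of $\alpha_{\max}^{l,w}(\eta^{11})$ and $\beta_{\max}^{k,w}(\eta^{11})$ lie in $A_{l,i}(w)$ or in $A_{k,i}(w)$, hence which extremal sequences $s_i$ moves (Lemma \ref{siwsets}$(iv)$ and $(v)$), each move replacing $i+1=\gamma_{u+1}$ by $i=\gamma_u$ or conversely and so shifting one of $a,b,c,d$ by one. The obstacle I anticipate is precisely this bookkeeping: confirming that the shifts of $a,b,c,d$ combine with the correct value of $\varepsilon$ to give a net change of exactly $+1$ or $0$ as required. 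Controlling $\varepsilon$ is the delicate issue, since the relation $b>c$ versus $b<c$ can flip when a moved index crosses the boundary $\{u,u+1\}$; it is here that the entry inequalities of Lemma \ref{siweta11}$(ii)$--$(iii)$, together with the size equalities in Lemmas \ref{eta20Bkl} and \ref{eta02Bkl}, pin down the relative order of the moved index against the fixed ones and ensure that $b\neq c$ persists. Once these relations are in place, each sub-case collapses to a one-line identity of the same shape as the one verified in case B.
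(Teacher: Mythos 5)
Your overall strategy is exactly the paper's: induction on $\ell(w)$, the type decomposition from Lemma \ref{multsiw}, transport of $(a,b,c,d,\varepsilon)$ via Lemma \ref{siwsets} for the non-$\eta^{11}$ types, and the split of the $\eta^{11}$ case according to Corollary \ref{meta11siw}. Your treatment of the types other than $\eta^{11}$ and of the case $\eta^{11}\in B_{k,l}(s_{i}w)\setminus B_{k,l}(w)$ is correct and matches the paper's, including the key observation that both free entries of $\beta_{\min}^{k,w}(\eta^{20})$ and $\beta_{\max}^{k,w}(\eta^{20})$ lie on the same side of $\{i,i+1\}$, and the resulting identification $d-a+1-(b-c)=f-e+2$ in both configurations.

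The gap is in the case $\eta^{11}\in B_{k,l}(w)$, which you yourself flag as ``the obstacle I anticipate'' and do not carry out. This is not routine bookkeeping that ``collapses to a one-line identity of the same shape as case B.'' The paper needs roughly eight distinct sub-configurations here, extracted from parts $(ii)$ and $(iii)$ of Lemma \ref{siweta11} according to which of $\alpha_{\max}$ and $\beta_{\max}$ lie in $A_{l,i}(w)$ or $A_{k,i}(w)$, and the verifications are of genuinely different shapes: in some, a single index among $a,b,c,d$ shifts by one and directly changes $d-a+1-\varepsilon(b-c)$ by $1$; in one, $s_{i}$ interchanges $c=t$ with $b=t+1$ so that $\varepsilon$ flips from $1$ to $0$ and the net change of $+1$ comes entirely from the $\varepsilon$-term; and in the sub-case $\eta^{02}\in\tilde{B}_{k,l}(w)$ with $\beta_{\max}\in A_{k,i}(w)$ or $\alpha_{\max}\in A_{l,i}(w)$, the extremal sequences \emph{do} move but the formula is unchanged only because part $(iii)$ of Lemma \ref{siweta11} forces $a<b<t=c<t+1<d$ or $a<t<t+1=b<c<d$, hence $\varepsilon=0$ before and after, so the moved index ($b$ or $c$) does not enter the value at all. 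Without these explicit configuration checks the claim that the formula's value changes by exactly $[\eta^{20}\in B_{k,l}(w)]-[\eta^{02}\in B_{k,l}(w)]$ is unproved, and this is the heart of the theorem.
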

The multiplicity from Theorem \ref{diff2} is thus $c+d-a-b+1$ when $b>c$, and just $d-a+1$ in case $b<c$.

\begin{proof}
We follow the proof of Theorem \ref{diff1}. When $w=\operatorname{Id}$ the set $B_{k,l}(w)$ is empty for every $k$ and $l$, so that the result holds in an empty manner. Assume that we have a permutation $w$ for which it holds for every $\eta$, choose $i$ such that $\ell(s_{i}w)>\ell(w)$, take $\eta \in B_{k,l}(s_{i}w)$, and we need to compare the sequences $\beta_{\max}^{k,s_{i}w}(\eta)$ and $\beta_{\min}^{k,s_{i}w}(\eta)$ with the appropriate sequences for $w$.

When $\eta$ is one of $\eta^{00}$, $\eta^{10}$, $\eta^{20}$, $\eta^{21}$, or $\eta^{22}$, the proof of Theorem \ref{diff1} implies that $\eta \in B_{k,l}(w)$. Moveover, we have the equalities $\beta_{\max}^{k,s_{i}w}(\eta)=\beta_{\max}^{k,w}(\eta)$ and $\beta_{\min}^{k,s_{i}w}(\eta)=\beta_{\min}^{k,w}(\eta)$ by part $(i)$ of Lemma \ref{siwsets} in this case, so that the parameters $a$, $b$, $c$, and $d$, hence also $\varepsilon$, that we get for $s_{i}w$ and $\eta$ are the same as those for $w$ and $\eta$. Since Lemma \ref{multsiw} and the induction hypothesis give the equalities $m_{\eta}(s_{i}w)=m_{\eta}=d-a+1-\varepsilon(b-c)$, the result follows in this case.

Next we turn our attention to $\eta$ that is $\eta^{01}$, $\eta^{02}$, or $\eta^{12}$. In this case the proof of Theorem \ref{diff1} produces $s_{i}\eta \in B_{k,l}(w)$, and part $(ii)$ of Lemma \ref{siwsets} gives $\beta_{\max}^{k,s_{i}w}(\eta)=s_{i}\beta_{\max}^{k,w}(s_{i}\eta)$ and $\beta_{\min}^{k,s_{i}w}(\eta)=s_{i}\beta_{\min}^{k,w}(s_{i}\eta)$. As part $(iii)$ of the latter lemma shows that the application of $s_{i}$ does not change the indices from the ordering of $\eta_{1}$ in these cases, the values of $a$, $b$, $c$, and $d$, as well as the resulting one of $\varepsilon$, that arise from $s_{i}w$ and $\eta$ coincide with those coming from $w$ and $s_{i}\eta$. The equalities $m_{\eta}(s_{i}w)=m_{s_{i}\eta}(w)=d-a+1-\varepsilon(b-c)$ resulting from Lemma \ref{multsiw} and the induction hypothesis establish the result in this setting.

So now take $\eta=\eta^{11}$, which is assumed to be in $B_{k,l}(s_{i}w)$, and consider first the case where $\eta^{11} \not\in B_{k,l}(w)$, so that Corollary \ref{meta11siw} expresses $m_{\eta^{11}}(s_{i}w)$ as $m_{\eta^{20}}(w)+m_{\eta^{02}}(N_{w,i}^{(2)})$. Then Remark \ref{inddecom}, Proposition \ref{contmax}, and Corollary \ref{setofpairs} imply that both $\beta_{\max}^{k,w}(\eta^{20})$ and $\beta_{\min}^{k,w}(\eta^{20})$ contain $\eta^{20}_{2}=\eta^{11}_{2}\cup\{i\}$ and another element from $\eta^{20}_{1}=\eta^{11}_{2}\setminus\{i,i+1\}$, with the latter element being strictly larger for $\beta_{\max}^{k,w}(\eta^{20})$ than for $\beta_{\min}^{k,w}(\eta^{20})$. Moreover, Lemma \ref{Nwipos} implies that $\eta^{20}\in\tilde{B}_{k,l}(w)$, so that it is also in $B_{k,l}(w)$ via Remark \ref{inddecom}, the condition on $|\eta_{2}|=|\eta^{11}_{2}|$, and Proposition \ref{Bkldiff}, and hence $m_{\eta^{20}}(w)=1$ by Theorem \ref{diff1}.

We denote by $e$ and $f$ the indices with which we have $i\neq\gamma_{f}\in\beta_{\max}^{k,w}(\eta^{20})$ and $i\neq\gamma_{e}\in\beta_{\min}^{k,w}(\eta^{20})$, so that $e<f$ and both these sequences are in $A_{k,i}(w)$ by Lemma \ref{Nwipos}. Moreover, as Lemma \ref{nontrivAli} implies that $i$ shows up in the same entry of both $\beta_{\max}^{k,w}(\eta^{20})$ and $\beta_{\min}^{k,w}(\eta^{20})$, we deduce that $\gamma_{e}$ and $\gamma_{f}$ are either both smaller than $i$ or both larger than $i+1$. Hence if $i$ is $\gamma_{t}$ for some $t$, so that $i+1=\gamma_{t+1}$ (as both are in $\eta^{11}_{1}$), then either $e$ and $f$ are both smaller that $t$ or are both larger than $t+1$.

The number of pairs is now given, via Corollary \ref{setofpairs} as the number of choices of $\beta \in A_{k}(w)$ that satisfy $\eta^{20}_{2}\subseteq\beta\subseteq\eta^{20}$ and $\beta_{\min}^{k,w}(\eta^{20})\leq\beta\leq\beta_{\max}^{k,w}(\eta^{20})$. But there is only one free element in $\beta$, which has to be taken from $\eta^{20}_{1}$ hence from $\eta^{11}_{1}$ and be between $\gamma_{e}$ and $\gamma_{f}$, and there are precisely $f-e+1$ such options in $\eta^{11}_{1}$ in our notation. Moreover, recalling that both $e$ and $f$ are on the same side of $t$ and $t+1$, all these $f-e+1$ options are from $\eta^{20}_{1}$ as desired. It follows that $m_{\eta^{11}}(s_{i}w)=f-e+2$ in this case.

But Lemma \ref{Nwipos} implies that $\beta_{\max}^{k,w}(\eta^{20}) \in A_{k,i}(w)$ and $\alpha_{\max}^{l,w}(\eta^{20}) \in A_{l,i}(w)$ in this case. We thus invoke Part $(iii)$ of Lemma \ref{eta11siw}, with $\beta$ and $k$, which gives that $\beta_{\max}^{k,s_{i}w}(\eta^{11})=s_{i}\beta_{\max}^{k,w}(\eta^{20})$, whose explicit description is $\eta_{2}\cup\{\gamma_{f},i+1\}$. Part $(iv)$ of that lemma, as is stated there, yields $\beta_{\min}^{k,s_{i}w}(\eta^{11})=\beta_{\min}^{k,w}(\eta^{20})$, which explicitly equals $\eta_{2}\cup\{\gamma_{e},i\}$. This means that if $e$ and $f$ are smaller than $t$ then $d=t+1$, $c=f$, $b=t$, $a=e$, $\varepsilon=1$, and we get a value of $f-e+2$, and in case $e$ and $f$ are larger that $t+1$ we get $d=f$, $c=t+1$, $b=e$, $a=t$, $\varepsilon=1$, and again the value is $f-e+2$. This establishes the result in this case.

The remaining case is where $\eta=\eta^{11} \in B_{k,l}(w)$, where Corollary \ref{meta11siw} gives $m_{\eta^{11}}(s_{i}w)=m_{\eta^{11}}(w)+\big(m_{\eta^{20}}(w)-m_{\eta^{02}}(w)\big)$. In addition, since if $\eta^{02}\in\tilde{B}_{k,l}(w)$ then $\eta^{20}\in\tilde{B}_{k,l}(w)$ (by applying $s_{i}$ to the sum), and in our setting this is the same for $B_{k,l}(w)$, we deduce, via Remark \ref{inddecom}, our condition on $|\eta^{11}_{2}|$, and Theorem \ref{diff1} again, that $m_{\eta^{11}}(s_{i}w)$ is $m_{\eta^{11}}(w)+1$ in case $\eta^{20}$ is in $B_{k,l}(w)$ and $\eta^{02}$ is not, and just $m_{\eta^{11}}(w)$ if either both are in $B_{k,l}(w)$ or both are not. Part $(i)$ of Lemma \ref{siweta11} now implies that when neither $\eta^{20}$ nor $\eta^{02}$ are in $B_{k,l}(w)$, we have $\beta_{\max}^{k,s_{i}w}(\eta)=\beta_{\max}^{k,w}(\eta)$ and $\beta_{\min}^{k,s_{i}w}(\eta)=\beta_{\min}^{k,w}(\eta)$, so that $a$, $b$, $c$, and $d$ (hence $\varepsilon$) remain the same when going from $w$ to $s_{i}w$, and indeed the value of $m_{\eta^{11}}(w)$ from the induction hypothesis yields the asserted one for $m_{\eta^{11}}(s_{i}w)$.

Next, part $(ii)$ of Lemma \ref{siweta11} divides the case in which $\eta^{20} \in B_{k,l}(w)$ and $\eta^{02} \not\in B_{k,l}(w)$ into three sub-cases, in which we draw conclusions using Lemma \ref{nontrivAli}, Definition \ref{minmaxelts}, Proposition \ref{contmax}, and part $(iv)$ of Lemma \ref{siwsets}. One possibility is with $\beta_{\max} \in A_{k,i}(w)$ and $\alpha_{\max} \not\in A_{l,i}(w)$, where $i\in\beta_{\max}$, $i\not\in\beta_{\min}$, $i\in\alpha_{\max}$, and $i+1\in\alpha_{\max}$ thus $i+1\not\in\beta_{\min}$. Another is with $\alpha_{\max} \in A_{l,i}(w)$ and $\beta_{\max} \not\in A_{k,i}(w)$, so that $i+1\in\beta_{\min}$, $i+1\not\in\beta_{\max}$, and $i\not\in\beta_{\max}$. Finally when $\alpha_{\max} \in A_{l,i}(w)$ and $\beta_{\max} \in A_{k,i}(w)$ we get $i\in\beta_{\max}$, $i+1\not\in\beta_{\max}$, $i\not\in\beta_{\min}$, and $i+1\in\beta_{\min}$.

Combining these with the cardinality conditions from part $(ii)$ of Lemma \ref{siweta11}, the first sub-case produces the setting where $d=t$ and that in which $c=t$ and $b>t+1$ (because $d>t+1$), and applying $s_{i}$ increased $d$ by 1 in the former case, while in the latter we have $\varepsilon=1$ and $s_{i}$ increases $c$ by 1 (still with $\varepsilon=1$). From the second one we extract the situation with $a=t+1$ and that where $b=t+1$ and $c<t$ (since $a<t$), and in the last one we have $c=t$ and $b=t+1$, where applying $s_{i}$ subtracts 1 from $a$ in the former case, while in the latter we again have $\varepsilon=1$, $s_{i}$ subtracts 1 from $b$, and leaves the value of $\varepsilon$. In the remaining case we have $\varepsilon(b-c)=1$, but $s_{i}$ interchanges $c=t$ and $b=t+1$ hence takes $\varepsilon=1$ to 0. In total, in all these situations the asserted value of $m_{\eta^{11}}(s_{i}w)$ is one more than the one of $m_{\eta^{11}}(w)$ from the induction hypothesis, in correspondence with the value from Corollary \ref{meta11siw} and Theorem \ref{diff1}.

Finally, when $\eta^{02}\in\tilde{B}_{k,l}(w)$, if $\alpha_{\max} \in A_{l}^{(i)}(w)$ and $\beta_{\max} \in A_{k}^{(i)}(w)$ then once again $\beta_{\max}^{k,s_{i}w}(\eta)=\beta_{\max}^{k,w}(\eta)$ and $\beta_{\min}^{k,s_{i}w}(\eta)=\beta_{\min}^{k,w}(\eta)$ via part $(i)$ of Lemma \ref{siweta11}, so that $a$, $b$, $c$, and $d$ (and $\varepsilon$) do not change when we replace $w$ by $s_{i}w$, and the induction hypothesis yields that $m_{\eta^{11}}(s_{i}w)$ has the asserted value. When $\beta_{\max}$ is in $A_{k,i}(w)$ or $\alpha_{\max} \in A_{l,i}(w)$ we invoke part $(iii)$ of Lemma \ref{siweta11}, which shows, since $k-|\eta_{2}|=2$, that in the former case our indices satisfy the inequalities $a<b<t=c<t+1<d$ and in the latter we have $a<t<t+1=b<c<d$. Since in these situations we have $\varepsilon=0$, which remains such also after applying $s_{i}$, we obtain that the desired value of $m_{\eta^{11}}(s_{i}w)$ coincides with the known one $m_{\eta^{11}}(w)$ also in this case. This completes the proof of the theorem.
\end{proof}

We now consider the complementary case for $r=2$, in which $l=k=|\eta_{2}|+3$ and thus $|\eta_{1}|=6$. Proposition \ref{contmax} implies that for each $w$ the entries of $\eta_{1}$ are divided between $\beta_{\min}$ and $\beta_{\max}$, and Corollary \ref{setofpairs} and Theorem \ref{mposBkl} show that for positive multiplicities we may restrict attention to the situation where $\beta_{\min}\leq\beta_{\max}$. The complementary result to Theorem \ref{diff2} for this case, which is proved in a similar manner, is as follows.
\begin{prop}
Given such $k$,$l$, $\eta$, and $w$, there are 5 options for the values of $\beta_{\min}$ and $\beta_{\max}$ satisfying the inequality from Corollary \ref{setofpairs}, yielding the following respective multiplicities.
\begin{enumerate}[$(i)$]
\item If $\beta_{\min}\cap\eta_{1}=\{\gamma_{1},\gamma_{3},\gamma_{5}\}$ and $\beta_{\max}\cap\eta_{1}=\{\gamma_{2},\gamma_{4},\gamma_{6}\}$, then $m_{\eta}=3$.
\item When $\beta_{\min}\cap\eta_{1}=\{\gamma_{1},\gamma_{3},\gamma_{4}\}$ and $\beta_{\max}\cap\eta_{1}=\{\gamma_{2},\gamma_{5},\gamma_{6}\}$, we get $m_{\eta}=4$.
\item When $\beta_{\min}\cap\eta_{1}=\{\gamma_{1},\gamma_{2},\gamma_{5}\}$ and $\beta_{\max}\cap\eta_{1}=\{\gamma_{3},\gamma_{4},\gamma_{6}\}$, we get $m_{\eta}=4$.
\item In case $\beta_{\min}\cap\eta_{1}=\{\gamma_{1},\gamma_{2},\gamma_{4}\}$ and $\beta_{\max}\cap\eta_{1}=\{\gamma_{3},\gamma_{5},\gamma_{6}\}$, we get $m_{\eta}=5$.
\item In case $\beta_{\min}\cap\eta_{1}=\{\gamma_{1},\gamma_{2},\gamma_{3}\}$ and $\beta_{\max}\cap\eta_{1}=\{\gamma_{4},\gamma_{5},\gamma_{6}\}$, we get $m_{\eta}=5$.
\end{enumerate} \label{lketa23}
\end{prop}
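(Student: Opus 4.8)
The plan is to argue exactly as in the proof of Theorem \ref{diff2}: first pin down the five configurations, then obtain the multiplicities by the same induction on $\ell(w)$.

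First I would enumerate the five options. By Proposition \ref{contmax} we have $\beta_{\min}\cap\beta_{\max}=\eta_{2}$, and since $|\eta_{2}|=k-3$ while $|\beta_{\min}|=|\beta_{\max}|=k$, each of $\beta_{\min}$ and $\beta_{\max}$ is $\eta_{2}$ together with a $3$-element subset of $\eta_{1}=\{\gamma_{1},\ldots,\gamma_{6}\}$, and these two subsets are complementary in $\eta_{1}$. Because $\eta_{2}$ is common to both, the inequality $\beta_{\min}\leq\beta_{\max}$ from Corollary \ref{setofpairs} is equivalent to the entrywise domination of the two index triples; writing $\beta_{\min}\cap\eta_{1}=\{\gamma_{a_{1}},\gamma_{a_{2}},\gamma_{a_{3}}\}$ and $\beta_{\max}\cap\eta_{1}=\{\gamma_{b_{1}},\gamma_{b_{2}},\gamma_{b_{3}}\}$ with $a_{1}<a_{2}<a_{3}$ and $b_{1}<b_{2}<b_{3}$, the fact that the two triples are disjoint turns the condition into $a_{s}<b_{s}$ for $s=1,2,3$. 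The partitions of $\{1,\ldots,6\}$ into two such dominating triples are counted by the Catalan number $C_{3}=5$ (the triple $A=\{a_{s}\}$ being the up-steps and $B=\{b_{s}\}$ the down-steps of a Dyck word, with $a_{s}<b_{s}$ exactly the ballot condition), and listing them produces precisely the five cases $(i)$--$(v)$.

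For the multiplicities I would run the usual induction on $\ell(w)$, following the proof of Theorem \ref{diff2} verbatim for the base case $w=\operatorname{Id}$ (where every $B_{k,l}$ is empty) and for the inductive step $w\to s_{i}w$ with $\ell(s_{i}w)>\ell(w)$. When $\eta$ is one of $\eta^{00},\eta^{10},\eta^{20},\eta^{21},\eta^{22}$ or one of $\eta^{01},\eta^{02},\eta^{12}$, Lemma \ref{multsiw} together with parts $(i)$--$(iii)$ of Lemma \ref{siwsets} shows that both the multiplicity and the pair $(\beta_{\min},\beta_{\max})$ transfer from $w$ (possibly after applying $s_{i}$, which by Lemma \ref{siwsets}$(iii)$ does not change the $\gamma$-indices and hence preserves the case type), so these are settled by the induction hypothesis. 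The only real work is $\eta=\eta^{11}$, where Corollary \ref{meta11siw} splits the analysis into the case $\eta^{11}\in B_{k,l}(w)$, in which $m_{\eta^{11}}(s_{i}w)=m_{\eta^{11}}(w)+\big(m_{\eta^{20}}(w)-m_{\eta^{02}}(w)\big)$, and the \emph{birth} case $\eta^{11}\in B_{k,l}(s_{i}w)\setminus B_{k,l}(w)$, in which $m_{\eta^{11}}(s_{i}w)=m_{\eta^{20}}(w)+m_{\eta^{02}}(N_{w,i}^{(2)})$. In both situations Remark \ref{inddecom} gives $|\eta^{20}_{2}|=|\eta^{02}_{2}|=k-2$, so $\eta^{20}$ and $\eta^{02}$ lie in the regime $l=k=|\eta_{2}|+2$ of Theorem \ref{diff1} and their multiplicities for $w$ are $0$ or $1$; hence the change in $m_{\eta^{11}}$ is controlled, and I would match it against the tabulated values by using Lemmas \ref{eta11siw} and \ref{siweta11} to record how the six indices $a_{1},a_{2},a_{3},b_{1},b_{2},b_{3}$—equivalently the position of the pair $i=\gamma_{t}$, $i+1=\gamma_{t+1}$ inside the Dyck word—are modified by $s_{i}$, thereby identifying which of $(i)$--$(v)$ the new $\eta^{11}$ belongs to.

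The main obstacle will be the birth sub-case. There $m_{\eta^{20}}(w)=1$ by Lemma \ref{Nwipos} and Theorem \ref{diff1}, while $m_{\eta^{02}}(N_{w,i}^{(2)})$ equals, by the equivalence of $(i)$ and $(ii)$ in Lemma \ref{Nwipos} together with Corollary \ref{setofpairs}, the number of (unordered, since $k=l$) presentations of $\eta^{20}$ as a sum of two elements of $A_{k,i}(w)$, that is, the number of $\beta$ with $\beta_{\min}^{20}\leq\beta\leq\beta_{\max}^{20}$ up to complementation. I would compute this count explicitly in terms of the two free indices of $\beta_{\min}^{20}$ and of $\beta_{\max}^{20}$ relative to $t$ and $t+1$; the delicate point, absent in Theorem \ref{diff2}, is that here $\beta^{20}$ carries two free entries (because $\delta_{k,l}=1$ forces the free part to have size $r+\delta_{k,l}=2$), so the count ranges over a two-dimensional interval of pairs rather than over a single chain. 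Once that count is organized—exactly as the parameter $\varepsilon$ organized the one-dimensional count in Theorem \ref{diff2}—I would verify case by case that $1+m_{\eta^{02}}(N_{w,i}^{(2)})$ reproduces the value $3$, $4$, or $5$ attached to whichever configuration among $(i)$--$(v)$ the step $w\to s_{i}w$ creates, which completes the induction and hence the proof.
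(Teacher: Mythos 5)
Your proposal follows essentially the same route as the paper's proof: the same induction on $\ell(w)$ via Lemmas \ref{multsiw} and \ref{siwsets}, the same split of the $\eta=\eta^{11}$ case via Corollary \ref{meta11siw} into the birth sub-case $m_{\eta^{11}}(s_{i}w)=m_{\eta^{20}}(w)+m_{\eta^{02}}(N_{w,i}^{(2)})$ and the non-birth sub-case controlled by Lemmas \ref{eta11siw} and \ref{siweta11}, with the only cosmetic difference being your Catalan/ballot framing of the enumeration of the five configurations where the paper argues directly on the indices of the intermediate elements. The bookkeeping you defer (listing, for each configuration and each position $t$ of $i$ in $\operatorname{ord}(\eta_{1})$, which two-element subsets of $\eta^{20}_{1}$ can be adjoined to $\eta^{20}_{2}$, and tracking how $s_{i}$ moves one configuration to another) is exactly what the paper's proof carries out to confirm the values $3,4,4,5,5$.
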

In fact, by writing $\beta_{\min}\cap\eta_{1}=\{\gamma_{a},\gamma_{b},\gamma_{c}\}$ and $\beta_{\max}\cap\eta_{1}=\{\gamma_{d},\gamma_{e},\gamma_{f}\}$, we can write the multiplicities from Proposition \ref{lketa23} in a unified manner, which is similar to that from Theorem \ref{diff2}, as $f-a-\varepsilon(b-d)-\delta(c-e)$, with $\varepsilon$ being 1 when $b>d$ and 0 in case $b<d$, and in which $\delta$ is 1 in case $c>e$ and 0 if $c<e$.

\begin{proof}
We must have $\gamma_{1}\in\beta_{\min}$ and $\gamma_{6}\in\beta_{\max}$, with the index of the intermediate element of $\beta_{\max}\cap\eta_{1}$ being at least 4 (as it must be larger than one element of that set and two of $\beta_{\min}\cap\eta_{1}$), with that of $\beta_{\min}\cap\eta_{1}$ being bounded by 3 (as one more element there, and two of $\beta_{\max}\cap\eta_{1}$, must be larger). These considerations produce only our five cases, and in all of them we indeed have $\beta_{\min}\leq\beta_{\max}$. We also know that $\beta_{\max}$ determines $\beta_{\min}$ by Definition \ref{minmaxelts}.

We apply the same arguments from the proof of Theorems \ref{diff1} and \ref{diff2}, where the case $w=\operatorname{Id}$ is true as an empty statement, and we assume the result for $w$ and aim to prove it for $s_{i}w$ for some $i$ with $\ell(s_{i}w)>\ell(w)$. If $\eta$ is $\eta^{00}$, $\eta^{10}$, $\eta^{20}$, $\eta^{21}$, or $\eta^{22}$, then part $(i)$ of Lemma \ref{siwsets} shows that the sequences arising from $s_{i}w$ are the same as those arising from $w$, and the assertion follows from the induction hypothesis and the equality $m_{\eta}(s_{i}w)=m_{\eta}(w)$ from Lemma \ref{multsiw}. When $\eta$ is one of $\eta^{01}$, $\eta^{02}$, and $\eta^{12}$, parts $(ii)$ and $(iii)$ of Lemma \ref{siwsets} imply that the case associated with $s_{i}w$ and $\eta$ is the same as the one corresponding to $w$ and $s_{i}\eta$, so that the equality $m_{\eta}(s_{i}w)=m_{s_{i}\eta}(w)$ from Lemma \ref{multsiw} and the induction hypothesis conclude this case as well.

We shall thus henceforth assume that $\eta=\eta^{11}$, and that it is in $B_{k,l}(s_{i}w)$ (hence belongs to one of our five cases). If this element is not in $B_{k,l}(w)$, then we have $m_{\eta^{11}}(s_{i}w)=m_{\eta^{20}}(w)+m_{\eta^{02}}(N_{w,i}^{(2)})$ by Corollary \ref{meta11siw}, with Remark \ref{inddecom} and our assumption giving $|\eta^{20}_{2}|=|\eta^{11}_{2}|+1=k-2=l-2$, which combines with Proposition \ref{Bkldiff} and the fact that $\eta^{20}\in\tilde{B}_{k,l}(w)$ by Lemma \ref{Nwipos} to show that $\eta^{20} \in B_{k,l}(w)$ and thus $m_{\eta^{20}}(w)=1$ by Theorem \ref{diff1}.

Now, when $\eta=\eta^{11}$ is associated with one of our cases for $s_{i}w$ but not for $w$, we must have $\beta_{\max}^{k,s_{i}w}(\eta)\neq\beta_{\max}^{k,w}(\eta)$, which by parts $(iv)$ and $(v)$ of Lemma \ref{siwsets} happens if and only if $\beta_{\max}^{k,w}(\eta) \in A_{k,i}(w)$ and $\beta_{\max}^{k,s_{i}w}(\eta)=s_{i}\beta_{\max}^{k,w}(\eta)$. Hence $\beta_{\max}^{k,s_{i}w}(\eta)$, which corresponds to one of our cases, contains $i+1=\gamma_{t+1}$ but not $i=\gamma_{t}$, and applying $s_{i}$, to replace $\gamma_{t+1}$ by $\gamma_{t}$ in $\beta_{\max}$, no longer produces any of our cases. This happens in case $(i)$ with $t$ being 1, 3, or 5, in case $(ii)$ with $t=1$, and in case $(iii)$ with $t=5$.

Corollary \ref{setofpairs} and Lemma \ref{Nwipos} then evaluate the multiplicity $m_{\eta^{02}}(N_{w,i}^{(2)})$ as the number of elements $\beta \in A_{k}^{\eta^{20}}(w)$ that satisfy $\beta_{\min}^{k,w}(\eta^{20})\leq\beta\leq\beta_{\max}^{k,w}(\eta^{20})$ (all of which will be in $A_{k,i}(w)$), and divide it by 2 since each sum $\alpha+\beta=\eta^{20}$ is obtained both from $\alpha$ and from $\beta$ (and they are distinct since $|\eta^{20}_{2}|<k$). For case $(i)$ and $t=1$, such $\beta$ is obtained by adding to $\eta_{20}^{2}$ the sets $\{\gamma_{3},\gamma_{5}\}$, its complement $\{\gamma_{4},\gamma_{6}\}$, or $\{\gamma_{3},\gamma_{6}\}$ and its complement $\{\gamma_{4},\gamma_{5}\}$. When $t=3$ the added sets are $\{\gamma_{1},\gamma_{5}\}$ with the complement $\{\gamma_{2},\gamma_{6}\}$, as well as $\{\gamma_{1},\gamma_{6}\}$ or its complement $\{\gamma_{2},\gamma_{5}\}$. If $t=5$ we can add $\{\gamma_{1},\gamma_{3}\}$, the complement $\{\gamma_{2},\gamma_{4}\}$, $\{\gamma_{1},\gamma_{4}\}$, or the complement $\{\gamma_{2},\gamma_{3}\}$. When we are in case $(ii)$ and $t=1$, the sets from $t=1$ and case $(i)$ are allowed, as well as $\{\gamma_{3},\gamma_{4}\}$ and the complement $\{\gamma_{5},\gamma_{6}\}$, and when $t=3$ and the case is 3 we get the sets showing up in case $(i)$ (with $t=5$), as well as $\{\gamma_{1},\gamma_{2}\}$ and its complement $\{\gamma_{3},\gamma_{4}\}$. Substituting the resulting value of $m_{\eta^{02}}(N_{w,i}^{(2)})$ and adding $m_{\eta^{20}}(w)=1$ yields indeed the asserted value for $m_{\eta^{11}}(s_{i}w)$.

It remains to consider the case with $\eta=\eta^{11}$ lying in $B_{k,l}(w)$, where the multiplicity $m_{\eta^{11}}(s_{i}w)$ is $m_{\eta^{11}}(w)+\big(m_{\eta^{20}}(w)-m_{\eta^{02}}(w)\big)$ by Corollary \ref{meta11siw}. Remark \ref{inddecom}, the assumption on $|\eta^{11}_{2}|$, and Theorem \ref{diff1} again reduce the latter expression to $m_{\eta^{11}}(w)+1$ when $\eta^{20}$ is in $\tilde{B}_{k,l}(w)$ (or equivalently in $B_{k,l}(w)$) and $\eta^{02}$ is not there, and to $m_{\eta^{11}}(w)$ when either both these sequences are in $\tilde{B}_{k,l}(w)$ (or equivalently $B_{k,l}(w)$), or neither are there. The fact that when neither sequence is in that set the sequences $\beta_{\max}$ and $\beta_{\min}$ are the same for $w$ and $s_{i}w$ (with our $\eta$) is again obtained from Part $(i)$ of Lemma \ref{siweta11}, yielding the assertion in this case by the induction hypothesis.

Using part $(ii)$ of Lemma \ref{siweta11} when $\eta^{20} \in B_{k,l}(w)$ and $\eta^{02} \not\in B_{k,l}(w)$ is easier in the case $k=l$, since then $\alpha_{\max}=\beta_{\max}$ and there is only one case, in which $\beta_{\max} \in A_{k,i}(w)$ (so that $i=\gamma_{t}\in\beta_{\max}$ and $i+1=\gamma_{t+1}$ is in its complement $\beta_{\min}$ by Lemma \ref{nontrivAli} and Definition \ref{minmaxelts}), and the cardinality condition from Lemma \ref{eta02Bkl} holds. This happens when our $\eta$ and $w$ produce case $(i)$ with $t=2$ or with $t=4$, or in case $(ii)$ with $t=2$, or in case $(iii)$ with $t=4$. Since the resulting cases for $s_{i}w$ are then $(ii)$, $(iii)$, $(iv)$, and $(iv)$ respectively, the equality $m_{\eta^{11}}(s_{i}w)=m_{\eta^{11}}(w)+1$ holds with the right hand side given by the induction hypothesis if $m_{\eta^{11}}(s_{i}w)$ is the asserted value.

The case in which $\eta^{02}\in\tilde{B}_{k,l}(w)$ and $\beta_{\max} \in A_{k}^{(i)}(w)$ produces again, using part $(i)$ of Lemma \ref{siweta11} and the induction hypothesis, that $m_{\eta^{11}}(s_{i}w)=m_{\eta^{11}}(w)$ is given by the desired value. Finally, if $\eta^{02}\in\tilde{B}_{k,l}(w)$ and $\beta_{\max} \in A_{k,i}(w)$ (and $\eta^{11} \in B_{k,l}(w)$), then part $(iii)$ of Lemma \ref{siweta11} implies that the cardinality conditions from Lemma \ref{eta02Bkl} cannot hold. This can happen only in case $(iv)$ for $\eta$ and $w$, with $t=3$, in which we have $m_{\eta^{11}}(s_{i}w)=m_{\eta^{11}}(w)=5$ by the induction hypothesis. As the result for $s_{i}w$ is case $(v)$, this is indeed the desired value also here. This completes the proof of the proposition.
\end{proof}

We deduce the following consequence.
\begin{cor}
If $l>k=|\eta_{2}|+2$ then $m^{k,l}_{\eta}(w)$ equals 0 when $\eta \not\in B_{k,l}(w)$ and lies between 3 and $l-k+4$ for $\eta \in B_{k,l}(w)$, with each value in this range being attained for some $w$. When $l=k=|\eta_{2}|+3$ the respective value and range are 0 and between 3 and 5. Moreover, wherever $\ell(s_{i}w)>\ell(w)$ we have $m^{k,l}_{\eta}(s_{i}w) \geq m^{k,l}_{\eta}(w)$ for any such $\eta$. \label{lowbdr2}
\end{cor}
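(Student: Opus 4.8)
I would first extract the easy assertions from the explicit formulas already in hand. That $m^{k,l}_\eta(w)=0$ for $\eta\notin B_{k,l}(w)$ is exactly Theorem \ref{mposBkl}. For the range when $l>k=|\eta_2|+2$, I feed the formula of Theorem \ref{diff2} into a short extremization: writing $m^{k,l}_\eta(w)=(c-a)+(d-b)+1$ when $b>c$ (so that $a<c<b<d$) gives $m\geq 3$, while for $b<c$ one has $m=d-a+1$ with $a<b<c<d$, hence $m\geq 4$; thus $m\geq 3$ throughout. Since $a,b,c,d\in\{1,\dots,l-k+4\}$, the case $b<c$ gives $m=d-a+1\leq l-k+4$, whereas $b>c$ gives at most $l-k+3$ because then $c<b$; so $m\leq l-k+4$. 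The case $l=k=|\eta_2|+3$ is handled the same way by reading off the five values $3,4,4,5,5$ of Proposition \ref{lketa23}, giving the range $[3,5]$.

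For attainment I would exhibit, for each target $v$, an index configuration and then a realizing permutation. Taking $b<c$ with $(a,b,c,d)=(1,2,3,v)$ yields $m=v$ for every $4\leq v\leq l-k+4$, while $b>c$ with $a=1,c=2,b=3,d=4$ yields $m=3$; for $k=l$ the five cases of Proposition \ref{lketa23} already realize $3,4,5$. It then remains to produce, for a prescribed pair $\beta_{\min}\leq\beta_{\max}$ of the required index type, a permutation $w$ and a multi-set $\eta\in B_{k,l}(w)$ whose minimal and maximal $k$-summands are exactly these. This I would do by choosing the bounding sequences $w^k\subseteq w^l$ directly (for instance with $\eta_2=\varnothing$, $k=2$, $\eta_1=\{1,\dots,l+2\}$), so that the interval $[\beta_{\min},\beta_{\max}]$ cut out inside $A_k(w)$ is the prescribed one. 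This is a routine, if fiddly, construction and I do not expect it to be the obstacle.

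The monotonicity statement is the substantial part. Fixing $\eta$ and $i$ with $\ell(s_iw)>\ell(w)$, I classify $\eta=\eta^{ab}$ by the multiplicities of $i$ and $i+1$. For the types $\eta^{00},\eta^{10},\eta^{20},\eta^{21},\eta^{22}$ Lemma \ref{multsiw} gives $m_\eta(s_iw)=m_\eta(w)$, so the inequality is an equality. For $\eta=\eta^{11}$, Lemma \ref{multsiw} and Corollary \ref{meta11siw} give $m_{\eta^{11}}(s_iw)=m_{\eta^{11}}(w)+\big(m_{\eta^{20}}(w)-m_{\eta^{02}}(w)\big)+m_{\eta^{02}}(N_{w,i}^{(2)})$, where the related $\eta^{20},\eta^{02}$ have parameter $r'=1$; by Theorem \ref{diff1} their multiplicities lie in $\{0,1\}$, and since lowering each $i+1$ to $i$ in a presentation of $\eta^{02}$ produces one of $\eta^{20}$, we get $\eta^{02}\in B_{k,l}(w)\Rightarrow\eta^{20}\in B_{k,l}(w)$, hence $m_{\eta^{20}}(w)\geq m_{\eta^{02}}(w)$; as $m_{\eta^{02}}(N_{w,i}^{(2)})\geq0$, this yields $m_{\eta^{11}}(s_iw)\geq m_{\eta^{11}}(w)$. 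This leaves the types $\eta^{01},\eta^{02},\eta^{12}$, for which Lemma \ref{multsiw} gives $m_\eta(s_iw)=m_{s_i\eta}(w)$, so that monotonicity reduces to the three comparisons $m_{\eta^{10}}(w)\geq m_{\eta^{01}}(w)$, $m_{\eta^{20}}(w)\geq m_{\eta^{02}}(w)$ and $m_{\eta^{21}}(w)\geq m_{\eta^{12}}(w)$, each saying that the $i$-heavier multi-set dominates the $i+1$-heavier one, now both in the $r=2$ regime.

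These three comparisons are where the real work lies. I would prove them uniformly by an order-embedding. Writing $\eta$ for the $i+1$-heavy multi-set and $\eta^\flat=s_i\eta$ for the $i$-heavy one (same single-multiplicity entries, with $i+1$ resp.\ $i$ in the aligned position by part $(iii)$ of Lemma \ref{siwsets}), the operator $s_i$, which lowers the forced $i+1$ to $i$ in a fixed coordinate, maps $A_k^{\eta}(w)$ into $A_k^{\eta^\flat}(w)$ injectively and order-preservingly for the product order on the free $\gamma$-indices, its image being those elements of $A_k^{\eta^\flat}(w)$ lying in $A_k^{(i)}(w)$. Consequently the extremal index data of $\eta^\flat$ dominates that of $\eta$, giving $a'\leq a$, $b'\leq b$, $c'\geq c$, $d'\geq d$; plugging this into Theorem \ref{diff2} and checking the three sign-patterns ($\varepsilon=\varepsilon'=0$ and $\varepsilon=\varepsilon'=1$ are immediate, while $\varepsilon=1,\varepsilon'=0$ follows since $m_\eta=(d-a)-(b-c)+1<d'-a'+1=m_{\eta^\flat}$) yields $m_{\eta^\flat}(w)\geq m_\eta(w)$ in every case, and analogously for Proposition \ref{lketa23}. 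The main obstacle is exactly verifying this embedding and the induced domination of the four indices cleanly, in particular controlling how the forced entry sits relative to the bound $w^k$ — equivalently, tracking the part of $A_k^{\eta^\flat}(w)$ inside $A_{k,i}(w)$ that has no $s_i$-preimage — since this is what makes the domination strong enough to absorb the $\varepsilon$-correction.
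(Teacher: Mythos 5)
Your treatment of the vanishing, the range, and the attainment is essentially the paper's: Theorem \ref{mposBkl} for the zero value, the same extremization of $d-a+1-\varepsilon(b-c)$ for the bounds $3$ and $l-k+4$ (resp.\ $3$ and $5$ via Proposition \ref{lketa23}), and a realization by prescribing $w^{k}\subseteq w^{l}$ directly. Where you diverge is the monotonicity, and there your route is substantially heavier than it needs to be. You fix $w$ and compare $\eta$ with $s_{i}\eta$, which forces the full type-by-type case analysis of Lemma \ref{multsiw}, a separate treatment of $\eta^{11}$ via Corollary \ref{meta11siw}, and then the three comparisons $m_{\eta^{10}}\geq m_{\eta^{01}}$, $m_{\eta^{20}}\geq m_{\eta^{02}}$, $m_{\eta^{21}}\geq m_{\eta^{12}}$ -- which are exactly the $r=2$ instance of Conjecture \ref{siinc}, and which you correctly identify as the real work and leave as a sketch. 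The paper instead fixes $\eta$ and compares $w$ with $s_{i}w$ directly: Proposition \ref{Asiw} gives $A_{l}^{\eta}(w)\subseteq A_{l}^{\eta}(s_{i}w)$ and likewise for $k$, so by Proposition \ref{maxelt} the maximal elements can only grow and hence (taking complements in $\eta$ as in Definition \ref{minmaxelts}) the minimal ones can only shrink; since the formula of Theorem \ref{diff2} (and its $k=l$ analogue) is monotone in $(\beta_{\min},\beta_{\max})$ -- the $\varepsilon=1$ value $d-a+1-(b-c)$ being dominated by $d-a+1$ -- the inequality $m_{\eta}(s_{i}w)\geq m_{\eta}(w)$ follows in one stroke, with no case analysis on the type of $\eta$ at all.

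Your sketched embedding argument is not wrong -- the position-domination $a'\leq a$, $b'\leq b$, $c'\geq c$, $d'\geq d$ does hold, and can be extracted from parts $(ii)$ and $(iii)$ of Lemma \ref{siwsets} together with the fact that $\beta\geq\beta'$ forces the sorted position vectors in $\operatorname{ord}(\eta_{1})$ to dominate -- but as written it is a gap: you have not actually established the domination, and the worry you raise about elements of $A_{k}^{\eta^{\flat}}(w)\cap A_{k,i}(w)$ without $s_{i}$-preimage is precisely the point you would have to resolve. The lesson is that this difficulty evaporates if you make the comparison in the other direction ($w$ versus $s_{i}w$ for fixed $\eta$), where the needed containment of the sets $A^{\eta}$ is already supplied by Proposition \ref{Asiw}.
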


\begin{proof}
The value 0 is a consequence of Theorem \ref{mposBkl}, and since in the notation from Theorem \ref{diff2} we have $d-b\geq1$ and $c-a\geq1$ (as well as $d-a\geq3$ because $b$ and $c$ both lie between them), this establishes the lower bound 3 when $l>k=|\eta_{2}|+2$. We now note that $\varepsilon=1$ only when $b>c$, so that the value $d-a+1-(b-c)$ arising from this case is smaller than $d-a+1$ occurring when $\varepsilon=0$. This value thus $\beta_{\max}$ increases and as $\beta_{\min}$ decreases, and since $A_{l}^{\eta}(w) \subseteq A_{l}^{\eta}(s_{i}w)$ when $\ell(s_{i}w)>\ell(w)$ (and the same with $k$), we get $\beta_{\max}^{k,s_{i}w}(\eta)\geq\beta_{\max}^{k,w}(\eta)$ and $\alpha_{\max}^{l,s_{i}w}(\eta)\geq\alpha_{\max}^{l,w}(\eta)$ hence $\beta_{\min}^{k,s_{i}w}(\eta)\leq\beta_{\min}^{k,w}(\eta)$, yielding the asserted inequality. Thus, the maximal value for $m_{\eta}$ will be obtained when $w^{k}$ and $w^{l}$ are large enough not to impose any restrictions on sequences that are contained in $\eta$, which will give $b<c$ (hence $\varepsilon=0$), $a=1$, $d=l-k+4$, and the asserted upper bound (which is also sharp). For any smaller value, say $d-1$ for $4 \leq d \leq l-k+4$, take a permutation $w$ for which $w^{k}=\operatorname{ord}(\eta_{2}\cup\{\gamma_{2},\gamma_{d}\})$ and $w^{l}=\operatorname{ord}(\eta_{1}\cup\eta_{2}\setminus\{\gamma_{1},\gamma_{3}\}) \supseteq w^{k}$, with which $\beta_{\max}=w^{k}$ (with our $d$ and with $b=2$), $\alpha_{\max}=w^{l}$, hence the complement $\beta_{\min}$ is with $a=1$ and $c=3>b$ (thus $\varepsilon=1$), and the multiplicity is indeed $d-1$. The result for $l=k=|\eta_{2}|+3$ follows directly from Proposition \ref{lketa23}, with the inequality holding by the same argument. This proves the corollary
\end{proof}

\section{Questions for Further Research \label{QuesRes}}

Theorems \ref{diff1} and \ref{diff2} and Proposition \ref{lketa23} imply that for $r\leq2$, the multiplicity $m^{k,l}_{\eta}(w)$ depends only on the set of sums $\alpha+\beta=\eta$ with $\alpha \in A_{l}(w)$ and $\beta \in A_{k}(w)$, which by Lemma \ref{conteta} and Corollary \ref{setofpairs} are the same as those $\alpha \in A_{l}^{\eta}(w)$ satisfying $\alpha_{\min}^{l,w}(\eta)\leq\beta\leq\alpha_{\max}^{l,w}(\eta)$ or those $\beta \in A_{k}^{\eta}(w)$ for which $\beta_{\min}^{k,w}(\eta)\leq\beta\leq\beta_{\max}^{k,w}(\eta)$, up to interchanging $\alpha$ and $\beta$ in case $k=l$. This set can be partially ordered, say using the order on $\beta$ (where for $k=l$ we may choose some element of $\eta_{1}$ and use it in order to determine which of our summands is $\alpha$ and which is $\beta$, and then apply the usual ordering), and the multiplicity depends only on the type of this set as a poset. Moreover, if the poset of one multi-set $\eta$ and permutation $w$ can be embedded, as a poset, in another such poset, then the multiplicity of the larger poset is larger than or equal to that of $\eta$ and $w$.

From checking several additional cases, we pose the following conjecture, stating that this is the case for all values of $r$.
\begin{conj}
The multiplicity $m^{k,l}_{\eta}(w)$ depends only on the poset type of the set of sums $\alpha+\beta=\eta$, with $\alpha \in A_{l}(w)$ and $\beta \in A_{k}(w)$. Moreover, when one poset can be embedded into another, the associated multiplicity either remains the same or increases. \label{poset}
\end{conj}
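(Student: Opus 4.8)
The plan is to run the same induction on $\ell(w)$ that drives Theorems \ref{diff1} and \ref{diff2} and Proposition \ref{lketa23}, but to carry the \emph{entire} presentation poset through the induction step rather than only the distinguished elements $\alpha_{\max},\beta_{\min},\alpha_{\min},\beta_{\max}$. First I would fix the object: by Corollary \ref{setofpairs} and Proposition \ref{contmax} the set of presentations $\eta=\alpha+\beta$ is in order-preserving bijection with the interval $[\beta_{\min},\beta_{\max}]$ inside $A_k^{\eta}(w)$, and since every such $\beta$ is $\eta_2$ together with a choice of $r+\delta_{k,l}$ elements of $\operatorname{ord}(\eta_1)$, this interval embeds as an interval in the distributive lattice of $(r+\delta_{k,l})$-element subsets of a chain, ordered componentwise (in the case $k=l$ I would orient the unordered pair by a marked element of $\eta_1$, exactly as in the remark preceding Theorem \ref{diff2}). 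Call this poset $\mathcal{P}^{k,l}_\eta(w)$. The goal becomes to produce a poset functional $\Phi$ with $m^{k,l}_\eta(w)=\Phi\big(\mathcal{P}^{k,l}_\eta(w)\big)$, and to show $\Phi$ is monotone under poset embeddings.

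Next I would upgrade Lemma \ref{siwsets} from a statement about extremal elements to one about the whole interval. For the type classes $\eta^{00},\eta^{10},\eta^{20},\eta^{21},\eta^{22}$ and $\eta^{01},\eta^{02},\eta^{12}$, the proof of Lemma \ref{siwsets} already shows that $s_i$ acts as an order isomorphism between $A_l^{s_i\eta}(w)$ (or $A_l^{\eta}(w)$) and $A_l^{\eta}(s_iw)$; I would record that it therefore carries the interval $[\beta_{\min},\beta_{\max}]$ isomorphically, so that $\mathcal{P}^{k,l}_\eta(s_iw)\cong\mathcal{P}^{k,l}_{\eta}(w)$ (respectively $\cong\mathcal{P}^{k,l}_{s_i\eta}(w)$). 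Combined with $m_\eta(s_iw)=m_\eta(w)$ or $m_{s_i\eta}(w)$ from Lemma \ref{multsiw}, the identity $m=\Phi(\mathcal{P})$ propagates across the induction step for all of these types with no further work.

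The crux is the $\eta^{11}$ case, where Corollary \ref{meta11siw} gives either $m_{\eta^{11}}(s_iw)=m_{\eta^{11}}(w)+m_{\eta^{20}}(w)-m_{\eta^{02}}(w)$ or $m_{\eta^{11}}(s_iw)=m_{\eta^{20}}(w)+m_{\eta^{02}}(N_{w,i}^{(2)})$. Here I would use Lemmas \ref{eta11siw} and \ref{siweta11} to describe, order-theoretically, how $\mathcal{P}^{k,l}_{\eta^{11}}(s_iw)$ is assembled from the posets of $\eta^{11},\eta^{20},\eta^{02}$ at $w$: relaxing the bound $w^l$ (replacing $i$ by $i+1$ in $\alpha_{\max}$ or $\beta_{\max}$) enlarges $A_k^{\eta}$, and the newly created presentations are exactly those counted by $m_{\eta^{02}}(N_{w,i}^{(2)})$, which by Lemma \ref{Nwipos} and Corollary \ref{setofpairs} is the size of the interval $[\beta_{\min}^{k,w}(\eta^{20}),\beta_{\max}^{k,w}(\eta^{20})]$. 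The structural statement I would aim to prove is that this operation glues a copy of $\mathcal{P}^{k,l}_{\eta^{20}}(w)$ onto $\mathcal{P}^{k,l}_{\eta^{11}}(w)$ along a face, and that $\Phi$ is \emph{additive} over this gluing in a manner matching the arithmetic of Corollary \ref{meta11siw}. The hard part, and the reason this is only conjectural, is to \emph{identify} the functional $\Phi$ for general $r$: we have closed forms only for $r\le 2$, where the $\varepsilon$ and $\delta$ sign-corrections in Theorem \ref{diff2} and Proposition \ref{lketa23} already show $\Phi$ is neither the cardinality nor the rank of the interval, so the genuine obstacle is to guess the correct chain- or M\"obius-type invariant and verify the gluing additivity uniformly in $r$.

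For the monotonicity clause I would first establish the global inequality $m^{k,l}_\eta(s_iw)\ge m^{k,l}_\eta(w)$ whenever $\ell(s_iw)>\ell(w)$, generalizing Corollary \ref{lowbdr2}: the inclusion $A_k^{\eta}(w)\subseteq A_k^{\eta}(s_iw)$ forces $\beta_{\max}$ to weakly increase and $\beta_{\min}$ to weakly decrease, so every induction step can only enlarge $\mathcal{P}^{k,l}_\eta$ and, by the additivity above, can only add to $m$. Granting this, embedding-monotonicity would follow once I show that any poset embedding $\mathcal{P}\hookrightarrow\mathcal{Q}$ between two realizable presentation posets is realized geometrically by a chain of length-increasing simple transpositions joining a permutation that produces $\mathcal{P}$ to one that produces $\mathcal{Q}$ (with $k$, $l$, and the shape of $\eta$ fixed); the value of $m$ then only increases along the chain. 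I expect this realizability to be the second genuine difficulty, since a priori an abstract poset embedding need not be induced by relaxing the bounds $w^k,w^l$ one transposition at a time, and reconciling the two notions is where the combinatorics of $A_l(w)$ from Section \ref{PropAlw} would have to be pushed further.
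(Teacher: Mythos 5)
This statement is Conjecture \ref{poset}: the paper does not prove it, and explicitly poses it as an open problem supported only by the closed-form multiplicities for $r\le2$ (Theorems \ref{diff1} and \ref{diff2} and Proposition \ref{lketa23}). So there is no proof in the paper to compare against, and your proposal, as you yourself acknowledge, is a research plan rather than a proof. The intermediate scaffolding you describe is consistent with the paper's machinery --- the identification of the presentation set with the interval $[\beta_{\min},\beta_{\max}]$ in $A_k^{\eta}(w)$ via Corollary \ref{setofpairs}, and the fact that Lemma \ref{siwsets} makes $s_i$ an order isomorphism on these intervals for all types other than $\eta^{11}$, so that the identity $m=\Phi(\mathcal{P})$ would indeed propagate for free in those cases. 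But the two places you flag as ``the reason this is only conjectural'' are precisely the content of the conjecture: (1) exhibiting a poset functional $\Phi$ for general $r$ and proving it is additive over the gluing in the $\eta^{11}$ step, and (2) showing that every abstract embedding of realizable presentation posets is induced by a chain of length-increasing simple transpositions. Neither is addressed, so the proposal does not establish the statement.

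Two further points you should be aware of if you pursue this. First, in the $\eta^{11}$ case with $\eta^{11}\in B_{k,l}(w)$, Corollary \ref{meta11siw} gives the increment as the \emph{difference} $m_{\eta^{20}}(w)-m_{\eta^{02}}(w)$, not as $m_{\eta^{02}}(N_{w,i}^{(2)})$ (which vanishes there by Lemma \ref{Nwipos}); the non-negativity of that difference is itself only Conjecture \ref{siinc}, so even the stepping-stone inequality $m^{k,l}_\eta(s_iw)\ge m^{k,l}_\eta(w)$ that you invoke for the monotonicity clause is not available unconditionally --- your argument for it already presupposes the additivity of $\Phi$, which is the open point. Second, for $r\le2$ the functional is visibly not determined by the isomorphism type of the interval as an abstract poset alone unless one checks that the parameters $a,b,c,d$ (and $\varepsilon$, $\delta$) entering Theorem \ref{diff2} and Proposition \ref{lketa23} are recoverable from that type; the paper asserts this for $r\le2$ but any general $\Phi$ must reproduce exactly these values, which is a nontrivial consistency constraint on your choice of invariant. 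In short: the plan is a reasonable way to organize an attack, but the statement remains unproved both in the paper and in your proposal.
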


We will pose a weaker conjecture, as assuming it will suffice to getting the
\begin{conj}
Take $k$, $l$, $w$, and $i$ with $\ell(s_{i}w)>\ell(w)$. Then for $\eta$ of the sort $\eta^{01}$, $\eta^{02}$, or $\eta^{12}$ we have $m^{k,l}_{\eta}(w) \leq m^{k,l}_{s_{i}\eta}(w)$. \label{siinc}
\end{conj}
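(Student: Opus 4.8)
The plan is to reformulate the conjecture as a coefficient-positivity statement and then prove it by the same induction on $\ell(w)$ used throughout the paper. Recall that for $\eta=\eta^{01}$ we have $s_i\eta=\eta^{10}$, for $\eta=\eta^{02}$ we have $s_i\eta=\eta^{20}$, and for $\eta=\eta^{12}$ we have $s_i\eta=\eta^{21}$, so the three inequalities to be proved are $m^{k,l}_{\eta^{01}}(w)\le m^{k,l}_{\eta^{10}}(w)$, $m^{k,l}_{\eta^{02}}(w)\le m^{k,l}_{\eta^{20}}(w)$ and $m^{k,l}_{\eta^{12}}(w)\le m^{k,l}_{\eta^{21}}(w)$. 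By the bookkeeping recorded after Equation \eqref{decomPw2}, the ``difference'' components satisfy, as polynomials in the outer variables and at each fixed $T_kT_l$, the identities $P_{w,2}^{10}=m_{\eta^{10}}-m_{\eta^{01}}$, $P_{w,2}^{20}=m_{\eta^{20}}-m_{\eta^{02}}$ and $P_{w,2}^{21}=m_{\eta^{21}}-m_{\eta^{12}}$. Hence, taking everything with respect to the index $i$, Conjecture \ref{siinc} is \emph{equivalent} to the statement that $P_{w,2}^{10}$, $P_{w,2}^{20}$ and $P_{w,2}^{21}$ have nonnegative $x$-coefficients whenever $\ell(s_iw)>\ell(w)$; one may also phrase this through the operator identity $P_{w,2}-s_iP_{w,2}=(x_i-x_{i+1})\partial_i(P_{w,2})$, requiring this antisymmetric polynomial to be nonpositive on the $x_{i+1}$-heavy monomials. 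Moreover Lemma \ref{typesind} shows these same components \emph{vanish} when $\ell(s_iw)<\ell(w)$, so the clean statement to induct on is: \emph{for every $w$ and every $i$, the components $P_{w,2}^{10}$, $P_{w,2}^{20}$, $P_{w,2}^{21}$ with respect to $i$ have nonnegative coefficients.}

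First I would run the induction on $\ell(w)$, the base $w=\operatorname{Id}$ being trivial as $P_{\operatorname{Id},2}=0$. For the step write $w'=s_jw$ with $\ell(s_jw)>\ell(w)$ and fix $i$. If $j=i$, then Lemma \ref{typesind} gives $P_{w',2}^{10}=P_{w',2}^{20}=P_{w',2}^{21}=0$ and we are done. If $|i-j|\ge2$, then $s_i$ and $s_j$ act on disjoint pairs of variables and commute, and from $\ell(s_iw')>\ell(w')$ a short length computation using Lemma \ref{propoflw} gives $\ell(s_iw)>\ell(w)$; Proposition \ref{Asiw} then makes $A_l(w)$, and hence $A_{l,j}(w)$, invariant under $s_i$, so that $\tilde N^{(2)}_{w,j}$ is symmetric in $x_i\leftrightarrow x_{i+1}$ and therefore has all three of its own $i$-difference components equal to $0$. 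Because the transform of Lemma \ref{typesind} (applied at $j$) acts only on the $(x_j,x_{j+1})$-bidegree and commutes with the $i$-grading, extracting the $i$-component gives $P_{w',2}^{10}=[\,j\text{-transform}\,](P_{w,2}^{10})$ with no surviving $\tilde N$-contribution, and likewise for the $20$ and $21$ components. Since that transform only relabels and adds together the $j$-graded pieces of its input, it sends coefficientwise-nonnegative polynomials to coefficientwise-nonnegative ones, and the induction hypothesis for $w$ yields the claim for $w'$.

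The genuine difficulty is the adjacent case $|i-j|=1$, say $j=i+1$ (the case $j=i-1$ being symmetric). Here $s_i$ and $s_j$ share the variable $x_{i+1}$, the $i$- and $j$-gradings no longer factor, and $A_{l,i+1}(w)$ need not be $s_i$-invariant, so the vanishing argument above breaks down. What remains available is that $\tilde N^{(2)}_{w,i+1}$ does not involve $x_{i+1}$ at all (Definition \ref{Pwdef}, together with $N^{(2)}_{w,i+1}=x_{i+2}^2\tilde N^{(2)}_{w,i+1}$), so its $i$-difference components are just the coefficients of $x_i$ and of $x_i^2$ in $\tilde N^{(2)}_{w,i+1}$, which are manifestly nonnegative. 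The obstacle is that the $s_{i+1}$-transform mixes monomials of different $x_{i+1}$-degree while fixing the $x_i$-degree, so reassembling the $i$-difference components of $P_{w',2}$ couples the $x_i$-degree blocks that the transform treats independently, and a naive termwise comparison can produce cancellations. The plan for this case is to strengthen the inductive invariant so as to control the $x_i$- and $x_{i+1}$-graded data of $P_{w,2}$ jointly on the three consecutive values $i,i+1,i+2$, translating the required inequality---via Lemma \ref{nontrivAli}, Lemma \ref{Nwipos}, and the description of $B_{k,l}$ under $s_{i+1}$ in Proposition \ref{Bsiw}---into an explicit injection between the relevant sets of presentations that respects the $s_i$-action. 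I expect this adjacent, braid-type case to be the main obstacle, precisely as the three-strand relation $\pi_i\pi_{i+1}\pi_i=\pi_{i+1}\pi_i\pi_{i+1}$ is the only nontrivial ingredient in defining $\pi_w$.

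As a conceptual alternative consistent with Conjecture \ref{poset}, one can note that $\ell(s_iw)>\ell(w)$ makes the whole denominator $\prod_l\prod_{\alpha\in A_l(w)}(1-x^{\alpha}T_l)$ of $\mathcal{K}_w$ invariant under $s_i$ (Proposition \ref{Asiw}), so $s_i$ carries presentations $\alpha+\beta=\eta$ to presentations of $s_i\eta$ and, ordering each set of presentations by its $A_k(w)$-component, induces an order-preserving surjection from the presentation poset of $\eta$ onto that of $s_i\eta$ (the step lowering an occurrence of $i+1$ to $i$ can only add order relations). Granting the monotonicity of the multiplicity under such poset maps---which is exactly the content established for $r\le2$ in Theorems \ref{diff1} and \ref{diff2} and Proposition \ref{lketa23}, and conjectured in general in Conjecture \ref{poset}---this again yields $m^{k,l}_{\eta}(w)\le m^{k,l}_{s_i\eta}(w)$; but since this route presupposes the poset principle, the self-contained induction above, which settles all non-adjacent cases outright and isolates the braid case, is the preferable line of attack.
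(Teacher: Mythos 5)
This statement is Conjecture \ref{siinc}: the paper offers no proof of it and explicitly lists proving it as a direction for further research, so there is nothing to compare your argument against except its own internal logic. Your reformulation is correct and is the natural one: by the bookkeeping in the proof of Lemma \ref{multsiw}, the conjecture is equivalent to the coefficientwise nonnegativity of the components $P_{w,2}^{10}$, $P_{w,2}^{20}$, $P_{w,2}^{21}$ taken with respect to $i$ whenever $\ell(s_{i}w)>\ell(w)$, and Lemma \ref{typesind} makes them vanish in the opposite case, so inducting on ``nonnegative for every $i$'' is reasonable; the base case and the case $j=i$ are fine. But the proposal is a plan, not a proof: you leave the adjacent case $|i-j|=1$ entirely open, and the non-adjacent case, which you claim to settle outright, also contains a genuine gap.

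The gap is this: the transform of Lemma \ref{typesind} is a nonnegative combination at the level of the graded \emph{pieces} $P_{w,2}^{ab}$, but not at the level of monomial \emph{coefficients}, because by Lemma \ref{multsiw} the coefficient of a monomial of $j$-type $11$ in $P_{s_{j}w,2}$ is $m_{\eta^{11}}(w)+m_{\eta^{20}}(w)-m_{\eta^{02}}(w)+m_{\eta^{02}}(N_{w,j}^{(2)})$, with a genuine subtraction. Applying this to the two multisets $\eta^{10}$ and $\eta^{01}$ (superscripts now with respect to $i$) and subtracting, the inequality you need for $s_{j}w$ reads $D(\eta)(w)+D(\eta')(w)-D(\eta'')(w)+D(\eta'')(N_{w,j}^{(2)})\geq0$, where $D$ denotes the $i$-difference of multiplicities and $\eta',\eta''$ are $\eta$ with its $j$-type changed to $20$ and to $02$; your inductive hypothesis makes $D(\eta)(w)$, $D(\eta')(w)$ \emph{and} $D(\eta'')(w)$ nonnegative, so the third term enters with the wrong sign and nothing controls it. What is actually needed is the nonnegativity of the iterated second difference (the type-$10$ component with respect to $i$ of the type-$20$ component with respect to $j$ of $P_{w,2}$), a two-index strengthening that does not follow from the single-index invariant you induct on. Separately, your claim that $\tilde{N}_{w,j}^{(2)}$ is $s_{i}$-symmetric is false: Proposition \ref{Asiw} makes $A_{l}(s_{i}w)$, not $A_{l}(w)$, stable under $s_{i}$ (the wording in its statement is a slip), and $A_{l,j}(w)$ need not be $s_{i}$-stable --- for $w=1324$, $i=1$, $j=3$ the factor $1-x_{1}x_{4}T_{2}$ of $N_{w,3}$ has no $s_{1}$-partner. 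What is true, and is what that step actually needs, is only that the $i$-differences of the coefficients of $N_{w,j}^{(2)}$ are nonnegative, since any element of $A_{l,j}(w)$ containing $i+1$ but not $i$ has its $s_{i}$-image in $A_{l,j}(w)$ as well. So both halves of the induction require a strictly stronger hypothesis than the one you carry, and the statement remains open.
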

Note that for $\eta$ as in Conjecture \ref{siinc}, part $(ii)$ of Lemma \ref{siwsets} and its proof imply that the set of sums associated with $w$ and $s_{i}\eta$ is isomorphic, via $s_{i}$, to that of $s_{i}w$ and $\eta$ as posets, and the latter clearly contains the set of sums corresponding to $w$ and $\eta$. Hence Conjecture \ref{poset} implies Conjecture \ref{siinc}.

The first question for further research is to prove Conjecture \ref{poset}, or at least Conjecture \ref{siinc}, since the latter produces the following consequence.
\begin{thm}
If Conjecture \ref{siinc} holds, then the polynomial $P_{w,2}$ from Proposition \ref{expPwinT} is given by \[P_{w,2}(x,t)=\sum_{k \leq l}\sum_{\eta \in B_{k,l}(w)}m^{k,l}_{\eta}(w)x^{\eta}T_{k}T_{l},\quad\mathrm{with\ }m^{k,l}_{\eta}(w)>0\mathrm{\ with\ }\eta \in B_{k,l}(w).\] In addition, for each $k$, $l$, and $\eta$ we have $m^{k,l}_{\eta}(s_{i}w) \geq m^{k,l}_{\eta}(w)$ for any $i$ with $\ell(s_{i}w)>\ell(w)$, and if for $\eta \in B_{k,l}(w)$ we set $r=k-\delta_{k,l}-|\eta_{2}|$, then we have $m^{k,l}_{\eta}(w)\geq2^{r}-1$. \label{formPw2}
\end{thm}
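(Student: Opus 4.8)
The plan is to prove all three assertions simultaneously by induction on $\ell(w)$, exactly as in the proofs of Theorems \ref{diff1} and \ref{diff2}. The \emph{shape} of the formula is already in hand: Proposition \ref{expPwinT} writes $P_{w,2}$ as a sum of terms $m^{k,l}_\eta(w)x^\eta T_k T_l$, and Theorem \ref{mposBkl} shows these vanish unless $\eta\in B_{k,l}(w)$, so the only new content is the strict positivity, the monotonicity $m^{k,l}_\eta(s_iw)\geq m^{k,l}_\eta(w)$, and the bound $m^{k,l}_\eta(w)\geq 2^r-1$. The base case $w=\operatorname{Id}$ is vacuous since $P_{\operatorname{Id},2}=0$ and every $B_{k,l}(\operatorname{Id})$ is empty. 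In the inductive step I fix $i$ with $\ell(s_iw)>\ell(w)$ and run through the type decomposition of $\eta$ as $\eta^{ab}$, invoking Lemma \ref{multsiw} and Corollary \ref{meta11siw}.

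For monotonicity, the types $\eta^{00},\eta^{10},\eta^{20},\eta^{21},\eta^{22}$ satisfy $m_\eta(s_iw)=m_\eta(w)$ and are immediate, while for $\eta^{01},\eta^{02},\eta^{12}$ Lemma \ref{multsiw} gives $m_\eta(s_iw)=m_{s_i\eta}(w)$, and Conjecture \ref{siinc} is precisely the inequality $m_\eta(w)\leq m_{s_i\eta}(w)$ needed here. For $\eta=\eta^{11}$ I split according to Corollary \ref{meta11siw}: in the creation case $\eta^{11}\in B_{k,l}(s_iw)\setminus B_{k,l}(w)$ we have $m_{\eta^{11}}(w)=0$ while $m_{\eta^{11}}(s_iw)=m_{\eta^{20}}(w)+m_{\eta^{02}}(N_{w,i}^{(2)})$ is positive because Lemma \ref{Nwipos} makes the second summand positive; in the remaining case the increment is $m_{\eta^{20}}(w)-m_{\eta^{02}}(w)$, which is nonnegative since $s_i\eta^{02}=\eta^{20}$ and Conjecture \ref{siinc} applied to $\eta^{02}$ gives $m_{\eta^{02}}(w)\leq m_{\eta^{20}}(w)$. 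This same nonnegativity, together with the lower bound below, yields strict positivity for all $\eta\in B_{k,l}(w)$, since $r\geq 1$ there forces $2^r-1\geq 1$.

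For the lower bound I again reduce to the $\eta^{11}$ case: the other types transfer the bound verbatim through the equalities of Lemma \ref{multsiw}, together with the fact (from the proof of Theorem \ref{diff1}) that they keep $\eta$ or $s_i\eta$ inside $B_{k,l}(w)$ with the \emph{same} value of $r$, and the non-creation $\eta^{11}$ case preserves the bound by monotonicity. The only place where the bound is genuinely produced is the creation case, where $m_{\eta^{11}}(s_iw)=m_{\eta^{20}}(w)+m_{\eta^{02}}(N_{w,i}^{(2)})$. Here $\eta^{20}$ has parameter $r-1$ by Remark \ref{inddecom}, so the induction hypothesis gives $m_{\eta^{20}}(w)\geq 2^{r-1}-1$, and by Lemma \ref{Nwipos} the quantity $m_{\eta^{02}}(N_{w,i}^{(2)})$ equals the total number $N$ of presentations $\eta^{20}=\alpha+\beta$ (unordered when $k=l$). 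It therefore suffices to prove the standalone combinatorial estimate $N\geq 2^{r-1}$, after which $m_{\eta^{11}}(s_iw)\geq(2^{r-1}-1)+2^{r-1}=2^r-1$.

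The heart of the matter --- and the step I expect to be the main obstacle --- is this count. Writing $\beta_{\min}\cap\eta^{20}_1=\{b_1<\dots<b_s\}$ and $\beta_{\max}\cap\eta^{20}_1=\{c_1<\dots<c_s\}$ with $s=(r-1)+\delta_{k,l}$, Proposition \ref{contmax} and Corollary \ref{setofpairs} show these sets are disjoint with $b_j<c_j$ for every $j$. For each of the $2^s$ subsets $S\subseteq\{1,\dots,s\}$ I form $\beta_S:=\operatorname{ord}\big(\eta^{20}_2\cup\{c_j:j\in S\}\cup\{b_j:j\notin S\}\big)$; a majorization argument (each chosen entry lies between $b_j$ and $c_j$, so the sorted sequence dominates $\beta_{\min}$ and is dominated by $\beta_{\max}$ componentwise) gives $\beta_{\min}\leq\beta_S\leq\beta_{\max}$, whence Corollary \ref{setofpairs} certifies $\beta_S$ as a genuine presentation, and distinctness of the $2^s$ elements is clear. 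This yields $2^s$ presentations when $k<l$ (so $s=r-1$ and $N\geq 2^{r-1}$) and, after dividing by $2$ for the unordered convention, $2^{s-1}=2^{r-1}$ when $k=l$ (so $s=r$, with the complementary mixes never coinciding because the $\eta_1$-parts are nonempty and disjoint). The delicate points to get right are the validity of the majorization bound under reordering, the bookkeeping of the unordered $k=l$ convention throughout (already flagged in the proofs of Theorems \ref{diff1} and \ref{diff2}), and confirming via Lemma \ref{Nwipos} that in the creation case every such $\beta_S$ indeed lies in $A_{k,i}(w)$ so that it is actually counted by $m_{\eta^{02}}(N_{w,i}^{(2)})$.
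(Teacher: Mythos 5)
Your proposal is correct and follows essentially the same route as the paper: the same induction on $\ell(w)$ with the type decomposition $\eta^{ab}$, Lemma \ref{multsiw} and Corollary \ref{meta11siw} handling the monotonicity and positivity, and the same subset construction (your $\beta_S$ built from $\{b_j\}$ and $\{c_j\}$ is exactly the paper's $\beta_I$ built from $\{\gamma_j^-\}$ and $\{\gamma_j^+\}$, with your $s$ equal to the paper's $\tilde{r}=r-1+\delta_{k,l}$) giving the $2^{r-1}$ bound on $m_{\eta^{02}}(N_{w,i}^{(2)})$ in the creation case. The delicate points you flag (majorization under reordering, the division by $2$ when $k=l$) are treated at the same level of detail in the paper's own argument.
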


\begin{proof}
We begin by proving the inequality involving $s_{i}w$ for such $i$, using which we determine the type of $\eta$. When $\eta$ is $\eta^{00}$, $\eta^{10}$, $\eta^{20}$, $\eta^{21}$, or $\eta^{22}$, the asserted inequality holds as an equality by Lemma \ref{multsiw}. If $\eta$ is either $\eta^{01}$, $\eta^{02}$, or $\eta^{12}$, then the asserted inequality is a direct consequence of Lemma \ref{multsiw} and Conjecture \ref{siinc}. When $\eta=\eta^{11}$, the left hand side of the inequality consists of the right hand side, of the term involving $N_{w,i}^{(2)}$ (which is always non-negative by Lemma \ref{Nwipos}), and the a difference which is also non-negative by Conjecture \ref{siinc}. This proves the desired inequality in all cases (note that if we assume the stronger Conjecture \ref{poset}, then the natural embedding of the poset associated with $w$ and $\eta$ into that of $s_{i}w$ and $\eta$ yields this inequality immediately).

Now, Theorem \ref{mposBkl} implies that $P_{w,2}$ can be described as such a sum over $\eta \in B_{k,l}(w)$, and we wish to prove that if $r=k-\delta_{k,l}-|\eta_{2}|$ for such $\eta$, which satisfies $r\geq1$ by Proposition \ref{Bkldiff}, then $m_{\eta}(w)\geq2^{r}-1$, and in particular $m_{\eta}(w)>0$. As in the proofs of Theorems \ref{diff1} and \ref{diff2} and Proposition \ref{lketa23}, there is nothing to prove when $w=\operatorname{Id}$, so we assume that the result is true for some $w$ and take an index $i$ for which $\ell(s_{i}w)>\ell(w)$. We saw in the proof of Theorems \ref{diff1} that if $\eta \in B_{k,l}(s_{i}w)$ and the type of $\eta$ is $\eta^{00}$, $\eta^{10}$, $\eta^{20}$, $\eta^{21}$, or $\eta^{22}$ then $\eta \in B_{k,l}(w)$ already and thus Lemma \ref{multsiw} and the induction hypothesis give $m_{\eta}(s_{i}w)=m_{\eta}(w)=2^{r}-1>0$. The same proof also showed that if $\eta$ is either $\eta^{01}$, $\eta^{02}$, or $\eta^{12}$ then $s_{i}\eta \in B_{k,l}(w)$, and as the value of $r$ is clearly the same for $\eta$ and $s_{i}\eta$, we get $m_{\eta}(s_{i}w)=m_{s_{i}\eta}(w)=2^{r}-1>0$ via Lemma \ref{multsiw} and the induction hypothesis.

For $\eta=\eta^{11} \in B_{k,l}(s_{i}w)$, recall from Corollary \ref{meta11siw} that $m_{\eta^{11}}(s_{i}w)$ equals $m_{\eta^{11}}(w)+\big(m_{\eta^{20}}(w)-m_{\eta^{02}}(w)\big)$ when $\eta$ was already in $B_{k,l}(w)$, and is given by $m_{\eta^{20}}(w)+m_{\eta^{02}}(N_{w,i}^{(2)})$ if it was not. In the former case the first summand is at least $2^{r}-1>0$ by the induction hypothesis, and the difference is non-negative by Conjecture \ref{siinc} again. In the latter case, Remark \ref{inddecom} implies that the value associated with $\eta^{20}$ is $r-1$, and as Lemma \ref{Nwipos} implies that the latter sequence is in $\tilde{B}_{k,l}(w)$, the first summand is at least $2^{r-1}-1\geq0$. The second one is positive (by Lemma \ref{Nwipos}), so that the positivity of $m_{\eta}$ for all $\eta \in B_{k,l}(w)$ is now established, but for the bound it remains to prove that it equals at least $2^{r-1}$.

But Corollary \ref{setofpairs} implies that evaluates this number as the number of sequences $\beta$ that satisfy $\eta^{20}_{2}\subseteq\beta\subseteq\eta^{20}$ and $\beta_{\min}^{k,w}(\eta^{20})\leq\beta\leq\beta_{\max}^{k,w}(\eta^{20})$ (divided by 2 in case $k=l$ due to the interchanges), and we recall from Proposition \ref{contmax} that these two bounding sequence consist of $\eta^{20}_{2}$ (of size $k-\delta_{k,l}-r+1$) and two disjoint subsets of $\eta^{20}_{1}$, of both of size $\tilde{r}=r-1+\delta_{k,l}$. If we denote the additional elements of $\beta_{\max}^{k,w}(\eta^{20})$ by $(\gamma_{1}^{+},\ldots,\gamma_{\tilde{r}}^{+})$ and those of $\beta_{\min}^{k,w}(\eta^{20})$ by $(\gamma_{1}^{-},\ldots,\gamma_{\tilde{r}}^{-})$, then we get $\gamma_{j}^{-}<\gamma_{j}^{+}$ for every $1 \leq j\leq\tilde{r}$. Hence for every subset $I$ of the numbers between 1 and $\tilde{r}$, the element $\beta_{I}$ consisting of $\eta^{20}_{2}$, $\gamma_{j}^{+}$ for $j \in I$, and $\gamma_{j}^{-}$ for all $j \not\in I$, is in the desired set. This gives $2^{\tilde{r}}$ distinct elements of our set of pairs, and note that if $k=l$ then $I$ and its complement yield the same sum up to interchanges, so we take half of that sum. In total this produces $2^{\tilde{r}}/2^{\delta_{k,l}}=2^{r-1}$ elements contributing to $m_{\eta^{02}}(N_{w,i}^{(2)})$, yielding the desired bound. This completes the proof of the theorem.
\end{proof}
Note that the bounds for $r=1$ and $r=2$ in Theorem \ref{formPw2} are given in Theorem \ref{diff1} and Corollary \ref{lowbdr2}, and in fact, the case $r=0$ follows from Proposition \ref{Bkldiff} and Theorem \ref{mposBkl}. We also saw in the proof of Theorem \ref{diff1} and Proposition \ref{lketa23} that the bound $2^{r-1}$ for the multiplicity $m_{\eta^{02}}(N_{w,i}^{(2)})$ (when it is positive) is just a bound, and the value itself can be larger (indeed, in some $\beta$'s we can have a $j$th entry, appropriately located, that is strictly between $\gamma_{j}^{-}$ and $\gamma_{j}^{+}$ in case their difference is larger than 1).

\smallskip

We consider briefly the next degree part of $P_{w}$ from Definition \ref{Pwdef} and Theorem \ref{formofKw}, by writing the $O(T^{3})$ part from Proposition \ref{expPwinT} as $P_{w,3}+O(T^{4})$, with $P_{w,3}$ coming with a positive sign. For $P_{w,3}$ we have to consider multi-sets $\tau$ of the form $\tau_{1}+2\tau_{2}+3\tau_{3}$ (as a multi-set which can be obtained as the sum of three sets contains elements with multiplicities at most 3), and given three integers $p \leq k \leq l$, we define \[\tilde{C}_{p,k,l}(w)\!:=\!\{\tau|\exists\alpha \in A_{l}(w), \beta \in A_{k}(w),\mathrm{\ and\ }\gamma \in A_{p}(w)\mathrm{\ such\ that\ }\tau=\alpha+\beta+\gamma\}\] to be our ambient set. The set of interest $C_{p,k,l}(w)$ now consists of those $\tau\in\tilde{C}_{p,k,l}(w)$ for which there exists a presentation $\alpha+\beta+\gamma$ for which $\alpha+\beta \in B_{k,l}(w)$, there is one for which $\alpha+\gamma \in B_{p,l}(w)$, and in some presentation we have $\beta+\gamma \in B_{p,k}(w)$.

The analogue of Theorem \ref{mposBkl} and the conditional Theorem \ref{formPw2} is as follows.
\begin{conj}
The polynomial $P_{w,3}$ can be written as in the formula \[P_{w,3}(x,t)=\sum_{p \leq k \leq l}\sum_{\tau \in C_{p,k,l}(w)}m^{p,k,l}_{\tau}(w)x^{\tau}T_{p}T_{k}T_{l},\] namely the multiplicity $m^{p,k,l}_{\tau}(w)$ can be non-zero only for $\tau \in C_{p,k,l}(w)$. Moreover, this multiplicity is positive for every such $\tau$. \label{formPw3}
\end{conj}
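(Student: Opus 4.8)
The plan is to mimic the inductive scheme used for Theorems \ref{mposBkl} and \ref{formPw2}, now tracking the part of $P_w$ that is cubic in the $T_l$'s. Writing $P_w=1-P_{w,2}+P_{w,3}+O(T^4)$ and expanding the Leibniz identity \eqref{Leibniz} by $T$-degree, the degree-three part of $P_w N_{w,i}$ equals $N^{(3)}_{w,i}-P_{w,2}N^{(1)}_{w,i}+P_{w,3}$ (using $P_{w,1}=0$ from Proposition \ref{expPwinT}), where $N^{(d)}_{w,i}$ denotes the degree-$d$ piece of $N_{w,i}$. Applying $\pi_i$ gives the central recursion
\[P_{s_iw,3}=\pi_i(P_{w,3})-\pi_i\big(P_{w,2}N^{(1)}_{w,i}\big)+\pi_i\big(N^{(3)}_{w,i}\big),\]
the cubic analogue of Equation \eqref{Pw2ind}. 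Since each factor $x^{s_i\alpha}$ with $\alpha\in A_{\cdot,i}(w)$ carries exactly one power of $x_{i+1}$ and no $x_i$, one has $N^{(3)}_{w,i}=-x_{i+1}^{3}\tilde N^{(3)}_{w,i}$ with $\tilde N^{(3)}_{w,i}\geq0$ free of $x_i,x_{i+1}$, and the computation $\pi_i(x_{i+1}^{3})=-(x_i^{2}x_{i+1}+x_ix_{i+1}^{2})$ shows that the genuinely new triple-product term enters with a positive sign in the types $\tau^{21}$ and $\tau^{12}$.

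First I would handle the base case: $w=\operatorname{Id}$ forces $P_{\operatorname{Id},3}=0$ and $C_{p,k,l}(\operatorname{Id})=\emptyset$, since every $B$-set is empty there. Then I would set up the cubic analogue of the decomposition in Equation \eqref{decomPw2}, grouping the monomials of $P_{w,3}$ by the pair of multiplicities $(a,b)$ of $x_i$ and $x_{i+1}$ with $a,b\in\{0,1,2,3\}$, and prove the analogues of Lemmas \ref{typesind} and \ref{multsiw} recording how each $m_{\tau^{ab}}$ transforms under $w\mapsto s_iw$. As in the quadratic case the $s_i$-invariant types transport directly through $\pi_i$, the single-excess types pair with their $s_i$-images, and the delicate types ($\tau^{21}$, $\tau^{12}$, and the mixed types fed by $P_{w,2}N^{(1)}_{w,i}$) absorb the new contributions. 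I would also need a cubic analogue of Proposition \ref{Bsiw} describing how $C_{p,k,l}(w)$ changes on passing to $s_iw$, assembled from Proposition \ref{Asiw} applied to all three summands.

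The support half (nonzero multiplicity implies $\tau\in C_{p,k,l}(s_iw)$) I expect to go through unconditionally by the case analysis of Theorem \ref{mposBkl}: the invariant and single-excess types reduce to the induction hypothesis via Proposition \ref{Asiw} and the $C$-analogue of Proposition \ref{Bsiw}, while the delicate types require a cubic analogue of Lemma \ref{Nwipos}. That analogue must show that a positive new contribution—either from $\pi_i(N^{(3)}_{w,i})$, a triple sum $\tau=\alpha+\beta+\gamma$ of elements of $A_{l,i}(w),A_{k,i}(w),A_{p,i}(w)$, or from $\pi_i(P_{w,2}N^{(1)}_{w,i})$, a $B_{k,l}(w)$-element summed with an $s_i$-image from $A_{p,i}(w)$—forces all three pairwise sums into the $B$-sets that define $C_{p,k,l}(s_iw)$. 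This is exactly where the three \emph{separate} presentations demanded by the definition of $C_{p,k,l}$ must be manufactured from a single triple presentation, using the swapping and replacement arguments of Lemmas \ref{interup}, \ref{interdown}, and \ref{Nwipos}.

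The positivity half is where I expect the real obstacle, and it is presumably why the statement is posed as a conjecture. For $\tau$ already in $C_{p,k,l}(w)$ the recursion gives $m_\tau(s_iw)=m_\tau(w)+(\text{difference terms})$, so preserving positivity needs these differences to be non-negative; for $\tau$ entering $C$ for the first time positivity must come from the $N^{(3)}$ and cross terms alone. Establishing non-negativity of the differences in general requires a monotonicity statement of the flavor of Conjectures \ref{poset} and \ref{siinc}, now for triple sums—that $m^{p,k,l}_\tau(w)$ depends only on the poset type of the set of presentations $\tau=\alpha+\beta+\gamma$ and is monotone under poset embedding. Absent such input one can still try to imitate the explicit count in Theorem \ref{formPw2}, bounding the new contribution below by a product of ``$2^{r}-1$''-type factors attached to the three pairwise reductions; but reconciling the overlaps among the three entangled $B$-conditions is the genuinely new difficulty, and I would not expect to resolve it without first settling the cubic poset conjecture.
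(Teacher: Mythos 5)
This statement is a conjecture in the paper, and the paper offers only partial evidence rather than a proof; your plan reproduces that evidence essentially step for step: your recursion is exactly Equation \eqref{Pw3ind}, your type decomposition and transformation rules are the paper's analogue of Lemma \ref{typesind} and its Lemma \ref{siwmults}, and the authors likewise conclude that positivity hinges on a monotonicity statement of the flavor of Conjecture \ref{poset} or Conjecture \ref{siinc}. The one place where you are more optimistic than the paper is the support half: the authors state that even the weak assertion (nonzero multiplicity forces $\tau \in C_{p,k,l}(s_{i}w)$) is covered only ``in most cases'' of the induction, and that the cases where $\tau^{11}$ (resp.\ $\tau^{22}$) receives a contribution from the cross term $x_{i}P_{w,2}^{10}N_{w,i}^{(1)}$ (resp.\ $x_{i}^{2}x_{i+1}P_{w,2}^{21}N_{w,i}^{(1)}$) were only checked in examples and appear to depend on Conjecture \ref{siinc} as well, so you should not expect that half to go through unconditionally either.
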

To give an example, take $w=4123 \in S_{4}$, for which $P_{w,3}(x,t)$ equals \[x_{1}x_{2}x_{3}x_{4}T_{1}^{2}T_{2}+x_{1}^{2}x_{2}x_{3}x_{4}T_{1}T_{2}^{2}+(x_{1}^{2}x_{2}^{2}x_{3}x_{4}+x_{1}^{2}x_{2}x_{3}^{2}x_{4}+x_{1}^{2}x_{2}x_{3}x_{4}^{2})T_{1}T_{2}T_{3}.\] Note that if $\tau$ represents the last term, then with $\alpha=124 \in A_{3}(w)$ we get $13+4=14+3 \in B_{1,2}(w)$, if $\beta=14 \in A_{2}(w)$ then $123+4=124+3 \in B_{1,2}(w)$, and $\gamma=4 \in A_{1}(w)$ yields $123+14=124+13 \in B_{1,2}(w)$ (thus indeed this $\tau$ is in $C_{1,2,3}(w)$), but there is no presentation $\alpha+\beta+\gamma$ for which all sums of two terms is in the appropriate set from Definition \ref{Bklwdef} (indeed, $\beta$ must be 14, $\gamma$ has to be 4, and the remaining $\alpha=123$ does not satisfy this condition). A similar phenomenon is visible in $P_{w,3}$ part of $P_{w}$ give in Equation \eqref{P31425} for $w=3142 \in S_{4}$.

As evidence for Conjecture \ref{formPw3}, we consider the usual inductive argument, where for $w=\operatorname{Id}$ there is nothing to prove again, and if $\ell(s_{i}w)>\ell(w)$ then the analogue of Equation \eqref{Pw2ind} for $P_{w,3}$ is
\begin{equation}
P_{s_{i}w,3}(x,t)=\pi_{i}\big(P_{w,3}(x,t)\big)-\pi_{i}\big(P_{w,2}(x,t)N_{w,i}^{(1)}(x,t)\big)+\pi_{i}\big(N_{w,i}^{(3)}(x,t)\big). \label{Pw3ind}
\end{equation}
We decompose $P_{w,3}$ as in Equation \eqref{decomPw2} but including terms having also 3 in their superscripts, and then the analogue of Lemma \ref{typesind} shows that for $s_{i}w$, the polynomials with superscripts 01, 02, 03, 13, 23, and 12 vanish, those having superscripts 00 and 33 are the same as those of $w$, the ones for which the superscripts are 10, 20, 30, 31, and 32 are the sums of those for $w$ and the ones for $w$ having the interchanged signs, and we have \[P_{s_{i}w,3}^{11}=P_{w,3}^{11}+P_{w,2}^{10}\tilde{N}_{w,i}^{(1)},\qquad P_{s_{i}w,3}^{22}=P_{w,3}^{22}+P_{w,2}^{21}\tilde{N}_{w,i}^{(1)},\] and \[P_{s_{i}w,3}^{21}=P_{w,3}^{21}+P_{w,3}^{12}+P_{w,2}^{20}\tilde{N}_{w,i}^{(1)}+\tilde{N}_{w,i}^{(3)}\] (note that in the second term in Equation \eqref{Pw3ind}, the product $x_{i}N_{w,i}^{(1)}(x,t)$ is $s_{i}$-invariant, and hence multiplies $\partial_{i}P_{w,2}(x,t)$, to which only the parts $P_{w,2}^{10}$, $P_{w,2}^{20}$, and $P_{w,2}^{21}$ from Equation \eqref{decomPw2} contribute).

By expressing, for such $w$ and $i$, the types of $\tau$ as $\tau^{ab}$ with $a$ and $b$ between 0 and 3 as we did for the $\eta$'s, and by omitting $p$, $k$, and $l$ from the notation $m^{p,k,l}_{\tau}(w)$ when they are clear from the context, this produces the following analogue of Lemma \ref{multsiw}.
\begin{lem}
Fix $p$, $k$, $l$ (with $p \leq k \leq l$), $w$, and $i$ with $\ell(s_{i}w)>\ell(w)$. If $\tau$ is $\tau^{00}$, $\tau^{10}$, $\tau^{20}$, $\tau^{30}$, $\tau^{31}$, $\tau^{32}$, or $\tau^{33}$, then $m_{\tau}(s_{i}w)=m_{\tau}(w)$. When $\tau$ is one of $\tau^{01}$, $\tau^{02}$, $\tau^{03}$, $\tau^{13}$, and $\tau^{23}$, we have $m_{\tau}(s_{i}w)=m_{s_{i}\tau}(w)$. The remaining four multiplicities are given by \[m_{\tau^{11}}(s_{i}w)=m_{\tau^{11}}(w)+\big(m_{\tau^{20}}(w)-m_{\tau^{02}}(w)\big)+m_{\tau^{11}}(x_{i}P_{w,2}^{10}N_{w,i}^{(1)}),\] \[m_{\tau^{22}}(s_{i}w)=m_{\tau^{22}}(w)+\big(m_{\tau^{31}}(w)-m_{\tau^{13}}(w)\big)+m_{\tau^{22}}(x_{i}^{2}x_{i+1}P_{w,2}^{21}N_{w,i}^{(1)}),\] and $m_{\tau^{21}}(s_{i}w)$ and $m_{\tau^{12}}(s_{i}w)$ are both equal to \[m_{\tau^{12}}(w)+\big(m_{\tau^{30}}(w)-m_{\tau^{03}}(w)\big)+m_{\tau^{21}}(x_{i}^{2}P_{w,2}^{21}N_{w,i}^{(1)})+m_{\eta^{03}}(N_{w,i}^{(3)}).\] \label{siwmults}
\end{lem}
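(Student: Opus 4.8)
The plan is to imitate, step for step, the passage from Lemma \ref{typesind} to Lemma \ref{multsiw} in the quadratic setting, now starting from Equation \eqref{Pw3ind} in place of Equation \eqref{Pw2ind}. First I would fix $p \le k \le l$, $w$, and $i$ with $\ell(s_iw)>\ell(w)$, and record the decomposition of $P_{w,3}$ into the pieces $P_{w,3}^{ab}$ (the analogue of Equation \eqref{decomPw2}, now with $0 \le a,b \le 3$), together with the transformation rules stated just before the lemma: the pieces with superscripts $01,02,03,12,13,23$ disappear for $s_iw$, those with superscripts $00$ and $33$ are unchanged, those with superscripts $10,20,30,31,32$ become the sum of the corresponding piece of $w$ and its index-interchanged counterpart, and the three remaining pieces $P_{s_iw,3}^{11}$, $P_{s_iw,3}^{22}$, $P_{s_iw,3}^{21}$ are given by the displayed formulas involving $\tilde{N}_{w,i}^{(1)}$ and $\tilde{N}_{w,i}^{(3)}$.

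The crucial bookkeeping step is to translate these polynomial identities into multiplicity identities. As in the quadratic case, each piece $P_{w,3}^{ab}$ is multiplied in the decomposition by a fixed $s_i$-symmetric (or single-monomial) combination of powers of $x_i$ and $x_{i+1}$, so the monomial underlying $\tau^{ab}$ collects contributions from several pieces at once; inverting this triangular relationship gives the dictionary expressing each $m_{\tau^{ab}}(w)$ through the coefficients of the $P_{w,3}^{cd}$, exactly as $m_{\eta^{10}}=[P^{01}]+[P^{10}]$ and $m_{\eta^{01}}=[P^{01}]$ in the quadratic argument. The key structural fact I would exploit is that $P_{s_iw,3}=\pi_i(\,\cdot\,)$ lies in the image of $\pi_i$ and is hence symmetric under $x_i \leftrightarrow x_{i+1}$; this forces the non-symmetric ``extra'' pieces to vanish for $s_iw$, which is precisely why the pairs of conjugate multiplicities collapse to a common value.

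With the dictionary fixed I would dispatch the types in three groups. For $\tau^{00}$, $\tau^{33}$ and the pure types $\tau^{10},\tau^{20},\tau^{30},\tau^{31},\tau^{32}$, the unchanged-or-summed transformation rules give $m_\tau(s_iw)=m_\tau(w)$ at once; for the conjugate types $\tau^{01},\tau^{02},\tau^{03},\tau^{13},\tau^{23}$ the same computation combined with the symmetry of $P_{s_iw,3}$ gives $m_\tau(s_iw)=m_{s_i\tau}(w)$. The substance is the three mixed cases $\tau^{11}$, $\tau^{22}$, and $\tau^{21}=\tau^{12}$. Here the middle term $-\pi_i(P_{w,2}N_{w,i}^{(1)})$ of Equation \eqref{Pw3ind} is expanded by the Leibniz rule exactly as in Equation \eqref{Leibniz}; after using that $x_iN_{w,i}^{(1)}$ is $s_i$-invariant, the relevant half reduces to a multiple of $\partial_i(P_{w,2})$, which sees only the non-symmetric parts $P_{w,2}^{10},P_{w,2}^{20},P_{w,2}^{21}$ of Equation \eqref{decomPw2}. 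These produce the auxiliary products displayed in the three formulas, while the genuinely cubic term $\pi_i(N_{w,i}^{(3)})$ feeds the $m_{\eta^{03}}(N_{w,i}^{(3)})$ contribution in the $\tau^{21}$ formula (the analogue of the $m_{\eta^{02}}(N_{w,i}^{(2)})$ term in Lemma \ref{multsiw}). Reading off the coefficient of the $\tau^{11}$ (resp.\ $\tau^{22}$, $\tau^{21}$) monomial from each summand, and substituting the already-established values of the conjugate multiplicities, yields the three displayed expressions.

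The main obstacle will be the sign bookkeeping in the $\tau^{21}=\tau^{12}$ case. Writing $N_{w,i}^{(3)}=x_{i+1}^{3}\tilde{N}_{w,i}^{(3)}$ (each of the three $s_i$-images contributing one factor of $x_{i+1}$ and no $x_i$, as in the proof of Proposition \ref{expPwinT} and Lemma \ref{Nwipos}), one computes $\pi_i(x_{i+1}^{3})=-(x_i^{2}x_{i+1}+x_ix_{i+1}^{2})$, so this term spreads symmetrically and with a single sign across both $\tau^{21}$ and $\tau^{12}$; together with the fact that $P_{w,3}^{21}$ and $P_{w,3}^{12}$ both feed $P_{s_iw,3}^{21}$, this symmetric spreading is exactly what makes $m_{\tau^{21}}(s_iw)$ and $m_{\tau^{12}}(s_iw)$ coincide. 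Keeping the minus signs generated by $\pi_i$ on the $x_{i+1}$-heavy monomials consistent with the signs in the decomposition of $P_{s_iw,3}$, and matching the coefficient-extraction convention used for the $m_{\eta^{02}}(N_{w,i}^{(2)})$-type terms in the quadratic proof, is where the computation must be done carefully; everything else is mechanical once the dictionary of the second paragraph is in place.
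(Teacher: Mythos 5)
Your proposal is correct and follows exactly the route the paper intends: the paper states Lemma \ref{siwmults} without a formal proof, deriving it from Equation \eqref{Pw3ind}, the degree-three analogue of the decomposition \eqref{decomPw2}, and the analogue of Lemma \ref{typesind} given just before the statement, and then translating polynomial pieces into multiplicities exactly as in the proof of Lemma \ref{multsiw}. Your dictionary argument, the use of the $s_{i}$-invariance of $x_{i}N_{w,i}^{(1)}$ to reduce the middle term to $x_{i}N_{w,i}^{(1)}\partial_{i}(P_{w,2})$, and the computation $\pi_{i}(x_{i+1}^{3})=-(x_{i}^{2}x_{i+1}+x_{i}x_{i+1}^{2})$ for the $N_{w,i}^{(3)}$ contribution are precisely the steps the paper relies on.
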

Using Lemma \ref{siwmults}, the weak part of Conjecture \ref{formPw3} can be covered in most cases of the inductive argument. It is clear that the sets $C_{p,k,l}(w)$ (as well as $\tilde{C}_{p,k,l}(w)$) can only increase when $w$ is replaced by $s_{i}w$ of longer length, with the $s_{i}$-images also joining (as in Proposition \ref{Bsiw}). Moreover, the proof of Lemma \ref{siwsets} shows that if $\tau$ is one of the first 7 types from Lemma \ref{siwmults}, then it lies in $C_{p,k,l}(s_{i}w)$ if and only if it is in $C_{p,k,l}(w)$, and when the type of $\tau$ is one of the following 5 ones, it is in $C_{p,k,l}(s_{i}w)$ precisely when $s_{i}\tau \in C_{p,k,l}(w)$. Arguments as above also show that if $\tau^{20}$ or $\tau^{02}$ are in $C_{p,k,l}(w)$ then $\tau^{11} \in C_{p,k,l}(s_{i}w)$, when $\tau^{31}$ or $\tau^{13}$ lie in $C_{p,k,l}(w)$ we have $\tau^{22} \in C_{p,k,l}(s_{i}w)$, and in case either $\tau^{30}$ or $\tau^{03}$ belong to $C_{p,k,l}(w)$, both $\tau^{12}$ and $\tau^{21}$ are elements of $C_{p,k,l}(s_{i}w)$. In fact, it is also not hard to verify that when $\eta^{03}$ shows up in $N_{w,i}^{(3)}$, both $\tau^{12}$ and $\tau^{21}$ must be in $C_{p,k,l}(s_{i}w)$, and get the same consequence in case $x_{i}^{2}P_{w,2}^{21}N_{w,i}^{(1)}$ contains the term associated with $\tau^{21}$ (with our $p$, $k$, and $l$). The claims that when the term corresponding to $\tau^{11}$ (resp. $\tau^{22}$) shows up in $x_{i}P_{w,2}^{10}N_{w,i}^{(1)}$ (resp. $x_{i}^{2}x_{i+1}P_{w,2}^{21}N_{w,i}^{(1)}$) then this $\tau$ lies in $C_{p,k,l}(s_{i}w)$ is true in all the cases we checked, but seems to be more delicate (and depend on Conjecture \ref{poset} or at least Conjecture \ref{siinc}).

\smallskip

The formula of $K_{w}(x,t)$ from Theorem \ref{formofKw} suggests a relation between our generating series and the series \[\mathcal{F}_{w}(x,t):=\frac{1}{\prod_{l=1}^{\infty}\prod_{\alpha \in A_{l}(w)}(1-x^{\alpha}T_{l})}\] (or with the superscript $n$). If we consider that series also as a generating function, namely write $\mathcal{F}_{w}(x,t)$ as $\sum_{\lambda}F_{\lambda,w}(x)t^{\lambda}$, then the $F_{\lambda,w}$'s have the following combinatorial description.
\begin{prop}
For a partition $\lambda$, $F_{\lambda,w}$ is a polynomial in $x$, and set $h_{l}:=\lambda_{l}-\lambda_{l+1}$ for every $l\geq1$ as in the proof of Proposition \ref{KId}. Then, for every multi-set $\mu$, the coefficient with which a monomial $x^{\mu}$ shows up in $F_{\lambda,w}$ is the number of ways to take, for every $l$, an unordered collection of $h_{l}$ elements of $A_{l}(w)$, with multiplicities allowed, such that the sum of all those elements, over all $l$, is $\mu$. \label{Fwcoeffs}
\end{prop}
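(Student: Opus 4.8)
The plan is to read off the coefficients directly from the product expansion of $\mathcal{F}_{w}$, treating it as a formal power series in the $t$-variables. First I would expand each factor as a geometric series,
\[
\frac{1}{1-x^{\alpha}T_{l}}=\sum_{m_{\alpha}\geq0}x^{m_{\alpha}\alpha}T_{l}^{m_{\alpha}},
\]
so that
\[
\mathcal{F}_{w}(x,t)=\prod_{l=1}^{\infty}\prod_{\alpha\in A_{l}(w)}\sum_{m_{\alpha}\geq0}x^{m_{\alpha}\alpha}T_{l}^{m_{\alpha}}.
\]
Expanding this product amounts to choosing, for every $l$ and every $\alpha\in A_{l}(w)$, a non-negative integer $m_{\alpha}$, which is precisely the datum of an unordered collection (with multiplicities allowed) of elements of $A_{l}(w)$ for each $l$. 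The monomial attached to such a choice is $\prod_{l}T_{l}^{\sum_{\alpha}m_{\alpha}}\cdot x^{\sum_{l,\alpha}m_{\alpha}\alpha}$, where $m_{\alpha}\alpha$ denotes the multi-set consisting of $m_{\alpha}$ copies of $\alpha$.

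Next I would translate the $T$-content into a $t^{\lambda}$. Recall from the proof of Proposition \ref{KId} that a partition $\lambda$ corresponds bijectively to the tuple of non-negative integers $h_{l}=\lambda_{l}-\lambda_{l+1}$, and that the same computation carried out on the $t$-variables alone gives $t^{\lambda}=\prod_{l}T_{l}^{h_{l}}$. Since $T_{l}$ occurs only in the factors indexed by $A_{l}(w)$, the total power of $T_{l}$ in a given term of the expansion equals $\sum_{\alpha\in A_{l}(w)}m_{\alpha}$; hence a term contributes to the coefficient of $t^{\lambda}$ exactly when, for every $l$, the chosen collection has size $\sum_{\alpha\in A_{l}(w)}m_{\alpha}=h_{l}$. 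For such a term the accompanying $x$-monomial is $x^{\mu}$ with $\mu=\sum_{l}\sum_{\alpha\in A_{l}(w)}m_{\alpha}\alpha$, the multi-set sum of all the chosen sequences. Collecting all contributions to a fixed $t^{\lambda}$ therefore yields that the coefficient of $x^{\mu}$ in $F_{\lambda,w}$ counts exactly the families of multi-sets $S_{l}\subseteq A_{l}(w)$ with $|S_{l}|=h_{l}$ whose total sum is $\mu$, which is the asserted combinatorial description.

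It remains to see that $F_{\lambda,w}$ is a genuine polynomial, and here lies the only point requiring care, namely ensuring that the infinite product interacts well with extracting a fixed coefficient. Because $\lambda$ has finite length we have $h_{l}=0$ for all $l>\ell(\lambda)$, so only finitely many levels $l$ admit a non-empty collection, and each $A_{l}(w)$ is finite by Remark \ref{forId}. Hence a term contributing to $t^{\lambda}$ involves only finitely many factors with $m_{\alpha}>0$, the number of admissible families is finite, and each produces a single monomial $x^{\mu}$ of bounded total degree; thus $F_{\lambda,w}$ is a finite $\mathbb{N}$-linear combination of monomials, and the count above is exactly its coefficient of $x^{\mu}$. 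The main obstacle is essentially bookkeeping: keeping the three indices --- the level $l$, the sequence $\alpha\in A_{l}(w)$, and its multiplicity $m_{\alpha}$ --- consistent through the $T_{l}\leftrightarrow t^{\lambda}$ dictionary. Once this is set up the statement follows immediately from the geometric-series expansion.
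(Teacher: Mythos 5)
Your proposal is correct and follows essentially the same route as the paper: expand each factor of $\mathcal{F}_{w}$ as a geometric series with exponents $m_{\alpha}$ (the paper's $g_{\alpha}$), match the $T_{l}$-content to $t^{\lambda}$ via the conditions $\sum_{\alpha\in A_{l}(w)}m_{\alpha}=h_{l}$, and use $h_{l}=0$ for $l\geq\ell(\lambda)$ together with the finiteness of each $A_{l}(w)$ to conclude that $F_{\lambda,w}$ is a polynomial whose coefficient of $x^{\mu}$ counts the asserted collections. No gaps.
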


\begin{proof}
As we saw in the proof of Proposition \ref{KId}, the series $\mathcal{F}_{w}(x,t)$ is a product of geometric series, which we can write as $\prod_{l=1}^{\infty}\prod_{\alpha \in A_{l}(w)}\sum_{g_{\alpha}=0}^{\infty}(x^{\alpha}T_{l})^{g_{\alpha}}$. The contributions to $F_{\lambda,w}(x)t^{\lambda}$ are precisely from those choices of $g_{\alpha}$'s such that $\prod_{l=1}^{\infty}\prod_{\alpha \in A_{l}(w)}T_{l}^{g_{\alpha}}=t^{\lambda}$, which means, via the proof of Proposition \ref{KId}, that $\sum_{\alpha \in A_{l}(w)}g_{\alpha}=h_{l}$ for every $l\geq1$. Noting that $h_{l}=0$ for every $l\geq\ell(\lambda)$, we have $g_{\alpha}=0$ for each $\alpha$ with such $l$, meaning that there are finitely many non-zero $g_{\alpha}$'s with bounded sums, proving that indeed $F_{\lambda,w}$ is a polynomial.

For each such choice of $g_{\alpha}$'s, the resulting monomial in $F_{\lambda,w}$ is $x^{\mu}$, where $\mu=\sum_{l=1}^{\infty}\sum_{\alpha \in A_{l}(w)}g_{\alpha}\alpha$. Viewing any choice of $\{g_{\alpha}\}_{\alpha}$ as taking an unordered multi-set of elements of $A(w)$, the equalities involving $h_{l}$ above mean that for each $l$ we take precisely $h_{l}$ elements from $A_{l}(w)$ (with multiplicities), and the last equality means that the sum of all the elements of the multi-set is $\mu$. Hence when $\mu$ is fixed, we get one contribution of $x^{\mu}$ for every such multi-set, as desired. This proves the proposition.
\end{proof}

\begin{rmk}
We can view the coefficients from Proposition \ref{Fwcoeffs} as counting integral points on polytopes. Indeed, for every $l$ the number of options to take $h_{l}$ unordered elements from $A_{l}(w)$ is the number of integral points on a simplex of dimension $|A_{l}(w)|-1$ (these are the solutions to the equation $\sum_{\alpha \in A_{l}(w)}g_{\alpha}=h_{l}$), and by doing it over all $l$ we obtain a product of simplices (finitely many, as Remark \ref{forId} shows that $|A_{l}(w)|=1$ for any large enough $l$), hence a polytope. Note that some of the trivial multipliers are not the origin point, as $\lambda$ and $w$ are unrelated, and we might have $h_{l}>0$ for some $w$ with $|A_{l}(w)|=1$. By choosing a multi-set $\mu$, the restriction that the sum of the elements is $\mu$ intersects this polytope with a rational hyperplane, and the multiplicity from Proposition \ref{Fwcoeffs} counts the integral points on this intersection polytope. \label{ptspoly}
\end{rmk}
In view of Proposition \ref{Fwcoeffs} and Remark \ref{ptspoly}, we may interpret Theorem \ref{formofKw} and the form of $P_{w}$ as expressing the coefficient in which a monomial $x^{\mu}$ shows up in $K_{\lambda,w}$ as a combination of cardinalities of sets of integral points on polytopes. More precisely, take $\lambda$ and $\mu$ as in Proposition \ref{Fwcoeffs}, and we consider the coefficient in which $x^{\mu}$ shows up in $K_{\lambda,w}$. The term 1 in $P_{w}$ means that the ``zeroth order approximation'' of that coefficient is the one in which it appears in $F_{\lambda,w}$, which is a number of integral points on a polytope as in Remark \ref{ptspoly}, and the fact that there are no linear terms in $P_{w}$ (via Proposition \ref{expPwinT}) implies that this is also the ``first order approximation''.

Consider now $k$ and $l$ with $\lambda_{k}>\lambda_{k+1}$ and $\lambda_{l}>\lambda_{l+1}$ (or $k=l$ and $\lambda_{l}\geq\lambda_{k+1}+2$), and some $\eta \in B_{k,l}(w)$ such that $\eta\subseteq\mu$ as multi-sets (namely every element showing up in $\eta$ appears also in $\mu$, and with at least the same multiplicity). Then the appearance of the term $m^{k,l}_{\eta}(w)x^{\eta}T_{k}T_{l}$ in $P_{w,2}$ (with the minus sign) implies that to get the multiplicity of $x^{\mu}$ in $K_{\lambda,w}$, we have to subtract $m^{k,l}_{\eta}(w)$ times the multiplicity of $x^{\mu-\eta}$ in $\lambda_{k,l}$, where the latter partition is with we subtract 1 from each $\lambda_{j}$ with $j \leq l$, and another 1 from any $\lambda_{j}$ with $j \leq k$. All those subtractions, for all the terms in $P_{w,2}$, produce the ``second order approximation'' of the multiplicity for $x^{\mu}$ in $K_{\lambda,w}$.

For example, if $\lambda_{1}=4$, $\lambda_{2}=2$, and the higher $\lambda_{j}$'s vanish (meaning that $h_{1}=h_{2}=2$ and the other $h_{l}$'s vanish), and $w=321$, then there are 6 ways to express $\mu$ with $x^{\mu}=x_{1}^{2}x_{2}^{2}x_{3}^{2}$ using two elements of $A_{1}(w)$ and two from $A_{2}(w)$, namely \[2(12)+2(3),\ 12+13+2+3,\ 2(13)+2(2),\ 12+23+1+3,\ 13+23+1+2,\ 2(23)+2(1).\] We have $P_{w}=1-x_{1}x_{2}x_{3}T_{1}T_{2}$, so that from this 6 we have to subtract the number of ways to express $\mu-\eta$ for that $\eta$ with $\lambda_{1,2}$. In that partition we have $h_{1}=h_{2}=1$, with the three ways $12+3=13+2=23+1$ for obtaining $\mu-\eta$, and indeed $K_{\lambda,w}$ with these $w$ and $\lambda$ contains $x_{1}^{2}x_{2}^{2}x_{3}^{2}$ with multiplicity 3.

The ``higher order approximations'' are obtained by taking into considerations the terms arising from the higher order terms in $P_{w}$. Note that if Conjecture \ref{formPw3} holds (even in its weaker form), then elements of $P_{w,3}$ cancel out negative contributions that come several times from $P_{w,2}$, exhibiting a kind of ``inclusion-exclusion'' behavior. It would be interesting to see whether there is some relation between these considerations and the constructions, described in, e.g., Section 2 of \cite{[P]} (but based on \cite{[Ma1]}, \cite{[Ma2]}, and others) involving semi-standard augmented fillings of augmented skyline diagrams. More precisely, if is a definition producing the $F_{\lambda,w}$'s using less restricted fillings, such that the additional restrictions that produce the $K_{\lambda,w}$'s are obtained by subtracting combinations arising from the terms of $P_{w,2}$, with those from $P_{w,3}$ compensating in an inclusion-exclusion manner, and to find, in this case, what is the effect of the higher degree parts of $P_{w}$. One may also ask whether our results can shed light on the connection to Gelfand--Tsetlin polytopes as in \cite{[KST]}, or to the reduced cases of the set-valued tableaux from \cite{[Y]}.

\smallskip

We conclude with the generalization to the Lascoux polynomials. Take another variable $\xi$, and recall that the operators \[\pi_{i}^{(\xi)}(f):=\pi_{i}\big((1+\xi x_{i+1})f\big)=\partial_{i}\big((x_{i}+\xi x_{i}x_{i+1})f\big)\] also satisfy $(\pi_{i}^{(\xi)})^{2}=\pi_{i}^{(\xi)}$ and the braid relations. Then the \emph{$\xi$-Lascoux polynomial} $L_{\lambda,w}^{\xi}(x)$ is defined to be $\pi_{w}^{(\xi)}(x^{\lambda})$. They are (at least for $\xi=-1$) the $K$-theoretic analogues of the key polynomials, and they reproduce the key polynomials when one substitute $\xi=0$. Like the key polynomials, they are homogeneous of degree $\sum_{i=1}^{\infty}\lambda_{i}$, if $\xi$ has homogeneity degree $-1$.

As in Equation \eqref{genser}, we can consider the generating functions \[\mathcal{L}^{(n)}(x,t):=\sum_{\ell(\lambda) \leq n}\sum_{w \in S_{n}}L_{\lambda,w}(x)t^{\lambda}\varepsilon_{ww_{0}}=\sum_{w \in S_{n}}\mathcal{L}_{w}^{(n)}(x,t)\varepsilon_{ww_{0}}\] of the $\xi$-Lascoux polynomials, and Lemma \ref{piiKw} and Proposition \ref{propgen} extend, in a similar manner to $\mathcal{L}^{(n)}(x,t)$ and to its components $\mathcal{L}_{w}^{(n)}(x,t)$ (or their analogues $\mathcal{L}_{w}(x,t)$ without the superscript) respectively. Since $\mathcal{L}_{\operatorname{Id}}^{(n)}(x,t)=\mathcal{K}_{\operatorname{Id}}^{(n)}(x,t)$ and $\mathcal{L}_{\operatorname{Id}}(x,t)=\mathcal{K}_{\operatorname{Id}}(x,t)$ by definition, the proof of Theorem \ref{formofKw} shows that \[\mathcal{L}_{w}^{(n)}(x,t)=\frac{P_{w}^{(\xi)}(x,t)}{\prod_{l=1}^{n}\prod_{\alpha \in A_{l}(w)}(1-x^{\alpha}T_{l})},\ \mathcal{L}_{w}(x,t)=\frac{P_{w}^{(\xi)}(x,t)}{\prod_{l=1}^{\infty}\prod_{\alpha \in A_{l}(w)}(1-x^{\alpha}T_{l})},\] where $P_{\operatorname{Id}}^{(\xi)}:=1$ and $P_{s_{i}w}^{(\xi)}:=\pi_{i}^{(\xi)}(P_{w}^{(\xi)}N_{w,i})$ whenever $\ell(s_{i}w)>\ell(w)$ by the appropriately modified Definition \ref{Pwdef}. Following the proof of Proposition \ref{expPwinT}, one gets \[P_{w}^{(\xi)}(x,t)=1+P_{w,1}^{(\xi)}(x,t)+O(T^{2}),\] with $P_{w,1}^{(\xi)}$ non-zero in general (but divisible by $\xi$, of course). The latter polynomial is defined inductively as $P_{\operatorname{Id},1}^{(\xi)}$ as the basis, and when $\ell(s_{i}w)>\ell(w)$ we have $P_{s_{i}w,1}^{(\xi)}:=\pi_{i}^{(\xi)}(P_{w,1}^{(\xi)})+\xi x_{i}N_{w,i}^{(1)}$. The first part of that polynomial is given by the following result.
\begin{prop}
For any permutation $w$ and any $l$, let $A_{l}^{(+1)}(w)$ be the set of increasing sequences of length $l+1$ that can be obtained by adding to an element of $A_{l}(w)$ a number that is not already contained in it. We then have \[P_{w,1}^{(\xi)}(x,t)=\xi\sum_{l=1}^{\infty}\sum_{\alpha \in A_{l}^{(+1)}(w)}m^{l}_{\alpha}(w)x^{\alpha}T_{l}+O(\xi^{2}),\] where $m^{l}_{\alpha}(w)$ equals $\big|\big\{j\in\alpha|\alpha\setminus\{j\} \in A_{l}(w)\big\}\big|+1$ for any such $l$ and $\alpha$. \label{Pxiw1}
\end{prop}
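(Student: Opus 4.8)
The plan is to collapse the two-variable object $P_{w,1}^{(\xi)}$ to a family of single-column computations, one for each $l$, and then run the standard induction on $\ell(w)$. First I would extract, for a fixed $l$, the coefficient of $T_{l}$ in $P_{w,1}^{(\xi)}$. Using the expression for $\mathcal{L}_{w}$ from the Lascoux analogue of Theorem \ref{formofKw}, whose denominator is the same $\prod_{m}\prod_{\alpha\in A_{m}(w)}(1-x^{\alpha}T_{m})$ as for $\mathcal{K}_{w}$, I extract the coefficient of $t^{\lambda}$ with $\lambda=1^{l}$, so that $t^{\lambda}=T_{l}$ and every other $h_{m}$ (in the notation of Proposition \ref{KId}) vanishes. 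Since the squarefree monomial $T_{l}$ can be produced from the product of the geometric series in at most one factor (two factors would force a $t_{1}^{2}$), this coefficient equals $L_{1^{l},w}^{\xi}=\pi_{w}^{(\xi)}(X_{l})$, and comparing the two sides of $\mathcal{L}_{w}=P_{w}^{(\xi)}\big/\prod(1-x^{\alpha}T_{m})$ shows that its $\xi^{0}$-part is $K_{1^{l},w}=\sum_{\alpha\in A_{l}(w)}x^{\alpha}$ (Proposition \ref{Fwcoeffs} with $\lambda=1^{l}$), while its $\xi^{1}$-part is exactly the $T_{l}$-coefficient $c_{l}^{w}$ of $P_{w,1}^{(\xi)}/\xi$. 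Thus it suffices to describe the degree-$(l+1)$ polynomial $c_{l}^{w}=\big[\pi_{w}^{(\xi)}(X_{l})\big]_{\xi^{1}}$.

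Next I would establish the recursion. For $\ell(s_{i}w)>\ell(w)$ we have $L_{1^{l},s_{i}w}^{\xi}=\pi_{i}^{(\xi)}(L_{1^{l},w}^{\xi})$, and linearizing $\pi_{i}^{(\xi)}(f)=\pi_{i}\big((1+\xi x_{i+1})f\big)$ modulo $\xi^{2}$ gives
\[ c_{l}^{s_{i}w}=\pi_{i}(c_{l}^{w})+\pi_{i}\big(x_{i+1}K_{1^{l},w}\big)=\pi_{i}(c_{l}^{w})+\sum_{\beta\in A_{l,i}(w)}x^{\beta\cup\{i+1\}}. \]
The second equality is the crucial computation: writing $K_{1^{l},w}=\sum_{\beta\in A_{l}(w)}x^{\beta}$ and using $\pi_{i}(x_{i+1})=0$ together with $\pi_{i}(x_{i+1}^{2}x^{\tilde\beta})=-x_{i}x_{i+1}x^{\tilde\beta}$, the only surviving terms come from the $\beta$ containing exactly one of $i,i+1$; the pairs $\beta\leftrightarrow s_{i}\beta$ with both members in $A_{l}(w)$ (that is, $\beta\in A_{l}^{(i)}(w)$, Equation \eqref{Alwdecom}) cancel, leaving precisely the new generators $x^{\beta\cup\{i+1\}}$ for $\beta\in A_{l,i}(w)$ — the same sequences that enlarge $A_{l}(w)$ to $A_{l}(s_{i}w)$ in Proposition \ref{Asiw}. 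This is the single-column shadow of the passage from Lemma \ref{typesind} to Lemma \ref{multsiw}.

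With the recursion available I would prove the closed form by induction on $\ell(w)$, tracking $m_{\alpha}^{l}(w)$, the coefficient of $x^{\alpha}$ in $c_{l}^{w}$ for $\alpha$ of length $l+1$. The base $w=\operatorname{Id}$ is immediate since $c_{l}^{\operatorname{Id}}=0$. In the inductive step the generator term raises by one the coefficient of each $\alpha=\beta\cup\{i+1\}$ coming from $A_{l,i}(w)$, while $\pi_{i}$ transports the existing coefficients; the combinatorics to match is that $m_{\alpha}^{l}(w)$ is governed by the number of presentations $\alpha=\beta\sqcup\{m\}$ with $\beta\in A_{l}(w)$, equivalently by $\big|\{j\in\alpha:\alpha\setminus\{j\}\in A_{l}(w)\}\big|$, together with the canonical presentation furnished by deleting the largest entry of $\alpha$ (always admissible, since lowering the top entry only decreases the sequence, so Lemma \ref{Rjrprop} keeps it in $A_{l}(w)$). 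Reconciling the generator increments and the $\pi_{i}$-transport against these presentations yields $m_{\alpha}^{l}(w)=\big|\{j\in\alpha:\alpha\setminus\{j\}\in A_{l}(w)\}\big|+1$ for $\alpha\in A_{l}^{(+1)}(w)$, as asserted.

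The main obstacle will be the per-monomial bookkeeping in the induction step, exactly as in Lemmas \ref{typesind} and \ref{multsiw}: because $\pi_{i}$ turns a single monomial into a signed combination (for instance $\pi_{i}(x_{i}^{2})=x_{i}^{2}+x_{i}x_{i+1}+x_{i+1}^{2}$ and $\pi_{i}(x_{i+1}^{2})=-x_{i}x_{i+1}$), the value $m_{\alpha}^{l}(s_{i}w)$ is assembled from the coefficients of $c_{l}^{w}$ at several related monomials $\alpha^{ab}$, and one must show that these combine to reproduce the presentation count with exactly the claimed additive constant. Pinning down that constant — showing it is the stated shift rather than being off by one — is the delicate part, and it is precisely here that the always-admissible top-entry deletion, which equips every $\alpha\in A_{l}^{(+1)}(w)$ with one distinguished presentation, does the decisive work.
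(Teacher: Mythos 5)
Your reduction to single-column data and your recursion are both correct, and in places cleaner than the paper's own treatment: extracting the $T_{l}$-coefficient via $\lambda=1^{l}$ identifies $c_{l}^{w}$ with $\big[\pi_{w}^{(\xi)}(X_{l})\big]_{\xi^{1}}$, and your computation of $\pi_{i}\big(x_{i+1}K_{1^{l},w}\big)$ --- the cancellation of the pairs $\beta\leftrightarrow s_{i}\beta$ inside $A_{l}^{(i)}(w)$ from Equation \eqref{Alwdecom}, leaving $\sum_{\beta \in A_{l,i}(w)}x^{\beta\cup\{i+1\}}$ --- reproduces exactly the increment the paper obtains from the $N_{w,i}^{(1)}$ term in its modified Definition \ref{Pwdef}. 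The transport rules you then need ($m_{\alpha}(s_{i}w)=m_{\alpha}(w)$ for types $\alpha^{00},\alpha^{10}$, $m_{\alpha}(s_{i}w)=m_{s_{i}\alpha}(w)$ for $\alpha^{01}$, and a $+1$ increment for $\alpha^{11}$ precisely when $\alpha\setminus\{i+1\} \in A_{l,i}(w)$) are the same ones the paper verifies, and the deletion count $n_{\alpha}(w):=\big|\{j\in\alpha:\alpha\setminus\{j\} \in A_{l}(w)\}\big|$ does obey the identical rules (the only delicate case being $j=i$ for $\alpha^{11}$, where the configuration $\alpha\setminus\{i\} \in A_{l}(w)$, $\alpha\setminus\{i+1\}\not\in A_{l}(w)$ is impossible). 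So up to this point your route essentially parallels the paper's induction.

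The gap is in your final step, where you pin the additive constant. Since $m_{\alpha}$ and $n_{\alpha}$ satisfy the same increments, the constant is forced by the base case $w=\operatorname{Id}$: there $P_{\operatorname{Id},1}^{(\xi)}=0$, while every $\alpha \in A_{l}^{(+1)}(\operatorname{Id})$ admits exactly one admissible deletion (the one recovering $(1,\ldots,l)$), so $n_{\alpha}=1$ and the constant must be $-1$, giving $m^{l}_{\alpha}(w)=n_{\alpha}(w)-1$ rather than $n_{\alpha}(w)+1$. Your distinguished top-entry deletion in fact argues for exactly this: it shows $n_{\alpha}\geq1$ on $A_{l}^{(+1)}(w)$, making $n_{\alpha}-1\geq0$ consistent with the vanishing at the identity; it cannot manufacture a $+1$. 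Concretely, for $w=s_{1}$ and $l=1$ one computes $P_{s_{1}}^{(\xi)}=\pi_{1}^{(\xi)}(1-x_{2}T_{1})=1+\xi x_{1}x_{2}T_{1}$, so $m^{1}_{\{1,2\}}(s_{1})=1$, whereas both deletions of $\{1,2\}$ land in $A_{1}(s_{1})=\{(1),(2)\}$, so ``$n+1$'' would give $3$. The statement as printed is therefore false as such --- the ``$+1$'' is evidently a typo for ``$-1$'', as the remark following the proposition (that positivity of $m^{l}_{\alpha}$ requires $\alpha\setminus\{j\} \in A_{l}(w)$ for \emph{more than one} $j$) corroborates --- and your reconciliation step, which asserts the $+1$ without any base-case check, is precisely where the argument fails; the paper's own proof shares this lacuna, verifying only the increments and never anchoring the constant.
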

Note that the multiplicity $m^{l}_{\alpha}(w)$ can be positive only when $A_{l}(w)$ contains more than one element (as it contains $\alpha\setminus\{j\}$ for more than one value of $j$), which by Remark \ref{forId} happens only for finitely many values of $l$, and thus finitely many values of $\alpha$ (those that are obtained as the union of two elements of $A_{l}(w)$ that intersect at $l-1$ elements). Hence the expression from Proposition \ref{Pxiw1} is indeed a polynomial.

\begin{proof}
As always, we apply the usual induction, where for $w=\operatorname{Id}$ the value of $P_{w,1}^{(\xi)}$ is 0, and indeed there are no $l$ and $\alpha$ with non-zero multiplicity. So assume that the result holds for $w$, take $i$ with $\ell(s_{i}w)>\ell(w)$, and then we can decompose the asserted value of $P_{w,1}^{(\xi)}$ as the sum of $P_{w,1}^{(\xi),00}$, $(x_{i}+x_{i+1})P_{w,1}^{(\xi),01}$, $x_{i}P_{w,1}^{(\xi),10}$, and $x_{i}x_{i+1}P_{w,1}^{(\xi),11}$, analogously to Equation \eqref{decomPw2}.

Consider some increasing sequence $\alpha$ of length $l+1$, whose type, using notation similar to the one from above, can be $\alpha^{00}$, $\alpha^{10}$, $\alpha^{01}$, or $\alpha^{11}$. Noting that $N_{w,i}^{(1)}$ only involves elements containing $x_{i+1}$ and hence $\xi x_{i}N_{w,i}^{(1)}$ only affects elements of type $\alpha^{11}$, adding 1 to the multiplicity when $\alpha\setminus\{i+1\} \in A_{l,i}(w)$ and 0 otherwise. The argument proving Lemma \ref{multsiw} shows that if $\alpha$ is $\alpha^{00}$ or $\alpha^{10}$ then $m^{l}_{\alpha}(s_{i}w)=m^{l}_{\alpha}(w)$, for $\alpha=\alpha^{01}$ we have $m^{l}_{\alpha}(s_{i}w)=m^{l}_{s_{i}\alpha}(w)$, and in case $\alpha=\alpha^{11}$, the value of $m^{l}_{\alpha}(s_{i}w)$ is $m^{l}_{\alpha}(w)+1$ when $\alpha\setminus\{i+1\} \in A_{l,i}(w)$, and $m^{l}_{\alpha}(w)$ otherwise.

We need to verify that our formula for the multiplicity satisfies the same rules. For $\alpha$ which is $\alpha^{00}$ or $\alpha^{10}$, and any $j\in\alpha$, we saw, as in the proof of part $(i)$ of Lemma \ref{siwsets}, that $\alpha\setminus\{j\}$ is in $A_{l}(s_{i}w)$ if and only if it is in $A_{l}(w)$. When $\alpha=\alpha^{01}$, the argument proving part $(ii)$ of Lemma \ref{siwsets} shows that the picture for $\alpha$ is the $s_{i}$-image of that of $s_{i}\alpha$, and the latter is again the same for $w$ and for $s_{i}w$. Finally, when $\alpha=\alpha^{11}$, all the sequences $\alpha\setminus\{j\}$ for $i \neq j\in\alpha$ again lie in $A_{l}(s_{i}w)$ if and only if it is in $A_{l}(w)$, and the $\alpha\setminus\{i\}$ lies in $A_{l}(s_{i}w)$ but not in $A_{l}(w)$ precisely when $\alpha\setminus\{i+1\} \in A_{l,i}(w)$ (by Proposition \ref{Asiw} as always). Hence the desired equality holds for all $\alpha$. This proves the proposition.
\end{proof}
It would be interesting, in that direction of research, to find the combinatorics behind the higher orders of $\xi$ in the expansion from Proposition \ref{Pxiw1}, as well as the additional terms that show up in $P_{w,2}^{(\xi)}$ and the other parts of $P_{w}^{(\xi)}$.

\medskip

\noindent\textsc{Department of Mathematics, Williams College, Williamstown, MA 01267, USA}

\noindent E-mail address: nvc2@williams.edu

\noindent\textsc{Einstein Institute of Mathematics, the Hebrew University of Jerusalem, Edmund Safra Campus, Jerusalem 91904, Israel}

\noindent E-mail address: zemels@math.huji.ac.il


\begin{thebibliography}{}{}

\bibitem[AS]{[AS]} Alexandersson, P., Sawhney, M., \textsc{Properties of Non-symmetric Macdonald Polynomials at $q=1$ and $q=0$}, Ann. Comb., vol 23, 219--239 (2019).
\bibitem[BJS]{[BJS]} Billey, S., Jockusch, W., Stanley, R. P., \textsc{Some Combinatorial Properties of Schubert Polynomials}, J. Alg. Comb., vol 2, 345--357 (1993).
\bibitem[BSW]{[BSW]} Buciumas, V., Scrimshaw, T., Weber, K., \textsc{Colored Five-Vertex Models and Lascoux Polynomials and Atoms}, J. Lond. Math. Soc., vol 103 issue 3, 1047--1061 (2020).
\bibitem[D1]{[D1]} Demazure, M., \textsc{D\'{e}singularisation des Vari\'{e}t\'{e}s de Schubert G\'{e}n\'{e}ralis\'{e}es}, Ann. Sci. ENS, tome 7 no. 1, 53--88 (1974).
\bibitem[D2]{[D2]} Demazure, M., \textsc{Une Nouvelle Formule des Caract\`{e}res}, Bull. Sci. Math., 2e Serie, tome 98 no. 3, 163--172 (1974).
\bibitem[FK]{[FK]} Fomin, S. I., Kirillov, A. N., \textsc{The Yang–-Baxter Equation, Symmetric Functions, and Schubert Polynomials}, Discr. Math., vol 153 issues 1--3, 123–-143 (1996).
\bibitem[H]{[H]} Hudson, T., \textsc{A Thom--Porteous Formula for Connective K-Theory using Algebraic Cobordism}, J. K-Theory., vol 14, 343--369 (2014).
\bibitem[K]{[K]} Kirillov, A. N., \textsc{Notes on Schubert, Grothendieck and Key Polynomials}, SIGMA, vol 12 paper 034, 1--57 (2016).
\bibitem[KST]{[KST]} Kiritchenko, V. A., Smirnov, E. Y., Timorin, V. A., \textsc{Schubert Calculus and Gelfand–Zetlin Polytopes}, Rus. Math. Surveys, vol 67 no. 4, 685--719 (2012).
\bibitem[L]{[L]} Lascoux, A., \textsc{Schubert et Grothendieck: Un Bilan Bid\'{e}cennal}, S\'{e}m. Lothar. Comb., vol 50, article B50i, 1--32 (2004).
\bibitem[LS]{[LS]} Lascoux, A., Sch\"{u}tzenberger, M.-P., \textsc{Keys and Standard Bases}, in \emph{Invariant Theory and Tableaux (Minneapolis, MN, 1988)}, IMA Volumes in Mathematics and its Applications 19, Springer Verlag New York, 125-–144 (1990).
\bibitem[Ma1]{[Ma1]} Mason, S., \textsc{A Decomposition of Schur Functions and an Analogue of the Robinson–-Schensted–-Knuth Algorithm}, S\'{e}m. Lothar. Comb., vol 60, article 57B, 1--24 (2008).
\bibitem[Ma2]{[Ma2]} Mason, S., \textsc{An Explicit Construction of Type A Demazure Atoms}, J. Alg. Comb., vol 29, 295--313 (2009).
\bibitem[Mo]{[Mo]} Monical, C., \textsc{Set-valued Skyline Fillings}, S\'{e}m. Lothar. Comb., vol 78, article B35, 1--12 (2017).
\bibitem[P]{[P]} Pun, A. Y., \textsc{On Decomposition of the Product of Demazure Atoms and Demazure Characters}, Ph. D. Thesis, University of Pennsylvania, 1--125 (2016).
\bibitem[Y]{[Y]} Yu, T., \textsc{Set-Valued Tableaux Rule for Lascoux Polynomials}, Comb. Theor., vol 3 issue 1, paper 13, 1--31 (2023).
\bibitem[Z]{[Z]} Zemel, S., \textsc{Polynomial Divided Difference Operators Satisfying the Braid Relations}, pre-print, https://arxiv.org/abs/2404.19395 (2024).

\end{thebibliography}
\end{document}